\documentclass[a4paper,11pt]{amsart}%

\voffset=5mm
\headheight=0mm
\topmargin=0mm
\oddsidemargin=0mm
\evensidemargin=0mm
\textheight=220mm
\textwidth=154mm
\abovedisplayskip=0pt
\abovedisplayshortskip=0pt
\belowdisplayskip=0pt
\belowdisplayshortskip=0pt

\usepackage{amsmath}%
\usepackage{amssymb}%
\usepackage{amsfonts}%
\usepackage{amscd}%
\usepackage{mathrsfs}%
\usepackage[all]{xy}%
\usepackage{enumerate}%


\theoremstyle{plain}%
    \newtheorem{thm}{Theorem}[section]%
    \newtheorem{prop}[thm]{Proposition}%
    \newtheorem{lem}[thm]{Lemma}%
	\newtheorem{cor}[thm]{Corollary}%
    \newtheorem{con}[thm]{Conjecture}%

\theoremstyle{definition}%
    \newtheorem{ex}[thm]{Example}%

\theoremstyle{remark}%
    \newtheorem{rem}[thm]{Remark}%

\def\endpiece{xxx}%
\def\makeop[#1]{\xmakeop#1,xxx,}
\def\mkop#1{\expandafter\def\csname #1\endcsname{{\mathrm{#1}}}} %
\def\xmakeop#1,{\def\temp{#1}\ifx\temp\endpiece\else\mkop{#1}\expandafter\xmakeop\fi}%

\newcommand{\Ram}{\mathop{\mathrm {Ram}}\nolimits}
\newcommand{\Hom}{\mathop{\mathrm {Hom}}\nolimits}
\newcommand{\corank}{\mathop{\mathrm {corank}}\nolimits}
\newcommand{\rank}{\mathop{\mathrm {rank}}\nolimits}
\newcommand{\im}{\mathop{\mathrm {im}}\nolimits}

\newcommand{\Tr}{\mathop{\mathrm {Tr}}\nolimits}
\newcommand{\Frob}{\mathop{\mathrm {Frob}}\nolimits}
\newcommand{\If}{\mathop{\mathrm {if}}\nolimits}
\newcommand{\Gal}{\mathop{\mathrm {Gal}}\nolimits}

\newcommand{\Sel}{\mathop{\mathrm {Sel}}\nolimits}
\newcommand{\PD}{\mathop{\mathrm {PD}}\nolimits}


\def\A{{\mathbb{A}}}%
\def\Z{{\mathbb{Z}}}%
\def\Q{{\mathbb{Q}}}%
\def\Zp{{\mathbb{Z}_p}}%
\def\Qp{{\mathbb{Q}_p}}%
\def\C{{\mathbb{C}}}
\def\pa{{\mathrm{par}}}%
\def\Res{{\mathrm{Res}}}%
\def\ideal#1{{\mathfrak{#1}}}%
\def\integer#1{{\mathcal{O}_{#1}}}%
\def\line#1{{\overline{#1}}}%
\def\cal#1{{\mathcal{#1}}}%
\def\G_m^#1{{\mathbb{G}_m^{#1}}}%
\makeop[Tr,Nr,Ver,Ker,Aut,det,sgn,End,ev]%
\makeop[GL,SL,E,M,K,SK]%
    

\begin{document}

\setcounter{tocdepth}{1} 
\numberwithin{equation}{section}


\title[congruences between Hilbert modular forms and the Iwasawa $\lambda$-invariants]
{congruences between Hilbert modular forms of weight $2$, and the Iwasawa $\lambda$-invariants}
\author{Yuichi Hirano}
\address{Graduate School of Mathematical Sciences, The University of
Tokyo, 8-1 Komaba 3-chome, Meguro-ku, Tokyo, 153-8914, Japan}
\email{yhirano@ms.u-tokyo.ac.jp}
\date{\today}

\begin{abstract}
The purpose of this paper is to prove the equality between the algebraic Iwasawa $\lambda$-invariant and the analytic Iwasawa $\lambda$-invariant for a Hilbert cusp form of parallel weight $2$ at an ordinary prime $p$ when the associated residual Galois representation is reducible. 
This is a generalization of a result of R.\,Greenberg and V.\,Vatsal. 
\end{abstract}

\maketitle
\setcounter{section}{-1}

\section{Introduction}
%

%
\subsection{Introduction}
%

In this paper, we study a way to obtain the equality between the algebraic Iwasawa $\lambda$-invariant and the analytic Iwasawa $\lambda$-invariant from a congruence between a Hilbert cusp form of parallel weight $2$ and a Hilbert Eisenstein series of parallel weight $2$. 
Our result is a generalization of the work of R.\,Greenberg and V.\,Vatsal \cite{Gre--Vat} for elliptic modular forms, which is not covered by the advanced work of C.\,Skinner and E.\,Urban \cite{Ski--Ur} who assume absolute residual irreducibility. 

Let $F$ be a totally real number field of degree $n$ with narrow class number $h_F^+=1$. 
Let $\mathfrak{o}_F$ denote the ring of integers of $F$, and let $\Delta_F$ denote the discriminant of $F$. 
Let $\mathfrak{n}$ be a non-zero ideal of $\ideal{o}_F$ such that 
$\mathfrak{n}$ is prime to $6\Delta_F$. 
Let $p$ be a prime number such that $p\ge n+2$ and $p$ is prime to $6\mathfrak{n}\Delta_F$. 
We fix algebraic closures $\overline{\Q}$ of $\Q$ and $\overline{\Q}_p$ of $\Qp$, and embeddings $\overline{\Q} \hookrightarrow \overline{\Q}_p \hookrightarrow \C$. 
Let $\integer{}$ be the ring of integers of a finite extension $K$ of the field $\Phi_p$ defined in \S \ref{Hecke}, $\varpi$ a uniformizer of $\integer{}$, and $\kappa$ the residue field of $\integer{}$. 
Let $F_{\infty}$ denote the cyclotomic $\Zp$-extension of $F$. We put $\Gamma=\Gal(F_{\infty}/F)$. Let $\Lambda$ denote the Iwasawa algebra $\integer{}[[\Gamma]]$.
Let $S_2(\ideal{n},\integer{})$ denote the space of Hilbert cusp forms of parallel weight $2$ and level $\ideal{n}$ with coefficients in $\integer{}$ (see (\ref{integral MF})).
Let $Y(\ideal{n})$ be the Shimura variety $\Gamma_{1}(\ideal{d}_F[t_1],\ideal{n})\backslash \mathfrak{H}^{\mathrm{Hom}(F, \,\mathbb{R})}$ defined by $($\ref{adele HMV}$)$, and let $Y(\ideal{n})^{\mathrm{BS}}$ be the Borel\nobreakdash--Serre compactification of $Y(\ideal{n})$ $($cf. \S \ref{subsection:modular symbol}$)$. 
Let $C$ denote the set of all cusps of $Y(\ideal{n})$, and let $D_s$ denote the boundary of $Y(\ideal{n})^{\mathrm{BS}}$ at $s\in C$.
Let $C_{\infty}$ be the subset of $C$ consisting of cusps $\Gamma_0(\ideal{d}_F [t_1],\ideal{n})$\nobreakdash-equivalent to the cusp $\infty$ (where $\Gamma_0(\ideal{d}_F [t_1],\ideal{n})$ is the congruence subgroup defined in \S\ref{Analytic HMV}). 
Let $D_{C_{\infty}}(\ideal{n})$ denote the union of $D_s$ for all $s\in C_{\infty}$.

Let $\mathbf{f}\in S_2(\ideal{n},\integer{})$ be a normalized Hecke eigenform for all Hecke operators with character $\varepsilon$ and $p$-ordinary. 
We assume that the residual Galois representation $\bar{\rho}_{\mathbf{f}}\,(=\rho_{\mathbf{f}}\,(\bmod\, \varpi)): G_{F}\to \GL_2(\kappa)$ associated to $\mathbf{f}$ is reducible and of the form
\[
\bar{\rho}_{\mathbf{f}} \sim 
\overline
{\begin{pmatrix} 
\varphi&\ast\\ 
0&\psi
\end{pmatrix}}.
\]

\begin{thm}
\label{thm:main theorem}
Let $\mathbf{f}\in S_2(\ideal{n},\integer{})$ as above.
We assume that the above $\varphi$ $($resp. $\psi$$)$ is ramified $($resp. unramified$)$ at every prime ideal of $\ideal{o}_F$ lying above $p$, totally even $($resp. totally odd$)$, and the associated algebraic Iwasawa $\mu$-invariant is $0$ $($for the definition, see \S \ref{subsection:algebraic Iwasawa invariants} $(\mu=0)$$)$. 
Let $\ideal{n}'$ be the least common multiple of $\ideal{n}^2$ and $\ideal{m}_{\varepsilon} \ideal{m}_{\psi}^2$, where $\ideal{m}_{\ast}$ is the conductor of $\ast$. 
We put $\mathbf{g}=(\mathbf{f}\otimes \mathbf{1}_{\ideal{n}})\otimes \mathbf{1}_{\ideal{m}_{\psi}}\in S_2(\ideal{n}',\integer{})$ $($cf. \cite[Proposition 4.4, 4.5]{Shi}$)$, where, for a non-zero ideal $\ideal{a}$ of $\ideal{o}_F$, $\mathbf{1}_{\ideal{a}}$ is the trivial character modulo $\ideal{a}$. 
Let $\chi$ be an $\integer{}$-valued narrow ray class character of $F$, whose conductor is the level $\ideal{n}'$, such that $\chi$ is of order prime to $p$, totally even, and the algebraic Iwasawa $\mu$-invariants of $\varphi\chi$ and $\psi\chi$ are $0$. 
Let $\mathbf{f}\otimes \chi$ be the Hecke eigenform twisted by $\chi$ $($defined in \cite[Proposition 4.4, 4.5]{Shi}$)$. 
We assume the following two conditions$:$
\begin{enumerate}[$(a)$] 

\item the local components $H^{n}(\partial \left(Y(\ideal{n}')^{\mathrm{BS}}\right),\integer{})_{\ideal{m}}$ and $H_c^{n+1}(Y(\ideal{n}'),\integer{})_{\ideal{m}}$ are torsion-free, where $\ideal{m}$ is the maximal ideal of $\cal{H}_2(\ideal{n}',\integer{})$ defined at the beginning of \S \ref{subsection:application for alg and anal}$;$

\item the local component $H^{n}(D_{C_{\infty}}(\ideal{n}'),\integer{})_{\ideal{m}_{\mathbf{g}}'}$ is torsion-free, where $\ideal{m}_{\mathbf{g}}'$ is the maximal ideal of $\mathbb{H}_2(\ideal{n}',\integer{})'$ defined before Proposition \ref{prop:integral of relative class}. 

\end{enumerate}
Then we have the following$:$
\begin{enumerate}[$(1)$] 

\item \label{main thm:alg side}
$($Theorem \ref{thm:Selmer for 1 and 2}, \S\ref{subsection:application for algebraic}$)$ the Selmer group $\Sel(F_{\infty},A_{\mathbf{f}\otimes\chi})$ for $\mathbf{f}\otimes\chi$ $($for the definition, see \S \ref{subsection:algebraic Iwasawa invariants}, \S \ref{subsection:application for algebraic}$)$ is finitely generated $\Lambda$-cotorsion, and 
\[
\lambda_{\mathbf{f}\otimes \chi}^{\mathrm{alg}}=\lambda_{\varphi\chi,\Sigma_0}+\lambda_{\psi\chi,\Sigma_0}. 
\]
Here $\lambda_{\mathbf{f}\otimes \chi}^{\mathrm{alg}}$ is the algebraic Iwasawa $\lambda$-invariant for $\mathbf{f}\otimes\chi$ defined by $($\ref{lambda^alg for f and chi}$)$, and $\lambda_{\varphi\chi,\Sigma_0}$ and $\lambda_{\psi\chi,\Sigma_0}$ are the classical algebraic Iwasawa $\lambda$-invariants defined by $(\ref{classical lambda for varphichi})$ and $(\ref{classical lambda for psichi})$, respectively.

\item \label{main thm:an side}
$($Theorem \ref{thm:integral p-adic L}$)$ if $\chi$ is of type $S$ $($that is, $\overline{\Q}^{\ker(\chi)} \cap F_{\infty}=F$$)$, then there exists a $p$-adic $L$-function $\mathscr{L}_p(\mathbf{f},\chi,T)\in \integer{}[[T]]$ satisfying the following interpolation property$:$
for each finite order character $\rho$ of $\Gamma$ with conductor $p^{\nu_{\rho}}$, 
\[
\mathscr{L}_p(\mathbf{f},\chi,\rho(\gamma)-1)
=\alpha^{-\nu_{\rho}}\tau(\chi^{-1}\rho^{-1})\frac{D(1,\mathbf{f},\chi\rho)}{(-2\pi \sqrt{-1})^n\Omega_{\mathbf{g}}^{\mathbf{1}}}\in \integer{}(\rho).
\]
Here $\alpha^{-\nu_{\rho}}=\prod_{\ideal{p}|p}\alpha_{\ideal{p}}^{-\nu_{\rho,\ideal{p}}}$ for $p^{\nu_{\rho}}=\prod_{\ideal{p}|p}\ideal{p}^{\nu_{\rho,\ideal{p}}}$, where $\alpha_{\ideal{p}}$ is a unit root of the polynomial defined by $($\ref{unit root}$)$, $\tau(\chi^{-1}\rho^{-1})$ denotes the Gauss sum of $\chi^{-1}\rho^{-1}$ defined by \cite[(3.9)]{Shi}, $D(1,\mathbf{f},\chi\rho)$ is given by the Dirichlet series in the sense of G.\,Shimura $($see $($\ref{L-function}$)$$)$, $\Omega_{\mathbf{g}}^{\mathbf{1}}\in \C^{\times}$ is the canonical period $($defined in \S \ref{subsection:canonical period}$)$ of $\mathbf{g}\in S_2(\ideal{n}',\integer{})$ and the trivial character $\mathbf{1}$ of the Weyl group $W_G$, and $\integer{}(\rho)$ denotes the ring of integers of the field generated by $\im(\rho)$ over $K$.

\item \label{main thm:equality}
$($Theorem \ref{thm:equality of Iwasawa invariants}$)$ 
the algebraic Iwasawa $\lambda$-invariant $\lambda_{\mathbf{f}\otimes \chi}^{\mathrm{alg}}$ above is equal to the analytic Iwasawa $\lambda$-invariant $\lambda_{\mathbf{f}\otimes \chi}^{\mathrm{an}}$ for $\mathbf{f}\otimes\chi$ defined by $($\ref{lambda^an for f and chi}$)$$:$ 
\begin{align*}
\lambda_{\mathbf{f}\otimes \chi}^{\mathrm{alg}}&=\lambda_{\mathbf{f}\otimes \chi}^{\mathrm{an}}(=\lambda_{\varphi\chi,\Sigma_0}+\lambda_{\psi\chi,\Sigma_0}). 
\end{align*}
\end{enumerate}
\end{thm}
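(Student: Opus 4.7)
The plan is to prove the three parts in sequence, following the Greenberg--Vatsal philosophy generalized to the Hilbert setting: the residual reducibility of $\bar{\rho}_{\mathbf{f}}$ forces $\mathbf{f}$, and hence $\mathbf{g}$, to be congruent modulo $\varpi$ to a Hilbert Eisenstein series of parallel weight $2$ attached to $(\varphi,\psi)$. This Eisenstein congruence is the bridge linking the Iwasawa theory of $\mathbf{f}\otimes\chi$ to the classical Iwasawa theory of the CM characters $\varphi\chi$ and $\psi\chi$, for which the Iwasawa Main Conjecture for totally real fields (Wiles) is available. The hardest step will be Part (\ref{main thm:equality}), where one must upgrade the Eisenstein congruence to an integral identity of $p$-adic $L$-functions at the level of modular symbols and then synthesize with the algebraic side.

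For Part (\ref{main thm:alg side}), I would begin with an exact sequence of discrete ordinary Galois modules relating $A_{\mathbf{f}\otimes\chi}$ to the characters $\varphi\chi$ and $\psi\chi$. The $p$-ordinariness gives a Greenberg filtration on $\rho_{\mathbf{f}\otimes\chi}$ by a line and a quotient; the hypotheses that $\varphi$ is ramified and totally even and $\psi$ is unramified and totally odd at each $\ideal{p}\mid p$ identify the subline with $\varphi\chi$ and the quotient with $\psi\chi$. The assumption $\mu_{\varphi\chi}=\mu_{\psi\chi}=0$ ensures that neither piece contributes a residual Selmer cokernel, so a snake-lemma argument as in \cite{Gre--Vat} yields both the cotorsionness of $\Sel(F_{\infty},A_{\mathbf{f}\otimes\chi})$ and the additivity
\[
\lambda^{\mathrm{alg}}_{\mathbf{f}\otimes\chi}=\lambda_{\varphi\chi,\Sigma_0}+\lambda_{\psi\chi,\Sigma_0},
\]
after checking that the auxiliary local contributions at primes in $\Sigma_0\setminus\{\ideal{p}\mid p\}$ match on both sides.

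For Part (\ref{main thm:an side}), I would construct $\mathscr{L}_p(\mathbf{f},\chi,T)$ by the cohomological modular-symbol method. The cuspidal class attached to $\mathbf{g}$ in $H^{n}(Y(\ideal{n}'),\integer{})_{\ideal{m}}$ is integrally well-defined after normalization by the canonical period $\Omega_{\mathbf{g}}^{\mathbf{1}}$, thanks to Condition (a). Pairing this class against cycles representing finite-order characters $\rho$ of $\Gamma$, using the distribution relations along the cyclotomic tower $F_{\infty}/F$ and Shimura's Rankin--Selberg integral representation of $D(1,\mathbf{f},\chi\rho)$, produces an element of $\integer{}[[T]]=\Lambda$ satisfying the interpolation formula; the unit-root factor $\alpha^{-\nu_{\rho}}=\prod_{\ideal{p}\mid p}\alpha_{\ideal{p}}^{-\nu_{\rho,\ideal{p}}}$ is the standard ordinary-stabilization correction, and the Gauss sum $\tau(\chi^{-1}\rho^{-1})$ appears from unfolding the twist.

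For Part (\ref{main thm:equality}), the crucial input is the Eisenstein congruence $\mathbf{g}\equiv \mathbf{E}(\varphi\chi,\psi\chi)\pmod{\varpi}$. Transferred to modular symbols, it expresses the cuspidal class of $\mathbf{g}$ modulo $\varpi$ as a boundary class supported on $D_{C_{\infty}}(\ideal{n}')$; Condition (b) is precisely what makes this boundary class integral and unambiguous. Explicit computation of the constant term at the cusps in $C_{\infty}$ identifies this boundary symbol, after normalization, with the product of the two Deligne--Ribet $p$-adic $L$-functions, yielding
\[
\mathscr{L}_p(\mathbf{f},\chi,T)\equiv L_p(\varphi\chi,T)\cdot L_p(\psi\chi,T)\pmod{\varpi}
\]
with the correct $\Sigma_0$-imprimitive Euler factors. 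Comparing $\lambda$-invariants gives $\lambda^{\mathrm{an}}_{\mathbf{f}\otimes\chi}=\lambda^{\mathrm{an}}_{\varphi\chi,\Sigma_0}+\lambda^{\mathrm{an}}_{\psi\chi,\Sigma_0}$, and then invoking the Iwasawa Main Conjecture for totally real fields for $\varphi\chi$ and $\psi\chi$ together with Part (\ref{main thm:alg side}) closes the argument. The main obstacle I anticipate is arranging the choice of level $\ideal{n}'$ and period $\Omega_{\mathbf{g}}^{\mathbf{1}}$ so that the displayed mod-$\varpi$ identity holds on the nose in $\integer{}[[T]]$, rather than up to a unit or denominator; this is where the specific form $\mathbf{g}=(\mathbf{f}\otimes\mathbf{1}_{\ideal{n}})\otimes\mathbf{1}_{\ideal{m}_{\psi}}$ at level $\operatorname{lcm}(\ideal{n}^{2},\ideal{m}_{\varepsilon}\ideal{m}_{\psi}^{2})$ and the torsion-freeness hypotheses (a), (b) are essential.
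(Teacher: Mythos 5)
Your outline follows essentially the same three-step plan as the paper (Selmer exact sequence and snake lemma on the algebraic side; cohomological construction and integrality of the $p$-adic $L$-function via modular symbols; Eisenstein congruence transferred to a congruence of $p$-adic $L$-functions and then Wiles' main conjecture for $\varphi\chi$, $\psi\chi$). Two imprecisions, however, are worth correcting because one of them would derail the argument if taken literally.

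First, you attribute the integrality of the normalized class in Part (\ref{main thm:an side}) to Condition (a), but in the paper it is Condition (b) (torsion-freeness of $H^n(D_{C_{\infty}}(\ideal{n}'),\integer{})_{\ideal{m}_{\mathbf{g}}'}$) together with the vanishing of $H^{n-1}(D_{C_{\infty}}(\ideal{n}'),\C)$ that gives $[\omega_{\mathbf{g}}]_{\mathrm{rel}}/\Omega_{\mathbf{g}}^{\mathbf{1}}\in \widetilde{H}^n(Y(\ideal{n}')^{\mathrm{BS}},D_{C_{\infty}}(\ideal{n}');\integer{})$ (Proposition \ref{prop:integral of relative class}); Condition (a) is used for the congruence $[\omega_{\mathbf{g}}]\equiv[\omega_{\mathbf{G}}]\ (\mathrm{mod}\ \varpi)$ in Step 3, via \cite[Theorem 6.1]{Hira}.

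Second, and more substantively, you write the Eisenstein congruence as $\mathbf{g}\equiv \mathbf{E}(\varphi\chi,\psi\chi)\ (\mathrm{mod}\ \varpi)$. This is a type mismatch: $\mathbf{g}=(\mathbf{f}\otimes\mathbf{1}_{\ideal{n}})\otimes\mathbf{1}_{\ideal{m}_\psi}$ carries no $\chi$-twist, its nebentypus is $\varepsilon$, and its residual representation is $\begin{pmatrix}\varphi&\ast\\0&\psi\end{pmatrix}$, not $\begin{pmatrix}\varphi\chi&\ast\\0&\psi\chi\end{pmatrix}$. The correct congruence, which the paper establishes coefficientwise, is with the untwisted Eisenstein series $\mathbf{G}=\mathbf{E}_2(\psi,\varepsilon\psi^{-1})\in M_2(\ideal{n}',\integer{})$ satisfying $D(s,\mathbf{G})=L_F^{\{\ideal{n}'\}}(s,\psi)L_F^{\{\ideal{n}'\}}(s-1,\varepsilon\psi^{-1})$ (note $\varphi\equiv\varepsilon\chi_{\mathrm{cyc}}\psi^{-1}\ (\mathrm{mod}\ \varpi)$, so this is indeed the residual Eisenstein partner; one cannot use $\mathbf{E}_2(\varphi,\psi)$ directly since $\varphi\psi$ is totally odd). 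The character $\chi$ only enters later, through the evaluation of both Mellin transforms at the twists $\chi\rho$. Your final displayed congruence of $p$-adic $L$-functions is then correct up to the unit $U(T)\in\Lambda^{\times}$ built from Gauss-sum ratios and the $\alpha^{-\nu_\rho}$ factor, a nuisance you correctly flag; since units do not change $\lambda$-invariants, this does not affect the conclusion, and the rest of your Step 3 closes as in the paper.

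Finally, the "distribution relations along the cyclotomic tower" that you invoke in Part (\ref{main thm:an side}) are not entirely formal in the Hilbert case (narrow class group, totally positive unit action, and the need to move between cusps at level $\ideal{n}'p^v$) and are the content of Proposition \ref{prop:distribution}; worth acknowledging, but this does not change your plan.
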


This result is a generalization of the result of Greenberg and Vatsal \cite{Gre--Vat} in the case where $F=\Q$ and weight $k=2$. 
However, the methods to prove Theorem \ref{thm:main theorem} (\ref{main thm:an side}) and (\ref{main thm:equality}) have some limitations, such as the need for the torsion-freeness on the compactly supported cohomology and the boundary cohomology. 
In the case where $F$ is a real quadratic field, the torsion-freeness is satisfied under some conditions (Proposition \ref{prop:ab torsion-free} and \ref{prop:bou torsion-free}).
We also give examples satisfying all the assumptions of the main theorem (Example \ref{Example cong}).

\begin{rem}\label{rem:Fe--Wa}
The assumption on the algebraic Iwasawa $\mu$-invariant of a narrow ray class character $\eta$ is satisfied if $\overline{\Q}^{\ker (\eta)}$ is an abelian extension over $\Q$ by Ferrero\nobreakdash--Washington theorem (see, for example, \cite[\S 7.5, Theorem 7.15]{Was}).
\end{rem}

Our proof based on the method of Greenberg and Vatsal requires new ingredients; a criterion for the $\Lambda$\nobreakdash-cotorsionness of Selmer groups (\S \ref{subsection:algebraic Iwasawa invariants}), a construction of a $\C$-valued distribution interpolating special values of the $L$-functions (\S \ref{subsection:const of p-adic L}), a criterion for the integrality of special values of the ($p$-adic) $L$-functions divided by the canonical period defined in terms of parabolic cohomology (\S \ref{subsection:construction of p-adic L}), and a congruence between our $p$-adic $L$-function and the product of two Deligne--Ribet $p$-adic $L$-functions (\S \ref{subsection:proof of main theorem}). 
We give an outline of the proof of the main theorem 
(Theorem \ref{thm:main theorem}) below in order to clarify its complicated structure, the methods used, and the places where the assumptions are necessary.
The proof consists of three steps.

\textbf{Step 1.} To prove Theorem \ref{thm:main theorem} (\ref{main thm:alg side}).

Since $\overline{\rho}_{\mathbf{f}\otimes\chi}$ is reducible, we have an exact sequence of residual Galois representations 
\[
0 \rightarrow A_{\varphi\chi}[\varpi] \rightarrow A_{\mathbf{f}\otimes\chi}[\varpi] \rightarrow A_{\psi\chi}[\varpi] \rightarrow 0.
\]
Here $A_{\ast}$ denotes a torsion quotient of the Galois representation associated to $\ast\in\{\mathbf{f}\otimes\chi,\varphi\chi,\psi\chi\}$ (for the definition, see \S \ref{subsection:def of Selmer}), which is an $\integer{}$-module, and $A_{\ast}[\varpi]$ denotes the $\varpi$\nobreakdash-torsion of $A_{\ast}$. 
By the parity conditions on $\varphi\chi$ and $\psi\chi$, this sequence induces an exact sequence of the $\varpi$-torsion of (non-primitive) Selmer groups 
\begin{align}\label{pi-torsion parts}
0 \rightarrow \Sel^{\Sigma_0}(F_{\infty},A_{\varphi\chi}) [\varpi] 
\rightarrow \Sel(F_{\infty},A_{\mathbf{f}\otimes\chi})[\varpi] 
\rightarrow \Sel^{\Sigma_0}(F_{\infty},A_{\psi\chi})[\varpi] \rightarrow 0
\end{align}
(Theorem \ref{thm:Selmer for 1 and 2} (\ref{key-seq})).
It follows from the isomorphism $\Sel(F_{\infty},A_{\mathbf{f}\otimes\chi})=\Sel^{\Sigma_0}(F_{\infty},A_{\mathbf{f}\otimes\chi})$ (cf. Proposition \ref{prop:generator}), the vanishing results of $H^0(F_{\infty},A_{\psi\chi}[\varpi])$ (Lemma \ref{lem:H^0=0}) and $H^2(F_{\infty},A_{\varphi\chi}[\varpi])$ (Lemma \ref{lem:H^2=0}), and the isomorphism $\Sel^{\Sigma_0}(F_{\infty},A[\varpi])\simeq \Sel^{\Sigma_0}(F_{\infty},A)[\varpi]$ for $A=A_{\mathbf{f}\otimes\chi}$, $A_{\varphi\chi}$, or $A_{\psi\chi}$ (Proposition \ref{prop:Selmer mod pi}). 
Now, by the assumption on the algebraic Iwasawa $\mu$-invariants of $\varphi\chi$ and $\psi\chi$, the group $\Sel(F_{\infty},A_{\mathbf{f}\otimes\chi})[\varpi]$ is finite, and hence $\Sel(F_{\infty},A_{\mathbf{f}\otimes\chi})$ is finitely generated $\Lambda$-cotorsion (Theorem \ref{thm:Selmer for 1 and 2}). 

By Proposition \ref{prop:corank=dim}, we have 
$\corank_{\integer{}}(\Sel(F_{\infty},A_{\mathbf{f}\otimes\chi}))=\dim_{\kappa}(\Sel(F_{\infty},A_{\mathbf{f}\otimes\chi}[\varpi]))$ and $\corank_{\integer{}}(\Sel^{\Sigma_0}(F_{\infty},A))=\dim_{\kappa}(\Sel^{\Sigma_0}(F_{\infty},A[\varpi]))$ for $A=A_{\varphi\chi}$ and $A_{\psi\chi}$. 
It follows from the fact obtained by T.\,Weston \cite{We} that, for $A=A_{\mathbf{f}\otimes\chi}$, $A_{\varphi\chi}$, and $A_{\psi\chi}$, $\Sel(F_{\infty},A)$ has no proper $\Lambda$-submodules of finite index. 
Now, again using the exact sequence (\ref{pi-torsion parts}), we obtain the equality $\lambda_{\mathbf{f}\otimes\chi}^{\text{alg}}=\lambda_{\varphi\chi,\Sigma_0}+\lambda_{\psi\chi,\Sigma_0}$ (Theorem \ref{thm:Selmer for 1 and 2} (\ref{thm:Selmer for 1 and 2:corank})).
The proof is based on the method of Greenberg and Vatsal \cite[\S 2]{Gre--Vat}.

\textbf{Step 2.} To prove Theorem \ref{thm:main theorem} (\ref{main thm:an side}). 

We first construct a $\C$-valued distribution attached to a Hilbert cusp form $\mathbf{f}$ of parallel weight $2$ (Proposition \ref{prop:distribution}) interpolating special values of the associated $L$-functions (Proposition \ref{prop:interpolation}). 
This is a generalization of results of Y.\,Amice and J.\,V\'elu \cite{Ami--Ve}, M.\,Vishik \cite{Vi}, and B.\,Mazur, J.\,Tate, and J.\,Teitelbaum \cite{MTT}.
The proof is based on Mellin transforms for the compactly supported cohomology class $[\omega_{\mathbf{f}}]_c$ in $H_c^n(Y(\ideal{n}),\C)$ (Proposition \ref{Mellin hol} and \ref{Mellin anti-hol}), which implies that the special values of the associated $L$-functions are expressed as a linear combination of the images of $[\omega_{\mathbf{f}}]_c$ under the evaluation maps with integral coefficients. 

Next we prove the integrality of the $p$-adic $L$-function.
Let $[\omega_{\mathbf{g}}]$ (resp. $[\omega_{\mathbf{g}}]_{\mathrm{rel}}$) denote the image of $[\omega_{\mathbf{g}}]_c$ in the parabolic cohomology $H_{\pa}^n(Y(\ideal{n}'),\C)$ 
(resp. the relative cohomology $H^{n}(Y(\ideal{n}')^{\mathrm{BS}},D_{C_{\infty}}(\ideal{n}');\C)$).
By the Eichler--Shimura\nobreakdash--Harder isomorphism (\ref{+,+ decomp}) and the $q$-expansion principle over $\C$,
there exists $\Omega_{\mathbf{g}}\in \C^{\times}$ such that the class $[\omega_{\mathbf{g}}]/\Omega_{\mathbf{g}}$ belongs to the torsion-free part $H_{\pa}^{n}(Y(\ideal{n}'),\integer{})/(\text{$\integer{}$-torsion})$ of the parabolic cohomology, and its reduction modulo $\varpi$ does not vanish. 
By using a vanishing result on $H^{n-1}(D_{C_{\infty}}(\ideal{n}'),\C)$ (\cite[Proposition 5.3]{Hira}), we prove that the class $[\omega_{\mathbf{g}}]_{\mathrm{rel}}/\Omega_{\mathbf{g}}$ belongs to the torsion\nobreakdash-free part $H^{n}(Y(\ideal{n}')^{\mathrm{BS}},D_{C_{\infty}}(\ideal{n}');\integer{})/(\text{$\integer{}$-torsion})$ of the relative cohomology (Proposition \ref{prop:integral of relative class}), where we use the assumptions (b), $h_F^+=1$, and weight $k=2$.
This is a generalization of an argument of Manin\nobreakdash--Drinfeld in the case where $F=\Q$ (see, for example, \cite[Lemma 6.9 (b)]{Gre--Ste}).
Now the integrality of the $p$-adic $L$-function follows from the fact $D(s,\mathbf{f},\chi\rho)=D(s,\mathbf{g},\chi\rho)$, the Mellin transforms for the relative cohomology class $[\omega_{\mathbf{g}}]_{\mathrm{rel}}$ (\cite[Proposition 2.5 and 2.6]{Hira}), and the integrality of $[\omega_{\mathbf{g}}]_{\mathrm{rel}}/\Omega_{\mathbf{g}}$ (Theorem \ref{thm:integral p-adic L}), where we use the assumption that the conductor of $\chi$ is the level $\ideal{n}'$.

\textbf{Step 3.} To prove Theorem \ref{thm:main theorem} (\ref{main thm:equality}).

Since $\lambda_{\mathbf{f}\otimes \chi}^{\mathrm{alg}}=\lambda_{\varphi\chi,\Sigma_0}+\lambda_{\psi\chi,\Sigma_0}$ (mentioned in Step 1), Theorem \ref{thm:main theorem} (\ref{main thm:equality}) follows from the equality $\lambda_{\mathbf{f}\otimes \chi}^{\mathrm{an}}=\lambda_{\varphi\chi,\Sigma_0}+\lambda_{\psi\chi,\Sigma_0}$ (Theorem \ref{congruence of p-adic L} (\ref{equality of the analytic Iwasawa invariants})). 
The equality is obtained by the Iwasawa main conjecture for totally real number fields (proved by A.\,Wiles \cite{Wil90}) and a congruence between our $p$\nobreakdash-adic $L$-function and the product of two Deligne--Ribet $p$\nobreakdash-adic $L$-functions (Theorem \ref{congruence of p-adic L} (\ref{congruence of p-adic L})). 
The latter follows from congruences between special values of $L$-functions obtained from a congruence between the Hilbert cusp form $\mathbf{g}\in S_2(\ideal{n}',\integer{})$ and a Hilbert Eisenstein series $\mathbf{G}\in M_2(\ideal{n}',\integer{})$ induced by $\psi$ and $\varepsilon\psi^{-1}$ (by \cite[Theorem 6.1]{Hira}), where we use the assumptions (a), (b). 
The proof is based on the method of Greenberg and Vatsal \cite[Theorem 3.11]{Gre--Vat} by using the result \cite[Theorem 6.1]{Hira} instead of the result of Vatsal \cite[Theorem 2.10]{Vat}.\\

The organization of this paper is as follows. 

In \S \ref{Hilbert}, 
we summarize results on Hilbert modular varieties and Hilbert modular forms in the analytic and algebraic settings.

In \S \ref{section:Selmer}, we generalize results of Greenberg and Vatsal \cite{Gre--Vat} on the algebraic side. 
We prove that the Selmer group for a Hilbert cusp form is finitely generated $\Lambda$-cotorsion, and the associated Iwasawa $\lambda$-invariant is equal to the sum of classical Iwasawa $\lambda$-invariants under some assumptions (Theorem \ref{thm:Selmer for 1 and 2}).

In \S \ref{section:p-adic L}, we construct a $\C$-valued distribution attached to a Hilbert cusp form (Proposition \ref{prop:distribution}), which interpolates special values of the associated $L$-functions (Proposition \ref{prop:interpolation}). 

In \S \ref{section:integrality of p-adic L},
we prove the integrality of the $p$-adic $L$-function attached to a Hilbert cusp form divided by the canonical period (Theorem \ref{thm:integral p-adic L}). 

In \S \ref{section:Equality between the Iwasawa invariants}, we generalize results of Greenberg and Vatsal \cite{Gre--Vat} on the analytic side. 
We prove that the analytic Iwasawa $\lambda$-invariant for a Hilbert cusp form is equal to the sum of classical Iwasawa $\lambda$-invariants under some assumptions (Theorem \ref{congruence of p-adic L} (\ref{equality of the analytic Iwasawa invariants})). 
The key ingredient of our proof is the congruence between special values of the $L$\nobreakdash-functions obtained from a congruence between a Hilbert cusp form of parallel weight $2$ and a Hilbert Eisenstein series of parallel weight $2$, which is the main theorem of \cite{Hira}. 

In \S \ref{section:Modularity}, we give examples satisfying all the assumptions of the main theorem (Example \ref{Example cong}).

\tableofcontents

%
\subsection{Notation}
%

Let $\widehat{\Z}$ denote $\prod_{l}\Z_l$, where $l$ runs over all rational primes. 
We abbreviate $\A_{\Q}$, the ring of adeles of $\Q$, to $\A$. 
We fix a rational prime number $p>3$.
We fix algebraic closures $\line{\Q}$ of $\Q$ and $\line{\Q}_p$ of the field of $p$-adic numbers $\Qp$, 
and embeddings $\iota_p:\line{\Q} \to \line{\Q}_p$ and $\line{\Q}_p \rightarrow \C$,
where $\mathbb{C}$ denotes the field of complex numbers.

Let $F$ be a totally real number field unramified at $p$, $n$ the degree $[F\colon\Q]$ of the extension $F/\Q$, and $\mathfrak{o}_F$ the ring of integers of $F$.
For a place $v$ of $F$ (resp. a non-zero prime ideal $\ideal{q}$ of $\ideal{o}_F$), let $F_v$ (resp. $F_{\ideal{q}}$) denote the completion at $v$ (resp. the $\ideal{q}$-adic completion) of $F$.
Let $\ideal{o}_{F_{\ideal{q}}}$ denote the ring of integers of $F_{\ideal{q}}$, and $\widehat{\ideal{o}}_F$ the product of $\ideal{o}_{F_{\ideal{q}}}$ over all non-zero prime ideals $\ideal{q}$ of $\ideal{o}_F$.
Let $J_F$ denote the set of embeddings of $F$ into $\mathbb{R}$.
For $a\in F$ and $\iota\in J_F$, let $a^{\iota}$ denote $\iota(a)$. 
We have $F\otimes_{\Q}\mathbb{R} \simeq \mathbb{R}^{J_F}$, and write $(F\otimes_{\Q}\mathbb{R})_+^{\times}$ for the subgroup of $(F\otimes_{\Q}\mathbb{R})^{\times}$ corresponding to $(\mathbb{R}_+^{\times})^{J_F}$, where $\mathbb{R}_+^{\times}$ denotes the multiplicative group of positive real numbers.
As usual, $\A_F$ denotes the ring of adeles of $F$, which is the product of the finite part $\A_{F,f}(\simeq \widehat{\ideal{o}}_F \otimes_{\ideal{o}_F}F)$ and the infinite part $\A_{F,\infty}(\simeq F\otimes_{\Q}\mathbb{R})$.
For $x\in \A_F$ and a place $v$ of $F$, $x_0$, $x_{\infty}$, and $x_v$ denote the finite component $\in \A_{F,f}$, the infinite component $\in \A_{F,\infty}$, and the $v$-component $\in F_v$ of $x$, respectively. 
For $x \in \A_F$, a subset $X$ of $\A_F$, and a non-zero ideal $\ideal{n}$ of $\ideal{o}_F$, we write $x_{\ideal{n}}$ and $X_{\ideal{n}}$ for the images of $x$ and $X$ in $\prod_{\ideal{q}|\ideal{n}} F_{\ideal{q}}$, where
$\ideal{q}$ denotes a non-zero prime ideal of $\ideal{o}_F$. 
Let $N$ denote the norm map $\mathrm{Nr}_{F/\Q}$ of the extension $F/\Q$, 
$\ideal{d}_F \subset \ideal{o}_F$ the different of $F$, and $\Delta_F$ the discriminant $N(\ideal{d}_F)$ of $F$, which is prime to $p$ by assumption. 
Let $\mathrm{Cl}_F^+$ denote the narrow ideal class group of $F$. 
We have an isomorphism 
$F^{\times}\backslash \A_F^{\times}/\widehat{\ideal{o}}_F^{\times}(F\otimes_{\Q}\mathbb{R})_+^{\times} \xrightarrow{\simeq} \mathrm{Cl}_F^+$ sending the class of $x\in \A_F^{\times}$ to the class of the fractional ideal
$[x]:=\prod_{\ideal{q}} \ideal{q}^{\mathrm{ord}_{\ideal{q}}(x_{\ideal{q}})}$, where $\ideal{q}$ runs over the set of all non-zero prime ideals of $\ideal{o}_F$. Let $D$ be an element of $\A_F^{\times}$ such that $[D] = \ideal{d}_F$ and $D_{\infty}=1$.

For a non-zero ideal $\ideal{b}$ of $\ideal{o}_F$, 
let $\mathrm{Cl}_F^+(\ideal{b})$ denote the narrow ray class group of $F$ modulo $\ideal{b}$.
By a narrow ray class character of $F$ modulo $\ideal{b}$, we mean a homomorphism $\chi : \mathrm{Cl}_F^+(\ideal{b}) \to \C^{\times}$.
The conductor of $\chi$ is the smallest divisor $\ideal{m}_{\chi}$ of $\ideal{b}$ such that $\chi$ factors through $\mathrm{Cl}_F^+(\ideal{m}_{\chi})$.
For a narrow ray class character $\chi$ of $F$ modulo $\ideal{b}$, there exists an $r = (r_{\iota})_{\iota\in J_F}\in (\Z/2\Z)^{J_F}$ such that
\[
\chi((\alpha))=\sgn(\alpha)^r \ \ \ \  \text{for all}\ \ \ \ \alpha\in F^{\times}\ \ \ \ \text{satisfying}\ \ \ \ \alpha\equiv 1\ (\bmod \ideal{b}).
\]
Here $\sgn(x)$ for $x \in \mathbb{R}^{\times}$ denotes the sign of $x$ and $\sgn(\alpha)^r=\prod_{\iota\in J_F}\sgn(\alpha^{\iota})^{r_{\iota}}$, where we identify $J_F$ with the set of infinite places of $F$. 
We call $r$ the sign of $\chi$. 
We say that $\chi$ is totally even (resp. totally odd) if $r_{\iota} = 0$ (resp. $r_{\iota} = 1$) for all $\iota \in J_F$.

For an algebraic group $H$ defined over $\Q$, 
$H(\mathbb{R})$ is abbreviated to $H_{\infty}$ and 
$H_{\infty,+}$ denotes the connected component of $H_{\infty}$ containing the unit. 
Let $G$ denote the reductive algebraic group $\Res_{F/\Q}\mathrm{GL}_2$ over $\Q$, where $\Res_{F/\Q}$ denotes the Weil restriction of scalars. 
We have $G_{\infty}= \GL_2(\mathbb{R})^{J_F}$, $G_{\infty,+}= \GL_2(\mathbb{R})_+^{J_F}$, and $G(\A)=\GL_2(\A_F)$. 
Let $B$ denote the Borel subgroup of $G$ consisting of upper triangular matrices, and let $U$ denote its unipotent radical.

%
\subsection{Acknowledgement}
%

I would like to thank Professor Takashi Taniguchi (Kobe University) for patiently providing comments and suggestions on the work in \S \ref{subsection:Mellin transform}. 
I am grateful to Professor Takeshi Tsuji (Tokyo University) for helpful discussions on Proposition \ref{prop:bou torsion-free}. 
I am also grateful to Professor Iwao Kimura (Toyama University) for helpful comments and suggestions on Example \ref{Example cong}.

%
\section{Hilbert modular varieties and Hilbert modular forms}\label{Hilbert}
%

%
\subsection{Analytic Hilbert modular varieties}\label{Analytic HMV}
%

In this subsection, we recall the definition of analytic Hilbert modular varieties. 
For more detail, refer to \cite[\S 1.1]{Dim}. 

Let $\mathfrak{H}$ be the upper half plane $\{ z \in \mathbb{C} \mid \mathrm{Im}(z) > 0 \}$. 
The group $\GL_2(\mathbb{R})_+$ acts on $\mathfrak{H}$ by linear fractional transformations. 
We can extend the action to $\GL_2(\mathbb{R})$ by defining the action of $\begin{pmatrix}-1&0\\ 0&1\\\end{pmatrix}$ on $\mathfrak{H}$ by $z\mapsto -\bar{z}$. 
We define the action of $G_{\infty}=\GL_2(\mathbb{R})^{J_F}$ on $\mathfrak{H}^{J_F}$ by $(g_{\iota})_{\iota\in J_F}\cdot(z_{\iota})_{\iota\in J_F}=(g_{\iota}z_{\iota})_{\iota\in J_F}$.
Let $\underline{\textbf{i}}=(\sqrt{-1},\cdots,\sqrt{-1}) \in \mathfrak{H}^{J_F}$. 
Let $K_{\infty}$ and $K_{\infty,+}$ be the stabilizers of $\underline{\textbf{i}}$ in $G_{\infty}$ and $G_{\infty,+}$, respectively. 

For a non-zero ideal $\mathfrak{n}$ of $\mathfrak{o}_F$, we define the open compact subgroup $K_1(\ideal{n})$ of $G(\mathbb{A}_f)$ by 
\begin{align*}
&K_1(\mathfrak{n})=\left\{\begin{pmatrix}a&b\\c&d\\\end{pmatrix}\in G(\widehat{\Z})\bigg\vert c\in \mathfrak{n},d-1\in \mathfrak{n}\right\}. 
\end{align*}
The adelic Hilbert modular variety $Y(\ideal{n})$ of level $K_1(\ideal{n})$ is defined by
\begin{align}\label{adele HMV}
Y(\ideal{n})&=G(\Q)\backslash G(\mathbb{A})/K_1(\ideal{n}) K_{\infty,+}\\
&\nonumber=G(\Q)_+\backslash G(\mathbb{A})_+/K_1(\ideal{n}) K_{\infty,+},
\end{align}
where $G(\A)_+=G(\A_f) G_{\infty,+}$ and $G(\Q)_+=G(\Q)\cap G_{\infty,+}$. 
Then $Y(\ideal{n})$ is a disjoint union of finitely many arithmetic quotients of $\mathfrak{H}^{J_F}$ as follows. 
Let $\ideal{a}$ be a fractional ideal of $F$.
We consider the following congruence subgroups of $G(\Q)_+$:
\begin{align}\label{cong subgroup}
\Gamma_0(\ideal{a},\mathfrak{n})
&= \left\{ \begin{pmatrix}
a&b\\
c&d
\end{pmatrix} \in \begin{pmatrix}\mathfrak{o}_F&\ideal{a}^{-1}\\ \ideal{a}\mathfrak{n}&\mathfrak{o}_F\end{pmatrix} \bigg| ad-bc \in \mathfrak{o}_{F,+}^{\times} \right\},\\
\nonumber \Gamma_1(\ideal{a},\mathfrak{n})
&=\left\{ \begin{pmatrix}
a&b\\
c&d
\end{pmatrix} \in \Gamma_0(\ideal{a},\mathfrak{n}) \bigg| d\equiv  1 (\bmod \mathfrak{n}) \right\},\\
\nonumber \Gamma_1^1(\ideal{a},\mathfrak{n})
&=\Gamma_1(\ideal{a},\mathfrak{n})\cap \SL_2(F), 
\end{align}
where $\mathfrak{o}_{F,+}^{\times}$ denotes the subgroup of $\mathfrak{o}_F^{\times}$ consisting of totally positive units. 
Let $\mathrm{Cl}_F^+$ be the narrow ideal class group of $F$ and 
$h_F^+$ the narrow class number $\sharp\mathrm{Cl}_F^+$ of $F$. 
Choose and fix $t_1,\cdots,t_{h_F^+} \in \A_F^{\times}$ such that $t_{i,\infty}=1$ and the corresponding fractional ideals $[t_1],\cdots,[t_{h_F^+}]$ form a complete set of representatives of $\mathrm{Cl}_F^+$. 
Throughout the paper, we assume that 
\begin{align}\label{prime to p}
\text{$[t_i]$ is prime to $p$ for each $i$. }
\end{align}
We put 
\[
x_i=\begin{pmatrix}D^{-1}t_i^{-1}&0\\0&1\\\end{pmatrix}\in G(\A)_+. 
\]
We define the analytic Hilbert modular varieties $Y_i$ by 
\begin{align}\label{analytic HMV}
Y_i=\overline{\Gamma_1(\ideal{d}_F[t_i],\mathfrak{n})}\backslash \mathfrak{H}^{J_F}, 
\end{align}
where  $\overline{\Gamma}$ denotes $\Gamma/(\Gamma\cap F^{\times})$ for a congruence subgroup $\Gamma$ of $G(\Q)_+$. 
Then, by the strong approximation theorem, we have the following description of $Y(\ideal{n})$:
\begin{align}\label{adele var}
Y(\ideal{n})\simeq \coprod_{1\le i \le h_F^+} Y_i
\end{align}
given by sending the class of $x_i g\in Y(\ideal{n})$ to the class of $g \underline{\mathbf{i}}\in Y_i$ for $g\in G_{\infty,+}$.

%
\subsection{Analytic Hilbert modular forms}\label{Analytic HMF}
%

In this subsection, we fix notation concerning the spaces of Hilbert modular forms, following \cite[\S 1.2]{Dim}, \cite[\S 2]{Shi}. 

Let $k$ be an integer $\ge 2$ and $\ideal{n}$ a non-zero ideal of $\ideal{o}_F$. 
Let $t=\sum_{\iota\in J_F}\iota \in \Z[J_F]$.
For each subset $J$ of $J_F$, let $M_{k,J}(\ideal{n},\C)$ (resp. $S_{k,J}(\ideal{n},\C)$) denote the $\C$-vector space $G_{kt,J}(K_1(\ideal{n}))$ (resp. $S_{kt,J}(K_1(\ideal{n}))$) of Hilbert modular (resp. cusp) forms of weight $kt$, level $K_1(\ideal{n})$, and type $J$ at infinity defined in \cite[Definition 1.2]{Dim}. 
Let $\chi$ be a Hecke character of $F$ of type $-(k-2)t$ whose conductor divides $\mathfrak{n}$. 
Let $M_{k,J}(\ideal{n},\chi,\C)$ (resp. $S_{k,J}(\ideal{n},\chi,\C)$) denote the subspace $G_{kt,J}(K_1(\ideal{n}),\chi)$ (resp. $S_{kt,J}(K_1(\ideal{n}),\chi)$) of $G_{kt,J}(K_1(\ideal{n}))$ (resp. $S_{kt,J}(K_1(\ideal{n}))$) of elements with character $\chi$ defined in \cite[Definition 1.3]{Dim}. 
If $J=J_F$, then we simply write 
$M_{k}(\ideal{n},\C)$ and $M_{k}(\ideal{n},\chi,\C)$ 
(resp. $S_{k}(\ideal{n},\C)$ and $S_{k}(\ideal{n},\chi,\C)$)
for $M_{k,J_F}(\ideal{n},\C)$ and $M_{k,J_F}(\ideal{n},\chi,\C)$
(resp. $S_{k,J_F}(\ideal{n},\C)$ and $S_{k,J_F}(\ideal{n},\chi,\C)$), respectively.
We note that the spaces $M_{k}(\ideal{n},\C)$ and $M_{k}(\ideal{n},\chi,\C)$ are embedded into the space of Hilbert modular forms defined in \cite[\S 2]{Shi} (see, for example, \cite[\S 5.5]{Ge--Go}).

For a fractional ideal $\ideal{a}$ of $F$, 
let $M_{k}(\Gamma_{1}(\ideal{a},\mathfrak{n}),\C)$ (resp. $S_{k}(\Gamma_{1}(\ideal{a},\mathfrak{n}),\C)$) denote the space $G_{kt,J_F}(\Gamma_{1}(\ideal{a},\mathfrak{n});\C)$ (resp. $S_{kt,J_F}(\Gamma_{1}(\ideal{a},\mathfrak{n});\C)$) of holomorphic Hilbert modular (resp. cusp) forms of weight $kt$ and of level $\Gamma_{1}(\ideal{a},\mathfrak{n})$ defined in \cite[Definition 1.4]{Dim}. 
Then we have canonical isomorphisms (cf. \cite[p.323]{Hida91} and \cite[(2.6a)]{Hida88}):
\begin{align}\label{isomorphism between spaces of HMF}
M_{k}(\mathfrak{n},\C) \simeq \bigoplus_{1\le i \le h_F^+}M_{k}(\Gamma_{1}(\ideal{d}_F [t_i],\mathfrak{n}),\C),\ \ \ \ \ \ 
S_{k}(\mathfrak{n},\C) \simeq \bigoplus_{1\le i \le h_F^+}S_{k}(\Gamma_{1}(\ideal{d}_F [t_i],\mathfrak{n}),\C).
\end{align}

%
\subsection{Dirichlet series associated to Hilbert modular forms}\label{Diri}
%

In this subsection, we recall the definition and properties of the Dirichlet series associated to Hilbert modular forms, following \cite[\S 2]{Shi}.

Let $\textbf{h} \in M_k(\ideal{n},\C)$ and $h_i\in M_k(\Gamma_1(\ideal{d}_F[t_i],\ideal{n}),\C)$ such that $\textbf{h}=(h_i)_{1\le i \le h_F^+}$ under the isomorphism (\ref{isomorphism between spaces of HMF}). 
Then $h_i$ has the Fourier expansion of the form 
\begin{align}\label{i-th Fourier exp}
h_i(z)
=c_{\infty}([t_i]^{-1},\textbf{h})N([t_i])^{k/2}
+\sum_{0\ll \xi \in [t_i]}c(\xi[t_i]^{-1},\bold{h})N(\xi)^{k/2}e_F(\xi z)
\end{align}
given by \cite[(2.18)]{Shi} and \cite[Proposition 4.1]{Hida88}. 
Here the notion $\gg 0$ means totally positive, $\ideal{m} \mapsto c(\ideal{m},\textbf{h})$ is a function on the set of all fractional ideals of $F$ vanishing outside the set of integral ideals, and 
$e_F$ denotes the additive character of $F\backslash \A_F$ characterized by $e_F(x_{\infty})=\exp(2\pi \sqrt{-1}x_{\infty})$ for $x_{\infty}\in \A_{F,\infty}$.
We put 
\[
\text{$a_{\infty}(0,h_i)=c_{\infty}([t_i]^{-1},\textbf{h})N([t_i])^{k/2}$ and $a_{\infty}(\xi,h_i)=c(\xi[t_i]^{-1},\bold{h})N(\xi)^{k/2}$}
\]
for any $0\ll \xi\in [t_i]$. 
We also put 
\begin{align}
\label{Hecke constant}
C_{\infty,i}(0,\textbf{h})&=N([t_i])^{-k/2}a_{\infty}(0,h_i),\\
\label{Hecke eigenvalue}
C(\mathfrak{m},\textbf{h})&=N(\ideal{m})^{k/2}c(\mathfrak{m},\textbf{h})
\end{align}
for all non-zero ideals $\ideal{m}$ of $\ideal{o}_F$. 
Let $\eta$ be a narrow ray class character of $F$.
The Dirichlet series in the sense of G.\,Shimura \cite[(2.25)]{Shi} is defined by 
\begin{align}\label{L-function}
\sum_{\mathfrak{m}}C(\mathfrak{m},\textbf{h})
\eta(\mathfrak{m})\mathrm{N}(\mathfrak{m})^{-s} \ \ \text{for}\ \ s\in \C,
\end{align}
where $\mathfrak{m}$ runs over the set of all non-zero ideals of $\ideal{o}_F$. 
It converges absolutely if $\mathrm{Re}(s)$ is sufficiently large and extends to a meromorphic function on the complex plane (see, for example, \cite[\S 2.3]{Hira}). 
For each $\textbf{h}\in M_{k}(\ideal{n},\C)$, let $D(s,\textbf{h},\eta)$ denote this meromorphic function. 
If $\eta$ is the trivial character, we simply write $D(s,\textbf{h})$ for $D(s,\textbf{h},\eta)$.

%
\subsection{Hecke operators on analytic Hilbert modular forms}\label{Hecke}
%

Let $\ideal{n}$ be a non-zero ideal of $\ideal{o}_F$. 
In this subsection, we recall the definition of the Hecke operators acting on $M_2(\mathfrak{n},\C)$ and $S_2(\mathfrak{n},\C)$, following \cite[\S 1.10]{Dim}. 

Let $\Delta(\mathfrak{n})$ be the following semigroup: 
\begin{align*}
\Delta(\mathfrak{n})&=
G(\A_f)\cap 
\left\{\begin{pmatrix}a&b\\c&d\\\end{pmatrix} \in M_2(\widehat{\mathfrak{o}}_F) \bigg\vert c \in \mathfrak{n}\widehat{\mathfrak{o}}_F,\  d_{\ideal{q}} \in \ideal{o}_{F_{\ideal{q}}}^{\times}\ \,\text{whenever}\ \ideal{q} \vert \mathfrak{n}  \right\},
\end{align*}
where $\ideal{q}$ is a non-zero prime ideal of $\ideal{o}_F$.
For $y \in \Delta(\mathfrak{n})$, we define the action of the double coset $K_1(\mathfrak{n}) y K_1(\mathfrak{n})$ on $M_2(\ideal{n},\C)$ (resp. $S_2(\ideal{n},\C)$) by 
\begin{align}\label{Hecke op analytic}
\textbf{f}\vert [K_1(\mathfrak{n}) y K_1(\mathfrak{n})](x)=\sum_{i} \textbf{f}(xy_i^{-1}),
\end{align}
where $K_1(\mathfrak{n}) y K_1(\mathfrak{n})=\coprod_i K_1(\mathfrak{n}) y_i$. 
By the definition of $M_2(\ideal{n},\C)$ and $S_2(\ideal{n},\C)$, the right\nobreakdash-hand side is independent of  the choice of the representative set $\{y_i\}_i$. 

We define the Hecke operator $T(\varpi_{\ideal{q}}^e)$ (resp. $S(\varpi_{\ideal{q}}^e)$) for a non-negative integer $e$, a non-zero prime ideal $\ideal{q}$ of $\mathfrak{o}_F$ (resp. prime ideal $\ideal{q}$ of $\mathfrak{o}_F$ prime to $\ideal{n}$), and a uniformizer $\varpi_{\ideal{q}}$ of $\mathfrak{o}_{F_{{}_{\ideal{q}}}}$ by the action of the double coset
$K_1(\ideal{n})\begin{pmatrix}\varpi_{\ideal{q}}^e&0\\0&1\\\end{pmatrix} K_1(\ideal{n})$ (resp.
$K_1(\ideal{n})\begin{pmatrix}\varpi_{\ideal{q}}^e&0\\0&\varpi_{\ideal{q}}^e\end{pmatrix} K_1(\ideal{n})$). 
We note that these operators are independent of the choice of $\varpi_{\ideal{q}}$.
We put $T(\ideal{q}^e)=T(\varpi_{\ideal{q}}^e)$ and $S(\ideal{q}^e)=S(\varpi_{\ideal{q}}^e)$ (resp. $U(\ideal{q}^e)=T(\varpi_{\ideal{q}}^e)$) for a non-negative integer $e$ and a non-zero prime ideal $\ideal{q}$ prime to $\ideal{n}$ (resp. prime ideal $\ideal{q}$ dividing $\ideal{n}$). 
We define 
$T(\ideal{m})=\prod_{\ideal{q}\nmid\ideal{n}}T(\ideal{q}^{e(\ideal{q})})$ and 
$S(\ideal{m})=\prod_{\ideal{q}\nmid\ideal{n}}S(\ideal{q}^{e(\ideal{q})})$ for all non-zero ideal 
$\ideal{m}=\prod_{\ideal{q}\nmid\ideal{n}}\ideal{q}^{e(\ideal{q})}$ of $\ideal{o}_F$ prime to $\ideal{n}$ and 
$U(\ideal{m})=\prod_{\ideal{q}|\ideal{n}}U(\ideal{q}^{e(\ideal{q})})$ for all non-zero ideal $\ideal{m}=\prod_{\ideal{q}|\ideal{n}}\ideal{q}^{e(\ideal{q})}$ of $\ideal{o}_F$, where $\ideal{q}$ is a non-zero prime ideal. 

The definition of the Hecke operators acting on $M_2(\Gamma_1(\ideal{a},\ideal{n}),\C)$ and $S_2(\Gamma_1(\ideal{a},\ideal{n}),\C)$ and their relation (via (\ref{isomorphism between spaces of HMF})) to the adelic ones recalled above are explicitly given in \cite[\S 2]{Shi}.

By \cite[(2.23)]{Shi}, we have a relation between the Hecke operators and the Fourier expansion of the following form: 
for $\mathbf{f} \in M_2(\ideal{n},\C)$ and $V(\ideal{m}')=T(\ideal{m}')$ or $U(\ideal{m}')$, 
\begin{align}\label{Hecke and Fourier}
C(\ideal{m},\textbf{f}\vert V(\ideal{m}'))=\sum_{\ideal{m}+\ideal{m}'\subset \ideal{c}}N(\ideal{c})C(\ideal{c}^{-2}\ideal{m}\ideal{m}',\textbf{f}|S(\ideal{c})). 
\end{align}

For a subalgebra $A$ of $\C$, let 
$\mathbb{H}_2(\mathfrak{n},A)$ (resp. $\mathscr{H}_2(\mathfrak{n},A)$) be the commutative $A$-subalgebra of $\End_{\C}(M_2(\ideal{n},\C))$ (resp. $\End_{\C}(S_2(\ideal{n},\C))$) generated by $T(\ideal{m})$, $S(\ideal{m})$ for all non-zero ideal $\ideal{m}=\prod_{\ideal{q}\nmid\ideal{n}}\ideal{q}^{e(\ideal{q})}$ of $\ideal{o}_F$ prime to $\ideal{n}$ and $U(\ideal{m})$ for all non-zero ideal $\ideal{m}=\prod_{\ideal{q}|\ideal{n}}\ideal{q}^{e(\ideal{q})}$ of $\ideal{o}_F$.

Let $\widetilde{F}$ be the Galois closure of $F$ in $\overline{\Q}$, and let $F'$ be the field generated by elements $\varepsilon^{t/2}$ for all $\varepsilon\in\ideal{o}_{F,+}^{\times}$ over $\widetilde{F}$. 
Let $\Phi_p$ be the composite field of $\iota_p((F')^{\iota}(\sqrt{-1}))$ in $\overline{\Q}_p$ for all $\iota\in J_F$, where $\iota_p: \overline{\Q} \to \overline{\Q}_p$ is the fixed embedding. 
Let $\integer{}$ be the ring of integers of a finite extension of $\Phi_p$.
For $\textbf{h}=(h_i)_{1\le i \le h_F^+} \in M_2(\ideal{n},\C)$, $h_i\in M_2(\Gamma_1(\ideal{d}_F[t_i],\ideal{n}),\C)$ has the Fourier expansion of the form $(\ref{i-th Fourier exp})$. 
For a subalgebra $A$ of $\C$, we put
\begin{align}
\nonumber M_2(\Gamma_1(\ideal{d}_F[t_i],\ideal{n}),A)&=M_2(\Gamma_1(\ideal{d}_F[t_i],\ideal{n}),\C) \cap A[[e_F(\xi z):\text{$\xi=0$ or $0\ll \xi \in F$}]],\\
\nonumber S_2(\Gamma_1(\ideal{d}_F[t_i],\ideal{n}),A)&=S_2(\Gamma_1(\ideal{d}_F[t_i],\ideal{n}),\C) \cap A[[e_F(\xi z):\text{$\xi=0$ or $0\ll \xi \in F$}]],\\
\label{integral MF}
M_2(\ideal{n},A)=\bigoplus_{1\le i \le h_F^+} &M_2(\Gamma_1(\ideal{d}_F[t_i],\ideal{n}),A),\ \ \ 
S_2(\ideal{n},A)=\bigoplus_{1 \le i \le h_F^+} S_2(\Gamma_1(\ideal{d}_F[t_i],\ideal{n}),A).
\end{align}
The space $M_2(\ideal{n},\integer{})$ (resp. $S_2(\ideal{n},\integer{})$) is stable under $\mathbb{H}_2(\ideal{n},\integer{})$ (resp. $\mathscr{H}_2(\ideal{n},\integer{})$) (see, for example, \cite[Theorem 4.11]{Hida88}, \cite[Theorem 2.2 (ii)]{Hida91}).

\section{Selmer groups}\label{section:Selmer}
%

%
\subsection{Definition of Selmer groups}\label{subsection:def of Selmer}
%

In this subsection, we recall the notation and definitions of Selmer groups for a nearly ordinary Galois representation with Selmer weights in the sense of T.\,Weston \cite{We}.

Let $F$ be a finite extension of $\Q$. 
Let $F_{\infty}$ denote the cyclotomic $\Zp$-extension of $F$.
Let $G_F$ denote the absolute Galois group of $F$. 
We put $\Gamma=\Gal(F_{\infty}/F)$. 
Let $K_p$ be a finite extension of $\Qp$ and $V$ a finite dimensional $K_p$\nobreakdash-vector space endowed with a continuous $K_p$\nobreakdash-linear action of $G_F$. 
We put $n=\dim_{K_p}V$. 
For a real place $v$ of $F$, let $d_v^{\pm}(V)$ denote the dimension of the $(\pm 1)$\nobreakdash-eigenspaces of a complex conjugation in $G_{F_v}$ acting on $V$, respectively. 
Let $\cal{O}$ denote the ring of integers of $K_p$.
We choose a $G_F$-stable $\integer{}$-lattice $T$ in $V$.
We put $A=V/T$. 
We call $A$ a torsion quotient of $V$. 
Then $A$ is a discrete $\integer{}$\nobreakdash-module with the action of $G_F$, which is isomorphic to $(K_p/\integer{})^n$ as an $\integer{}$\nobreakdash-module.  
We say that $V$ is a nearly ordinary Galois representation of $G_F$ (\cite[\S 1.2]{We}) if, for every place $v$ of $F$ lying above $p$, 
$V$ has a $G_{F_v}$-stable complete flag of $K_p$-subspaces
\[
0=V_v^0 \varsubsetneq V_v^1 \varsubsetneq \cdots \varsubsetneq V_v^n=V.
\]
A set $\cal{W}$ of Selmer weights for $V$ in the sense of Weston (\cite[\S 1.2]{We}) is a choice of integers $c_v(V)$ for every place $v$ of $F$ lying above $p$ such that $0\le c_v(V) \le n$ and 
\[
\sum_{v|p}[F_v:\Qp]\cdot c_v(V)
=\sum_{\text{$v$\,:\,real places}}d_v^-(V)+\sum_{\text{$v$\,:\,complex places}}n.
\]
For every place $w$ of $F_{\infty}$ lying above $p$, 
let $A_w^{\cal{W}}$ denote the image of $V_v^{n-c_v(V)}$ in $A$ under the canonical morphism $V\twoheadrightarrow A$ with $v$ the restriction of $w$ to $F$:
\[\xymatrix{
0 \ar[r] & V_v^{n-c_v(V)} \ar[d] \ar[r] & V \ar[d] \\
0 \ar[r] & A_w^{\cal{W}} \ar[r] & A=V/T \ar[r] & A/A_w^{\cal{W}} \ar[r] & 0.
}\]
For every place $w$ of $F_{\infty}$, the local Galois cohomology $H_{s,\cal{W}}^1(F_{\infty,w},A)$ is defined by 
\begin{eqnarray*}
H_{s,\cal{W}}^1(F_{\infty,w},A) =
\left\{\begin{array}{ll}
H^1(F_{\infty,w},A)&
\If w \nmid p,  \\[1mm]
\im\left(H^1(F_{\infty,w},A) \rightarrow H^1(I_{F_{\infty,w}},A/A_w^{\cal{W}})\right)& 
\If w|p.   
\end{array} \right.
\end{eqnarray*}
Here $I_{F_{\infty,w}}$ denotes the inertia subgroup of $G_{F_{\infty,w}}$.

Let $\Sigma(F)$ denote the set of all places of $F$. 
Let $\Sigma_p(F)$ denote the set of all places of $F$ lying above $p$, and 
let $\Sigma_{\infty}(F)$ denote the set of all infinite places of $F$. 
We put 
\[
\Ram(A)=\{ v \in \Sigma(F)\mid 
\text{$v\notin \Sigma_p(F) \cup \Sigma_{\infty}(F)$ and the action of $G_{F_{v}}$ on $A$ is ramified}\}. 
\]
We say that a finite subset $\Sigma$ of $\Sigma(F)$ is sufficient large for $A$ (\cite[\S 1.3]{We}) if $\Sigma$ contains $\Sigma_p(F) \cup \Sigma_{\infty}(F) \cup \Ram(A)$. 

Let $A$ be a torsion quotient of a nearly ordinary Galois representation $V$ with Selmer weights $\cal{W}$. 
Let $\Sigma$ be a finite subset of $\Sigma(F)$ such that $\Sigma$ is sufficient large for $A$. 
We define the Selmer group $\Sel_{\cal{W}}(F_{\infty},A)$ of $A$ in the sense of Weston \cite[\S 1.3]{We} by 
\begin{align*}
\Sel_{\cal{W}}(F_{\infty},A)=\ker\left( H^1(F_{\Sigma}/F_{\infty},A)
\rightarrow \prod_{w|v\in \Sigma}H_{s,\cal{W}}^1(F_{\infty,w},A) \right).
\end{align*}
Here $F_{\Sigma}$ denotes the maximal extension of $F$ which is unramified outside $\Sigma$. 

Next we define the non-primitive Selmer groups in the sense of R.\,Greenberg \cite[\S 2]{Gre--Vat}.
Let $\Sigma_0$ be a subset of $\Sigma\backslash \{\Sigma_p(F) \cup \Sigma_{\infty}(F)\}$. 
We define the non-primitive Selmer group $\Sel_{\cal{W}}^{\Sigma_0}(F_{\infty},A)$ of $A$ and $\Sigma_0$ by
\begin{align*}
\Sel_{\cal{W}}^{\Sigma_0}(F_{\infty},A)=\ker\left( H^1(F_{\Sigma}/F_{\infty},A)
\rightarrow \prod_{w|v\in \Sigma\backslash \Sigma_0} H_{s,\cal{W}}^1(F_{\infty,w},A) \right).
\end{align*}
We have $\Sel_{\cal{W}}(F_{\infty},A) \subset \Sel_{\cal{W}}^{\Sigma_0}(F_{\infty},A)$ by the definition.

Let $\Lambda$ denote the Iwasawa algebra $\integer{}[[\Gamma]]$. 
We know that the groups $H^1(F_{\Sigma}/F_{\infty},A)$, $H^2(F_{\Sigma}/F_{\infty},A)$, 
$\prod_{w|v} H_{s,\cal{W}}^1(F_{\infty,w},A)$, and $\Sel_{\cal{W}}(F_{\infty},A)$ are discrete 
$\integer{}$\nobreakdash-modules with a natural continuous action of $\Gamma$. 
Hence these groups are regarded as $\Lambda$\nobreakdash-modules and 
are known to be cofinitely generated, 
that is, their Pontryagin duals are finitely generated $\Lambda$\nobreakdash-modules. 
The following is conjectured by \cite[Conjecture 1.7]{We}:
\begin{con}\label{conj:cotorsion}
For every nearly ordinary Galois representation $V$ with Selmer weights $\cal{W}$, the Selmer group $\Sel_{\cal{W}}(F_{\infty},A)$ of a torsion quotient $A$ of $V$ is finitely generated $\Lambda$\nobreakdash-cotorsion. 
\end{con}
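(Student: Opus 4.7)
The plan is to compute the $\Lambda$-corank of $\Sel_{\cal{W}}(F_{\infty},A)$ by combining the Poitou--Tate nine-term exact sequence with the global and local Euler--Poincar\'e characteristic formulae, and to show that the defining condition on the Selmer weights makes this corank vanish.

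First I would assemble the defining exact sequence
\begin{equation*}
0 \to \Sel_{\cal{W}}(F_{\infty},A) \to H^{1}(F_{\Sigma}/F_{\infty},A) \to \prod_{w\mid v \in \Sigma} H^{1}(F_{\infty,w},A)/H_{s,\cal{W}}^{1}(F_{\infty,w},A),
\end{equation*}
and extend it on the right via Poitou--Tate global duality to identify the cokernel in terms of $\Sel_{\cal{W}^{*}}(F_{\infty},A^{*})^{\vee}$ and $H^{2}(F_{\Sigma}/F_{\infty},A)$, where $A^{*}$ is the Cartier dual of $A$ and $\cal{W}^{*}$ denotes the orthogonal Selmer condition.

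Next I would compute each $\Lambda$-corank. For the global term, Jannsen's theorem on continuous Galois cohomology over cyclotomic towers together with the global Euler--Poincar\'e formula yields
\begin{equation*}
\corank_{\Lambda} H^{1}(F_{\Sigma}/F_{\infty},A) - \corank_{\Lambda} H^{2}(F_{\Sigma}/F_{\infty},A) = \sum_{v\text{ real}} d_{v}^{-}(V) + \sum_{v\text{ complex}} n\cdot[F_{v}:\mathbb{R}].
\end{equation*}
For the local terms, at places $v\nmid p\infty$ the quotient vanishes tautologically. At $v\mid p$, local Tate duality applied to the definition $H_{s,\cal{W}}^{1}=\im(H^{1}(F_{\infty,w},A)\to H^{1}(I_{F_{\infty,w}},A/A_{w}^{\cal{W}}))$ together with the local Euler--Poincar\'e formula yields $\corank_{\Lambda}(H^{1}/H_{s,\cal{W}}^{1})=(n-c_{v}(V))\cdot[F_{v}:\mathbb{Q}_{p}]$. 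Subtracting the local contribution from the global one, the Selmer weights balance $\sum_{v\mid p}[F_{v}:\Qp]\,c_{v}(V)=\sum_{v\text{ real}}d_{v}^{-}(V)+\sum_{v\text{ complex}}n$ cancels exactly with the archimedean term, forcing $\corank_{\Lambda}\Sel_{\cal{W}}(F_{\infty},A)=0$ up to the contribution of $H^{2}$.

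The main obstacle is showing that $H^{2}(F_{\Sigma}/F_{\infty},A)$ is $\Lambda$-cotorsion, which is a weak Leopoldt-type statement open in general; a secondary obstacle is controlling the cokernel of the localization map, also governed by $H^{2}$ through the Poitou--Tate sequence. For nearly ordinary representations of the type considered here, weak Leopoldt is known in many cases by work of Greenberg, and for representations attached to Hilbert modular forms one expects it to follow from the ordinarity hypothesis. In the specific setting of Theorem~\ref{thm:main theorem}, where $\bar\rho_{\mathbf{f}}$ is reducible, one can sidestep the general conjecture entirely: the filtration $0\to A_{\varphi\chi}[\varpi]\to A_{\mathbf{f}\otimes\chi}[\varpi]\to A_{\psi\chi}[\varpi]\to 0$ together with the assumed vanishing of the algebraic $\mu$-invariants of $\varphi\chi$ and $\psi\chi$ reduces the cotorsionness of $\Sel(F_{\infty},A_{\mathbf{f}\otimes\chi})$ to the classical cotorsionness of the Selmer groups of the characters, which is known.
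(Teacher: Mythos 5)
This statement is not a theorem of the paper but a \emph{conjecture} --- Weston's Conjecture~1.7, which the paper explicitly states as open and verifies only in the special cases needed (Theorem~\ref{thm:Selmer for 1 and 2}, which handles the residually reducible Hilbert modular case under (RR), (Parity), and $(\mu=0)$). There is no proof of this conjecture in the paper for you to be compared against, so what follows is an assessment of the soundness of your outline.

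Your corank calculation via the Poitou--Tate sequence and the global/local Euler--Poincar\'e formulas is indeed the standard heuristic that motivates the conjecture (and the reason the ``Selmer weight'' balance condition is imposed), but it does not constitute a proof. You correctly identify the first obstruction: $\corank_{\Lambda}H^{2}(F_{\Sigma}/F_{\infty},A)=0$ is the weak Leopoldt conjecture for $A$, which is open in this generality. There is a second obstruction you gloss over: even granting weak Leopoldt, the Euler characteristic argument gives $\corank_{\Lambda}\Sel_{\cal{W}}=\corank_{\Lambda}\bigl(\mathrm{cokernel\ of\ localization}\bigr)$, and showing this cokernel (which Poitou--Tate relates to the dual Selmer group $\Sel_{\cal{W}^{*}}(F_{\infty},A^{*})$) is cotorsion is itself equivalent to the conjecture for the dual --- so the argument is circular without an extra input such as Greenberg's surjectivity criterion \cite[\S~1.4, Proposition~1.8]{We}, which is precisely what the paper invokes but which requires $\Sel_{\cal{W}}$ to already be known cotorsion. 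Your final paragraph is correct and reflects what the paper actually does: in the reducible case the exact sequence $0\to\Phi\to A_{\mathbf{f}}[\varpi]\to\Psi\to 0$, together with the $(\mu=0)$ hypothesis for the characters and Iwasawa's theorem (that the relevant $X_{\infty}$ and $Y_{\infty}$ are $\Lambda$-torsion), forces $\Sel^{\Sigma_{0}}(F_{\infty},A_{\mathbf{f}}[\varpi])$ to be finite, from which Nakayama-type reasoning (Proposition~\ref{prop:Selmer f.g}) yields the cotorsionness without ever touching the general conjecture.
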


In order to confirm Conjecture \ref{conj:cotorsion} in some special cases, we will need the Selmer groups and the non-primitive Selmer groups of the residual Galois representation $A[\varpi]$ defined as follows. 
Let $\varpi$ be a uniformizer of $\integer{}$. Let $A[\varpi]$ denote the $\varpi$-torsion of $A$. 
For every place $w$ of $F_{\infty}$, the local Galois cohomology $H_{s,\cal{W}}^1(F_{\infty,w},A[\varpi])$ is defined by 
\begin{eqnarray*}
H_{s,\cal{W}}^1(F_{\infty,w},A[\varpi])=
\left\{\begin{array}{ll}
H^1(F_{\infty,w},A[\varpi])&
\If w \nmid p, \\[1mm]
\im\left(H^1(F_{\infty,w},A[\varpi]) \rightarrow 
H^1(I_{F_{\infty,w}},(A/A_w^{\cal{W}})[\varpi])\right)& 
\If w|p.   
\end{array} \right.
\end{eqnarray*}
We define the Selmer group $\Sel_{\cal{W}}(F_{\infty},A[\varpi])$ of $A[\varpi]$ by 
\begin{align*}
\Sel_{\cal{W}}(F_{\infty},A[\varpi])=\ker\left( H^1(F_{\Sigma}/F_{\infty},A[\varpi])
\rightarrow \prod_{w|v\in \Sigma}H_{s,\cal{W}}^1(F_{\infty,w},A[\varpi]) \right).
\end{align*}
We also define the non-primitive Selmer group $\Sel_{\cal{W}}^{\Sigma_0}(F_{\infty},A[\varpi])$ of $A[\varpi]$ and $\Sigma_0$ by
\begin{align*}
\Sel_{\cal{W}}^{\Sigma_0}(F_{\infty},A[\varpi])=\ker\left( H^1(F_{\Sigma}/F_{\infty},A[\varpi])
\rightarrow \prod_{w|v\in \Sigma\backslash \Sigma_0} H_{s,\cal{W}}^1(F_{\infty,w},A[\varpi]) \right).
\end{align*}
We have $\Sel_{\cal{W}}(F_{\infty},A[\varpi]) \subset \Sel_{\cal{W}}^{\Sigma_0}(F_{\infty},A[\varpi])$ by the definition.

%
\subsection{Structure of Selmer groups}\label{subsection:St of Selmer}
%

Let $A$ be a torsion quotient of a nearly ordinary Galois representation $V$ with Selmer weights $\cal{W}$. 
Let $\Sigma$ be a finite subset of $\Sigma(F)$ such that $\Sigma$ is sufficient large for $A$. 
Let $\Sigma_0$ be a subset of $\Sigma\backslash \{\Sigma_p(F) \cup \Sigma_{\infty}(F)\}$ such that $\Sigma_0$ contains $\Ram(A)$.
We put $A^{\ast}=\Hom(T,\mu_{p^{\infty}})$. This is also a discrete 
$\integer{}$-module equipped with the action of $\Gal(F_{\Sigma}/F)$.
For a locally compact $\Z_p$-module $M$, let $M^{\mathrm{PD}}$ denote the Pontryagin dual of $M$. 
\begin{prop}\label{prop:surj}
Assume that $H^0(F_{\infty},A^{\ast}\otimes_{\integer{}} K_p/\integer{})=0$ and $\Sel_{\cal{W}}(F_{\infty},A)$ is finitely generated $\Lambda$-cotorsion. 
Then we have 
\[
\Sel_{\cal{W}}^{\Sigma_0}(F_{\infty},A) /\Sel_{\cal{W}}(F_{\infty},A) \simeq \prod_{w|v \in \Sigma_0}H_{s,\cal{W}}^1(F_{\infty,w},A). 
\]
In particular, $\Sel_{\cal{W}}^{\Sigma_0}(F_{\infty},A)$ is finitely generated $\Lambda$-cotorsion, 
and the following equalities hold$:$ 
\begin{align*}
\corank_{\integer{}}(\Sel_{\cal{W}}^{\Sigma_0}(F_{\infty},A))
&=\corank_{\integer{}}(\Sel_{\cal{W}}(F_{\infty},A))
+\sum_{w|v\in \Sigma_0} \corank_{\integer{}}(H_{s,\cal{W}}^1(F_{\infty,w}, A)),\\
\mu(\Sel_{\cal{W}}^{\Sigma_0}(F_{\infty},A)^{\PD})
&=\mu(\Sel_{\cal{W}}(F_{\infty},A)^{\PD}).
\end{align*}
\end{prop}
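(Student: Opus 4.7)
The plan is to produce the short exact sequence
\begin{align*}
0 \to \Sel_{\cal{W}}(F_{\infty},A) \to \Sel_{\cal{W}}^{\Sigma_0}(F_{\infty},A) \to \prod_{w|v \in \Sigma_0}H_{s,\cal{W}}^1(F_{\infty,w},A) \to 0,
\end{align*}
from which both the asserted isomorphism and the corank/$\mu$-invariant equalities will follow by standard arguments. Observe first that every $v \in \Sigma_0$ lies below no infinite or $p$-adic place, so by the case split in the definition, $H_{s,\cal{W}}^1(F_{\infty,w},A) = H^1(F_{\infty,w},A)$ for every $w|v \in \Sigma_0$; in particular the local conditions play no role at these places.

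Left-exactness is formal: the two defining sequences realising $\Sel_{\cal{W}}$ and $\Sel_{\cal{W}}^{\Sigma_0}$ as kernels of restriction maps from $H^1(F_{\Sigma}/F_{\infty},A)$, together with the canonical projection $\prod_{w|v\in\Sigma}H_{s,\cal{W}}^1 \twoheadrightarrow \prod_{w|v\in\Sigma\setminus\Sigma_0}H_{s,\cal{W}}^1$, fit into a commutative diagram to which the snake lemma yields the natural injection $\Sel_{\cal{W}}^{\Sigma_0}/\Sel_{\cal{W}} \hookrightarrow \prod_{w|v\in\Sigma_0} H^1(F_{\infty,w},A)$. Surjectivity of the map $\Sel_{\cal{W}}^{\Sigma_0}(F_{\infty},A) \to \prod_{w|v\in\Sigma_0} H^1(F_{\infty,w},A)$ is the main issue, and I would deduce it from the vanishing
\begin{align*}
H^2(F_{\Sigma}/F_{\infty},A) = 0.
\end{align*}
Under the two standing hypotheses (that $\Sel_{\cal{W}}(F_{\infty},A)$ is $\Lambda$-cotorsion and $H^0(F_{\infty},A^{\ast}\otimes_{\integer{}} K_p/\integer{}) = 0$) this vanishing is a standard consequence of Greenberg's global duality over $\Zp$-extensions: one combines the global Euler characteristic formula for $A$ over $F_{\infty}$ with the Poitou--Tate nine-term sequence and Jannsen's analysis of $H^2$ over $\Zp$-towers. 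Granting it, Poitou--Tate at finite levels, passed to the limit, shows that every local cocycle $(x_w)_{w|v\in\Sigma_0}$ lifts to a global class satisfying the Selmer condition at each $v \in \Sigma\setminus\Sigma_0$, since the obstruction is dual to a quotient of $H^2(F_{\Sigma}/F_{\infty},A) = 0$.

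From the resulting short exact sequence, taking Pontryagin duals gives a short exact sequence of finitely generated $\Lambda$-modules, and additivity of $\integer{}$-corank in short exact sequences of cofinitely generated $\integer{}$-modules produces the claimed corank identity. The $\mu$-invariant equality reduces to verifying
\begin{align*}
\mu\Bigl(\bigl(\textstyle\prod_{w|v\in\Sigma_0} H^1(F_{\infty,w},A)\bigr)^{\PD}\Bigr)=0,
\end{align*}
which is Greenberg's local calculation at primes $v \nmid p$: the image of the decomposition group at $v$ in $\Gamma$ is procyclic, and the explicit description of the local cohomology as a $\Lambda$-module via inertia invariants and tame quotients shows that its Pontryagin dual is $\Lambda$-torsion with vanishing $\mu$-invariant. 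The principal obstacle throughout is the $H^2$-vanishing; in the classical elliptic-curve setting it is routine, but in Weston's nearly ordinary / Selmer-weight framework one must keep careful track of the dual Selmer conditions at places above $p$ associated to $\cal{W}$, and verify that the hypothesis $H^0(F_{\infty},A^{\ast}\otimes_{\integer{}}K_p/\integer{})=0$ is exactly what is needed to eliminate the global obstruction.
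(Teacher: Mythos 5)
Your overall structure is right: establish the short exact sequence
\begin{align*}
0 \to \Sel_{\cal{W}}(F_{\infty},A) \to \Sel_{\cal{W}}^{\Sigma_0}(F_{\infty},A) \to \prod_{w|v \in \Sigma_0}H_{s,\cal{W}}^1(F_{\infty,w},A) \to 0,
\end{align*}
then deduce the corank and $\mu$-invariant statements from the observation that the local factors for $v\in\Sigma_0$ (being $H^1(F_{\infty,w},A)$ for $v\nmid p$) are divisible, $\Lambda$-cotorsion, and in fact cofinitely generated $\integer{}$-modules. The snake lemma reduction of left-exactness and the final bookkeeping match the paper's argument closely.

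The gap is in the surjectivity step, which you acknowledge is the crux and then do not actually supply. You propose to deduce surjectivity from $H^2(F_{\Sigma}/F_{\infty},A)=0$, but this is both insufficient and not available at this stage. It is insufficient because the Poitou--Tate nine-term sequence shows that surjectivity of $H^1(F_{\Sigma}/F_{\infty},A)\to\prod_{w|v\in\Sigma}H^1(F_{\infty,w},A)$ is controlled by the arrow $\prod H^1(F_{\infty,w},A)\to H^1(F_{\Sigma}/F_{\infty},A^{\ast})^{\PD}$ being zero, not by $H^2(F_{\Sigma}/F_{\infty},A)$ alone; what is actually needed is control of the relevant (strict) dual Selmer group, which is precisely what the cotorsion and $H^0$-vanishing hypotheses achieve. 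It is not available because proving $H^2=0$ here in a non-circular way is itself delicate: note that the paper's own proof of the analogous statement in the $1$-dimensional case (Lemma \ref{lem:H^2=0}) \emph{uses} Proposition \ref{prop:surj} (through Proposition \ref{prop:Selmer for 1} (\ref{prop:Selmer for 1 vanishing}) and the ``no proper $\Lambda$-submodule of finite index'' step), so the logic you propose would run backwards. The paper instead closes the gap in one stroke by invoking Weston's Proposition 1.8, a Cassels--Poitou--Tate style surjectivity theorem adapted to the nearly ordinary/Selmer-weight framework, whose hypotheses are exactly the two assumptions of the statement: $\Sel_{\cal{W}}(F_{\infty},A)$ $\Lambda$-cotorsion and $H^0(F_{\infty},A^{\ast}\otimes_{\integer{}}K_p/\integer{})=0$. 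You should cite that result directly rather than attempt to reprove it; as written, the surjectivity you need is asserted but not established.
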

\begin{proof}
By our assumptions and \cite[\S 1.4, Proposition 1.8]{We}, the sequence 
\[
0 \rightarrow \Sel_{\cal{W}}(F_{\infty},A)\rightarrow H^1(F_{\Sigma}/F_{\infty},A)\rightarrow \prod_{w|v \in \Sigma}H_{s,\cal{W}}^1(F_{\infty,w},A) \rightarrow 0
\]
is exact. Then our first assertion follows from the snake lemma for 
\begin{align*}
\xymatrix{
0\ar[r]&\Sel_{\cal{W}}(F_{\infty},A)\ar[r] \ar@{^{(}->}[d]&H^1(F_{\Sigma}/F_{\infty},A) \ar[r]\ar@{=}[d] &\displaystyle\prod_{w|v \in \Sigma}H_{s,\cal{W}}^1(F_{\infty,w},A)\ar@{->>}[d]\ar[r] &0 \\
0\ar[r]&\Sel_{\cal{W}}^{\Sigma_0}(F_{\infty},A)\ar[r] & H^1(F_{\Sigma}/F_{\infty},A) \ar[r]& \displaystyle\prod_{w|v \in \Sigma\backslash \Sigma_0}H_{s,\cal{W}}^1(F_{\infty,w},A)\ar[r] &0. \\
}
\end{align*}
We fix a place $v \in \Sigma_0$. 
For every place $w$ of $F_{\infty}$ lying above $v$, 
by \cite[Chapter VII, Theorem 7.1.8 (i)]{NSW}, 
the exact sequence $0\to A[\varpi] \to A \xrightarrow{\times \varpi} A\to 0$ implies that $H^1(F_{\infty,w},A)$ is divisible. 
Moreover, since $w\nmid p$, by \cite[\S 3, Proposition 2]{Gre89}, 
$\prod_{w|v}H_{s,\cal{W}}^1(F_{\infty,w},A)=\prod_{w|v} H^1(F_{\infty,w},A)$ is finitely generated $\Lambda$-cotorsion. 
Now $\prod_{w|v}H_{s,\cal{W}}^1(F_{\infty,w},A)$ is a finitely generated $\integer{}$-comodule and hence we obtain the equalities as desired. 
\end{proof}

\begin{prop}\label{prop:Selmer mod pi}
Let $p$ be an odd prime number. 
Assume that, for every place $w$ of $F_{\infty}$ lying above $p$, $I_{F_{\infty,w}}$ acts trivially on $A/A_w^{\cal{W}}$ and $H^0(F,A[\varpi])=0$. 
Then we have
\[
\Sel_{\cal{W}}^{\Sigma_0}(F_{\infty},A[\varpi])\simeq \Sel_{\cal{W}}^{\Sigma_0}(F_{\infty},A)[\varpi].
\]
\end{prop}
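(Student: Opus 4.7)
The plan is to exploit the Kummer-type short exact sequence $0 \to A[\varpi] \to A \xrightarrow{\varpi} A \to 0$ together with a snake-lemma argument comparing the two defining exact sequences of the Selmer groups. The inclusion $A[\varpi] \hookrightarrow A$ induces a commutative diagram whose middle column will be shown to be an isomorphism onto the $\varpi$-torsion of the target and whose right column will be shown to be injective componentwise; the isomorphism on Selmer groups then follows.

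First I would show $H^0(F_\infty, A[\varpi]) = 0$. Because $\Gamma = \Gal(F_\infty/F)$ is pro-$p$ and $A[\varpi]$ is a finite-dimensional $\kappa$-vector space with $\mathrm{char}(\kappa) = p$, any nonzero continuous $\Gamma$-representation on such a space has nonzero invariants; hence $H^0(F_\infty, A[\varpi])^{\Gamma} = H^0(F, A[\varpi]) = 0$ forces $H^0(F_\infty, A[\varpi]) = 0$. This further implies $H^0(F_\infty, A) = 0$: since $A \simeq (K_p/\integer{})^n$ is $\varpi$-power torsion, any nonzero element of $H^0(F_\infty, A)$ would yield a nonzero element in $H^0(F_\infty, A)[\varpi] = H^0(F_\infty, A[\varpi]) = 0$, a contradiction. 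The long exact sequence of $\Gal(F_\Sigma/F_\infty)$-cohomology for the Kummer sequence then provides the isomorphism
\[
H^1(F_\Sigma/F_\infty, A[\varpi]) \xrightarrow{\sim} H^1(F_\Sigma/F_\infty, A)[\varpi].
\]

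Next I would verify that $H^1_{s,\cal{W}}(F_{\infty,w}, A[\varpi]) \to H^1_{s,\cal{W}}(F_{\infty,w}, A)$ is injective for each $w$ lying above a place of $\Sigma \setminus \Sigma_0$. For $w \mid p$, the hypothesis that $I_{F_{\infty,w}}$ acts trivially on $A/A_w^{\cal{W}}$ gives $(A/A_w^{\cal{W}})^{I_{F_{\infty,w}}} = A/A_w^{\cal{W}}$, which is divisible (being a quotient of the divisible $\integer{}$-module $A$); the Kummer long exact sequence in $I_{F_{\infty,w}}$-cohomology applied to $A/A_w^{\cal{W}}$ then produces the injection $H^1(I_{F_{\infty,w}}, (A/A_w^{\cal{W}})[\varpi]) \hookrightarrow H^1(I_{F_{\infty,w}}, A/A_w^{\cal{W}})$, and restriction to the images defining $H^1_{s,\cal{W}}$ yields the required injection. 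For $w \nmid p$, an analogous Kummer long-exact-sequence argument applied to $A$ directly over $F_{\infty,w}$ gives the corresponding injection.

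Finally, applying the snake lemma to the commutative diagram
\[
\xymatrix{
0 \ar[r] & \Sel_{\cal{W}}^{\Sigma_0}(F_\infty, A[\varpi]) \ar[r] \ar[d] & H^1(F_\Sigma/F_\infty, A[\varpi]) \ar[r] \ar[d]^-{\simeq} & \prod_{w} H^1_{s,\cal{W}}(F_{\infty,w}, A[\varpi]) \ar@{^{(}->}[d] \\
0 \ar[r] & \Sel_{\cal{W}}^{\Sigma_0}(F_\infty, A)[\varpi] \ar[r] & H^1(F_\Sigma/F_\infty, A)[\varpi] \ar[r] & \prod_{w} H^1_{s,\cal{W}}(F_{\infty,w}, A)[\varpi]
}
\]
with $w \mid v$ running over $\Sigma \setminus \Sigma_0$ yields $\Sel_{\cal{W}}^{\Sigma_0}(F_\infty, A[\varpi]) \simeq \Sel_{\cal{W}}^{\Sigma_0}(F_\infty, A)[\varpi]$. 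The main obstacle will be the careful handling of the local condition at $w \mid p$, where $H^1_{s,\cal{W}}$ is defined as an image rather than as a kernel: one must verify that the induced map between these images is injective, and this is precisely the point where the trivial-inertia hypothesis on $A/A_w^{\cal{W}}$ enters the proof.
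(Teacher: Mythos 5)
Your overall strategy is the same as the paper's: use the Kummer sequence $0 \to A[\varpi] \to A \to A \to 0$, deduce $H^0(F_\infty,A[\varpi])=H^0(F_\infty,A)=0$ from the pro-$p$-ness of $\Gamma$ to get the middle isomorphism, then prove componentwise injectivity of the map on local conditions and conclude by a diagram chase. The $w \mid p$ step is handled correctly: trivial inertia action on $D=A/A_w^{\cal{W}}$ gives $H^0(I_{F_{\infty,w}},D)=D$ divisible, so $H^1(I_{F_{\infty,w}},D[\varpi]) \to H^1(I_{F_{\infty,w}},D)$ is injective, and this passes to the images defining $H^1_{s,\cal{W}}$.

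The gap is in the $w \nmid p$ step. You propose to apply "an analogous Kummer argument directly over $F_{\infty,w}$," but that argument produces the exact sequence $H^0(F_{\infty,w},A) \xrightarrow{\varpi} H^0(F_{\infty,w},A) \to H^1(F_{\infty,w},A[\varpi]) \to H^1(F_{\infty,w},A)$, so injectivity requires $H^0(F_{\infty,w},A)$ to be $\varpi$-divisible, and this is \emph{not} analogous to the $w\mid p$ situation: here the full decomposition group $G_{F_{\infty,w}}$ can act nontrivially on $A$, and the fixed points of a nontrivial action on a divisible module need not be divisible. To salvage the divisibility you need two inputs you have not invoked: (i) the standing hypothesis of \S 2.2 that $\Sigma_0 \supseteq \Ram(A)$, which makes $A$ unramified at every $v \in \Sigma\setminus\Sigma_0$ with $v\nmid p$, so $G_{F_{\infty,w}}$ acts on $A$ through $G_{F_{\infty,w}}/I_{F_{\infty,w}}$; and (ii) the fact that $G_{F_{\infty,w}}/I_{F_{\infty,w}}$ has profinite order prime to $p$, so a continuous action on the $p$-primary module $A$ factors through a finite prime-to-$p$ quotient and $H^0(F_{\infty,w},A)$ is a direct summand of $A$ (via the idempotent $\frac{1}{m}\sum_h h$), hence divisible. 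The paper avoids having to analyze $H^0(F_{\infty,w},A)$ at all by first injecting $H^1(F_{\infty,w},-) \hookrightarrow H^1(I_{F_{\infty,w}},-)$ via inflation-restriction (using (ii)) and then applying Kummer to inertia cohomology, where (i) gives $H^0(I_{F_{\infty,w}},A)=A$ divisible for free; this also treats $w\nmid p$ and $w\mid p$ uniformly through the intermediate diagram labelled $\bigstar\bigstar$. You should either supply the missing justification (i)+(ii) for the divisibility of $H^0(F_{\infty,w},A)$, or adopt the paper's detour through $I_{F_{\infty,w}}$-cohomology.
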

\begin{proof}
The assumption $H^0(F,A[\varpi])=0$ implies that $H^0(F_{\infty},A[\varpi])=0$ because $\Gamma$ is a pro-$p$ group. Then we have $H^0(F_{\infty},A)=0$ and hence the exact sequence $0\to A[\varpi]\to A \xrightarrow{\times \varpi} A \to 0$ induces an isomorphism 
\[
H^1(F_{\Sigma}/F_{\infty},A[\varpi])\simeq H^1(F_{\Sigma}/F_{\infty},A)[\varpi].
\]
We have the following commutative diagram:
\begin{align*}
\xymatrix{
\Sel_{\cal{W}}^{\Sigma_0}(F_{\infty},A[\varpi])\ar@{^{(}->}[r] \ar@{^{(}->}[d]&\Sel_{\cal{W}}^{\Sigma_0}(F_{\infty},A)[\varpi] \ar@{^{(}->}[r]\ar@{^{(}->}[d] &\Sel_{\cal{W}}^{\Sigma_0}(F_{\infty},A)\ar@{^{(}->}[d] \\
H^1(F_{\Sigma}/F_{\infty},A[\varpi])\ar[r]^{\simeq}\ar[d] & H^1(F_{\Sigma}/F_{\infty},A)[\varpi] \ar@{^{(}->}[r]& H^1(F_{\Sigma}/F_{\infty},A) \ar[d]\\
\displaystyle \prod_{w|v\in \Sigma\backslash \Sigma_0} H_{s,\cal{W}}^1(F_{\infty,w},A[\varpi]) \ar[rr]^{\bigstar}& & \displaystyle \prod_{w|v\in \Sigma\backslash \Sigma_0} H_{s,\cal{W}}^1(F_{\infty,w},A). 
}
\end{align*}
Thus, for the proof, it suffices to show the injectivity of the morphism $\bigstar$. 
In order to do it, we consider the following commutative diagram:
\begin{align*}
\xymatrix{
\displaystyle \prod_{w|v\in \Sigma\backslash \Sigma_0} H_{s,\cal{W}}^1(F_{\infty,w},A[\varpi])\ar@{^{(}->}[r] \ar[d]^{\bigstar} & \displaystyle \prod_{w|v\nmid p\in \Sigma\backslash \Sigma_0} H^1(I_{F_{\infty,w}},A[\varpi])\times \prod_{w|v\in \Sigma_p(F)} H^1(I_{F_{\infty,w}},D[\varpi]) \ar[d]^{\bigstar\bigstar}\\
\displaystyle \prod_{w|v\in \Sigma\backslash \Sigma_0} H_{s,\cal{W}}^1(F_{\infty,w},A) \ar@{^{(}->}[r] & \displaystyle \prod_{w|v\nmid p\in \Sigma\backslash \Sigma_0} H^1(I_{F_{\infty,w}},A)\times \prod_{w|v\in \Sigma_p(F)} H^1(I_{F_{\infty,w}},D),
}
\end{align*}
where we write $D$ for $A/A_w^{\cal{W}}$ to simplify the notation. 
Here the injectivity of the horizontal morphisms follows from that, for every place $w$ of $F_{\infty}$ such that $w\nmid p$, $G_{F_{\infty,w}}/I_{F_{\infty,w}}$ has profinite order prime to $p$.
Thus, it suffices to show the injectivity of the morphism $\bigstar\bigstar$. 
For a place $w\mid v\in \Sigma\backslash \Sigma_0$ such that $w\nmid p$, by the definition of $\Sigma_0$, we have $H^0(I_{F_{\infty,w}},A)=A$, which is divisible, and hence the exact sequence $0\to A[\varpi]\to A \xrightarrow{\times \varpi} A \to 0$ implies that 
\[
\text{$H^1(I_{F_{\infty,w}},A[\varpi]) \to H^1(I_{F_{\infty,w}},A)$ is injective.}
\]
For a place $w|v\in \Sigma_p(F)$, by the assumption on $D$, we have $H^0(I_{F_{\infty,w}},D)=D$, which is divisible, and hence the exact sequence $0\to D[\varpi]\to D \xrightarrow{\times \varpi} D \to 0$ implies that 
\[
\text{$H^1(I_{F_{\infty,w}},D[\varpi]) \to H^1(I_{F_{\infty,w}},D)$ is injective}
\]
as desired. 
\end{proof}

\begin{prop}\label{prop:Selmer f.g}
Assume that $\Sel_{\cal{W}}^{\Sigma_0}(F_{\infty},A[\varpi])$ is finite. 
Then, under the same assumptions as Proposition \ref{prop:Selmer mod pi},
$\Sel_{\cal{W}}^{\Sigma_0}(F_{\infty},A)$ and $\Sel_{\cal{W}}(F_{\infty},A)$ are finitely generated $\Lambda$\nobreakdash-cotorsion. 
\end{prop}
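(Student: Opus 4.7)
The plan is to reduce the statement to showing that the Pontryagin dual $X := \Sel_{\cal{W}}^{\Sigma_0}(F_{\infty},A)^{\PD}$ is a finitely generated $\Lambda$-torsion module, and then to control $X$ by its quotient $X/\varpi X$.

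First I would apply Proposition \ref{prop:Selmer mod pi}, whose hypotheses are exactly those assumed here, to identify $\Sel_{\cal{W}}^{\Sigma_0}(F_{\infty},A[\varpi])$ with $\Sel_{\cal{W}}^{\Sigma_0}(F_{\infty},A)[\varpi]$; by assumption this group is finite. Pontryagin duality then yields a natural isomorphism $X/\varpi X \simeq \Sel_{\cal{W}}^{\Sigma_0}(F_{\infty},A)[\varpi]^{\PD}$, so $X/\varpi X$ is finite as well.

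Next I would invoke a compact Nakayama argument. The Iwasawa algebra $\Lambda = \integer{}[[\Gamma]]$ is a two\nobreakdash-dimensional complete regular local ring with maximal ideal $\ideal{m} = (\varpi,\gamma-1)$ for a topological generator $\gamma$ of $\Gamma$, so finiteness of $X/\varpi X$ implies that $X/\ideal{m}X$ is a finite\nobreakdash-dimensional $\kappa$-vector space, and therefore $X$ is finitely generated over $\Lambda$. To upgrade this to torsion, I would apply the structure theorem: up to pseudo\nobreakdash-isomorphism one has $X \sim \Lambda^r \oplus \bigoplus_i \Lambda/(\varpi^{a_i}) \oplus \bigoplus_j \Lambda/(f_j^{b_j})$ with $f_j$ distinguished irreducible. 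Both $\Lambda^r/\varpi \Lambda^r \simeq \kappa[[T]]^r$ and the reductions $\Lambda/(\varpi^{a_i})$ modulo $\varpi$ are infinite, whereas each $\Lambda/(f_j^{b_j})$ modulo $\varpi$ is finite since $f_j$ is distinguished; comparing with $X/\varpi X$ up to the finite error produced by the pseudo\nobreakdash-isomorphism then forces $r=0$ and the absence of any $\Lambda/(\varpi^{a_i})$ summands, so $X$ is $\Lambda$-torsion (and in fact $\mu(X)=0$). This gives the assertion for $\Sel_{\cal{W}}^{\Sigma_0}(F_{\infty},A)$. Since $\Sel_{\cal{W}}(F_{\infty},A) \subset \Sel_{\cal{W}}^{\Sigma_0}(F_{\infty},A)$, the Pontryagin dual of the former is a quotient of $X$ and is therefore also a finitely generated $\Lambda$-torsion module.

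The substantive content of the argument is already encoded in Proposition \ref{prop:Selmer mod pi}; the present step should be a routine descent from $\varpi$-torsion to the full Selmer group via Pontryagin duality, topological Nakayama over $\Lambda$, and the structure theorem for finitely generated $\Lambda$-modules, so I do not anticipate a genuine obstacle beyond keeping the bookkeeping of the pseudo\nobreakdash-isomorphism and the $\varpi$-reductions straight.
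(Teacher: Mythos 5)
Your proposal is correct and follows essentially the same route as the paper: apply Proposition \ref{prop:Selmer mod pi} to identify $\Sel_{\cal{W}}^{\Sigma_0}(F_{\infty},A[\varpi])$ with $\Sel_{\cal{W}}^{\Sigma_0}(F_{\infty},A)[\varpi]$, dualize to get finiteness of $\Sel_{\cal{W}}^{\Sigma_0}(F_{\infty},A)^{\PD}/\varpi$, conclude cotorsionness via the structure theorem, and pass to $\Sel_{\cal{W}}(F_{\infty},A)$ by the surjection of Pontryagin duals. The only difference is that you re-establish finite generation of the dual via topological Nakayama, whereas the paper already recorded in \S\ref{subsection:def of Selmer} that these Selmer groups are cofinitely generated; your extra step is harmless and your structure-theorem bookkeeping is accurate.
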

\begin{proof}
By our assumptions and Proposition \ref{prop:Selmer mod pi}, $\Sel_{\cal{W}}^{\Sigma_0}(F_{\infty},A)[\varpi]$ is finite. 
Since we have $(\Sel_{\cal{W}}^{\Sigma_0}(F_{\infty},A)[\varpi])^{\PD}\simeq \Sel_{\cal{W}}^{\Sigma_0}(F_{\infty},A)^{\PD}/\varpi$, 
which is finite, the structure theorem of finitely generated $\Lambda$-modules (see, for example, \cite[Chapter 13, Theorem 13.12]{Was}) implies that $\Sel_{\cal{W}}^{\Sigma_0}(F_{\infty},A)$ is finitely generated $\Lambda$-cotorsion. 
Since $\Sel_{\cal{W}}(F_{\infty},A)\subset \Sel_{\cal{W}}^{\Sigma_0}(F_{\infty},A)$, we have $\Sel_{\cal{W}}^{\Sigma_0}(F_{\infty},A)^{\PD}\twoheadrightarrow \Sel_{\cal{W}}(F_{\infty},A)^{\PD}$ and hence $\Sel_{\cal{W}}(F_{\infty},A)$ is also finitely generated $\Lambda$\nobreakdash-cotorsion. 
\end{proof}

Let $\kappa$ denote the residue field of $\integer{}$. 
\begin{prop}\label{prop:corank=dim}
Assume that $H^0(F_{\infty},A^{\ast}\otimes_{\integer{}} K_p/\integer{})=0$.
Then, under the same assumptions as Proposition \ref{prop:Selmer mod pi} and Proposition \ref{prop:Selmer f.g}, we have
\[
\corank_{\integer{}}(\Sel_{\cal{W}}^{\Sigma_0}(F_{\infty},A))=\dim_{\kappa}(\Sel_{\cal{W}}^{\Sigma_0}(F_{\infty},A[\varpi])).
\]
\end{prop}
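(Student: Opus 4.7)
The plan is to dualize, apply the structure theorem for finitely generated $\Lambda$-modules, and invoke a theorem of T.\,Weston \cite{We} which, under the standing hypothesis on $A^{\ast}$, rules out nonzero finite $\Lambda$-submodules in the dual Selmer group.

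Set $X:=\Sel_{\cal{W}}^{\Sigma_0}(F_{\infty},A)$ and $Y:=X^{\PD}$. By Proposition \ref{prop:Selmer f.g}, $Y$ is a finitely generated $\Lambda$\nobreakdash-torsion module; Pontryagin duality together with Proposition \ref{prop:Selmer mod pi} identifies $Y/\varpi Y$ with the Pontryagin dual of $\Sel_{\cal{W}}^{\Sigma_0}(F_{\infty},A[\varpi])$, which is finite by the assumptions of Proposition \ref{prop:Selmer f.g}. Applying the structure theorem, $Y$ is pseudo-isomorphic to $\bigoplus_i \Lambda/(\varpi^{m_i})\oplus\bigoplus_j \Lambda/(g_j)$ with $g_j$ distinguished. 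Since $\Lambda/(\varpi,\varpi^{m_i})=\kappa[[T]]$ is infinite-dimensional over $\kappa$ whereas $\Lambda/(\varpi,g_j)\simeq \kappa[[T]]/(T^{\deg g_j})$ is finite-dimensional, the finiteness of $Y/\varpi Y$ forces $\mu(Y)=0$. Consequently $Y$ is pseudo-isomorphic to $Y':=\bigoplus_j \Lambda/(g_j)$ and, in particular, is finitely generated as an $\integer{}$-module.

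The next step is to upgrade $Y$ from finitely generated $\integer{}$-module to $\integer{}$-free. Under the hypothesis $H^0(F_{\infty},A^{\ast}\otimes_{\integer{}}K_p/\integer{})=0$, Weston's theorem \cite{We} asserts that $\Sel_{\cal{W}}(F_{\infty},A)$ admits no proper $\Lambda$\nobreakdash-submodule of finite index, i.e.\ its Pontryagin dual has no nonzero finite (equivalently, pseudo-null) $\Lambda$\nobreakdash-submodule. To transfer this to $Y$ I would use Proposition \ref{prop:surj} together with the fact that, for each $w\nmid p$, $H^1(F_{\infty,w},A)$ is $\integer{}$\nobreakdash-divisible, so that the quotient $\Sel_{\cal{W}}^{\Sigma_0}/\Sel_{\cal{W}}$ has $\integer{}$\nobreakdash-torsion-free Pontryagin dual; the absence of finite $\Lambda$\nobreakdash-submodules thus propagates from the dual of $\Sel_{\cal{W}}$ to $Y$. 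Combined with $\mu(Y)=0$, the pseudo-isomorphism $\phi:Y\to Y'$ has trivial kernel, and since each $\Lambda/(g_j)$ is $\integer{}$\nobreakdash-free of rank $\deg g_j$, the submodule $Y\hookrightarrow Y'$ of a finitely generated free $\integer{}$\nobreakdash-module is itself $\integer{}$\nobreakdash-free.

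Setting $r:=\rank_{\integer{}}(Y)$, we obtain $Y/\varpi Y\simeq \kappa^{r}$, whence
\[
\dim_{\kappa}\bigl(\Sel_{\cal{W}}^{\Sigma_0}(F_{\infty},A[\varpi])\bigr)
=\dim_{\kappa}(Y/\varpi Y)
=r
=\rank_{\integer{}}(Y)
=\corank_{\integer{}}\bigl(\Sel_{\cal{W}}^{\Sigma_0}(F_{\infty},A)\bigr),
\]
which is the asserted equality. The most delicate point is ensuring that Weston's no-finite-submodule conclusion, naturally formulated for the primitive Selmer group, transfers correctly to the non-primitive $\Sel_{\cal{W}}^{\Sigma_0}$; this is where the $\integer{}$\nobreakdash-divisibility of the local cohomology at places in $\Sigma_0$ (all of which lie outside $p$) is needed, together with the application of Proposition \ref{prop:surj}.
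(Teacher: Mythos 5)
Your proof is correct and follows essentially the same route as the paper: both arguments reduce the claim to showing that the dual $Y$ has no nonzero finite $\Lambda$-submodule, obtain this for the primitive Selmer group from Weston's theorem, and transfer it to the non-primitive group via Proposition \ref{prop:surj} and the $\integer{}$-divisibility of the quotient $\Sel_{\cal{W}}^{\Sigma_0}/\Sel_{\cal{W}}$. Your explicit detour through the structure theorem and pseudo-isomorphisms ($\mu=0$, then injectivity into an elementary module) is a slightly longer way of saying what the paper compresses into the observation that a finitely generated $\integer{}$-module with no nonzero finite $\Lambda$-submodule is $\integer{}$-free.
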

\begin{proof}
By Proposition \ref{prop:Selmer f.g}, $\Sel_{\cal{W}}^{\Sigma_0}(F_{\infty},A)^{\mathrm{PD}}$ is a finitely generated $\integer{}$-module and hence, by Proposition \ref{prop:Selmer mod pi}, it suffices to show that $\Sel_{\cal{W}}^{\Sigma_0}(F_{\infty},A)$ has no proper $\Lambda$-submodules of finite index. 
By Proposition \ref{prop:Selmer f.g}, $\Sel_{\cal{W}}(F_{\infty},A)$ is finitely generated $\Lambda$\nobreakdash-cotorsion and hence Proposition \ref{prop:surj} implies that 
$\Sel_{\cal{W}}^{\Sigma_0}(F_{\infty},A)/\Sel_{\cal{W}}(F_{\infty},A) \simeq \prod_{w|v \in \Sigma_0}H_{s,\cal{W}}^1(F_{\infty,w},A)$,
which is divisible (by the proof of Proposition \ref{prop:surj}). 
Now the assertion follows from the fact obtained by \cite[Proposition 1.8 (2)]{We} that $\Sel_{\cal{W}}(F_{\infty},A)$ has no proper $\Lambda$\nobreakdash-submodule of finite index.
\end{proof}

We can compare $\Sel_{\cal{W}}^{\Sigma_0}(F_{\infty},A)$ with $\Sel_{\cal{W}}(F_{\infty},A)$ by the following proposition:
\begin{prop}\label{prop:generator}
Let $v$ be a finite place of $F$ prime to $p$, and let $P_v(X)=\det((1-\Frob_v X) \vert_{V_{I_{F_v}}})\in \integer{}[X]$. 
Let $\cal{P}_v=P_v(N(\ideal{q}_v)^{-1}\gamma_v) \in \Lambda$, 
where $\ideal{q}_v$ denotes the prime ideal of $\ideal{o}_F$ corresponding to $v$, and $\gamma_v$ denotes the Frobenius automorphism 
corresponding to $v$ in $\Gamma$. 
Then, for each place $w$ of $F_{\infty}$ lying above $v$, the characteristic ideal of the $\Lambda$\nobreakdash-module 
$\prod_{w|v}H_{s,\cal{W}}^1(F_{\infty,w},A)^{\mathrm{PD}}$ is generated by $\cal{P}_v$.
\end{prop}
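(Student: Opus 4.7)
The plan is to carry out Greenberg's standard local Iwasawa-theoretic calculation at a finite place $v \nmid p$, with the only care being to keep track of the Tate twist that produces the factor $N(\ideal{q}_v)^{-1}$.

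First I would reduce from a product over $w\mid v$ to a single place. Fix a place $w_0$ of $F_\infty$ above $v$, and let $\Gamma_v\subset\Gamma$ be the decomposition subgroup at $w_0$. Since $\Gamma$ is abelian, $\Gamma_v$ is independent of the choice of $w_0$, the places above $v$ are parametrised by $\Gamma/\Gamma_v$, and Shapiro's lemma (applied in the continuous $p$-primary setting) gives
\[
\prod_{w\mid v} H^1(F_{\infty,w},A) \;\simeq\; \mathrm{CoInd}_{\Gamma_v}^{\Gamma} H^1(F_{\infty,w_0},A).
\]
Passing to Pontryagin duals, this becomes $\Lambda\otimes_{\Lambda_v} H^1(F_{\infty,w_0},A)^{\mathrm{PD}}$, where $\Lambda_v=\integer{}[[\Gamma_v]]$. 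Since characteristic ideals of extended $\Lambda$-modules are generated by the characteristic ideals of the original $\Lambda_v$-module (viewing $\Lambda_v\hookrightarrow\Lambda$ via the inclusion of $\Gamma_v$), it suffices to identify the characteristic ideal of $H^1(F_{\infty,w_0},A)^{\mathrm{PD}}$ as a $\Lambda_v$-module and show that its image in $\Lambda$ equals $(\cal{P}_v)$.

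Next, because $v \nmid p$ and $F_{\infty,w_0}/F_v$ is the unique unramified $\Gamma_v$-extension, we have $I_{F_{\infty,w_0}} = I_{F_v}$, and the quotient $G_{F_{\infty,w_0}}/I_{F_v}$ is the kernel of $\hat{\Z}\twoheadrightarrow \Gamma_v$, which is isomorphic to $\prod_{\ell\neq p}\Z_\ell$ and therefore has profinite order prime to $p$. Since $A$ is $p$-primary, the inflation--restriction spectral sequence collapses to
\[
H^1(F_{\infty,w_0},A) \;\simeq\; H^1(I_{F_v},A)^{G_{F_{\infty,w_0}}/I_{F_v}},
\]
and this is moreover an equality of $\Gamma_v$-modules once one lifts a topological generator $\gamma_v$ to a Frobenius element $\Frob_v\in G_{F_v}$.

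The heart of the computation is the identification $H^1(I_{F_v},A)\simeq A_{I_{F_v}}(-1)$: pick a topological generator $t_v$ of the tame pro-$p$ inertia $I_{F_v}^{(p)}$, note that $H^1(I_{F_v},A)=\Hom(I_{F_v}^{(p)},A)$ factors through $A^{I_{F_v}}$ (hence receives a map from $A_{I_{F_v}}$), and recall that conjugation of $\Frob_v$ on $t_v$ is raising to the $N(\ideal{q}_v)$-th power, so under evaluation at $t_v$ the $\Frob_v$-action becomes $N(\ideal{q}_v)^{-1}\Frob_v$ acting on $A_{I_{F_v}}$. Taking $G_{F_{\infty,w_0}}/I_{F_v}$-invariants and then Pontryagin duals converts invariants of a divisible module into coinvariants of its Tate module, so
\[
H^1(F_{\infty,w_0},A)^{\mathrm{PD}} \;\simeq\; \bigl(T_{I_{F_v}}\bigr)_{G_{F_{\infty,w_0}}/I_{F_v}} \otimes (\text{Tate twist})
\]
on which $\gamma_v$ acts through $N(\ideal{q}_v)^{-1}\Frob_v$. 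Using the structure theorem to compute characteristic ideals of $\Lambda_v$-modules of the form $M/(1-N(\ideal{q}_v)^{-1}\Frob_v\gamma_v)M$ for $M=T_{I_{F_v}}\otimes_{\integer{}}\Lambda_v$ yields the characteristic ideal generated by $\det\bigl(1-\Frob_v\cdot N(\ideal{q}_v)^{-1}\gamma_v \mid V_{I_{F_v}}\bigr)=P_v(N(\ideal{q}_v)^{-1}\gamma_v)=\cal{P}_v$, and extending scalars back to $\Lambda$ preserves this generator.

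The main obstacle I anticipate is bookkeeping for the Tate twist: one must verify that the Frobenius appearing in $P_v(X)$ (acting on $V_{I_{F_v}}$, i.e.\ coinvariants, not invariants) matches up correctly with $\Frob_v$ acting on $H^1(I_{F_v},A)$ after the evaluation-at-$t_v$ identification, and that dualising from invariants of $A$ to coinvariants of $T$ does not introduce an unwanted sign or twist. Once this is handled, the rest is formal.
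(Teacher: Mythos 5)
Your proposal follows the same route as the paper's proof: reduce to a single place $w\mid v$ via induction of $\Lambda_v$-modules to $\Lambda$, collapse inflation--restriction over $G_{F_{\infty,w}}/I_{F_v}$ (prime to $p$), identify the local $H^1$ with a Tate-twisted inertia coinvariant, and read off the characteristic ideal from the $\Frob_v$-eigenvalues. The outline, the Shapiro step, the observation $I_{F_{\infty,w_0}}=I_{F_v}$, and the tame-inertia conjugation computation producing the $N(\ideal{q}_v)^{-1}$ twist all match the paper.

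There are, however, two genuine gaps. First, the step ``$H^1(I_{F_v},A)=\Hom(I_{F_v}^{(p)},A)$'' silently assumes $I_{F_v}$ acts trivially on $A$. The proposition is applied precisely at $v\in\Sigma_0\supset\Ram(A)$, where the action \emph{is} ramified, so this identification is not available as stated. The paper resolves this by introducing the unique subgroup $J_{F_v}\subset I_{F_v}$ of profinite order prime to $p$ with $I_{F_v}/J_{F_v}\simeq\Zp(1)$, passing through $H^1(G,A^{J_{F_v}})\simeq H^1(G,A_{J_{F_v}})\simeq H^1(\overline{I}_{F_v},A_{J_{F_v}})^{G/\overline{I}_{F_v}}$; the coinvariants $A_{J_{F_v}}$ (not $A$) are the right coefficient module, and your argument needs this replacement made explicit. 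Second, after taking $G_{F_{\infty,w_0}}/I_{F_v}$-invariants, the module is \emph{not} literally $M/(1-N(\ideal{q}_v)^{-1}\Frob_v\gamma_v)M$ with $M=T_{I_{F_v}}\otimes_{\integer{}}\Lambda_v$: only the eigenspaces on which $N(\ideal{q}_v)\alpha_i^{-1}$ is a principal unit survive (note also that twisting and taking invariants do not commute, since $\chi_{\mathrm{cyc}}$ is nontrivial on $G_{F_{\infty,w_0}}/I_{F_v}$, so writing the coinvariants of $T_{I_{F_v}}$ before inserting the Tate twist changes which eigenspaces survive). To conclude that $\cal{P}_v$ still generates the characteristic ideal, you must add the paper's final observation: for $\alpha_i$ with $N(\ideal{q}_v)\alpha_i^{-1}$ not a principal unit, the corresponding factor $1-\alpha_iN(\ideal{q}_v)^{-1}\gamma_v$ is a unit in $\integer{}[[\Gamma_v]]$, so dropping those eigenspaces does not change the ideal. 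Without this, the structure-theorem step overcounts.
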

\begin{proof}
We fix a place $w$ of $F_{\infty}$ lying above $v$. 
Let $\Gamma_v$ denote the decomposition subgroup of $\Gamma$ for $w|v$. 
Since $H_{s,\cal{W}}^1(F_{\infty,w},A)\otimes_{\integer{}[[\Gamma_v]]}\Lambda\simeq \prod_{w'|v}H_{s,\cal{W}}^1(F_{\infty,w'},A)$, it suffices to show that 
the characteristic ideal of the $\integer{}[[\Gamma_v]]$\nobreakdash-module 
$H_{s,\cal{W}}^1(F_{\infty,w},A)^{\mathrm{PD}}$ is generated by $\cal{P}_v$.
Let $R_{F_v}$ denote the ramification subgroup of $G_{F_v}$, which has profinite order prime to $p$. 
Since $I_{F_v}/R_{F_v}\simeq \prod_{(\ell,\ideal{q}_v)=1}\mathbb{Z}_{\ell}(1)$ 
(see, for example, \cite[Chapter IV, \S 2, Exercise 2]{SE}),
there exists a unique subgroup $J_{F_v}$ of $I_{F_v}$ such that 
$J_{F_v}$ has profinite order prime to $p$ and $I_{F_v}/J_{F_v}\simeq \Zp(1)$. 
We put $\overline{I}_{F_v}=I_{F_v}/J_{F_v}$ and $G=G_{F_{\infty,w}}/J_{F_v}$. 
Therefore we obtain 
\begin{align*}
H_{s,\cal{W}}^1(F_{\infty,w},A) &\simeq H^1(G,A^{J_{F_v}})\\
\nonumber &\simeq H^1(G,A_{J_{F_v}})\\
\nonumber &\simeq H^1(\overline{I}_{F_v},A_{J_{F_v}})^{G/\overline{I}_{F_v}}.
\end{align*}
Here the first isomorphism is obtained from the inflation-restriction sequence for $1\to J_{F_v} \to G_{F_{\infty,w}} \to G \to 1$ because $J_{F_v}$ has profinite order prime to $p$, 
the second isomorphism is obtained from that the canonical morphism $A^{J_{F_v}} \hookrightarrow A \twoheadrightarrow A_{J_{F_v}}$ induces an isomorphism $A^{J_{F_v}} \simeq A_{J_{F_v}}$ because $J_{F_v}$ has profinite order prime to $p$, 
and the last isomorphism is obtained from the inflation-restriction sequence for $1\to \overline{I}_{F_v} \to G \to G/\overline{I}_{F_v}\to 1$ because 
$G/\overline{I}_{F_v}$ has profinite order prime to $p$. 
Note that 
\[
H^1(\overline{I}_{F_v},A_{J_{F_v}})\simeq A_{I_{F_v}}(-1)
\]
as $G_{F_v}/I_{F_v}$-modules. 
Indeed, if $\epsilon_v$ is a topological generator of $\overline{I}_{F_v}\simeq \Zp(1)$, then we have $H^1(\overline{I}_{F_v},A_{J_{F_v}}) \simeq H^1(\overline{I}_{F_v},A_{J_{F_v}}/(\epsilon_v-1)A_{J_{F_v}})\simeq \Hom (\Zp(1),A_{I_{F_v}}) \simeq A_{I_{F_v}}(-1)$.
Let $\alpha_1,\cdots,\alpha_e$ denote the eigenvalues of $\Frob_v\in G_{F_v}/I_{F_v}$ acting on $A_{I_{F_v}}$. 
Then the eigenvalues of $\Frob_v$ acting on $(A_{I_{F_v}}(-1))^{\mathrm{PD}}$ are $N(\ideal{q}_v)\alpha_1^{-1},\cdots,N(\ideal{q}_v)\alpha_e^{-1}$ and hence, again using the fact that $G/\overline{I}_{F_v}$ has profinite order prime to $p$, those eigenvalues $N(\ideal{q}_v)\alpha_i^{-1}$ which are principal units are the eigenvalues of $\gamma_v$ acting on $(A_{I_{F_v}}(-1)^{G/\overline{I}_{F_v}})^{\mathrm{PD}}$. 
Therefore, our assertion follows from that 
$1-N(\ideal{q}_v)\alpha_i^{-1}\gamma_v\in \integer{}[[\Gamma_v]]$ is a unit if and only if $N(\ideal{q}_v)\alpha_i^{-1}$ is a principal unit. 
\end{proof}

%
\subsection{Selmer groups for $1$-dimensional representations}\label{subsection:Selmer for 1}
%

In this subsection, we compute the Selmer groups 
for $1$-dimensional representations under some assumptions. 

We assume that $F$ is a totally real number field. 
Let $\Sigma$ be a finite subset of $\Sigma(F)$ such that 
$\Sigma $ contains $\Sigma_p(F)\cup \Sigma_{\infty}(F)$. 
Let $\theta:\Gal(F_{\Sigma}/F)\to \integer{}^{\times}$ be a continuous homomorphism. 
Let $V_{\theta}$ denote the space $\theta\otimes_{\Zp}\Qp$ with the action of $\Gal(F_{\Sigma}/F)$ via $\theta$. 
Let $A_{\theta}$ denote the cofree $\integer{}$-module $\theta\otimes_{\Zp}(\Qp/\Zp)$ of corank $1$ with the action of $\Gal(F_{\Sigma}/F)$ via $\theta$. 
Let $K$ denote the extension of $F$ such that $\ker(\theta)=\Gal(F_{\Sigma}/K)$, and let $K_{\infty}$ denote the cyclotomic $\Zp$-extension of $K$.
We put $G=\Gal(K_{\infty}/F)$ and $\Delta=\Gal(K_{\infty}/F_{\infty})$. 
We assume that 
\[
(p,\sharp\Delta)=1.
\]
Then we can identify $\Gamma$ with a subgroup of $G$ such that 
$G=\Delta \times \Gamma$. 
Moreover, the restriction map induces an isomorphism:
\[
H^1(F_{\Sigma}/F_{\infty},A_{\theta}) \simeq H^1(F_{\Sigma}/K_{\infty},A_{\theta})^{\Delta}.
\]
Since $\Gal(F_{\Sigma}/K_{\infty})$ acts trivially on $A_{\theta}$, we have 
\[
H^1(F_{\Sigma}/K_{\infty},A_{\theta})=\Hom(\Gal(M_{\infty}^{\Sigma}/K_{\infty}),A_{\theta}),
\]
where $M_{\infty}^{\Sigma}$ denotes the maximal abelian pro-$p$ extension of $K_{\infty}$ unramified outside $\Sigma$. 
Let $M_{\infty}^{p,\infty}$ denote the maximal abelian pro-$p$ extension of $K_{\infty}$ unramified outside $\Sigma_p(F)\cup \Sigma_{\infty}(F)$. 
Let $L_{\infty}$ denote the maximal abelian pro-$p$ extension of $K_{\infty}$ unramified everywhere. 
We put $X_{\infty}=\Gal(M_{\infty}^{p,\infty}/K_{\infty})$ and $Y_{\infty}=\Gal(L_{\infty}/K_{\infty})$. 

If $\theta$ is totally even (resp. totally odd), 
then, for every place $v\in \Sigma_{\infty}(F)$, we have $d_v^{-}(V_{\theta})=0$ (resp. $d_v^{-}(V_{\theta})=1$). 
Then $V_{\theta}$ is a nearly ordinary Galois representation with Selmer weights $c_v(V_{\theta})=0$ (resp. $c_v(V_{\theta})=1$) for every place $v\in \Sigma_p(F)$. 
Now, for $B=A_{\theta}$ and $A_{\theta}[\varpi]$, 
we can define the Selmer groups $\Sel(F_{\infty},B)$ and the non-primitive Selmer groups $\Sel^{\Sigma_0}(F_{\infty},B)$ as \S \ref{subsection:def of Selmer}.

\begin{prop}\label{prop:Selmer for 1}
Let $\xi$ be the restriction $\theta\vert_{\Delta}$ of $\theta$ to $\Delta$ and $\Sigma_0=\Sigma \backslash \{ \Sigma_p(F)\cup \Sigma_{\infty}(F)\}$. 
Assume that $p$ is odd and $(p, \sharp\Delta)=1$. 
\begin{enumerate}[$(1)$]
\item \label{prop:Selmer for 1 str}
\begin{eqnarray*}
\Sel(F_{\infty},A_{\theta})\simeq
\left\{ \begin{array}{ll}
\Hom_{\integer{}}((X_{\infty}\otimes_{\Zp}\integer{})^{\xi}, A_{\theta})&
\If \theta\ \mathrm{is \ totally \ even},  \\[1mm]
\Hom_{\integer{}}((Y_{\infty}\otimes_{\Zp}\integer{})^{\xi}, A_{\theta})& 
\If \theta\ \mathrm{is \ totally \ odd}.   \\
\end{array} \right.
\end{eqnarray*}
In particular, $\Sel(F_{\infty},A_{\theta})$ is finitely generated $\Lambda$-cotorsion.

\item \label{prop:Selmer for 1 str Sigma0}
For $B=A_{\theta}$ and $A_{\theta}[\varpi]$, 
\begin{eqnarray*}
\ \ \ \,\Sel^{\Sigma_0}(F_{\infty},B)\simeq
\left\{ \begin{array}{ll}
H^1(F_{\Sigma}/F_{\infty},B)&
\If \theta\ \mathrm{is \ totally \ even},  \\[1mm]
\ker\left(H^1(F_{\Sigma}/F_{\infty},B) \to \displaystyle \prod_{w|v\in \Sigma_p(F)}H^1(I_{F_{\infty},w},B)\right)& 
\If \theta\ \mathrm{is \ totally \ odd}.   \\
\end{array} \right.
\end{eqnarray*}

\item \label{prop:Selmer for 1 vanishing}
Assume that $\xi$ is non-trivial if $\theta$ is totally even, and $\xi\neq\omega$ if $\theta$ is totally odd. 
Then 
\begin{align*}
H^0(F,A_{\theta}^{\ast}\otimes_{\integer{}} K_p/\integer{})=0.
\end{align*}
Moreover, $\Sel^{\Sigma_0}(F_{\infty},A_{\theta})$ is finitely generated $\Lambda$-cotorsion. 
\end{enumerate}
\end{prop}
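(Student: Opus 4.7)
My plan is to prove the three parts in the order (2), (1), (3), since each builds on the previous.

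For part (2), I would directly compute the Selmer weights in the one-dimensional setting. Since $n = \dim_{K_p} V_\theta = 1$, the complete flag of $V_\theta$ at each place above $p$ consists only of $0$ and $V_\theta$. In the totally even case, $c_v(V_\theta) = 0$ forces $V_v^{n-c_v} = V_\theta$, hence $A_w^{\mathcal{W}} = A_\theta$ and $A_\theta/A_w^{\mathcal{W}} = 0$, making $H^1_{s,\mathcal{W}}(F_{\infty,w}, B) = 0$; the condition at $p$ is thus vacuous, and since $\Sigma_0$ removes all local conditions outside $\Sigma_p(F) \cup \Sigma_\infty(F)$ (and local cohomology at real infinite places vanishes for odd $p$), we obtain $\Sel^{\Sigma_0}(F_\infty, B) \simeq H^1(F_\Sigma/F_\infty, B)$. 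In the totally odd case, $c_v = 1$ forces $A_w^{\mathcal{W}} = 0$, so $H^1_{s,\mathcal{W}}(F_{\infty,w}, B) = \im(H^1(F_{\infty,w}, B) \to H^1(I_{F_{\infty,w}}, B))$, yielding the kernel of the restriction to inertia at primes above $p$ as stated.

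For part (1), the starting point is the identification from the preamble
\[
H^1(F_\Sigma/F_\infty, A_\theta) \simeq \Hom_{\integer{}}((\Gal(M_\infty^\Sigma/K_\infty) \otimes_{\Zp}\integer{})^\xi, A_\theta),
\]
obtained via inflation--restriction using $(p, \sharp\Delta) = 1$ and the triviality of the action of $\Gal(F_\Sigma/K_\infty)$ on $A_\theta$. Combining with part (2), the Selmer condition at the remaining primes translates under the Hom identification to the hom factoring through the quotient of $\Gal(M_\infty^\Sigma/K_\infty)$ obtained by killing the local decomposition groups at primes $w|v$ with $v \in \Sigma \setminus (\Sigma_p \cup \Sigma_\infty)$. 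The key input is that on the $\xi$-isotypic component this split-at-non-$p$ condition coincides, modulo finite defects absorbed by the divisibility of $A_\theta$, with the unramified-outside-$p$ condition, so the quotient is identified with $X_\infty = \Gal(M_\infty^{p, \infty}/K_\infty)$. In the even case this yields the stated isomorphism; in the odd case, the additional unramified condition at primes above $p$ from part (2) cuts the quotient down to the everywhere-unramified quotient $Y_\infty = \Gal(L_\infty/K_\infty)$. Finite generation and $\Lambda$-cotorsionness follow from the $\Lambda$-torsion property of $Y_\infty$ (Iwasawa's classical theorem) in the odd case, and of $X_\infty^\xi$ for totally real $K$ via Wiles' Iwasawa main conjecture for totally real fields in the even case, noting that $K$ is totally real whenever $\theta$ is totally even.

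For part (3), the Galois character of the appropriate Kummer dual $A_\theta^*$ is $\omega\theta^{-1}$, so the vanishing $H^0(F, A_\theta^* \otimes_{\integer{}} K_p/\integer{}) = 0$ reduces to $\bar\theta \neq \bar\omega$ as characters of $G_F$. In the totally even case, $\bar\omega\bar\theta^{-1}$ is totally odd, hence non-trivial via evaluation at a complex conjugation; in the totally odd case, the assumption $\xi \neq \omega$ together with $(p, \sharp\Delta) = 1$ (so that reduction modulo $\varpi$ is faithful on prime-to-$p$ order characters of $\Delta$) forces $\bar\theta|_\Delta \neq \bar\omega|_\Delta$, hence $\bar\theta \neq \bar\omega$. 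The $\Lambda$-cotorsionness of $\Sel^{\Sigma_0}(F_\infty, A_\theta)$ then follows from Proposition \ref{prop:surj}, whose exact sequence identifies $\Sel^{\Sigma_0}/\Sel$ with a product of local $H^1$'s at primes in $\Sigma_0$, each $\Lambda$-cotorsion by the argument in that proof, combined with the cotorsionness of $\Sel(F_\infty, A_\theta)$ from part (1); the hypothesis $\xi$ non-trivial in the even case is used here to ensure the $\Lambda$-torsion of $X_\infty^\xi$ needed by part (1).

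The main obstacle I anticipate lies in part (1): the identification of the split-at-non-$p$ quotient with the unramified-outside-$p$ quotient on the $\xi$-isotypic component demands careful bookkeeping of how decomposition and inertia groups in the cyclotomic $\Zp$-extension interact with the $\Delta$-action, and crucially exploits the divisibility of $A_\theta$ to absorb finite defect terms that would otherwise obstruct the cleaner description via $X_\infty$ and $Y_\infty$.
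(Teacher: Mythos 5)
Your overall route matches the paper's: first compute the local Selmer conditions in the one-dimensional setting (part~(2)), then identify the Selmer group with a Hom group valued in $X_\infty$ or $Y_\infty$ (part~(1)), and deduce the vanishing and cotorsion statements (part~(3)). Parts~(2) and~(3) are essentially the paper's argument and are fine.

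The gap is exactly where you flagged it, in part~(1), and the repair is simpler than you anticipate. You assert that the ``split-at-non-$p$'' condition (vanishing of the restriction in $H^1(F_{\infty,w},A_\theta)$ for $w\mid v \in \Sigma_0$) coincides with the unramified condition ``modulo finite defects absorbed by the divisibility of $A_\theta$.'' That is not the mechanism, and divisibility plays no role here. The correct reason is cohomological: since $v$ is finite and prime to $p$, $F_{\infty,w}/F_v$ is the unramified $\Zp$-extension, so $G_{F_{\infty,w}}/I_{F_{\infty,w}} \simeq \prod_{\ell\neq p}\Z_\ell$ has profinite order prime to $p$. Because $A_\theta$ is $p$-primary, $H^1\!\left(G_{F_{\infty,w}}/I_{F_{\infty,w}}, A_\theta^{I_{F_{\infty,w}}}\right)=0$, hence $H^1(F_{\infty,w},A_\theta)\hookrightarrow H^1(I_{F_{\infty,w}},A_\theta)$. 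The split and unramified conditions therefore agree \emph{exactly}, not up to finite defect, and a homomorphism $\phi\in\Hom(\Gal(M_\infty^\Sigma/K_\infty),A_\theta)^\Delta$ lies in $\Sel(F_\infty,A_\theta)$ iff it kills the inertia subgroups above $\Sigma_0$ (and, in the odd case, also the inertia above $p$), i.e.\ iff it factors through $X_\infty$ (resp.\ $Y_\infty$). This is precisely the argument the paper uses implicitly; note it is the same injectivity-on-$H^1$ fact that appears explicitly in the proof of Proposition~\ref{prop:Selmer mod pi}.

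Two lesser points. First, for $\Lambda$-cotorsion in the even case you invoke Wiles' main conjecture; the paper cites only Iwasawa's classical theorem that $X_\infty$ and $Y_\infty$ are finitely generated $\Lambda$-torsion over the cyclotomic $\Zp$-extension, which is the right-sized tool and is what you should cite. Second, your observation in part~(3) that, when $\theta$ is totally even, $\bar\theta\neq\bar\omega$ holds automatically by parity (so the hypothesis $\xi\neq\mathbf 1$ is not needed for the $H^0$-vanishing) is correct; as you note, the paper's hypothesis on $\xi$ in the even case is used for the cotorsion conclusion via Proposition~\ref{prop:surj}, and more substantially in Lemma~\ref{lem:H^2=0}.
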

\begin{proof}
(\ref{prop:Selmer for 1 str}), (\ref{prop:Selmer for 1 str Sigma0}) 
First we treat the case where $\theta$ is totally even. 
Let $\mathbf{0}$ denote the set of the Selmer weights for $V_{\theta}$ as explained before this proposition. 
Then we have $A_{\theta,w}^{\mathbf{0}}=A_{\theta}$ for every place $w$ of $F_{\infty}$ lying above $v\in \Sigma_p(F)$ and hence, for $B=A_{\theta}$ and $A_{\theta}[\varpi]$, we have
\[
H_{s,\mathbf{0}}^1(F_{\infty,w},B)=0,
\]
which proves (\ref{prop:Selmer for 1 str Sigma0}). 
Moreover the assumption $(p,\sharp \Delta)=1$ implies (\ref{prop:Selmer for 1 str}):
\[
\Sel(F_{\infty}, A_{\theta})
=\Hom(X_{\infty},A_{\theta})^{\Delta}=\Hom_{\integer{}}((X_{\infty}\otimes_{\Zp}\integer{})^{\xi},A_{\theta}).
\]

Next we treat the case where $\theta$ is totally odd. 
Let $\mathbf{1}$ denote the set of the Selmer weights for $V_{\theta}$ as explained before this proposition. 
Then we have 
$A_{\theta,w}^{\mathbf{1}}=0$ for every place $w$ of $F_{\infty}$ lying above $v\in \Sigma_p(F)$ and hence, for $B=A_{\theta}$ and $A_{\theta}[\varpi]$, we have
\begin{align*}
H_{s,\mathbf{1}}^1(F_{\infty,w},B)&=\im\left( H^1(F_{\infty,w},B)\to H^1(I_{F_{\infty,w}},B)\right),
\end{align*}
which proves (\ref{prop:Selmer for 1 str Sigma0}). 
Moreover the assumption $(p,\sharp \Delta)=1$ implies (\ref{prop:Selmer for 1 str}): 
\[
\Sel(F_{\infty}, A_{\theta})
=\Hom(Y_{\infty},A_{\theta})^{\Delta}=\Hom_{\integer{}}((Y_{\infty}\otimes_{\Zp}\integer{})^{\xi},A_{\theta}).
\]

According to a well-known theorem of K.\,Iwasawa (\cite{Iwa59}, \cite{Iwa73}), $X_{\infty}$ and $Y_{\infty}$ are finitely generated $\Lambda$-torsion, and hence $\Sel(F_{\infty},A_{\theta})$ is finitely generated $\Lambda$-cotorsion as desired. 

(\ref{prop:Selmer for 1 vanishing}) 
We note that the vanishing of $H^0(F,A_{\theta}^{\ast}\otimes_{\integer{}}K_p/\integer{})$ is equivalent to the vanishing of its $\varpi$-torsion $H^0\left(F,\Hom(A_{\theta}[\varpi],\kappa(1))\right)$. 
Thus, the first assertion follows from our assumption on $\xi$. 
Now, by combining with Proposition \ref{prop:surj} and (\ref{prop:Selmer for 1 str}), $\Sel^{\Sigma_0}(F_{\infty},A_{\theta})$ is finitely generated $\Lambda$-cotorsion as desired. 
\end{proof}

%
\subsection{Selmer groups for Hilbert modular forms}\label{subsection:algebraic Iwasawa invariants}
%

In this subsection, we apply the general theory above to the Iwasawa main conjecture of Hilbert modular forms. 

Let $\mathbf{f}\in S_2(\ideal{n},\integer{})$ be a normalized Hecke eigenform for all $T(\ideal{q})$ and $U(\ideal{q})$ with character $\varepsilon$. 
We assume that $\mathbf{f}$ is $p$-ordinary, that is, for every prime ideal $\ideal{p}$ of $\ideal{o}_F$ lying above $p$, the Hecke eigenvalue $C(\ideal{p},\mathbf{f})$ of $T(\ideal{p})$ is prime to $p$. Let 
\[
\rho_{\mathbf{f}} \colon G_{F} \rightarrow \GL(T_{\mathbf{f}})\simeq \GL_2(\integer{})
\]
denote the associated Galois representation, which satisfies the following (1), (2), (3), (4):
\begin{enumerate}[$(1)$]
\item $\rho_{\mathbf{f}}$ is unramified at every prime ideal $\ideal{q}$ of $\ideal{o}_F$ such that $\ideal{q} \nmid\ideal{n} p$;
\item $\Tr(\rho_{\mathbf{f}}(\Frob_{\ideal{q}}))=C(\ideal{q},\mathbf{f})$ for every prime ideal $\ideal{q}$ of $\ideal{o}_F$ such that $\ideal{q} \nmid\ideal{n} p$;
\item $\det(\rho_{\mathbf{f}}(\Frob_{\ideal{q}}))=\varepsilon(\ideal{q})N(\ideal{q})$ for every prime ideal $\ideal{q}$ of $\ideal{o}_F$ such that $\ideal{q} \nmid\ideal{n} p$;
\item $\rho_{\mathbf{f}}$ is totally odd. 
\end{enumerate}

Let $V_{\mathbf{f}}$ denote the space $T_{\mathbf{f}}\otimes_{\Z_p} \Qp$ with the action of $G_{F}$ via $\rho_{\mathbf{f}}$. 
Let $A_{\mathbf{f}}$ denote the cofree $\integer{}$-module $T_{\mathbf{f}}\otimes_{\Z_p}(\Q_p/\Z_p)$ of corank $2$ with the action of $G_{F}$ via $\rho_{\mathbf{f}}$. 

From the result of \cite[Theorem 2.1.4]{Wil88}, for every prime ideal $\ideal{p}$ of $\ideal{o}_F$ lying above $p$, the restriction of $\rho_{\mathbf{f}}$ to the decomposition group $G_{F_{\ideal{p}}}$ is of the form 
\begin{align}\label{rho:decom}
\rho_{\mathbf{f}} \big\vert_{G_{F_{\ideal{p}}}} \sim 
\begin{pmatrix} 
\rho_1&\ast\\ 
0&\rho_2\\
\end{pmatrix},
\end{align}
where 
$\rho_2 \colon G_{F_{\ideal{p}}} \rightarrow \integer{}^{\times}$ is unramified such that $\rho_2$ sends the arithmetic Frobenius to a unit\nobreakdash-root of $X^2-C(\ideal{p},\mathbf{f})X+\varepsilon(\ideal{p})N(\ideal{p})=0$. 
Then, for every place $w$ of $F_{\infty}$ lying above $p$, $A_{\mathbf{f},w}$ is defined by the following exact sequence of $\integer{}[G_{F_{\ideal{p}_v}}]$\nobreakdash-modules: 
\begin{align*}
0 \rightarrow A_{\mathbf{f},w} \rightarrow A_{\mathbf{f}} \rightarrow A_{\mathbf{f}}/A_{\mathbf{f},w} \rightarrow 0,
\end{align*}
where $v$ denotes the restriction of $w$ to $F$ and $\ideal{p}_v$ denotes the prime ideal of $\ideal{o}_F$ corresponding to $v$. 
Here $G_{F_{\ideal{p}_v}}$ acts on $A_{\mathbf{f},w}$ via the character 
$\rho_1 \colon G_{F_{\ideal{p}_v}} \rightarrow \integer{}^{\times}$, 
and on $A_{\mathbf{f}}/A_{\mathbf{f},w}$ via the character 
$\rho_2 \colon G_{F_{\ideal{p}_v}} \rightarrow \integer{}^{\times}$. 
Thus, $V_{\mathbf{f}}$ is a nearly ordinary Galois representation with Selmer weights $c_v(V_{\mathbf{f}})=1$ for every place $v \in \Sigma_p(F)$. 

Let $\Sigma_0$ be the set of all finite places $v$ of $F$ such that $\ideal{q}_v$ divides $\ideal{n}$, where $\ideal{q}_v$ denotes the prime ideal of $\ideal{o}_F$ corresponding to $v$.
We put $\Sigma=\Sigma_0\cup\Sigma_p(F)\cup\Sigma_{\infty}(F)$. 
Now, for $B=A_{\mathbf{f}}$ and $A_{\mathbf{f}}[\varpi]$, we can define the Selmer groups $\Sel(F_{\infty},B)$ and the non\nobreakdash-primitive Selmer groups $\Sel^{\Sigma_0}(F_{\infty},B)$ as \S \ref{subsection:def of Selmer}.

Let $\kappa$ denote the residue field of $\integer{}$. 
We assume that 
\begin{align*}
\mbox{(RR)}\ \,&&\mbox{the residual representation } \bar{\rho}_{\mathbf{f}}: G_{F}\to \GL_2(\kappa)\ \mbox{is reducible and of the form}&& 
\end{align*}
\[
\bar{\rho}_{\mathbf{f}} \sim 
\begin{pmatrix} 
\bar{\varphi}&\ast\\ 
0&\bar{\psi}\\
\end{pmatrix},
\]
that is, there exists an exact sequence of $\kappa[G_{F}]$-modules
\begin{align}\label{rho:bar}
0 \rightarrow \Phi \rightarrow A_{\mathbf{f}}[\varpi] \rightarrow \Psi \rightarrow 0,
\end{align}
where $G_{F}$ acts on $\Phi$ via the character 
$\bar{\varphi} \colon G_{F} \rightarrow \kappa^{\times}$, 
and on $\Psi$ via the character 
$\bar{\psi}\colon G_{F} \rightarrow \kappa^{\times}$. 
Let $(A_{\varphi},\varphi)$ (resp. $(A_{\psi},\psi)$) be an $\integer{}$-module $A_{\varphi}\simeq K_p/\integer{}$ (resp. $A_{\psi}\simeq K_p/\integer{}$) with the action of $G_F$ via the character $\varphi=\chi_{\text{cyc}}\varepsilon \psi^{-1}:G_F \to \kappa^{\times} \hookrightarrow \integer{}^{\times}$ (resp. $\psi:G_F \xrightarrow{\bar{\psi}} \kappa^{\times} \hookrightarrow \integer{}^{\times}$), where $\chi_{\text{cyc}}$ is the $p$-adic cyclotomic character of $G_F$. 
Note that $A_{\varphi}[\varpi]=\Phi$ and $A_{\psi}[\varpi]=\Psi$.

We assume the following parity condition: 
\begin{align*}
\mbox{(Parity)} \ \ \ 
&\varphi \ \mbox{is ramified at every prime ideal $\ideal{p}$ of $\ideal{o}_F$ lying above $p$ and totally even, and}\\
&\psi \ \mbox{is unramified at every prime ideal $\ideal{p}$ of $\ideal{o}_F$ lying above $p$ and totally odd.}
\end{align*}
Hence, for every prime ideal $\ideal{p}$ of $\ideal{o}_F$ lying above $p$, we have $\psi(\Frob_{\ideal{p}})\equiv C(\ideal{p},\mathbf{f})\,(\bmod\,{\varpi})$ by (\ref{rho:decom}).
We also assume that 
\begin{align*}
\mbox{($\mu=0$)}&&\ \ \ \ \ \ \ \ \ \ \ \ \ \ \mu(\Sel(F_{\infty},A_{\varphi})^{\PD})=0\ \text{and}\  \mu(\Sel(F_{\infty},A_{\psi})^{\PD})=0.\ \ \ \ \ \ \ \ \ \ \ \ \ \ \ \ \ \ \ && 
\end{align*}

In order to prove Theorem \ref{thm:Selmer for 1 and 2} below,
we need the following three lemmas.

\begin{lem}\label{lem:H^0=0}
Assume that $p$ is odd. Then we have 
\begin{align*}
&H^0(F,\Phi)=0,\ H^0(F,\Psi)=0,\\
&H^0(F_{\infty},\Phi)=0,\ H^0(F_{\infty},\Psi)=0.
\end{align*}
\end{lem}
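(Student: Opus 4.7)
The plan is to reduce each of the four vanishing statements to the observation that the relevant character is non-trivial on the relevant Galois group, and then exploit the one-dimensionality of $\Phi$ and $\Psi$ over $\kappa$ together with a standard pro-$p$ versus prime-to-$p$ argument.

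First I would treat the statements over $F$. Since $\Phi$ and $\Psi$ are one-dimensional $\kappa$-vector spaces on which $G_F$ acts via $\bar{\varphi}$ and $\bar{\psi}$ respectively, the cohomology groups $H^0(F,\Phi)=\Phi^{G_F}$ and $H^0(F,\Psi)=\Psi^{G_F}$ vanish if and only if these characters are non-trivial. For $\bar{\psi}$, the assumption that $\psi$ is totally odd forces $\psi(c)=-1$ for any complex conjugation $c\in G_F$, and $-1\neq 1$ in $\kappa$ because $p$ is odd. For $\bar{\varphi}$, the assumption that $\varphi$ is ramified at every prime ideal of $\ideal{o}_F$ lying above $p$ is incompatible with $\varphi$ being trivial, since a trivial character is unramified everywhere.

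Next I would pass to $F_\infty$. Since $\Phi$ remains a one-dimensional $\kappa$-vector space, $\Phi^{G_{F_\infty}}$ is either $0$ or the whole $\Phi$. If it equalled $\Phi$, then $\bar{\varphi}$ would be trivial on $G_{F_\infty}$ and would factor through $\Gamma=\Gal(F_\infty/F)$. But $\Gamma$ is pro-$p$ while $\kappa^\times$ has order prime to $p$, so any continuous homomorphism $\Gamma\to\kappa^\times$ must be trivial; this would force $\bar{\varphi}=1$, contradicting the conclusion of the previous step. The identical argument, with $\bar{\psi}$ in place of $\bar{\varphi}$, handles $\Psi^{G_{F_\infty}}$.

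There is no serious obstacle here: the lemma is an essentially formal consequence of the parity and ramification hypotheses (Parity) combined with the pro-$p$ vs.\ prime-to-$p$ observation. The only point requiring a little care is that one should deduce non-triviality of $\bar{\varphi}$ from the explicit hypothesis that $\varphi$ is ramified at every $\ideal{p}\mid p$, rather than try to extract it from the formula $\varphi=\chi_{\mathrm{cyc}}\varepsilon\psi^{-1}$ directly, since a priori cancellations among $\omega$, $\bar{\varepsilon}$ and $\bar{\psi}^{-1}$ could occur.
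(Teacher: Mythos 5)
Your proof is correct and takes essentially the same route as the paper: vanishing over $F$ from (Parity) and the oddness of $p$, then the reduction to $F_\infty$ via the pro-$p$ nature of $\Gamma$ (the paper states this reduction up front, "Since $\Gamma$ is a pro-$p$ group, it suffices to show...", while you phrase it as the character factoring through a pro-$p$ quotient, but these are the same observation). Your final remark, that one should use the ramification hypothesis on $\varphi$ directly rather than try to read off non-triviality from $\varphi=\chi_{\mathrm{cyc}}\varepsilon\psi^{-1}$, matches exactly the parenthetical justification the paper gives.
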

\begin{proof}
Since $\Gamma$ is a pro-$p$ group, it suffices to show that 
$H^0(F,\Phi)=0$ and $H^0(F,\Psi)=0$.
The condition (Parity) implies that $H^0 (F,\Phi)=0$ (because $\varphi$ is ramified at every prime ideal $\ideal{p}$ of $\ideal{o}_F$ lying above $p$) and $H^0 (F,\Psi)=0$ (because $\psi$ is totally odd and $p$ is odd) as desired. 
\end{proof}

\begin{lem}\label{lem:A_f=0}
Assume that $p$ is odd. Then we have 
\[
H^0 (F,A_{\mathbf{f}}[\varpi])=0. 
\]
\end{lem}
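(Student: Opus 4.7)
The plan is to deduce this directly from the previous lemma using the filtration on $A_{\mathbf{f}}[\varpi]$ coming from the reducibility assumption (RR). Recall that by (\ref{rho:bar}) we have the short exact sequence of $\kappa[G_F]$-modules
\[
0 \rightarrow \Phi \rightarrow A_{\mathbf{f}}[\varpi] \rightarrow \Psi \rightarrow 0,
\]
where $G_F$ acts on $\Phi$ (resp.\ $\Psi$) through $\bar{\varphi}$ (resp.\ $\bar{\psi}$).

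First I would take $G_F$-invariants, which is a left exact functor, to obtain the exact sequence
\[
0 \rightarrow H^0(F,\Phi) \rightarrow H^0(F,A_{\mathbf{f}}[\varpi]) \rightarrow H^0(F,\Psi).
\]
Then I would apply Lemma \ref{lem:H^0=0}, which under the hypothesis that $p$ is odd (together with the parity condition (Parity)) gives $H^0(F,\Phi) = 0$ and $H^0(F,\Psi) = 0$. Sandwiched between two zeros, the middle term $H^0(F,A_{\mathbf{f}}[\varpi])$ must also vanish.

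There is essentially no obstacle here; the lemma is a formal consequence of Lemma \ref{lem:H^0=0} and the left exactness of invariants. The only point to double\nobreakdash-check is that the sequence (\ref{rho:bar}) is indeed a sequence of $G_F$-modules (not merely of $G_{F_{\ideal{p}}}$-modules for places above $p$), which is exactly how it is stated in the assumption (RR).
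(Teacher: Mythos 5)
Your proof is correct and follows exactly the same route as the paper: take $G_F$-invariants of the short exact sequence $(\ref{rho:bar})$, obtain $0 \to H^0(F,\Phi) \to H^0(F,A_{\mathbf{f}}[\varpi]) \to H^0(F,\Psi)$, and apply Lemma \ref{lem:H^0=0}. Nothing to add.
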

\begin{proof}
The sequence (\ref{rho:bar}) induces an exact sequence 
\[
0\to H^0 (F,\Phi)\to H^0 (F,A_{\mathbf{f}}[\varpi])\to H^0 (F,\Psi).
\]
Now our assertion follows from Lemma \ref{lem:H^0=0}. 
\end{proof}

\begin{lem}\label{lem:H^2=0}
Assume that $p$ is odd. Then we have 
\[
H^2(F_{\Sigma}/F_{\infty},\Phi)=0.
\]
\end{lem}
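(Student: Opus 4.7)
The plan is to exploit the parity hypotheses on $\varphi$ together with the short exact sequence
\[
0 \to \Phi \to A_{\varphi} \xrightarrow{\varpi} A_{\varphi} \to 0.
\]
The induced long exact sequence of $\Gal(F_{\Sigma}/F_{\infty})$-cohomology contains
\[
H^1(F_{\Sigma}/F_{\infty}, A_{\varphi}) \xrightarrow{\varpi} H^1(F_{\Sigma}/F_{\infty}, A_{\varphi}) \to H^2(F_{\Sigma}/F_{\infty}, \Phi) \to H^2(F_{\Sigma}/F_{\infty}, A_{\varphi}) \xrightarrow{\varpi} H^2(F_{\Sigma}/F_{\infty}, A_{\varphi}),
\]
so it suffices to prove (i) $H^2(F_{\Sigma}/F_{\infty}, A_{\varphi}) = 0$, and (ii) $H^1(F_{\Sigma}/F_{\infty}, A_{\varphi})$ is $\varpi$-divisible.

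For (ii), I would first note that, since $\varphi$ is totally even, Proposition \ref{prop:Selmer for 1} (\ref{prop:Selmer for 1 str Sigma0}) identifies $H^1(F_{\Sigma}/F_{\infty}, A_{\varphi})$ with $\Sel^{\Sigma_0}(F_{\infty}, A_{\varphi})$. Proposition \ref{prop:surj} and the hypothesis ($\mu=0$) give $\mu(\Sel^{\Sigma_0}(F_{\infty}, A_{\varphi})^{\PD})=0$, while the theorem of Weston \cite[Proposition 1.8 (2)]{We} used in the proof of Proposition \ref{prop:corank=dim}, combined with Proposition \ref{prop:generator} (which ensures each local factor has characteristic ideal coprime to $\varpi$), implies that $\Sel^{\Sigma_0}(F_{\infty}, A_{\varphi})^{\PD}$ has no nonzero finite $\Lambda$-submodule. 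Any finitely generated $\Lambda$-torsion module with $\mu=0$ and no nonzero finite submodule embeds into $\bigoplus_j \Lambda/(f_j(T))$ with $f_j$ distinguished polynomials, hence is $\varpi$-torsion free as an $\integer{}$-module; Pontryagin-dually, $H^1(F_{\Sigma}/F_{\infty}, A_{\varphi})$ is $\varpi$-divisible.

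For (i), I would apply Poitou--Tate duality to the finite coefficients $A_{\varphi}[\varpi^m]$ at each finite layer $F_n \subset F_{\infty}$ and pass to the direct limit; the standard weak-Leopoldt argument over the cyclotomic $\Z_p$-extension reduces the vanishing of $H^2(F_{\Sigma}/F_{\infty}, A_{\varphi})$ to the vanishing of $(A_{\varphi}^{\ast})^{G_{F_{\infty}}}$ (and the analogous local vanishings at primes of $\Sigma$). The module $A_{\varphi}^{\ast}=\Hom(T_{\varphi},\mu_{p^{\infty}})$ carries the Galois action $\chi_{\mathrm{cyc}}\varphi^{-1}$, whose reduction mod $\varpi$ is $\omega\bar{\varphi}^{-1}$. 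Since $\bar{\varphi}$ is totally even while $\omega$ is totally odd, $\omega\bar{\varphi}^{-1}$ is a nontrivial character of $G_F$; because $\Gamma=\Gal(F_{\infty}/F)$ is pro-$p$ whereas $\kappa^{\times}$ has order prime to $p$, any character $G_F\to\kappa^{\times}$ trivial on $G_{F_{\infty}}$ is already trivial on $G_F$. Therefore $\omega\bar{\varphi}^{-1}|_{G_{F_{\infty}}}\neq 1$, so $(A_{\varphi}^{\ast})^{G_{F_{\infty}}}=0$. The ramification hypothesis on $\varphi$ at every prime of $\ideal{o}_F$ above $p$ ensures the analogous vanishing $H^0(F_{\infty,w},A_{\varphi}^{\ast})=0$ at those primes via local inertia.

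The main obstacle is step (i): one must spell out Poitou--Tate duality in the Iwasawa-theoretic direct limit and verify that the local $H^2$ contributions at primes in $\Sigma$ (especially those above $p$, where the ramification of $\varphi$ is used) also vanish. Once the vanishing of $(A_{\varphi}^{\ast})^{G_{F_{\infty}}}$ and its local analogues has been secured, the global vanishing of $H^2(F_{\Sigma}/F_{\infty}, A_{\varphi})$ follows by the standard weak-Leopoldt mechanism, and (i)--(ii) together yield $H^2(F_{\Sigma}/F_{\infty},\Phi)=0$.
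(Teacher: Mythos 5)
Your overall strategy matches the paper's: use the long exact sequence from $0 \to \Phi \to A_\varphi \xrightarrow{\varpi} A_\varphi \to 0$ to reduce the lemma to the $\varpi$-divisibility of $H^1(F_\Sigma/F_\infty, A_\varphi)$ and the vanishing of $H^2(F_\Sigma/F_\infty, A_\varphi)$, and your argument for the divisibility (your step (ii)) is essentially the paper's. The parenthetical appeal to Proposition \ref{prop:generator} is not what the paper uses there — the equality $\mu(\Sel^{\Sigma_0}(F_\infty,A_\varphi)^{\PD}) = \mu(\Sel(F_\infty,A_\varphi)^{\PD})$ comes directly from Proposition \ref{prop:surj} — but that is harmless.

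The genuine gap is in your step (i). The claim that a ``standard weak-Leopoldt argument\dots reduces the vanishing of $H^2(F_\Sigma/F_\infty, A_\varphi)$ to the vanishing of $(A_\varphi^*)^{G_{F_\infty}}$ (and the analogous local vanishings)'' is not a correct reduction. The tail of the nine-term Poitou--Tate sequence in the direct limit reads
\[
H^2(F_\Sigma/F_\infty, A_\varphi) \longrightarrow \prod_{w|v\in\Sigma} H^2(F_{\infty,w}, A_\varphi) \longrightarrow H^0(F_\Sigma/F_\infty, A_\varphi^*)^{\PD} \longrightarrow 0,
\]
so knowing that the two right-hand terms vanish (which your parity argument does correctly establish) only tells you the localization map has zero image; it tells you nothing about $H^2(F_\Sigma/F_\infty, A_\varphi)$ itself. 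To conclude you would need the preceding map $H^1(F_\Sigma/F_\infty, A_\varphi^*)^{\PD} \to H^2(F_\Sigma/F_\infty, A_\varphi)$ to be zero, which is tantamount to the very vanishing statement you are trying to prove. That the $H^0$-vanishing of the Kummer dual cannot be the decisive input is visible already in the simplest example: if $\mu_p \subset F$ and $A=\Qp/\Zp$, then $(A^*)^{G_{F_\infty}} = \mu_{p^\infty}(F_\infty) \neq 0$, and yet weak Leopoldt $H^2(F_\Sigma/F_\infty,\Qp/\Zp)=0$ holds for the cyclotomic $\Zp$-extension.

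Moreover, your write-up treats (i) and (ii) as logically independent, whereas in the paper the vanishing of $H^2$ is \emph{deduced from} the divisibility of $H^1$. Once $H^1(F_\Sigma/F_\infty, A_\varphi)$ is known to be $\Lambda$-cotorsion, Greenberg's corank formula [Gre89, \S 4, Proposition 3] (whose archimedean contributions vanish since $\varphi$ is totally even) forces $\corank_\Lambda H^2(F_\Sigma/F_\infty, A_\varphi) = 0$, and since $H^2(F_\Sigma/F_\infty, A_\varphi)$ is $\Lambda$-cofree by [Gre89, \S 4, Proposition 4], it must be zero. The real content behind ``weak Leopoldt'' here is exactly the $\Lambda$-cotorsion-ness of $H^1$, which ultimately traces back through Proposition \ref{prop:Selmer for 1} to Iwasawa's theorem that $X_\infty$ is $\Lambda$-torsion. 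Either reproduce that corank-plus-cofreeness argument, or cite the weak Leopoldt theorem for character twists of the cyclotomic $\Zp$-extension as a black box (descending from $K_\infty$ to $F_\infty$ using that $\Delta$ has order prime to $p$); but you cannot obtain it from the vanishing of $H^0$ of the Kummer dual alone.
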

\begin{proof}
The exact sequence $0 \rightarrow \Phi \rightarrow A_{\varphi} \xrightarrow{\times\varpi} A_{\varphi} \rightarrow 0$ induces a long exact sequence 
\[
H^1(F_{\Sigma}/F_{\infty},A_{\varphi})\xrightarrow{\times\varpi} H^1(F_{\Sigma}/F_{\infty},A_{\varphi}) 
\rightarrow H^2(F_{\Sigma}/F_{\infty},\Phi) \rightarrow H^2(F_{\Sigma}/F_{\infty},A_{\varphi}).
\]
Thus, for the proof, it is enough to show the following:
\begin{align*}
&\text{(i)} \ H^1(F_{\Sigma}/F_{\infty},A_{\varphi}) \ \text{is divisible;}\\
&\text{(ii)} \ H^2(F_{\Sigma}/F_{\infty},A_{\varphi})=0. 
\end{align*}

First we prove (i). 
Since $\varphi$ is totally even, we have $H^1(F_{\Sigma}/F_{\infty},A_{\varphi})=\Sel^{\Sigma_0}(F_{\infty},A_{\varphi})$ by Proposition \ref{prop:Selmer for 1} (\ref{prop:Selmer for 1 str Sigma0}). 
Since $\varphi$ is non-trivial, Proposition \ref{prop:Selmer for 1} (\ref{prop:Selmer for 1 vanishing}) implies that $\Sel^{\Sigma_0}(F_{\infty},A_{\varphi})$ is finitely generated $\Lambda$-cotorsion, and $\mu(\Sel^{\Sigma_0}(F_{\infty},A_{\varphi})^{\PD})=0$ by Proposition \ref{prop:surj} and the assumption $(\mu=0)$.
Moreover, $\Sel^{\Sigma_0}(F_{\infty},A_{\varphi})$ has no proper $\Lambda$-submodules of finite index by the proof of Proposition \ref{prop:corank=dim} and the assumption $(\mu=0)$. 
Therefore, by combining them, $H^1(F_{\Sigma}/F_{\infty},A_{\varphi})$ is divisible as desired. 

Next we prove (ii). By the above (i), we know $\corank_{\Lambda}(H^1(F_{\Sigma}/F_{\infty},A_{\varphi}))=0$. 
Then, by \cite[\S 4, Proposition 3]{Gre89}, we have $\corank_{\Lambda}(H^2(F_{\Sigma}/F_{\infty},A_{\varphi}))=0$ and hence 
$H^2(F_{\Sigma}/F_{\infty},A_{\varphi})$ is finitely generated $\Lambda$-cotorsion. 
Since $H^2(F_{\Sigma}/F_{\infty},A_{\varphi})$ is $\Lambda$-cofree by \cite[\S 4, Proposition 4]{Gre89}, we obtain $H^2(F_{\Sigma}/F_{\infty},A_{\varphi})=0$ as desired. 
\end{proof}

\begin{thm}\label{thm:Selmer for 1 and 2}
Assume that $p$ is odd. Then, under the above assumptions $\mathrm{(RR)}$, $\mathrm{(Parity)}$, and $(\mu=0)$, 
$\Sel(F_{\infty},A_{\mathbf{f}})$ and $\Sel^{\Sigma_0}(F_{\infty},A_{\mathbf{f}})$ are finitely generated $\Lambda$-cotorsion, and the following equality holds$:$
\begin{align}\label{thm:Selmer for 1 and 2:corank}
\displaystyle
\corank_{\integer{}}(\Sel^{\Sigma_0}(F_{\infty},A_{\mathbf{f}}))
=\corank_{\integer{}}(\Sel^{\Sigma_0}(F_{\infty},A_{\varphi}))+\corank_{\integer{}}(\Sel^{\Sigma_0}(F_{\infty},A_{\psi})).
\end{align}
\end{thm}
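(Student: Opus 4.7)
\medskip

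\noindent\textbf{Proof plan.} Following the strategy of Greenberg--Vatsal, I would exploit the reducibility of $\bar{\rho}_{\mathbf{f}}$ to reduce everything to the two characters $\varphi$ and $\psi$, whose Selmer groups are controlled by Proposition \ref{prop:Selmer for 1}. The heart of the argument is to produce, as predicted in Step 1 of the Introduction, the short exact sequence
\[
0 \rightarrow \Sel^{\Sigma_0}(F_\infty, A_{\varphi})[\varpi] \rightarrow \Sel(F_\infty, A_{\mathbf{f}})[\varpi] \rightarrow \Sel^{\Sigma_0}(F_\infty, A_{\psi})[\varpi] \rightarrow 0.
\]
To obtain this, start from $0 \to \Phi \to A_{\mathbf{f}}[\varpi] \to \Psi \to 0$ and take Galois cohomology over $F_\Sigma/F_\infty$: Lemma \ref{lem:H^0=0} gives injectivity of $H^1(F_\Sigma/F_\infty,\Phi) \hookrightarrow H^1(F_\Sigma/F_\infty, A_{\mathbf{f}}[\varpi])$, and Lemma \ref{lem:H^2=0} makes the map onto $H^1(F_\Sigma/F_\infty,\Psi)$ surjective. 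Then check that the local Selmer conditions match: at $v\in \Sigma_0$ all three conditions are the full $H^1$, while at $v \mid p$ the assumption (Parity) ensures $A_{\mathbf{f},w}^{\mathcal{W}}[\varpi] = \Phi$, so the Selmer-weight filtration on $A_{\mathbf{f}}[\varpi]$ recovers the filtration on the right side. A snake-lemma argument then yields the exact sequence with $[\varpi]$ inside each $\Sel^{\Sigma_0}(F_\infty, -[\varpi])$. Applying Proposition \ref{prop:Selmer mod pi} (its hypotheses being guaranteed by Lemma \ref{lem:A_f=0}, Lemma \ref{lem:H^0=0}, ordinarity and (Parity)) rewrites each term as $\Sel^{\Sigma_0}(F_\infty, -)[\varpi]$. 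Finally, Proposition \ref{prop:generator} applied to the ordinary $A_{\mathbf{f}}$ at each $v\in \Sigma_0$ makes the local Euler factor $\mathcal{P}_v$ a unit in $\Lambda$, giving the identification $\Sel(F_\infty, A_{\mathbf{f}}) = \Sel^{\Sigma_0}(F_\infty, A_{\mathbf{f}})$ that replaces the middle term by $\Sel(F_\infty, A_{\mathbf{f}})[\varpi]$.

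\medskip

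\noindent With the exact sequence in hand, $\Lambda$-cotorsionness follows formally. By Proposition \ref{prop:Selmer for 1}(\ref{prop:Selmer for 1 vanishing}) and Proposition \ref{prop:surj}, both $\Sel^{\Sigma_0}(F_\infty, A_\varphi)$ and $\Sel^{\Sigma_0}(F_\infty, A_\psi)$ are finitely generated $\Lambda$-cotorsion; assumption $(\mu=0)$ together with Proposition \ref{prop:surj} transports $\mu = 0$ from the primitive Selmer groups to these non-primitive ones, so their $\varpi$-torsion is finite. By the exact sequence, $\Sel(F_\infty, A_{\mathbf{f}})[\varpi]$ is finite; Pontryagin duality converts this into finiteness of $\Sel(F_\infty, A_{\mathbf{f}})^{\PD}/\varpi$, and the structure theorem for finitely generated $\Lambda$-modules (\cite[Theorem 13.12]{Was}) gives that $\Sel(F_\infty, A_{\mathbf{f}})$ is finitely generated $\Lambda$-cotorsion (with $\mu = 0$). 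The same conclusion for $\Sel^{\Sigma_0}(F_\infty, A_{\mathbf{f}})$ follows from Proposition \ref{prop:surj} applied to $A_{\mathbf{f}}$, whose hypothesis $H^0(F_\infty, A_{\mathbf{f}}^\ast \otimes K_p/\integer{}) = 0$ is guaranteed by (Parity) combined with (RR).

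\medskip

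\noindent For the corank equality (\ref{thm:Selmer for 1 and 2:corank}), apply Proposition \ref{prop:corank=dim} to each of $A_{\mathbf{f}}, A_\varphi, A_\psi$ so that every $\integer{}$-corank equals the corresponding $\kappa$-dimension of the Selmer group of $[\varpi]$; the hypothesis on $H^0$ is again verified via (Parity) and Proposition \ref{prop:Selmer for 1}(\ref{prop:Selmer for 1 vanishing}), while the absence of proper $\Lambda$-submodules of finite index (needed in the proof of Proposition \ref{prop:corank=dim}) is Weston's theorem \cite{We}. Combining this with additivity of $\dim_\kappa$ in the exact sequence from Step 1 and the identification $\Sel = \Sel^{\Sigma_0}$ for $A_{\mathbf{f}}$ yields the claimed corank formula. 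The main obstacle is Step 1, specifically the simultaneous bookkeeping of global cohomology and all local Selmer conditions: one must check that the snake-lemma diagram assembles coherently at the primes above $p$ (where the Greenberg-type filtration of $A_{\mathbf{f}}$ must be compatible with $0 \subset \Phi \subset A_{\mathbf{f}}[\varpi]$) and at the primes in $\Sigma_0$ (where Proposition \ref{prop:generator} is invoked to pass from $\Sel^{\Sigma_0}$ back to $\Sel$ for $A_{\mathbf{f}}$). Once this diagram chase is carried out, the remaining steps are essentially formal.
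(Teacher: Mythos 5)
Your approach is essentially the same as the paper's: the key exact sequence $0\to \Sel^{\Sigma_0}(F_\infty,\Phi)\to \Sel^{\Sigma_0}(F_\infty,A_{\mathbf{f}}[\varpi])\to \Sel^{\Sigma_0}(F_\infty,\Psi)\to 0$ obtained from the global cohomology sequence (via Lemmas \ref{lem:H^0=0}, \ref{lem:H^2=0}) together with matching the local conditions at $p$ via (Parity); then Propositions \ref{prop:Selmer mod pi}, \ref{prop:Selmer f.g}, \ref{prop:corank=dim}, \ref{prop:surj} and $(\mu=0)$ convert finiteness of the $\varpi$-torsion pieces into the cotorsion and corank statements, exactly as in the paper.

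There is one inaccuracy you should drop. You invoke Proposition \ref{prop:generator} to claim that the local Euler factors $\mathcal{P}_v$ at $v\in\Sigma_0$ are units in $\Lambda$, hence $\Sel(F_\infty,A_{\mathbf{f}})=\Sel^{\Sigma_0}(F_\infty,A_{\mathbf{f}})$, so that the middle term becomes $\Sel(F_\infty,A_{\mathbf{f}})[\varpi]$. Proposition \ref{prop:generator} only identifies the characteristic ideal of the local cohomology with $(\mathcal{P}_v)$; it does not assert that $\mathcal{P}_v$ is a unit. That holds only under an extra hypothesis (roughly, that $V_{\mathbf{f}}^{I_{F_v}}=0$, i.e.\ the conductor exhausts $\ideal{n}$ at $v$), which is the assumption on $\chi$ used in \S\ref{subsection:application for algebraic} and the Introduction for $\mathbf{f}\otimes\chi$, but which is not part of the hypotheses of Theorem \ref{thm:Selmer for 1 and 2}. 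Fortunately this identification is not needed here: the corank equality (\ref{thm:Selmer for 1 and 2:corank}) is stated entirely in terms of $\Sel^{\Sigma_0}$, and cotorsionness of $\Sel(F_\infty,A_{\mathbf{f}})$ follows for free from $\Sel(F_\infty,A_{\mathbf{f}})\subset\Sel^{\Sigma_0}(F_\infty,A_{\mathbf{f}})$ via the surjection of Pontryagin duals, which is exactly how Proposition \ref{prop:Selmer f.g} concludes. Remove the appeal to Proposition \ref{prop:generator} and work with $\Sel^{\Sigma_0}(F_\infty,A_{\mathbf{f}})$ throughout, and the argument matches the paper's.
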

\begin{proof}
For the proof of our first assertion, it suffices to check that $\Sel^{\Sigma_0}(F_{\infty},A_{\mathbf{f}}[\varpi])$ is finite by applying Proposition \ref{prop:Selmer f.g} with the help of Lemma \ref{lem:A_f=0}. 
By the exact sequence (\ref{rho:bar}) with the help of Lemma \ref{lem:H^0=0} and Lemma \ref{lem:H^2=0}, we have an exact sequence 
\[
0\rightarrow H^1(F_{\Sigma}/F_{\infty},\Phi) 
\rightarrow H^1(F_{\Sigma}/F_{\infty},A_{\mathbf{f}}[\varpi]) 
\rightarrow H^1(F_{\Sigma}/F_{\infty},\Psi) 
\rightarrow 0. 
\]
Then this sequence induces an exact sequence
\begin{align}\label{key-seq}
0\to \Sel^{\Sigma_0}(F_{\infty},\Phi) \to \Sel^{\Sigma_0}(F_{\infty},A_{\mathbf{f}}[\varpi])\to \Sel^{\Sigma_0}(F_{\infty},\Psi)\to 0. 
\end{align}
Indeed, 
by the definition of $\Sel^{\Sigma_0}(F_{\infty},A_{\mathbf{f}}[\varpi])$ and the condition (Parity), we have 
\[
\Sel^{\Sigma_0}(F_{\infty},A_{\mathbf{f}}[\varpi])=\ker\left(H^1(F_{\Sigma}/F_{\infty},A_{\mathbf{f}}[\varpi]) \rightarrow \prod_{w|v\in \Sigma_p(F)}H^1(I_{F_{\infty,w}},\Psi)\right), 
\]
and by Proposition \ref{prop:Selmer for 1} (\ref{prop:Selmer for 1 str Sigma0}), we have $\Sel^{\Sigma_0}(F_{\infty},\Phi)=H^1(F_{\Sigma}/F_{\infty},\Phi)$ 
and 
$\Sel^{\Sigma_0}(F_{\infty},\Psi)=\ker\left(H^1(F_{\Sigma}/F_{\infty},\Psi) 
\rightarrow \prod_{w|v\in \Sigma_p(F)}H^1(I_{F_{\infty,w}},\Psi)\right)$.
Hence, for the proof, it suffices to check that $\Sel^{\Sigma_0}(F_{\infty},\Phi)$ and $\Sel^{\Sigma_0}(F_{\infty},\Psi)$ are finite. 
Proposition \ref{prop:Selmer for 1} (\ref{prop:Selmer for 1 vanishing}) implies that $\Sel^{\Sigma_0}(F_{\infty},A_{\varphi})$ and $\Sel^{\Sigma_0}(F_{\infty},A_{\psi})$ are finitely generated $\Lambda$\nobreakdash-cotorsion. 
Moreover, by the assumption $(\mu=0)$, Proposition \ref{prop:surj} implies that $\mu(\Sel^{\Sigma_0}(F_{\infty},A_{\varphi})^{\PD})=0$ and $\mu(\Sel^{\Sigma_0}(F_{\infty},A_{\psi})^{\PD})=0$.
Now, by combining them and Proposition \ref{prop:Selmer mod pi}, $\Sel^{\Sigma_0}(F_{\infty},\Phi)$ and $\Sel^{\Sigma_0}(F_{\infty},\Psi)$ are finite as desired. 

Next we prove (\ref{thm:Selmer for 1 and 2:corank}).
Since $H^0(F,A_{\varphi}^{\ast}\otimes_{\integer{}} K_p/\integer{})=0$ and $H^0(F,A_{\psi}^{\ast}\otimes_{\integer{}} K_p/\integer{})=0$ by Proposition \ref{prop:Selmer for 1} (\ref{prop:Selmer for 1 vanishing}), we have 
$H^0(F,A_{\mathbf{f}}^{\ast}\otimes_{\integer{}} K_p/\integer{})=0$. 
Therefore we obtain 
\begin{align*}
\corank_{\integer{}}(\Sel^{\Sigma_0}(F_{\infty},A_{\mathbf{f}}))
&=\dim_{\kappa}(\Sel^{\Sigma_0}(F_{\infty},A_{\mathbf{f}}[\varpi]))\\
\nonumber&=\dim_{\kappa}(\Sel^{\Sigma_0}(F_{\infty},\Phi))
+\dim_{\kappa}(\Sel^{\Sigma_0}(F_{\infty},\Psi))\\
\nonumber&=\corank_{\integer{}}(\Sel^{\Sigma_0}(F_{\infty},A_{\varphi}))
+\corank_{\integer{}}(\Sel^{\Sigma_0}(F_{\infty},A_{\psi}))
\end{align*}
as desired. 
Here the first equality follows from Proposition \ref{prop:corank=dim} with the help of Lemma \ref{lem:A_f=0}, the second equality follows from the exact sequence (\ref{key-seq}), and the last equality follows from Proposition \ref{prop:corank=dim} with the help of Lemma \ref{lem:H^0=0}.
\end{proof}

%
\subsection{Applications to the algebraic Iwasawa invariants}\label{subsection:application for algebraic}
%

In this subsection, we fix $\mathbf{f}\in S_2(\ideal{n},\integer{})$ satisfying the conditions (RR) and (Parity) in \S \ref{subsection:algebraic Iwasawa invariants}. 
Let $\ideal{n}'$ be the least common multiple of $\ideal{n}^2$ and $\ideal{m}_{\varepsilon} \ideal{m}_{\psi}^2$. 
Let $\Sigma_0$ be the set of all finite places $v$ of $F$ such that $\ideal{q}_v$ divides $\ideal{n}'$, where $\ideal{q}_v$ denotes the prime ideal of $\ideal{o}_F$ corresponding to $v$.
We put $\Sigma=\Sigma_0\cup\Sigma_p(F)\cup\Sigma_{\infty}(F)$. 
Let $\chi$ be an $\integer{}$-valued narrow ray class character of $F$, whose conductor is $\ideal{n}'$, such that $\chi$ is of order prime to $p$, totally even, and 
\[
\mu(\Sel(F_{\infty},A_{\varphi\chi})^{\PD})=0, \ \ \mu(\Sel(F_{\infty},A_{\psi\chi})^{\PD})=0.
\]

The assumption (RR) implies that the residual representation $\bar{\rho}_{\mathbf{f}\otimes \chi}$ is reducible and of the form 
\begin{align*}
\bar{\rho}_{\mathbf{f}\otimes\chi} \sim 
\begin{pmatrix} 
\overline{\varphi\chi}&\ast\\ 
0&\overline{\psi\chi}
\end{pmatrix}.
\end{align*}
Then the triple $(\mathbf{f}\otimes \chi,\varphi\chi,\psi\chi)$ satisfies the conditions (RR), (Parity), and $(\mu=0)$ in \S \ref{subsection:algebraic Iwasawa invariants}. 
Hence, by applying Theorem \ref{thm:Selmer for 1 and 2} to $(\mathbf{f}\otimes \chi,\varphi\chi,\psi\chi)$ instead of $(\mathbf{f},\varphi,\psi)$, 
the Selmer group $\Sel(F_{\infty},A_{\mathbf{f}\otimes\chi})$ is finitely generated $\Lambda$-cotorsion. 
Here we note that $\Sel(F_{\infty},A_{\mathbf{f}\otimes\chi})=\Sel^{\Sigma_0}(F_{\infty},A_{\mathbf{f}\otimes\chi})$ by the assumption on $\chi$ (cf. Proposition \ref{prop:generator}).
Then we can define the algebraic Iwasawa $\lambda$-invariant $\lambda_{\mathbf{f}\otimes\chi}^{\mathrm{alg}}$ by 
\begin{align}\label{lambda^alg for f and chi}
\lambda_{\mathbf{f}\otimes\chi}^{\mathrm{alg}}
&=\lambda(\Sel(F_{\infty},A_{\mathbf{f}\otimes\chi})^{\mathrm{PD}})
=\corank_{\integer{}}(\Sel(F_{\infty},A_{\mathbf{f}\otimes\chi})).
\end{align}

By Proposition \ref{prop:Selmer for 1}, we can also define the algebraic Iwasawa $\lambda$-invariants $\lambda_{\varphi\chi,\Sigma_0}$ and $\lambda_{\psi\chi,\Sigma_0}$ by 
\begin{align}
\label{classical lambda for varphichi}
\lambda_{\varphi\chi,\Sigma_0}
&=\lambda(\Sel^{\Sigma_0}(F_{\infty},A_{\varphi\chi})^{\mathrm{PD}})=\corank_{\integer{}}(\Sel^{\Sigma_0}(F_{\infty},A_{\varphi\chi})),\\
\label{classical lambda for psichi}
\lambda_{\psi\chi,\Sigma_0}&=\lambda(\Sel^{\Sigma_0}(F_{\infty},A_{\psi\chi})^{\mathrm{PD}})=\corank_{\integer{}}(\Sel^{\Sigma_0}(F_{\infty},A_{\psi\chi})). 
\end{align}
Therefore, by the equality (\ref{thm:Selmer for 1 and 2:corank}), we obtain 
\begin{align}
\label{alg.lambda}
\lambda_{\mathbf{f}\otimes\chi}^{\mathrm{alg}}
&=\lambda_{\varphi\chi,\Sigma_0}+\lambda_{\psi\chi,\Sigma_0}.
\end{align}

\section{Construction of distributions}\label{section:p-adic L}
%

%
\subsection{Mellin transform}\label{subsection:Mellin transform}
%

In this subsection, we give a Mellin transform for a Hilbert cusp form.

Let $\ideal{n}$ be a non-zero ideal of $\ideal{o}_F$. 
Let $\eta$ be a $\overline{\Q}$-valued narrow ray class character of $F$ with sign $r\in (\Z/2\Z)^{J_F}$, 
whose conductor is denoted by $\ideal{m}_{\eta}$, 
such that $\ideal{m}_{\eta}$ is prime to $\ideal{d}_F[t_i]$ for each $i$.
Let $(\ideal{m}_{\eta}^{-1}\ideal{d}_F^{-1}[t_i]^{-1}/\ideal{d}_F^{-1}[t_i]^{-1})^{\times}$ (resp. $(\ideal{m}_{\eta}^{-1}/\mathfrak{o}_F)^{\times}$) be the subset of $\ideal{m}_{\eta}^{-1}\ideal{d}_F^{-1}[t_i]^{-1}/\ideal{d}_F^{-1}[t_i]^{-1}$ (resp. $\ideal{m}_{\eta}^{-1}/\mathfrak{o}_F$) consisting of elements whose annihilator is $\ideal{m}_{\eta}$. 
We fix a non-canonical isomorphism of $\mathfrak{o}_F$-modules 
$\ideal{m}_{\eta}^{-1}\ideal{d}_F^{-1}[t_i]^{-1}/\ideal{d}_F^{-1}[t_i]^{-1} \simeq \ideal{m}_{\eta}^{-1}/\mathfrak{o}_F \simeq \mathfrak{o}_F/\ideal{m}_{\eta}$ 
and a non-canonical bijection induced from it
$(\ideal{m}_{\eta}^{-1}\ideal{d}_F^{-1}[t_i]^{-1}/\ideal{d}_F^{-1}[t_i]^{-1})^{\times} \simeq (\ideal{m}_{\eta}^{-1}/\mathfrak{o}_F)^{\times}\simeq (\mathfrak{o}_F/\ideal{m}_{\eta})^{\times}$. 
Hence we may canonically identify $(\ideal{m}_{\eta}^{-1}\ideal{d}_F^{-1}[t_i]^{-1}/\ideal{d}_F^{-1}[t_i]^{-1})^{\times}/\mathfrak{o}_{F,+}^{\times}$ with a subgroup of $\text{Cl}_F^+(\ideal{m}_{\eta})$ under the canonical extension 
\begin{align}\label{canonical extension}
1\to (\mathfrak{o}_F/\ideal{m}_{\eta})^{\times}/\mathfrak{o}_{F,+}^{\times} \to \text{Cl}_F^+(\ideal{m}_{\eta}) \to \text{Cl}_F^+\to 1. 
\end{align}
We fix a splitting of the sequence (\ref{canonical extension}).
Let $\eta_i$ be a map on 
$(\ideal{m}_{\eta}^{-1}\ideal{d}_F^{-1}[t_i]^{-1}/\ideal{d}_F^{-1}[t_i]^{-1})^{\times}/\mathfrak{o}_{F,+}^{\times}$ defined by $\eta_i(\bar{b})=\sgn(b)^r\eta(\bar{b}\ideal{m}_{\eta}\ideal{d}_F[t_i])$. 
We note that $\eta_i(\xi \bar{b})=\eta(\xi)\eta_i(\bar{b})$ for any $\bar{b}\in (\ideal{m}_{\eta}^{-1}\ideal{d}_F^{-1}[t_i]^{-1}/\ideal{d}_F^{-1}[t_i]^{-1})^{\times}/\mathfrak{o}_{F,+}^{\times}$ and $0\ll\xi\in [t_i]$ prime to $\ideal{m}_{\eta}$. 

We put $\mathfrak{o}_{F,\ideal{m}_{\eta},+}^{\times}=\{e\in \mathfrak{o}_{F,+}^{\times}\mid e\equiv 1\ (\bmod\ \ideal{m}_{\eta})\}$. 
We fix a complete set $S_i$ (resp. $T$) of representatives of $(\ideal{m}_{\eta}^{-1}\ideal{d}_F^{-1}[t_i]^{-1}/\ideal{d}_F^{-1}[t_i]^{-1})^{\times}/\mathfrak{o}_{F,+}^{\times}$ in $\ideal{m}_{\eta}^{-1}\ideal{d}_F^{-1}[t_i]^{-1}$
(resp. $\mathfrak{o}_{F,+}^{\times}/\mathfrak{o}_{F,\ideal{m}_{\eta},+}^{\times}$ in $\mathfrak{o}_{F,+}^{\times}$).

For a subset $J$ of $J_F$, we put 
$z_{\iota}^J=z_{\iota}$ (resp. $\overline{z}_{\iota}$) if $\iota\in J$ (resp. $\iota \in J_F \backslash J$), and 
\begin{align}\label{dz_J}
dz_{{}_J}=\bigwedge_{\iota\in J_F} dz_{\iota}^J.
\end{align}
\begin{prop}\label{Mellin}
Let $\mathbf{f}=(f_i)_{1\le i \le h_F^+}\in S_2(\ideal{n},\C)$ be a Hecke eignform of all $T(\ideal{q})$ and $U(\ideal{q})$, and let $\eta$ a $\overline{\Q}$-valued narrow ray class character of $F$, whose conductor is denoted by $\ideal{m}_{\eta}$, such that $\ideal{m}_{\eta}$ is prime to $\ideal{d}_F[t_i]$ for each $i$. 
Then, under the above notation, we have
\begin{align*}
\sum_{i=1}^{h_F^+}\sum_{b_i\in S_i}
\eta_i(\bar{b}_i)^{-1}
\int_{\sqrt{-1}(F\otimes \mathbb{R})_+^{\times}/\mathfrak{o}_{F,\ideal{m}_{\eta},+}^{\times}}f_i(z+\bar{b}_i)dz_{{}_{J_F}}
=\tau(\eta^{-1}) \frac{D(1,\mathbf{f}, \eta)}{(-2\pi \sqrt{-1})^{n}}. 
\end{align*}
Here $\bar{b}_i$ denotes the image of $b_i\in S_i$ in $(\ideal{m}_{\eta}^{-1}\ideal{d}_F^{-1}[t_i]^{-1}/\ideal{d}_F^{-1}[t_i]^{-1})^{\times}/\mathfrak{o}_{F,+}^{\times}$ under the canonical map, $\tau(\eta^{-1})$ denotes the Gauss sum $($defined by \cite[(3.9)]{Shi}$)$, and the integrals are independent of the choice of the lift $b_i$ of $\bar{b}_i$. 
\end{prop}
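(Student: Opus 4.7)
The approach is a direct Mellin-transform computation generalizing the classical case $F=\Q$. First I would substitute $z=\sqrt{-1}y$ (so that $dz_{J_F}=(\sqrt{-1})^n\, dy_{J_F}$), insert the Fourier expansion (\ref{i-th Fourier exp}) of the cuspidal $f_i$, and interchange summation and integration, justified by the exponential decay in $y$. The $i$-th integral then becomes
\[
(\sqrt{-1})^n \sum_{0\ll\xi\in[t_i]} c(\xi[t_i]^{-1},\mathbf{f})\, N(\xi)\, e_F(\xi b_i)\int_{(F\otimes_{\Q}\mathbb{R})_+^\times/\mathfrak{o}_{F,\ideal{m}_\eta,+}^\times} \exp(-2\pi\Tr(\xi y))\,dy_{J_F}.
\]

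Next I would unfold via the action of $\mathfrak{o}_{F,\ideal{m}_\eta,+}^\times$. Writing $\xi=\epsilon\xi_0$ with $\xi_0$ an orbit representative and $\epsilon\in\mathfrak{o}_{F,\ideal{m}_\eta,+}^\times$, the factors $c$, $N$, and $e_F(\xi b_i)$ are orbit-invariant; the last assertion uses that $(\epsilon-1)\xi_0 b_i\in \ideal{d}_F^{-1}$, whose trace lies in $\Z$, when $\epsilon\equiv 1\pmod{\ideal{m}_\eta}$. Absorbing $\sum_{\epsilon}$ into the integral extends integration to all of $(F\otimes_{\Q}\mathbb{R})_+^\times$, and the elementary product
\[
\int_{(F\otimes_{\Q}\mathbb{R})_+^\times} \exp(-2\pi\Tr(\xi_0 y))\,dy_{J_F} = \prod_{\iota\in J_F}\frac{1}{2\pi\,\iota(\xi_0)} = \frac{1}{(2\pi)^n N(\xi_0)}
\]
cancels the $N(\xi_0)$ factor, reducing the $i$-th piece to $\frac{(\sqrt{-1})^n}{(2\pi)^n}\sum_{\xi_0}c(\xi_0[t_i]^{-1},\mathbf{f})\,e_F(\xi_0 b_i)$.

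Swapping order, the inner $b_i$-sum $\sum_{b_i\in S_i}\eta_i(\bar b_i)^{-1}e_F(\xi_0 b_i)$ is a twisted Hilbert Gauss sum. By the primitivity of $\eta$ and Shimura's evaluation \cite[(3.9)]{Shi}, it equals $\tau(\eta^{-1})\,\eta(\xi_0[t_i]^{-1})$ when $\gcd(\xi_0[t_i]^{-1},\ideal{m}_\eta)=1$ and vanishes otherwise. Since every non-zero integral ideal $\ideal{m}$ of $\ideal{o}_F$ coprime to $\ideal{m}_\eta$ arises uniquely as $\xi_0[t_i]^{-1}$ for the index $i$ such that $\ideal{m}[t_i]$ is principal with a totally positive generator, the remaining sum collapses to $D(1,\mathbf{f},\eta) = \sum_{\ideal{m}} c(\ideal{m},\mathbf{f})\,\eta(\ideal{m})$, and the identity $(\sqrt{-1})^n/(2\pi)^n = 1/(-2\pi\sqrt{-1})^n$ yields the claimed formula.

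The main obstacle I anticipate is reconciling the two distinct unit groups that appear: the integration quotient uses $\mathfrak{o}_{F,\ideal{m}_\eta,+}^\times$, while $S_i$ parametrizes classes modulo the larger $\mathfrak{o}_{F,+}^\times$. Consequently each ideal is realized by $\sharp T=[\mathfrak{o}_{F,+}^\times:\mathfrak{o}_{F,\ideal{m}_\eta,+}^\times]$ orbit representatives $\xi_0$, and Shimura's normalization in \cite[(3.9)]{Shi} must be precisely the one that absorbs this factor. A related sanity check is the asserted independence of the integral from the chosen lift of $\bar b_i$, which will follow from the invariance of the Fourier expansion under translation by $\ideal{d}_F^{-1}[t_i]^{-1}$ combined with the $\mathfrak{o}_{F,+}^\times/\mathfrak{o}_{F,\ideal{m}_\eta,+}^\times$-equivariance of the remaining factors.
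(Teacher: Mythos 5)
Your direct Mellin computation is a valid alternative to the paper's proof, and the ingredients match: insert the Fourier expansion, unfold by units, evaluate the Gamma integral, and apply Shimura's Gauss-sum identity. The paper organizes the same material differently: it first establishes the unramified zeta integral (Proposition~\ref{zeta integral unrami}) and its specialization (Proposition~\ref{Mellin unramified}), applies these to the twist $\mathbf{f}\otimes\eta$, and only then uses \cite[(3.11)]{Shi} to rewrite the resulting integral of $(f\otimes\eta)_i$ as a $b_i$-indexed combination of integrals of $f_i(z+\bar b_i)$. Your version skips the detour through the twisted form, which makes it more self-contained for this one proposition, at the cost of having to track the two unit groups $\mathfrak{o}_{F,+}^{\times}$ and $\mathfrak{o}_{F,\ideal{m}_{\eta},+}^{\times}$ simultaneously in a single computation.

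The ``obstacle'' you flag at the end is a genuine one, but it does not resolve the way you propose. Shimura's normalization does not absorb the factor $\sharp T$; rather, your intermediate assertion
\[
\sum_{b_i\in S_i}\eta_i(\bar b_i)^{-1}e_F(\xi_0 b_i)=\tau(\eta^{-1})\,\eta(\xi_0[t_i]^{-1})
\]
is not correct as written, because the identity \cite[(3.11)]{Shi} used in the paper runs $b$ over all of $(\ideal{m}_{\eta}^{-1}\ideal{d}_F^{-1}[t_i]^{-1}/\ideal{d}_F^{-1}[t_i]^{-1})^{\times}$, not merely over the quotient $S_i$ modulo $\mathfrak{o}_{F,+}^{\times}$. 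What saves your argument is precisely the $\sharp T$-fold multiplicity you observed: a fixed integral ideal $\ideal{m}$ in the narrow class of $[t_i]^{-1}$ is realized by $\sharp T$ orbit representatives modulo $\mathfrak{o}_{F,\ideal{m}_{\eta},+}^{\times}$, namely $u\xi_0^{(\ideal{m})}$ for $u\in T$. Since $e_F\bigl((u\xi_0^{(\ideal{m})})\,b_i\bigr)=e_F\bigl(\xi_0^{(\ideal{m})}(u b_i)\bigr)$ and $\eta_i(\overline{u b_i})=\eta_i(\bar b_i)$, the map $(b_i,u)\mapsto u b_i$ is a bijection $S_i\times T\xrightarrow{\ \sim\ }(\ideal{m}_{\eta}^{-1}\ideal{d}_F^{-1}[t_i]^{-1}/\ideal{d}_F^{-1}[t_i]^{-1})^{\times}$, so the combined double sum over the $\sharp T$ representatives $\xi_0$ of $\ideal{m}$ and over $b_i\in S_i$ produces exactly the full Gauss sum, to which \cite[(3.11)]{Shi} then applies with no leftover normalization. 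Once this coupling is made explicit, your computation reduces to $\tau(\eta^{-1})\sum_{\ideal{m}}c(\ideal{m},\mathbf{f})\eta(\ideal{m})=\tau(\eta^{-1})D(1,\mathbf{f},\eta)$ multiplied by $(\sqrt{-1})^n/(2\pi)^n=(-2\pi\sqrt{-1})^{-n}$, and the proof is complete.
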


In order to prove the proposition, we compute the following zeta integral:
for $\mathbf{h}\in S_k(\ideal{n},\C)$, 
\[
Z(\mathbf{h},s)=\int_{\mathbb{A}_F^{\times}/F^{\times}}
\mathbf{h}(\begin{pmatrix}t&0\\ 0&1\end{pmatrix})|t|^s d^{\times}t.
\]

\begin{prop}\label{zeta integral unrami}
Let $\mathbf{h}=(h_i)_{1\le i \le h_F^+}\in S_k(\ideal{n},\C)$. 
Then we have
\begin{align*}
Z(\mathbf{h},s)=|D|^{-s} D\left(s+k/2,\mathbf{h}\right)
\left(\frac{\Gamma(s+k/2)}{(2\pi)^{s+k/2}}\right)^n.
\end{align*}
\end{prop}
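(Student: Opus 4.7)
The plan is to evaluate $Z(\mathbf{h},s)$ by the standard unfolding method: decompose the idele group according to the narrow class group, translate the adelic integral to classical integrals on each piece, and then insert the Fourier expansion to recognize the Dirichlet series $D(s+k/2,\mathbf{h})$ times $n$ copies of the archimedean Mellin transform.

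First, by strong approximation (together with the choice of $t_1,\dots,t_{h_F^+}$ representing $\mathrm{Cl}_F^+$),
\[
\mathbb{A}_F^{\times} = \coprod_{i=1}^{h_F^+} F^{\times}\cdot t_i\cdot \widehat{\mathfrak{o}}_F^{\times}\cdot (F\otimes_{\mathbb{Q}}\mathbb{R})_+^{\times},
\]
so $Z(\mathbf{h},s)$ decomposes as a sum of $h_F^+$ integrals over the archimedean component $(F\otimes_{\mathbb{Q}}\mathbb{R})_+^{\times}/\mathfrak{o}_{F,+}^{\times}$, after factoring out the finite volume contribution from $\widehat{\mathfrak{o}}_F^{\times}$ (which is built into the measure normalization and ends up trivial since $\mathbf{h}$ is $K_1(\mathfrak{n})$-invariant). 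On the $i$-th piece I would write $t = t_i y_\infty$ with $y_\infty \in (F\otimes_{\mathbb{Q}}\mathbb{R})_+^{\times}$ and, using the adelic representative $x_i = \begin{pmatrix}D^{-1}t_i^{-1}&0\\0&1\end{pmatrix}$ of (\ref{adele var}), rewrite $\mathbf{h}(\begin{pmatrix}t&0\\0&1\end{pmatrix})$ as a value of $h_i$ on $\mathfrak{H}^{J_F}$ after translation by $x_i$; the matrix $x_i$ is precisely what introduces the $D^{-1}$, and its norm $|D|^{-s}$ from $|t|^{s}$ pulls out of the integral.

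Second, substitute the Fourier expansion (\ref{i-th Fourier exp}). Since $\mathbf{h}$ is cuspidal the constant term drops. The $F^{\times}$-quotient in the domain unfolds the sum $\sum_{0\ll \xi\in [t_i]/\mathfrak{o}_{F,+}^{\times}}$ into the full sum $\sum_{0\ll\xi\in[t_i]}$, at the price of extending the range of $y_\infty$ from the fundamental domain to all of $(F\otimes_{\mathbb{Q}}\mathbb{R})_+^{\times}$. Using the archimedean character $e_F(\xi\cdot \sqrt{-1}y_\infty) = \exp(-2\pi\sum_\iota \xi^\iota y_\infty^\iota)$, the integral factors over the $n$ infinite places and each factor is the classical Mellin transform
\[
\int_0^\infty e^{-2\pi \xi^\iota y} y^{s+k/2}\, \tfrac{dy}{y} = \frac{\Gamma(s+k/2)}{(2\pi \xi^\iota)^{s+k/2}},
\]
producing the desired Gamma factor $\bigl(\Gamma(s+k/2)/(2\pi)^{s+k/2}\bigr)^n$ together with a factor $N(\xi)^{-s-k/2}$.

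Third, repackage the remaining double sum. Combining $N([t_i])^{-s}$ coming from $|t_i|^{s}$ with the normalization $a_\infty(\xi,h_i) = c(\xi[t_i]^{-1},\mathbf{h}) N(\xi)^{k/2}$ and summing over $i$ and over $0\ll\xi\in[t_i]$ modulo $\mathfrak{o}_{F,+}^{\times}$ yields exactly $\sum_{\mathfrak{m}} c(\mathfrak{m},\mathbf{h}) N(\mathfrak{m})^{-s}$, which by (\ref{Hecke eigenvalue}) equals $D(s+k/2,\mathbf{h})$. The main obstacle is really just careful bookkeeping of normalizations: aligning the Haar measure on $\mathbb{A}_F^{\times}/F^{\times}$ with the measure on $(F\otimes_{\mathbb{Q}}\mathbb{R})_+^{\times}/\mathfrak{o}_{F,+}^{\times}$, tracking the appearance of $N([t_i])^{k/2}$ in (\ref{i-th Fourier exp}) versus (\ref{Hecke eigenvalue}), and verifying that the $|D|^{-s}$ factor truly arises from the translation by $x_i$ and not somewhere else; no essentially new idea beyond the classical Shimura–Hida computation is required.
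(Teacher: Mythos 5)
Your proposal is correct and follows essentially the same route as the paper: decompose $\mathbb{A}_F^{\times}$ by the narrow class group (with representatives $D^{-1}t_i^{-1}$, which is where $|D|^{-s}$ enters), use right $\widehat{\mathfrak{o}}_F^{\times}$-invariance to reduce to $(F\otimes\mathbb{R})_+^{\times}/\mathfrak{o}_{F,+}^{\times}$, insert the Fourier expansion of $h_i$, unfold the $\xi$-sum against the unit quotient, and evaluate the resulting product of archimedean Mellin transforms. The only caveat is the minor bookkeeping point you already flag yourself: the decomposition must be taken with respect to $D^{-1}t_i^{-1}$ (not $t_i$) so that the finite part matches $x_i$ and the definition of $h_i$ applies directly.
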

\begin{proof}
Note that 
\[
\mathbb{A}_F^{\times}
=\coprod_{1 \le i \le h_F^+} (F\otimes \mathbb{R})_+^{\times} \widehat{\mathfrak{o}}_F^{\times} D^{-1}t_i^{-1} F^{\times}.
\]
Then we have 
\begin{align*}
Z(\mathbf{h},s)&=\sum_{i=1}^{h_F^+} 
\int_{(F\otimes \mathbb{R})_+^{\times} \widehat{\mathfrak{o}}_F^{\times} D^{-1}t_i^{-1} F^{\times}/F^{\times}}
\mathbf{h}(\begin{pmatrix}t&0\\ 0&1\end{pmatrix})|t|^s d^{\times}t \\
&=\sum_{i=1}^{h_F^+} 
\int_{(F\otimes \mathbb{R})_+^{\times} \widehat{\mathfrak{o}}_F^{\times} F^{\times}/F^{\times}}
\mathbf{h}(\begin{pmatrix}D^{-1}t_i^{-1}t&0\\ 0&1\end{pmatrix})|D^{-1}t_i^{-1} t|^s d^{\times}t.
\end{align*}
Since $(F\otimes \mathbb{R})_+^{\times} \widehat{\mathfrak{o}}_F^{\times} F^{\times}/F^{\times}=(F\otimes \mathbb{R})_+^{\times}/\mathfrak{o}_{F,+}^{\times} \cdot \widehat{\mathfrak{o}}_F^{\times}$ and $\mathbf{h}$ is right-invariant with respect to the action of $\begin{pmatrix}\widehat{\mathfrak{o}}_F^{\times}&0\\ 0&1\end{pmatrix}$, we have 
\begin{align*}
Z(\mathbf{h},s)&=
\sum_{i=1}^{h_F^+} 
\int_{(F\otimes \mathbb{R})_+^{\times}/\mathfrak{o}_{F,+}^{\times}} \mathbf{h}(\begin{pmatrix}D^{-1}t_i^{-1}t_{\infty}&0\\ 0&1\end{pmatrix})|D^{-1}t_i^{-1} t_{\infty}|^s d^{\times}t_{\infty}.
\end{align*}
Thus we obtain 
\begin{align}\label{zeta=L unramified}
Z(\mathbf{h},s)
&=\sum_{i=1}^{h_F^+} 
\int_{(F\otimes \mathbb{R})_+^{\times}/\mathfrak{o}_{F,+}^{\times}} 
h_i(\sqrt{-1}t_{\infty})t_{\infty}^{k/2}|D^{-1}t_i^{-1} t_{\infty}|^s d^{\times}t_{\infty}\\
\nonumber&=\sum_{i=1}^{h_F^+} 
\int_{(F\otimes \mathbb{R})_+^{\times}/\mathfrak{o}_{F,+}^{\times}} 
\sum_{0 \ll \xi \in [t_i]}a_{\infty}(\xi,h_i)e_F(\sqrt{-1}\xi t_{\infty})|D^{-1}t_i^{-1}|^s t_{\infty}^{s+k/2} d^{\times}t_{\infty}\\
\nonumber&=|D|^{-s} \sum_{i=1}^{h_F^+}
\int_{(F\otimes \mathbb{R})_+^{\times}/\mathfrak{o}_{F,+}^{\times}} 
\sum_{0 \ll \xi \in [t_i]}
\frac{a_{\infty}(\xi,h_i)N([t_i]^{-1})^{k/2}}{N(\xi[t_i]^{-1})^{s+k/2}}e_F(\sqrt{-1}\xi t_{\infty}) (\xi t_{\infty})^{s+k/2} d^{\times}(\xi t_{\infty})\\
\nonumber&=|D|^{-s} \sum_{i=1}^{h_F^+} 
\sum_{\xi \mathfrak{o}_{F,+}^{\times}}
\frac{C(\xi[t_i]^{-1},\mathbf{h})}{N(\xi[t_i]^{-1})^{s+k/2}}
\int_{(F\otimes \mathbb{R})_+^{\times}} 
e_F(\sqrt{-1}t_{\infty}) t_{\infty}^{s+k/2} d^{\times}t_{\infty}\\
\nonumber&=|D|^{-s} D\left(s+k/2,\mathbf{h}\right)
\left(\frac{\Gamma(s+k/2)}{(2\pi)^{s+k/2}}\right)^n,
\end{align}
which proves the assertion. 
Here the first equality follows from the definition of $h_i$ (\cite[(2.7)]{Shi}) and the fourth equality follows from the definition of $C(\xi[t_i]^{-1},\mathbf{h})$ (\S \ref{Diri}).
\end{proof}

By evaluating the equation (\ref{zeta=L unramified}) at $s=0$, we obtain the following:
\begin{prop}\label{Mellin unramified}
Let $\mathbf{h}=(h_i)_{1\le i \le h_F^+}\in S_k(\ideal{n},\C)$. 
Then, under the above notation, we have
\begin{align*}
\sum_{i=1}^{h_F^+} 
\int_{\sqrt{-1}(F\otimes \mathbb{R})_+^{\times}/\mathfrak{o}_{F,+}^{\times}}h_i(z) \mathrm{Im}(z)^{k/2-1}dz_{{}_{J_F}}
=D(k/2,\mathbf{h})\left(\frac{\Gamma(k/2)}{-(2\pi)^{k/2}\sqrt{-1}}\right)^n.
\end{align*}
\end{prop}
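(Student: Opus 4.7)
The plan is to recognize that the stated formula is essentially just the specialization of the computation (\ref{zeta=L unramified}) at $s=0$, rewritten geometrically on $\mathfrak{H}^{J_F}$ via the change of variables $z = \sqrt{-1}\,t_{\infty}$. No new ingredient is needed beyond Proposition \ref{zeta integral unrami} and careful bookkeeping of the differential.

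First I would set $s=0$ in the chain of equalities (\ref{zeta=L unramified}). The factor $|D|^{-s}$ and the factor $|D^{-1}t_i^{-1}t_{\infty}|^s$ both become $1$, leaving
\[
\sum_{i=1}^{h_F^+}\int_{(F\otimes\mathbb{R})_+^{\times}/\mathfrak{o}_{F,+}^{\times}} h_i(\sqrt{-1}\,t_{\infty})\,t_{\infty}^{k/2}\,d^{\times}t_{\infty}
=D(k/2,\mathbf{h})\left(\frac{\Gamma(k/2)}{(2\pi)^{k/2}}\right)^{n}.
\]
Next I would rewrite the multiplicative Haar measure in the Euclidean form: since $d^{\times}t_{\infty}=\prod_{\iota\in J_F}dt_{\iota}/t_{\iota}$, we have $t_{\infty}^{k/2}\,d^{\times}t_{\infty}=\prod_{\iota\in J_F}t_{\iota}^{k/2-1}\,dt_{\iota}$.

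Then I would perform the change of variables $z_{\iota}=\sqrt{-1}\,t_{\iota}$ for each $\iota\in J_F$. The region of integration becomes $\sqrt{-1}(F\otimes\mathbb{R})_+^{\times}/\mathfrak{o}_{F,+}^{\times}$, we have $\mathrm{Im}(z_{\iota})=t_{\iota}$, and $dt_{\iota}=\tfrac{1}{\sqrt{-1}}\,dz_{\iota}=-\sqrt{-1}\,dz_{\iota}$. Since $J=J_F$ gives $z_{\iota}^{J_F}=z_{\iota}$, the definition (\ref{dz_J}) yields $dz_{J_F}=\bigwedge_{\iota\in J_F}dz_{\iota}$, so
\[
\prod_{\iota\in J_F}t_{\iota}^{k/2-1}\,dt_{\iota}
=(-\sqrt{-1})^{n}\,\mathrm{Im}(z)^{k/2-1}\,dz_{J_F},
\]
where $\mathrm{Im}(z)^{k/2-1}=\prod_{\iota}\mathrm{Im}(z_{\iota})^{k/2-1}$.

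Substituting this back and dividing by $(-\sqrt{-1})^{n}$, using $1/(-\sqrt{-1})^{n}=\sqrt{-1}^{\,n}$ absorbed into the right-hand side as $\bigl(1/(-\sqrt{-1})\bigr)^{n}$, yields the stated identity
\[
\sum_{i=1}^{h_F^+}\int_{\sqrt{-1}(F\otimes\mathbb{R})_+^{\times}/\mathfrak{o}_{F,+}^{\times}}h_i(z)\,\mathrm{Im}(z)^{k/2-1}\,dz_{J_F}
=D(k/2,\mathbf{h})\left(\frac{\Gamma(k/2)}{-(2\pi)^{k/2}\sqrt{-1}}\right)^{n}.
\]
There is essentially no obstacle; the only subtle point is sign and orientation bookkeeping in the differential, which must be verified so that the factor $(-\sqrt{-1})^{n}$ really matches the sign in the denominator of the right-hand side.
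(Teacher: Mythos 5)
Your proposal is correct and follows exactly the paper's intended route: the paper itself dispatches this proposition with the single sentence "By evaluating the equation (\ref{zeta=L unramified}) at $s=0$," and your write-up just makes explicit the change of variables $z=\sqrt{-1}\,t_{\infty}$ converting $t_{\infty}^{k/2}\,d^{\times}t_{\infty}$ into $(-\sqrt{-1})^n\,\mathrm{Im}(z)^{k/2-1}\,dz_{J_F}$, with the sign factor correctly landing in the denominator of the right-hand side.
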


Now we prove Proposition \ref{Mellin}.
Let $\mathbf{f}\otimes \eta=((f\otimes \eta)_i)_{1 \le i \le h_F^+}\in S_2(\ideal{n}_{\eta},\C)$ be a Hecke eigenform defined by 
\[
(f\otimes \eta)_i(z)=\sum_{0\ll \xi \in[t_i]}a_{\infty}(\xi,f_i)\eta(\xi [t_i]^{-1})e_F(\xi z)
\]
(\cite[Proposition 4.4, 4.5]{Shi}), where $\ideal{n}_{\eta}$ is the least common multiple of $\ideal{n}$, $\ideal{m}_{\eta}^2$, and $\ideal{m}_{\eta}\ideal{n}$. 
Since $D(s,\mathbf{f}\otimes \eta)=D(s,\mathbf{f},\eta)$, by applying Proposition \ref{Mellin unramified} to $\mathbf{f}\otimes \eta$ instead of $\mathbf{h}$, we have 
\begin{align}\label{Mellin ramified}
\tau(\eta^{-1})\sum_{i=1}^{h_F^+} 
\int_{\sqrt{-1}(F\otimes \mathbb{R})_+^{\times}/\mathfrak{o}_{F,+}^{\times}}(f\otimes \eta)_i(z) dz_{{}_{J_F}}
=\tau(\eta^{-1})\frac{D(1,\mathbf{f},\eta)}{(-2\pi\sqrt{-1})^{n}}.
\end{align}
Thus it suffices to show that the left-hand side of (\ref{Mellin ramified}) is equal to the left-hand side of the equation in Proposition \ref{Mellin}.
By \cite[(3.11)]{Shi}, we have 
\[
\eta(\xi[t_i]^{-1})\tau(\eta^{-1})=\sum_{b \in (\ideal{m}_{\eta}^{-1}\ideal{d}_F^{-1}[t_i]^{-1}/\ideal{d}_F^{-1}[t_i]^{-1})^{\times}}
\eta_i(\bar{b})^{-1}e_F(\xi b).
\]
Then the left-hand side of (\ref{Mellin ramified}) is equal to 
\begin{align*}
&\sum_{i=1}^{h_F^+} 
\int_{\sqrt{-1}(F\otimes \mathbb{R})_+^{\times}/\mathfrak{o}_{F,+}^{\times}}
\sum_{0\ll \xi \in[t_i]}a_{\infty}(\xi,f_i) \sum_{b_i\in S_i}\sum_{u\in T}
\eta_i(\bar{b}_i)^{-1}e_F(\xi b_i u^{-1})e_F(\xi z) dz_{{}_{J_F}}\\
&=\sum_{i=1}^{h_F^+} \sum_{b_i\in S_i}\eta_i(\bar{b}_i)^{-1}
\int_{\sqrt{-1}(F\otimes \mathbb{R})_+^{\times}/\mathfrak{o}_{F,+}^{\times}}
\sum_{u\in T} f_i(z+b_i u^{-1}) dz_{{}_{J_F}}\\
&=\sum_{i=1}^{h_F^+} \sum_{b_i\in S_i}\eta_i(\bar{b}_i)^{-1}
\int_{\sqrt{-1}(F\otimes \mathbb{R})_+^{\times}/\mathfrak{o}_{F,\ideal{m}_{\eta},+}^{\times}}
 f_i(z+b_i) dz_{{}_{J_F}},
\end{align*}
which is the the left-hand side of the equation in Proposition \ref{Mellin} as desired. 
Here we may regard $f_i(z+b_i)$ as a function on $\sqrt{-1}(F\otimes \mathbb{R})_+^{\times}/\mathfrak{o}_{F,\ideal{m}_{\eta},+}^{\times}$ because $f_i(uz+b_i)=f_i(z+b_i)$ for any $u\in \mathfrak{o}_{F,\ideal{m}_{\eta},+}^{\times}$. 
Furthermore, the integrals in the last line of this equation are independent of the choice of the lift $b_i$ of $\bar{b}_i$ (because if $\bar{b}_i=\bar{b}_i'$, then there is $\gamma \in \Gamma_1(\ideal{d}_F[t_i],\ideal{n})$ such that $b=\gamma (b')$ and $\gamma(\infty)=\infty$). 
Hence the integral depends only on the image $\bar{b}_i$ of $b_i$ in $(\ideal{m}_{\eta}^{-1}\ideal{d}_F^{-1}[t_i]^{-1}/\ideal{d}_F^{-1}[t_i]^{-1})^{\times}/\mathfrak{o}_{F,+}^{\times}$ and it shall be denoted by 
\[
\int_{\sqrt{-1}(F\otimes \mathbb{R})_+^{\times}/\mathfrak{o}_{F,\ideal{m}_{\eta},+}^{\times}}f_i(z+\bar{b}_i)dz_{{}_{J_F}}. 
\]

We consider a Mellin transform in the anti-holomorphic case. 
Let $W_G$ denote the Weyl group $K_{\infty}/K_{\infty,+}$, which is identified with the set $\{w_J\mid J\subset J_F\}$, 
where, for each subset $J$ of $J_F$, $w_J \in K_{\infty}$ such that 
$w_{J,\iota}=\begin{pmatrix}1&0\\0&1\end{pmatrix}$ if $\iota\in J$ and 
$w_{J,\iota}=\begin{pmatrix}-1&0\\0&1\end{pmatrix}$ if $\iota\in J_F \backslash J$. 
Then $W_G$ acts on the space $\bigoplus_{J\subset J_F} S_{2,J}(\ideal{n},\C)$ of Hilbert cusp forms via $\textbf{h}\mapsto \textbf{h}_J:=\textbf{h}|[K_{\infty}w_J K_{\infty}]$. 
Since $K_{\infty}w_J K_{\infty}=K_{\infty}w_J $, we have $\textbf{f}_J(x)=\textbf{f}(x w_J)$.
Then $f_{J,i}$ has the Fourier expansion of the form 
\begin{align*}
f_{J,i}(z)
=\sum_{\mu \in[t_i],\,\{\mu\}=J} a(\mu,f_{J,i})e_F(\mu z)
\end{align*}
given by \cite[Theorem 6.1]{Hida94}. 
Here $\{\mu\}=\{\iota\in J_F| \mu^{\iota}> 0 \}$.
Since the action of $W_G$ is compatible with the Hecke operators $T(\ideal{q})$ and $U(\ideal{q})$, $\textbf{f}_J$ is also a Hecke eigenform of all $T(\ideal{q})$ and $U(\ideal{q})$. 
Hence, by \cite[Corollary 6.2]{Hida94}, the $V(\ideal{q})$-eigenvalue $a(\mu,f_{J,i})N([t_i]^{-1})$ is equal to $C(\ideal{q},\mathbf{f})$, 
where $\ideal{q}=\mu[t_i]^{-1}$ and $V(\ideal{q})=T(\ideal{q})$ or $U(\ideal{q})$. 

We define $(\mathbf{f}\otimes \eta)_J=((f\otimes \eta)_{J,i})_{1 \le i \le h_F^+}\in S_{2,J}(\ideal{n}_{\eta},\C)$ by 
\[
(f\otimes \eta)_{J,i}(z)=\sum_{\mu \in[t_i],\,\{\mu\}=J}a(\mu,f_{J,i})\eta_{\infty}(\nu_J)\eta(\mu [t_i]^{-1})e_F(\mu z),
\]
where $\nu_J\in \mathbb{A}_{F,\infty}$ such that $\nu_{J,\iota}=1$ if $\iota\in J$ and $\nu_{J,\iota}=-1$ if $\iota\in J_F\backslash J$. 
Now the same argument as in the proof of Proposition \ref{Mellin} shows the following: 
\begin{prop}\label{anti Mellin}
Under the same notation and assumptions as Proposition \ref{Mellin}, we have
\begin{align*}
\sum_{i=1}^{h_F^+}\sum_{b_i\in S_i}
\eta_i(\bar{b}_i)^{-1}
\int_{\sqrt{-1}(F\otimes \mathbb{R})_+^{\times}/\mathfrak{o}_{F,\ideal{m}_{\eta},+}^{\times}}f_{J,i}(z+\bar{b}_i)dz_{{}_{J}}
=\eta_{\infty}(\nu_J)\tau(\eta^{-1})
\frac{D(1,\mathbf{f}, \eta)}{(-2\pi \sqrt{-1})^{n}}. 
\end{align*}
Here $\nu_J\in \mathbb{A}_{F,\infty}$ such that $\nu_{J,\iota}=1$ if $\iota\in J$ and $\nu_{J,\iota}=-1$ if $\iota\in J_F\backslash J$, and $dz_{{}_{J}}$ is defined by $(\ref{dz_J})$.
\end{prop}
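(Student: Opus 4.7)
The plan is to mimic the proof of Proposition \ref{Mellin} step by step, replacing the holomorphic Hilbert cusp form $\mathbf{f}$ by its Weyl twist $\mathbf{f}_J$ and keeping track of the sign-of-infinity data encoded by $J$. The whole argument factors through an appropriate zeta integral, an unfolding, and a Gauss sum expansion; each piece has a clean $J$-variant.

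First I would define a Weyl-twisted zeta integral
\[
Z_J(\mathbf{h},s)=\int_{\mathbb{A}_F^{\times}/F^{\times}}
\mathbf{h}_J\!\begin{pmatrix}t&0\\ 0&1\end{pmatrix}|t|^s\,d^{\times}t
\qquad (\mathbf{h}\in S_k(\ideal{n},\C)).
\]
Using the coset decomposition $\mathbb{A}_F^{\times}=\coprod_i(F\otimes\mathbb{R})_+^{\times}\widehat{\ideal{o}}_F^{\times}D^{-1}t_i^{-1}F^{\times}$ and the right $\widehat{\ideal{o}}_F^{\times}$-invariance, exactly as in Proposition \ref{zeta integral unrami}, reduces $Z_J(\mathbf{h},s)$ to a sum over $i$ of Archimedean integrals of $h_{J,i}$ against $|D^{-1}t_i^{-1}t_{\infty}|^{s}t_{\infty}^{k/2}$ over $(F\otimes\mathbb{R})_+^{\times}/\mathfrak{o}_{F,+}^{\times}$. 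Expanding $h_{J,i}$ via the Fourier expansion indexed by $\mu\in[t_i]$ with $\{\mu\}=J$ and changing variables $t_{\infty}\mapsto \xi t_{\infty}$ in each component (with the sign of $\xi^{\iota}$ matching $\iota\in J$ or $\iota\in J_F\setminus J$, so that $e_F(\sqrt{-1}\mu t_\infty)$ still provides the required exponential decay on the chosen contour) yields
\[
Z_J(\mathbf{h},s)=|D|^{-s}\,D(s+k/2,\mathbf{h}_J)\!\left(\frac{\Gamma(s+k/2)}{(2\pi)^{s+k/2}}\right)^{\!n}.
\]

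Setting $s=0$ and $k=2$ and reinterpreting the Archimedean integrals using the differential $dz_J$ defined in (\ref{dz_J}) (i.e. treating the $\iota\in J_F\setminus J$ components as anti-holomorphic, so that $d\bar{z}_{\iota}$ is paired with the corresponding negative imaginary direction), I obtain the anti-holomorphic analogue of Proposition \ref{Mellin unramified}:
\[
\sum_{i=1}^{h_F^+}\int_{\sqrt{-1}(F\otimes\mathbb{R})_+^{\times}/\mathfrak{o}_{F,+}^{\times}}
f_{J,i}(z)\,dz_{{}_{J}}
=D(1,\mathbf{f}_J)\!\left(\frac{1}{-2\pi\sqrt{-1}}\right)^{\!n}.
\]
Since the Hecke eigenvalues of $\mathbf{f}_J$ coincide with those of $\mathbf{f}$ (as recalled before the statement, via \cite[Corollary 6.2]{Hida94}), we have $D(s,\mathbf{f}_J,\eta)=D(s,\mathbf{f},\eta)$, so applying this identity to the twist $(\mathbf{f}\otimes\eta)_J\in S_{2,J}(\ideal{n}_{\eta},\C)$ gives
\[
\sum_{i=1}^{h_F^+}\int_{\sqrt{-1}(F\otimes\mathbb{R})_+^{\times}/\mathfrak{o}_{F,+}^{\times}}(f\otimes\eta)_{J,i}(z)\,dz_{{}_{J}}
=\frac{D(1,\mathbf{f},\eta)}{(-2\pi\sqrt{-1})^{n}},
\]
with the extra factor $\eta_{\infty}(\nu_J)$ tracked explicitly through the Fourier coefficients $a(\mu,f_{J,i})\eta_{\infty}(\nu_J)\eta(\mu[t_i]^{-1})$ in the definition of $(f\otimes\eta)_{J,i}$.

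Finally I would multiply both sides by $\tau(\eta^{-1})$ and apply the Gauss sum identity \cite[(3.11)]{Shi}
\[
\eta(\xi[t_i]^{-1})\tau(\eta^{-1})=\sum_{b\in(\ideal{m}_{\eta}^{-1}\ideal{d}_F^{-1}[t_i]^{-1}/\ideal{d}_F^{-1}[t_i]^{-1})^{\times}}\eta_i(\bar{b})^{-1}e_F(\xi b),
\]
and then break the sum over $b$ as $b=b_i u^{-1}$ with $b_i\in S_i$ and $u\in T$, using $\eta_i(\overline{b_i u^{-1}})=\eta_i(\bar{b}_i)$ for $u\in \mathfrak{o}_{F,+}^{\times}/\mathfrak{o}_{F,\ideal{m}_{\eta},+}^{\times}$. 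Swapping sum and integral collapses the sum over $u\in T$ with the quotient $\mathfrak{o}_{F,+}^{\times}/\mathfrak{o}_{F,\ideal{m}_{\eta},+}^{\times}$, producing the integral over the smaller domain $\sqrt{-1}(F\otimes\mathbb{R})_+^{\times}/\mathfrak{o}_{F,\ideal{m}_{\eta},+}^{\times}$. The independence of the lift $b_i$ of $\bar{b}_i$ follows verbatim from the argument after (\ref{Mellin ramified}), since the relevant $\Gamma_1(\ideal{d}_F[t_i],\ideal{n})$-element stabilizing $\infty$ still acts on $\mathbf{f}_J$ trivially with respect to the upper-triangular unipotent. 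The main obstacle, and essentially the only place where care is needed beyond the holomorphic case, is the sign-of-infinity bookkeeping in the Archimedean integral: one has to verify that the change of contour/variable in the components $\iota\in J_F\setminus J$ yields $dz_{{}_{J}}$ together with the expected factor $\eta_{\infty}(\nu_J)$, which is precisely the factor inserted into the definition of $(\mathbf{f}\otimes\eta)_J$.
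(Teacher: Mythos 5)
Your proposal follows exactly the route the paper itself indicates: the paper's entire proof of Proposition \ref{anti Mellin} is the single sentence ``the same argument as in the proof of Proposition \ref{Mellin} shows the following,'' and your plan is a faithful elaboration of that argument — a Weyl-twisted zeta integral, unfolding via the idele class coset decomposition, the mixed-signature Fourier expansion of $f_{J,i}$, and the Gauss sum expansion followed by collapsing the sum over $T$.

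One small point of care on the sign bookkeeping, since you flag it as the main obstacle. The displayed identity
\[
\sum_{i=1}^{h_F^+}\int_{\sqrt{-1}(F\otimes\mathbb{R})_+^{\times}/\mathfrak{o}_{F,+}^{\times}}(f\otimes\eta)_{J,i}(z)\,dz_{{}_{J}}
=\frac{D(1,\mathbf{f},\eta)}{(-2\pi\sqrt{-1})^{n}}
\]
should actually carry the factor $\eta_{\infty}(\nu_J)$ on the right, because the Fourier coefficients of $(f\otimes\eta)_{J,i}$ contain $\eta_{\infty}(\nu_J)$ as an overall scalar and so $D(1,(\mathbf{f}\otimes\eta)_J)=\eta_{\infty}(\nu_J)D(1,\mathbf{f},\eta)$; you do acknowledge this in the accompanying sentence, but the formula itself omits it. Correspondingly, when you then apply \cite[(3.11)]{Shi} to $\mu$ of sign pattern $\nu_J$ (rather than $\mu\gg 0$), the Gauss sum relation acquires an additional factor $\eta_{\infty}(\nu_J)$, namely $\sum_{b}\eta_i(\bar{b})^{-1}e_F(\mu b)=\eta_{\infty}(\nu_J)\,\eta(\mu[t_i]^{-1})\tau(\eta^{-1})$; this is precisely what cancels the $\eta_{\infty}(\nu_J)$ inserted into the definition of $(\mathbf{f}\otimes\eta)_J$, so that $\tau(\eta^{-1})(f\otimes\eta)_{J,i}(z)=\sum_b\eta_i(\bar{b})^{-1}f_{J,i}(z+b)$ with no stray sign. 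With both of these sign factors accounted for, the computation closes and matches the asserted formula. These are corrections of detail, not of strategy; the approach is the one the paper intends.
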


%
\subsection{Relation between cohomology class and Dirichlet series}\label{subsection:modular symbol}
%

In this subsection, we give a cohomological description of the Dirichlet series  defined by (\ref{L-function}). 

We keep the notation in \S \ref{subsection:Mellin transform}.
We fix $i\in \Z$ with $1\le i \le h_F^+$. 
We abbreviate $\Gamma_1(\ideal{d}_F[t_i],\ideal{n})$ to $\Gamma$. 
We consider the Hilbert modular varieties $Y(\ideal{n})$ (defined by (\ref{adele HMV})) and $Y_i$ (defined by (\ref{analytic HMV})). 
Let $C_i$ denote the set of all cusps of $Y_i$. 

We fix $b_i\in S_i$.
We consider the following subset $H_{b_i}$ of $\mathfrak{H}^{J_F}$: 
\[
H_{b_i}:=b_i+\sqrt{-1}(F\otimes\mathbb{R})_+^{\times}\hookrightarrow \mathfrak{H}^{J_F}. 
\]

We define an action of $\mathfrak{o}_{F,\ideal{m}_{\eta},+}^{\times}$ on $H_{b_i}$ by 
$\varepsilon\ast(z_{\iota})_{\iota\in J_F}=(\varepsilon^{\iota}z_{\iota}-(\varepsilon^{\iota}-1)b_i)_{\iota\in J_F}$.
Since $(\varepsilon-1) b_i\in \ideal{d}_F^{-1}[t_i]^{-1}$ for any $\varepsilon\in \mathfrak{o}_{F,\ideal{m}_{\eta},+}^{\times}$, we see that 
$\varepsilon\ast(z_{\iota})_{\iota\in J_F}$ is $\Gamma$-equivalent to $(z_{\iota})_{\iota\in J_F}$. 
Therefore we have
$H_{b_i}/\mathfrak{o}_{F,\ideal{m}_{\eta},+}^{\times} \to Y_i$. 

We extend it to a morphism on their compactifications as follows. 
Let $(\mathfrak{H}^{J_F})^{\text{BS}}$ denote the Borel--Serre compactification of $\mathfrak{H}^{J_F}$, which is a locally compact manifold on which $\GL_2(F)$ acts (see, for example, \cite[\S 2.1]{Ha}, \cite[\S 1.8]{Hida93}, \cite[\S 2.1]{Hira}). 
We can describe the boundary of $(\mathfrak{H}^{J_F})^{\text{BS}}$ at the cusp $\infty$ as follows. 
We put $X=\{(y,x)\in (F\otimes \mathbb{R})_+^{\times} \times (F\otimes \mathbb{R})
\mid \prod_{\iota\in J_F}y_{\iota}=1\}$. 
Then we have 
\begin{align}\label{H^{J_F}}
\mathfrak{H}^{J_F} \xrightarrow{\simeq} X \times \mathbb{R}_+^{\times}; (x_{\iota}+\sqrt{-1}y_{\iota})_{\iota\in J_F}
\mapsto \big{(}
\left(\left(\prod_{\iota\in J_F}y_{\iota}\right)^{-\frac{1}{n}}y_{\iota},x_{\iota}\right)_{\iota\in J_F},
\prod_{\iota\in J_F} y_{\iota}\big{)},
\end{align}
which is compatible with the action of $\Gamma_{\infty}$. 
Here $\Gamma_{\infty}$ denotes the stabilizer of $\infty$ in $\Gamma$, which acts trivially on the second factor of the right-hand side. 
The compactification of $\mathfrak{H}^{J_F}$ at the cusp $\infty$ is given by $X \times (\mathbb{R}_+^{\times}\cup \{\infty\})$. 

Let $Y_i^{\text{BS}}$ denote the Borel--Serre compactification $\overline{\Gamma}\backslash(\mathfrak{H}^{J_F})^{\text{BS}}$ of $Y_i$. 
Then $Y_i^{\text{BS}}$ is a compact manifold and its  boundary at a cusp $s$, which is denoted by $D_{s}$, is given by $\overline{\Gamma_{s}}\backslash \alpha(X \times \{\infty\})$, where $\Gamma_{s}$ denotes the stabilizer of $s$ in $\Gamma$ and $\alpha\in\SL_2(F)$ such that $s=\alpha(\infty)$. 

We define subsets $H_{b_i}^{\mathrm{BS}}$, $\partial_{\infty}$, and $\partial_{b_i}$ of $X \times (\mathbb{R}_{\ge 0}\cup \{\infty\})$ as follows.
Let $X_{b_i}$ denote the image of $H_{b_i}$ in $X$ under the composition of the isomorphism (\ref{H^{J_F}}) and the projection to $X$. 
We have $H_{b_i} \simeq X_{b_i} \times \mathbb{R}_+^{\times}$. 
We define $H_{b_i}^{\mathrm{BS}}$, $\partial_{\infty}$, and $\partial_{b_i}$ by 
\begin{align*}
&H_{b_i}^{\mathrm{BS}}=X_{b_i} \times (\mathbb{R}_{\ge 0}\cup \{\infty\}),& 
&\partial_{\infty}=X_{b_i} \times \{\infty\}, &
&\partial_{b_i}=X_{b_i} \times \{0\}.&
\end{align*}
The action of $\mathfrak{o}_{F,\ideal{m}_{\eta},+}^{\times}$ on $H_{b_i}$ extends canonically to an action on $H_{b_i}^{\mathrm{BS}}$.
We put $\alpha_{b_i}=\small\begin{pmatrix}-b_i&1+b_i^2\\-1&b_i \end{pmatrix}$.
Note that $\alpha_{b_i}(\infty)=b_i$.
The embedding $H_{b_i} \hookrightarrow \mathfrak{H}^{J_F}$
(resp. the composition $H_{b_i} \xrightarrow{\alpha_{b_i}} H_{b_i} \hookrightarrow \mathfrak{H}^{J_F}\xrightarrow{\alpha_{b_i}}\mathfrak{H}^{J_F}$) induces an $\mathfrak{o}_{F,\ideal{m}_{\eta},+}^{\times}$-equivariant map
\begin{align}\label{extend to H_b^{BS}}
&H_{b_i} \cup \partial_{\infty} \to \mathfrak{H}^{J_F}\cup (X\times\{\infty\})&
&\text{(resp. $H_{b_i} \cup \partial_{b_i} \to \mathfrak{H}^{J_F}\cup \alpha_{b_i}(X\times\{\infty\})$)}&
\end{align}
because $\alpha_{b_i}(b_i+\sqrt{-1}y)=b_i+\sqrt{-1}/y$.
Therefore we have $H_{b_i}^{\mathrm{BS}}/\mathfrak{o}_{F,\ideal{m}_{\eta},+}^{\times} \to Y_i^{\mathrm{BS}}$ and it induces
\begin{align}\label{Gysin}
H_c^n(Y(\ideal{n}),A&)
\to H_c^n(Y_i,A)\simeq H^n(Y_i^{\text{BS}},\partial(Y_i^{\text{BS}});A)\\
\nonumber&\to H^n(H_{b_i}^{\mathrm{BS}}/\mathfrak{o}_{F,\ideal{m}_{\eta},+}^{\times},\partial_{b_i}/\mathfrak{o}_{F,\ideal{m}_{\eta},+}^{\times}\cup\partial_{\infty}/\mathfrak{o}_{F,\ideal{m}_{\eta},+}^{\times};A)
\simeq
H_c^n(H_{b_i}/\mathfrak{o}_{F,\ideal{m}_{\eta},+}^{\times},A)
\end{align}
for $A=\integer{}$, $K$, or $\C$. 
Here $H_c^n(X,A)$ denotes the compactly supported cohomology of $X$ with coefficients in $A$, and $\partial(Y_i^{\text{BS}})$ denotes the boundary $\coprod_{s\in C_i}D_{s}$ of $Y_i^{\text{BS}}$. 

We define the evaluation map
\begin{align}\label{ev map}
\mathrm{ev}_{b_i,A} :
\widetilde{H}_c^n(Y(\ideal{n}),A)\to A
\end{align}
by the composition of (\ref{Gysin}) and the trace map 
$H_c^n(H_{b_i}/\mathfrak{o}_{F,\ideal{m}_{\eta},+}^{\times},A)\to A$, where 
\[
\widetilde{H}_c^n(Y(\ideal{n}),A)
:=H_c^n(Y(\ideal{n}),A)/\text{($A$-torsion)}.
\]
Note that the definition of $\mathrm{ev}_{b_i,A}$ depends only on $\bar{b}_i$ (because if $\bar{b}_i=\bar{b}_i'$, then there is $\gamma \in \Gamma$ such that $b=\gamma (b')$ and $\gamma(\infty)=\infty$) and hence it shall be denoted by $\mathrm{ev}_{\bar{b}_i,A}$.

Let us fix a Hilbert cusp form $\mathbf{f}\in S_2(\ideal{n},\C)$. 
Let $[\omega_{\mathbf{f}}]$ denote the associated cohomology class in $H^n(Y(\ideal{n}),\C)$.
Let $[\omega_{\mathbf{f}}]_{c}$ denote the compactly supported cohomology class in $H_c^n(Y(\ideal{n}),\C)$ whose image in $H^n(Y(\ideal{n}),\C)$ is $[\omega_{\textbf{f}}]$. 
By combining these observations and Proposition \ref{Mellin}, we obtain the following cohomological description of the Dirichlet series defined by (\ref{L-function}).
\begin{prop}\label{Mellin hol}
Let $\mathbf{f}\in S_2(\ideal{n},\integer{})$ be a Hecke eigenform for all $T(\ideal{q})$ and $U(\ideal{q})$.
Assume that $a [\omega_{\mathbf{f}}]_{c}\in \widetilde{H}_c^n(Y(\ideal{n}),A)$ for some $a\in A$. 
Then, under the same notation and assumptions as Proposition \ref{Mellin}, we have 
\begin{align*}
A(\eta) \ni \sum_{i=1}^{h_F^+}\sum_{b_i\in S_i}\eta_i(\bar{b}_i)^{-1} \mathrm{ev}_{\bar{b}_i,A}(a[\omega_{\mathbf{f}}]_{c})
=a\tau(\eta^{-1})
\frac{D(1,\mathbf{f}, \eta)}{(-2\pi \sqrt{-1})^{n}}. 
\end{align*}
\end{prop}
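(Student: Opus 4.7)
The plan is to reduce the proposition to Proposition \ref{Mellin} by identifying, for each $i$ and each $\bar{b}_i$, the image $\mathrm{ev}_{\bar{b}_i,\C}([\omega_{\mathbf{f}}]_{c})$ with the Mellin integral
\[
\int_{\sqrt{-1}(F\otimes \mathbb{R})_+^{\times}/\mathfrak{o}_{F,\ideal{m}_{\eta},+}^{\times}}f_i(z+\bar{b}_i)\,dz_{{}_{J_F}}.
\]
Once this identification is in place, multiplying by $\eta_i(\bar{b}_i)^{-1}$ and summing over $b_i\in S_i$ and $1\le i\le h_F^+$ gives the right-hand side by Proposition \ref{Mellin}, while the containment in $A(\eta)$ is immediate from the construction of $\mathrm{ev}_{\bar{b}_i,A}$.

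First I would recall that $[\omega_{\mathbf{f}}]$ is represented by the holomorphic $n$-form $\omega_{\mathbf{f}} = \mathbf{f}(z)\,dz_{{}_{J_F}}$, which under the strong approximation isomorphism (\ref{adele var}) restricts on the $i$-th component $Y_i$ to $f_i(z)\,dz_{{}_{J_F}}$. Since $\mathbf{f}$ is cuspidal, $\omega_{\mathbf{f}}$ extends by zero to the Borel--Serre boundary, and this is exactly the realization of $[\omega_{\mathbf{f}}]_c$ as a class in $H^n(Y_i^{\mathrm{BS}},\partial(Y_i^{\mathrm{BS}});\C)$.

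Next I would trace the pullback along the map $H_{b_i}^{\mathrm{BS}}/\mathfrak{o}_{F,\ideal{m}_{\eta},+}^{\times} \to Y_i^{\mathrm{BS}}$ induced by (\ref{extend to H_b^{BS}}). Under the translation $z\mapsto z+b_i$, the pullback of $f_i(z)\,dz_{{}_{J_F}}$ to $H_{b_i}\simeq\sqrt{-1}(F\otimes\mathbb{R})_+^{\times}$ is $f_i(z+b_i)\,dz_{{}_{J_F}}$, and cuspidality at $\infty$ together with the rapid decay of $f_i$ toward $\partial_{\infty}$ shows that this form represents a class in the relative cohomology $H^n(H_{b_i}^{\mathrm{BS}}/\mathfrak{o}_{F,\ideal{m}_{\eta},+}^{\times},\,\partial_{b_i}/\mathfrak{o}_{F,\ideal{m}_{\eta},+}^{\times}\cup\partial_{\infty}/\mathfrak{o}_{F,\ideal{m}_{\eta},+}^{\times};\C)\simeq H_c^n(H_{b_i}/\mathfrak{o}_{F,\ideal{m}_{\eta},+}^{\times},\C)$. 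Evaluating the trace map on this class produces precisely the Mellin integral above; invariance under the choice of lift of $\bar{b}_i$ is exactly the invariance already recorded after the statement of Proposition \ref{Mellin}.

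Finally, applying Proposition \ref{Mellin} and using the $A$-linearity of $\mathrm{ev}_{\bar{b}_i,A}$ on $a[\omega_{\mathbf{f}}]_c\in\widetilde{H}_c^n(Y(\ideal{n}),A)$ yields the claimed identity together with the integrality $A(\eta)\ni \mathrm{LHS}$. The main obstacle I anticipate is the careful bookkeeping of the orientation and sign conventions in the Borel--Serre boundary identification at both the cusp $\infty$ (via $\partial_{\infty}$) and the cusp $b_i$ (via $\alpha_{b_i}$ and $\partial_{b_i}$), so that the trace map produces the Mellin integral with no extra factor; the fact that the same integral appears on both sides of Proposition \ref{Mellin} will provide the needed normalization.
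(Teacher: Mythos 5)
Your proposal is correct and follows essentially the same route as the paper: the paper itself derives Proposition \ref{Mellin hol} by combining the construction of the evaluation maps in \S\ref{subsection:modular symbol} (where $\mathrm{ev}_{\bar{b}_i,\C}$ is the trace of the restriction to $H_{b_i}/\mathfrak{o}_{F,\ideal{m}_{\eta},+}^{\times}$, i.e.\ exactly the Mellin integral of $f_i(z+\bar b_i)$) with Proposition \ref{Mellin}, which is precisely your reduction. The additional details you supply on the Borel--Serre boundary and the relative class are the "observations" the paper invokes implicitly.
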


In the anti-holomorphic case, we also obtain the following cohomological description by the same argument as in the proof of Proposition \ref{Mellin hol}, using Proposition \ref{anti Mellin} instead of Proposition \ref{Mellin}.
\begin{prop}\label{Mellin anti-hol}
Under the same notation and assumptions as Proposition \ref{anti Mellin} and Proposition \ref{Mellin hol}, we have 
\begin{align*}
A(\eta) \ni \sum_{i=1}^{h_F^+}\sum_{b_i\in S_i}
\eta_i(\bar{b}_i)^{-1} 
\mathrm{ev}_{\bar{b}_i,A}\left(a[\omega_{\mathbf{f}}]_c|[K_{\infty}w_J K_{\infty}]\right)
=a\eta_{\infty}(\nu_J)\tau(\eta^{-1})
\frac{D(1,\mathbf{f},\eta)}{(-2\pi \sqrt{-1})^{n}}. 
\end{align*}
\end{prop}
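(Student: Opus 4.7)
The plan is to transcribe the proof of Proposition~\ref{Mellin hol} essentially verbatim, feeding in Proposition~\ref{anti Mellin} in place of Proposition~\ref{Mellin}. The first step is to identify the Weyl-twisted class $a[\omega_{\mathbf{f}}]_c\vert[K_{\infty}w_JK_{\infty}]$ with the compactly supported cohomology class represented on each component $Y_i$ by the $n$-form $f_{J,i}(z)\,dz_{{}_J}$, where $dz_{{}_J}$ is the mixed holomorphic/anti-holomorphic form defined in (\ref{dz_J}). This is forced by the relation $\mathbf{f}_J=\mathbf{f}\vert[K_\infty w_J K_\infty]$ combined with the fact that, for $\iota\in J_F\setminus J$, $w_{J,\iota}$ acts on the $\iota$-th copy of $\mathfrak{H}$ by $z\mapsto -\bar z$, so that the pullback of $dz_\iota$ is $-d\bar z_\iota$ on exactly those components; the Fourier expansion $f_{J,i}(z)=\sum_{\mu\in[t_i],\,\{\mu\}=J}a(\mu,f_{J,i})e_F(\mu z)$ recalled before Proposition~\ref{anti Mellin} is the explicit reflection of this.

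Once that identification is in place, I would feed $a[\omega_{\mathbf{f}_J}]_c$ through the same Gysin-type restriction chain (\ref{Gysin}) used to define $\mathrm{ev}_{\bar b_i,A}$, noting that the embedding $H_{b_i}\hookrightarrow \mathfrak{H}^{J_F}$ and its Borel--Serre extension (\ref{extend to H_b^{BS}}) are intrinsic to $b_i$ and independent of the choice of $J$. Pulling the form back along this embedding and integrating over $\sqrt{-1}(F\otimes\mathbb{R})_+^{\times}/\mathfrak{o}_{F,\ideal{m}_{\eta},+}^{\times}$ produces, by the same integrality argument as in Proposition~\ref{Mellin hol},
\[
A\ni \mathrm{ev}_{\bar b_i,A}\bigl(a[\omega_{\mathbf{f}}]_c\vert[K_{\infty}w_JK_{\infty}]\bigr)
=a\int_{\sqrt{-1}(F\otimes \mathbb{R})_+^{\times}/\mathfrak{o}_{F,\ideal{m}_{\eta},+}^{\times}}f_{J,i}(z+\bar b_i)\,dz_{{}_J}.
\]
Multiplying by $\eta_i(\bar b_i)^{-1}\in A(\eta)$, summing over $b_i\in S_i$ and $1\le i\le h_F^+$, and applying Proposition~\ref{anti Mellin} then yields the right-hand side $a\,\eta_\infty(\nu_J)\tau(\eta^{-1})D(1,\mathbf{f},\eta)/(-2\pi\sqrt{-1})^n$, while the left-hand side visibly lies in $A(\eta)$ since each individual evaluation lies in $A$.

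The only genuine obstacle is the first step: matching the right action of the double coset $[K_\infty w_J K_\infty]$ on $H_c^n(Y(\ideal{n}),\mathbb{C})$ with the concrete recipe $\mathbf{f}\mapsto \mathbf{f}_J$ on Fourier expansions, and verifying that this operator preserves the integral structure $\widetilde{H}_c^n(Y(\ideal{n}),A)$ so that $a[\omega_{\mathbf{f}}]_c\vert[K_{\infty}w_JK_{\infty}]$ remains an $A$-class. Preservation of the integral structure is automatic because $w_J\in K_\infty$ acts on the Betti cohomology through an involution defined over $A$, and the sign bookkeeping is absorbed into the definition of $dz_{{}_J}$ and into the factor $\eta_\infty(\nu_J)$ appearing in Proposition~\ref{anti Mellin}. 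Once this compatibility is checked, the argument is a word-for-word translation of the holomorphic case.
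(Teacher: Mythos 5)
Your proposal is correct and takes the same route as the paper, which gives essentially no proof of its own: the single sentence preceding the proposition simply says the result follows ``by the same argument as in the proof of Proposition~\ref{Mellin hol}, using Proposition~\ref{anti Mellin} instead of Proposition~\ref{Mellin}.'' Your write-up supplies exactly the details (the identification of the Weyl-twisted class with $f_{J,i}(z)\,dz_{{}_J}$, the $J$-independence of the Gysin chain~(\ref{Gysin}), and the preservation of the integral structure under $[K_\infty w_J K_\infty]$) that the paper treats as evident.
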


%
\subsection{Distribution property and interpolation property}\label{subsection:const of p-adic L}
%

In this subsection, we construct a $\C$\nobreakdash-valued distribution attached to a Hilbert cusp form. 

Let $p$ be a prime number such that $(p,6\ideal{d}_F\ideal{n})=1$. 
We decompose $(p)=\ideal{p}_1\cdots \ideal{p}_r$ into prime ideals $\ideal{p}_i$ of $\ideal{o}_F$. 
Let $\mathbf{f}\in S_2(\ideal{n},\integer{})$ be a normalized Hecke eigenform for all $T(\ideal{q})$ and $U(\ideal{q})$ with character $\varepsilon$. 
We assume that $\mathbf{f}$ is $p$-ordinary, that is, for each $j$ with $1\le j \le r$, the $T(\ideal{p}_j)$-eigenvalue $C(\ideal{p}_j,\mathbf{f})$ is prime to $p$. 
Let $\alpha_{\ideal{p}_j}$ be a unit root of the polynomial 
\begin{align}\label{unit root}
X^2-C(\ideal{p}_j,\mathbf{f})X+\varepsilon(\ideal{p}_j)N(\ideal{p}_j)=0.
\end{align}
Note that 
\[
K_0(\ideal{n})
\begin{pmatrix}\varpi_{\ideal{p}_j}&0\\ 0&1\end{pmatrix}
K_0(\ideal{n})
=\coprod_{u\in \ideal{o}_F/\ideal{p}_j}
K_0(\ideal{n})\begin{pmatrix}1 &u\\ 0& \varpi_{\ideal{p}_j}\end{pmatrix}
\coprod K_0(\ideal{n})\begin{pmatrix}\varpi_{\ideal{p}_j}&0\\ 0&1\end{pmatrix}.
\]
Then we have 
\begin{align}\label{Hecke relation of f}
C(\ideal{p}_j,\mathbf{f})\mathbf{f}(x)=\sum_{u\in \ideal{o}_F/\ideal{p}_j} 
\mathbf{f}(x \begin{pmatrix}\varpi_{\ideal{p}_j} &u\\ 0& 1\end{pmatrix})
+\varepsilon(\ideal{p}_j)
\mathbf{f}(x\begin{pmatrix}\varpi_{\ideal{p}_j}^{-1}&0\\ 0&1 \end{pmatrix}).
\end{align}

For $v=(v_j)_{1\le j\le r} \in (\Z_{\ge 0})^r$, 
we simply write $p^v=\prod_{1\le j \le r}\ideal{p}_j^{v_j}$ if there is no risk of confusion.
For a non-zero ideal $\ideal{a}$ of $\ideal{o}_F$, let $v(\ideal{a})=(v(\ideal{a})_j)_{1\le j\le r}\in (\Z_{\ge 0})^r$ such that $\ideal{a}p^{-v(\ideal{a})}\subset \ideal{o}_F$ and $\ideal{a}p^{-v(\ideal{a})}$ is prime to $p$. 
We use the convention $\alpha^{-v(\ideal{a})}=\prod_{1\le j\le r}\alpha_{\ideal{p}_j}^{-v(\ideal{a})_j}$.

For a non-zero ideal $\ideal{m}$ of $\ideal{o}_F$ and $\underline{a}=(a_i)_{1 \le i \le h_F^+} \in \bigoplus_{1\le i\le h_F^+}(\ideal{m}^{-1}\ideal{d}_F^{-1}[t_i]^{-1}/\ideal{d}_F^{-1}[t_i]^{-1})^{\times}/\ideal{o}_{F,+}^{\times}$, under the same notation in \S \ref{subsection:modular symbol}, 
we put 
\begin{align*}
H_{\underline{a}}:=\bigoplus_{1 \le i \le h_F^+}H_{a_i} \ \ \text{and} \ \ 
\mathrm{ev}_{\underline{a},\C}:=\bigoplus_{1 \le i \le h_F^+}\mathrm{ev}_{a_i,\C}. 
\end{align*}
We note that the image of $H_{\underline{a}}$ under the right multiplication of $\begin{pmatrix}\varpi_{\ideal{p}_j}^{-1}&0\\ 0&1 \end{pmatrix}$ is expressed as $H_{\underline{a(\ideal{p}_j)}}$, where the element $\underline{a(\ideal{p}_j)}=(a(\ideal{p}_j)_i)_{1\le i\le h_F^+}\in \bigoplus_{1\le i\le h_F^+}\ideal{p}_j\ideal{m}^{-1}\ideal{d}_F^{-1}[t_i]^{-1}$ is explicitly given as follows: 
under the same notation in \S \ref{Analytic HMV} and a fixed splitting of (\ref{canonical extension}), by computing modulo the left action of $G(F)$ and the right action of $K_1(\ideal{n})$, 
$x_i \begin{pmatrix}y_{\infty}&a_i \\0&1\end{pmatrix}_{\infty}
\equiv \begin{pmatrix}1&-a_i\\0&1\end{pmatrix}_0
\begin{pmatrix}D^{-1}t_i^{-1}&0\\0&1\end{pmatrix}
\begin{pmatrix}y_{\infty}&0\\0&1\end{pmatrix}_{\infty}$ and 
\begin{align*}
x_i \begin{pmatrix}y_{\infty}&a_i \\0&1\end{pmatrix}_{\infty}
\begin{pmatrix}\varpi_{\ideal{p}_j}^{-1}&0\\0&1\end{pmatrix}_0
\equiv &\begin{pmatrix}1&-a_i\\0&1\end{pmatrix}_0 
\begin{pmatrix}D^{-1}t_i^{-1}\varpi_{\ideal{p}_j}^{-1}&0\\0&1\end{pmatrix}
\begin{pmatrix}y_{\infty}&0\\0&1\end{pmatrix}_{\infty}\\
\equiv&\begin{pmatrix}1&-(a')^{-1}a_i\\0&1\end{pmatrix}_0
x_{k} \begin{pmatrix}t'y_{\infty}&0\\0&1\end{pmatrix}_{\infty},
\end{align*}
where $a'\in F^{\times}$ and $t'\in (F\otimes\mathbb{R})_{+}^{\times}$ such that $D^{-1}t_i^{-1}\varpi_{\ideal{p}_j}^{-1}=a'D^{-1}t_k^{-1}u't'$ for some $u' \in \widehat{\ideal{o}}_F^{\times}$.

Let $R(\ideal{p}_j)$ be an operation such that $R(\ideal{p}_j)\mathrm{ev}_{\underline{a},\C}$ is the evaluation map $\mathrm{ev}_{\underline{a(\ideal{p}_j)},\C}$ associated to $H_{\underline{a(\ideal{p}_j)}}$. 
We note that the operations $R(\ideal{p}_j)$ for $1 \le j \le r$ are compatible with each other. 

For a non-zero ideal $\ideal{m}$ of $\ideal{o}_F$, we define a map 
\[
\mu_{\mathbf{f},\alpha}(-,\ideal{m}):\bigoplus_{1 \le i \le h_F^+}(\ideal{m}^{-1}\ideal{d}_F^{-1}[t_i]^{-1}/\ideal{d}_F^{-1}[t_i]^{-1})^{\times}/\ideal{o}_{F,+}^{\times} \to \C
\]
by the following:
for $\underline{a} \in \bigoplus_{1\le i\le h_F^+}(\ideal{m}^{-1}\ideal{d}_F^{-1}[t_i]^{-1}/\ideal{d}_F^{-1}[t_i]^{-1})^{\times}/\ideal{o}_{F,+}^{\times}$,
\begin{align}\label{def of distribution}
\mu_{\mathbf{f},\alpha}(\underline{a},\ideal{m})
&=\alpha^{-v(\ideal{m})}
\left(\prod_{\ideal{q}|p} \left(1-\alpha_{\ideal{q}}^{-1}\varepsilon(\ideal{q})R(\ideal{q})\right)\right)
\mathrm{ev}_{\underline{a},\C}([\omega_{\mathbf{f}}]_c). 
\end{align}

\begin{prop}[distribution property]\label{prop:distribution}
Let $\mathbf{f}\in S_2(\ideal{n},\integer{})$ be a normalized Hecke eigenform for all $T(\ideal{q})$ and $U(\ideal{q})$ with character $\varepsilon$ and $p$-ordinary. 
Then, for a non-zero ideal $\ideal{m}$ of $\ideal{o}_F$ and $\underline{a} \in \bigoplus_{1\le i\le h_F^+}(\ideal{m}^{-1}\ideal{d}_F^{-1}[t_i]^{-1}/\ideal{d}_F^{-1}[t_i]^{-1})^{\times}/\ideal{o}_{F,+}^{\times}$, we have
\[
\sum_{\substack{\underline{b}\in  \bigoplus_{1 \le i \le h_F^+}(\ideal{m}^{-1}p^{-1}\ideal{d}_F^{-1}[t_i]^{-1}/\ideal{d}_F^{-1}[t_i]^{-1})^{\times}/\ideal{o}_{F,+}^{\times} \\ \underline{b}\equiv\underline{a}\,(\mathrm{mod}\,p)}}
\mu_{\mathbf{f},\alpha}(\underline{b},\ideal{m}p)=\mu_{\mathbf{f},\alpha}(\underline{a},\ideal{m}).\ \ \ \ \ \ \ \ \ \ \ \ \ \ \ \ \ \ \ \ \ 
\]
Here $\underline{b}$ runs over a complete set of representatives of $\bigoplus_{i=1}^{h_F^+}(\ideal{m}^{-1}p^{-1}\ideal{d}_F^{-1}[t_i]^{-1}/\ideal{d}_F^{-1}[t_i]^{-1})^{\times}/\ideal{o}_{F,+}^{\times}$, whose image in $\bigoplus_{i=1}^{h_F^+}(\ideal{m}^{-1}\ideal{d}_F^{-1}[t_i]^{-1}/\ideal{d}_F^{-1}[t_i]^{-1})^{\times}/\ideal{o}_{F,+}^{\times}$ under the canonical map is equal to $\underline{a}$.
\end{prop}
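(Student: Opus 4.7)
The plan is to prove the distribution relation one prime factor of $p$ at a time and then combine. By the Chinese remainder theorem, lifts of $\underline{a}$ modulo $p=\prod_j \mathfrak{p}_j$ factor as iterated lifts modulo each $\mathfrak{p}_j$; the operators $R(\mathfrak{p}_j)$ commute with one another and with each Euler factor $(1-\alpha_{\mathfrak{q}}^{-1}\varepsilon(\mathfrak{q})R(\mathfrak{q}))$; and $v(\mathfrak{m}p)_j = v(\mathfrak{m})_j + 1$ for every $j$. Consequently it suffices to prove the one-prime analogue
\[
\sum_{\underline{b}\equiv\underline{a}\,(\mathrm{mod}\,\mathfrak{p}_j)} \mu_{\mathbf{f},\alpha}(\underline{b},\mathfrak{m}\mathfrak{p}_j) = \mu_{\mathbf{f},\alpha}(\underline{a},\mathfrak{m})
\]
for each $j$, and apply it inductively. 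After cancelling the common Euler factors and the powers of $\alpha_{\mathfrak{q}}$ for $\mathfrak{q}\ne\mathfrak{p}_j$, this reduces to an identity on evaluation maps valid modulo the kernel of $1-\alpha_{\mathfrak{p}_j}^{-1}\varepsilon(\mathfrak{p}_j)R(\mathfrak{p}_j)$.

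The second step is to translate the Hecke eigenvalue identity (\ref{Hecke relation of f}) for $\mathbf{f}$ at $\mathfrak{p}_j$ into an identity among evaluation maps. Applied to a harmonic representative of $[\omega_{\mathbf{f}}]_c$, the change of variable $z \mapsto \varpi_{\mathfrak{p}_j}z + u$ carries $H_{\underline{a}}$ to a union of translates whose base points form, after reduction modulo the $\mathfrak{o}_{F,+}^{\times}$\nobreakdash-action, a complete set of lifts $\underline{b}'$ of $\underline{a}$ modulo $\mathfrak{p}_j$ in $\bigoplus_i\mathfrak{m}^{-1}\mathfrak{p}_j^{-1}\mathfrak{d}_F^{-1}[t_i]^{-1}/\mathfrak{d}_F^{-1}[t_i]^{-1}$, primitive or not; the remaining term $\varepsilon(\mathfrak{p}_j)\mathbf{f}(x\cdot\mathrm{diag}(\varpi_{\mathfrak{p}_j}^{-1},1))$ becomes $\varepsilon(\mathfrak{p}_j)R(\mathfrak{p}_j)\mathrm{ev}_{\underline{a}}([\omega_{\mathbf{f}}]_c)$ by definition of $R(\mathfrak{p}_j)$. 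Hence
\[
C(\mathfrak{p}_j,\mathbf{f})\,\mathrm{ev}_{\underline{a}}([\omega_{\mathbf{f}}]_c) = \sum_{\underline{b}'}\mathrm{ev}_{\underline{b}'}([\omega_{\mathbf{f}}]_c) + \varepsilon(\mathfrak{p}_j)R(\mathfrak{p}_j)\mathrm{ev}_{\underline{a}}([\omega_{\mathbf{f}}]_c).
\]

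The third step is a case analysis according to whether $\mathfrak{p}_j$ divides $\mathfrak{m}$. When $\mathfrak{p}_j\mid\mathfrak{m}$, every lift is automatically primitive, so $\sum_{\underline{b}'}=\sum_{\underline{b}}$, and substituting $C(\mathfrak{p}_j,\mathbf{f})=\alpha_{\mathfrak{p}_j}+\varepsilon(\mathfrak{p}_j)N(\mathfrak{p}_j)\alpha_{\mathfrak{p}_j}^{-1}$ (from $\alpha_{\mathfrak{p}_j}$ being a unit root of (\ref{unit root})) followed by division by $\alpha_{\mathfrak{p}_j}$ and application of the Euler factor $(1-\alpha_{\mathfrak{p}_j}^{-1}\varepsilon(\mathfrak{p}_j)R(\mathfrak{p}_j))$ yields the desired identity. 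When $\mathfrak{p}_j\nmid\mathfrak{m}$, the set of all lifts $\{\underline{b}'\}$ contains exactly one imprimitive element, the one whose image in $\mathfrak{m}^{-1}\mathfrak{d}_F^{-1}[t_i]^{-1}/\mathfrak{d}_F^{-1}[t_i]^{-1}$ is still $a_i$; a direct geometric comparison of fundamental domains identifies its evaluation, up to the Jacobian factor $N(\mathfrak{p}_j)$ produced by the measure change, with $R(\mathfrak{p}_j)\mathrm{ev}_{\underline{a}}([\omega_{\mathbf{f}}]_c)$. Subtracting this term to isolate the primitive lifts $\underline{b}$ appearing on the left\nobreakdash-hand side and combining it with the $\varepsilon(\mathfrak{p}_j)R(\mathfrak{p}_j)\mathrm{ev}_{\underline{a}}$ contribution from the Hecke identity produces a discrepancy proportional to $(1-\alpha_{\mathfrak{p}_j}^{-1}\varepsilon(\mathfrak{p}_j)R(\mathfrak{p}_j))\,R(\mathfrak{p}_j)\mathrm{ev}_{\underline{a}}([\omega_{\mathbf{f}}]_c)$, which is annihilated by the Euler factor already present in $\mu_{\mathbf{f},\alpha}$.

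The main obstacle is the case $\mathfrak{p}_j\nmid\mathfrak{m}$: one must pinpoint which lift in the $u$-sum is imprimitive, match its contribution to $R(\mathfrak{p}_j)\mathrm{ev}_{\underline{a}}$ together with the correct Jacobian factor arising from the change of variable on the $n$\nobreakdash-fold product of upper half planes, and verify the resulting algebraic identity against the Euler factor. The argument is structurally parallel to the classical constructions of Amice--V\'elu \cite{Ami--Ve} and Mazur--Tate--Teitelbaum \cite{MTT} in the elliptic case, transposed to the Hilbert setting where the non-trivial narrow class group forces us to keep track of the index $i$ and the matrices $x_i$ appearing in the operator $R(\mathfrak{p}_j)$.
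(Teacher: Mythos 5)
Your Steps 1 and 2 match the paper's strategy: reduce by CRT/induction to a single prime $\mathfrak{p}\mid p$, and translate the Hecke relation $(\ref{Hecke relation of f})$ into $\sum_{\underline b}\mathrm{ev}_{\underline b,\C} = (C(\mathfrak{p},\mathbf{f})-\varepsilon(\mathfrak{p})R(\mathfrak{p}))\mathrm{ev}_{\underline a,\C}$, the sum being over the $N(\mathfrak{p})$ lifts of $\underline a$. The genuine gap is in Step 3: you never invoke the second, equally essential geometric input, namely that $R(\mathfrak{p})\mathrm{ev}_{\underline b,\C}=\mathrm{ev}_{\underline a,\C}$ for every lift $\underline b$, so that $\sum_{\underline b}R(\mathfrak{p})\mathrm{ev}_{\underline b,\C}=N(\mathfrak{p})\mathrm{ev}_{\underline a,\C}$. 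Without this the algebra you sketch does not close. Substituting $C(\mathfrak{p},\mathbf{f})=\alpha_{\mathfrak{p}}+\varepsilon(\mathfrak{p})N(\mathfrak{p})\alpha_{\mathfrak{p}}^{-1}$ and then multiplying by $1-\alpha_{\mathfrak{p}}^{-1}\varepsilon(\mathfrak{p})R(\mathfrak{p})$, as you propose, produces a term $\alpha_{\mathfrak{p}}^{-1}\varepsilon(\mathfrak{p})^{2}R(\mathfrak{p})^{2}\mathrm{ev}_{\underline a,\C}$ that is not matched by anything in $\mu_{\mathbf{f},\alpha}(\underline a,\mathfrak{m})$, and it is exactly the collapsing identity that eliminates the would-be $R(\mathfrak{p})^2$ contribution. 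The paper's mechanism is to expand the $\mathfrak{p}$-Euler factor inside $\mu_{\mathbf{f},\alpha}(\underline b,\mathfrak{m}\mathfrak{p})$ into the two summands $1$ and $-\alpha_{\mathfrak{p}}^{-1}\varepsilon(\mathfrak{p})R(\mathfrak{p})$ before summing over $\underline b$; the first summand is handled by the Hecke relation, the second by the collapsing identity, and the two are recombined using $\varepsilon(\mathfrak{p})N(\mathfrak{p})=C(\mathfrak{p},\mathbf{f})\alpha_{\mathfrak{p}}-\alpha_{\mathfrak{p}}^{2}$ so that the factor $\alpha_{\mathfrak{p}}(1-\alpha_{\mathfrak{p}}^{-1}\varepsilon(\mathfrak{p})R(\mathfrak{p}))$ reappears and absorbs the $\alpha_{\mathfrak{p}}^{-1}$ discrepancy between $\alpha^{-v(\mathfrak{m}\mathfrak{p})}$ and $\alpha^{-v(\mathfrak{m})}$.

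Two further remarks on Step 3. First, the paper does not split into cases $\mathfrak{p}\mid\mathfrak{m}$ versus $\mathfrak{p}\nmid\mathfrak{m}$: it runs the computation over all $N(\mathfrak{p})$ lifts $(u+a_1)/\varpi_{\mathfrak{p}}$ uniformly, so the case analysis is extra machinery that is not needed once the collapsing identity is in hand. Second, your treatment of the $\mathfrak{p}\nmid\mathfrak{m}$ case contains a false identification. By the explicit double-coset computation preceding Proposition $\ref{prop:distribution}$, $R(\mathfrak{p})$ moves the base point into $\mathfrak{p}\mathfrak{m}^{-1}\mathfrak{d}_F^{-1}[t_1]^{-1}$ (multiplication by an approximation of $\varpi_{\mathfrak{p}}$, hence a shallower denominator), while the imprimitive lift you single out lies in $\mathfrak{m}^{-1}\mathfrak{d}_F^{-1}[t_1]^{-1}$ and arises from dividing by $\varpi_{\mathfrak{p}}$. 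These are different cycles, and the asserted equality of the imprimitive lift's evaluation with $N(\mathfrak{p})\cdot R(\mathfrak{p})\mathrm{ev}_{\underline a,\C}$ (your "Jacobian factor" argument) does not hold; consequently the claimed discrepancy "proportional to $(1-\alpha_{\mathfrak{p}}^{-1}\varepsilon(\mathfrak{p})R(\mathfrak{p}))R(\mathfrak{p})\mathrm{ev}_{\underline a,\C}$" and its supposed annihilation by the Euler factor do not materialize.
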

\begin{proof}
We claim that, for every prime ideal $\ideal{p}$ of $\ideal{o}_F$ dividing $p$, non-zero ideal $\ideal{m}$ of $\ideal{o}_F$, and $\underline{a}\in  \bigoplus_{1 \le i \le h_F^+}(\ideal{m}^{-1}\ideal{d}_F^{-1}[t_i]^{-1}/\ideal{d}_F^{-1}[t_i]^{-1})^{\times}/\ideal{o}_{F,+}^{\times}$,
\begin{align}\label{claim on distribution}
\sum_{\substack{\underline{b}\in  \bigoplus_{1 \le i \le h_F^+}(\ideal{m}^{-1}\ideal{p}^{-1}\ideal{d}_F^{-1}[t_i]^{-1}/\ideal{d}_F^{-1}[t_i]^{-1})^{\times}/\ideal{o}_{F,+}^{\times} \\ \underline{b}\equiv\underline{a}\,(\mathrm{mod}\,\ideal{p})}}
\mu_{\mathbf{f},\alpha}(\underline{b},\ideal{mp})=\mu_{\mathbf{f},\alpha}(\underline{a},\ideal{m}).
\end{align}
Here $\overline{b}$ runs over a complete set of representatives of $\bigoplus_{i=1}^{h_F^+}(\ideal{m}^{-1}\ideal{p}^{-1}\ideal{d}_F^{-1}[t_i]^{-1}/\ideal{d}_F^{-1}[t_i]^{-1})^{\times}/\ideal{o}_{F,+}^{\times}$, whose image in $\bigoplus_{i=1}^{h_F^+}(\ideal{m}^{-1}\ideal{d}_F^{-1}[t_i]^{-1}/\ideal{d}_F^{-1}[t_i]^{-1})^{\times}/\ideal{o}_{F,+}^{\times}$ under the canonical map is equal to $\underline{a}$.

For the moment, we admit the claim (\ref{claim on distribution}).
Let $T'$ be a complete set of representatives of 
$\bigoplus_{i=1}^{h_F^+}(\ideal{m}^{-1}\ideal{p}_1^{-1}\cdots \ideal{p}_{r-1}^{-1}\ideal{d}_F^{-1}[t_i]^{-1}/\ideal{d}_F^{-1}[t_i]^{-1})^{\times}/\ideal{o}_{F,+}^{\times}$ in $\bigoplus_{i=1}^{h_F^+}\ideal{m}^{-1}\ideal{p}_1^{-1}\cdots \ideal{p}_{r-1}^{-1}\ideal{d}_F^{-1}[t_i]^{-1}$. 
Then the left-hand side in the proposition is equal to 
\begin{align*}
\sum_{\substack{\underline{b}'\in T'\\\underline{b}'\equiv \underline{a}\,(\mathrm{mod}\,\ideal{p}_1\cdots\ideal{p}_{r-1})}}
\sum_{\substack{\underline{b}\in  \bigoplus_{1 \le i \le h_F^+}(\ideal{m}^{-1}\ideal{p}_1^{-1}\cdots \ideal{p}_{r-1}^{-1} \ideal{p}_r^{-1}\ideal{d}_F^{-1}[t_i]^{-1}/\ideal{d}_F^{-1}[t_i]^{-1})^{\times}/\ideal{o}_{F,+}^{\times}\\ \underline{b}\equiv \underline{b}'\,(\mathrm{mod}\,\ideal{p}_r)}}
\mu_{\mathbf{f},\alpha}(\underline{b},\ideal{m}p).
\end{align*}
Hence, by applying (\ref{claim on distribution}) to the triple 
$(\ideal{p}_r,\ideal{m}\ideal{p}_1\cdots \ideal{p}_{r-1}, \underline{b}')$ instead of $(\ideal{p},\ideal{m},\underline{a})$, 
the left-hand side in the proposition is equal to 
\begin{align*}
\sum_{\substack{\underline{b}'\in T'\\ \underline{b}'\equiv \underline{a}\,(\mathrm{mod}\,\ideal{p}_1\cdots\ideal{p}_{r-1})}}
\mu_{\mathbf{f},\alpha}(\underline{b}',\ideal{m}\ideal{p}_1\cdots \ideal{p}_{r-1}).
\end{align*}
Now, by the same argument as above, we can prove the proposition as desired. 

It remains to prove the claim (\ref{claim on distribution}).
The left-hand side of (\ref{claim on distribution}) is equal to 
\begin{align*}
\alpha&^{-v(\ideal{mp})}\sum_{\underline{b}\equiv\underline{a}\,(\mathrm{mod}\,\ideal{p})} 
\left(\prod_{\ideal{q}|p}(1-\alpha_{\ideal{q}}^{-1}\varepsilon(\ideal{q})R(\ideal{q}))\right)
\mathrm{ev}_{\underline{b},\C}([\omega_{\mathbf{f}}]_c)\\
&=\alpha^{-v(\ideal{mp})}\sum_{\underline{b}\equiv\underline{a}\,(\mathrm{mod}\,\ideal{p})}
\left(\prod_{\substack{\ideal{q}|p\\ \ideal{q}\neq \ideal{p}}}(1-\alpha_{\ideal{q}}^{-1}\varepsilon(\ideal{q})R(\ideal{q})) \right)
\mathrm{ev}_{\underline{b},\C}
([\omega_{\mathbf{f}}]_c)\\
&\ \ \ \ -\alpha^{-v(\ideal{m}\ideal{p}^2)}\varepsilon(\ideal{p})
\sum_{\underline{b}\equiv\underline{a}\,(\mathrm{mod}\,\ideal{p})} \left(\prod_{\substack{\ideal{q}|p\\ \ideal{q}\neq \ideal{p}}}(1-\alpha_{\ideal{q}}^{-1}\varepsilon(\ideal{q})R(\ideal{q}))\right)
R(\ideal{p})\mathrm{ev}_{\underline{b},\C}
([\omega_{\mathbf{f}}]_c ).
\end{align*}
We note that the first term above is equal to 
\begin{align}\label{first term}
\alpha^{-v(\ideal{mp})}
\left(C(\ideal{p},\mathbf{f})-\varepsilon(\ideal{p})R(\ideal{p})\right)
\left(\prod_{\substack{\ideal{q}|p\\ \ideal{q}\neq \ideal{p}}}(1-\alpha_{\ideal{q}}^{-1}\varepsilon(\ideal{q})R(\ideal{q})) \right)
\mathrm{ev}_{\underline{a},\C}
([\omega_{\mathbf{f}}]_c)
\end{align}
and the second term above is equal to 
\begin{align}\label{second term}
-\alpha^{-v(\ideal{mp}^2)}
\varepsilon(\ideal{p})N(\ideal{p})
\left(\prod_{\substack{\ideal{q}|p\\ \ideal{q}\neq \ideal{p}}}(1-\alpha_{\ideal{q}}^{-1}\varepsilon(\ideal{q})R(\ideal{q})) \right)
\mathrm{ev}_{\underline{a},\C}
([\omega_{\mathbf{f}}]_c).
\end{align}
Here the former (\ref{first term}) follows from (\ref{Hecke relation of f}) and the following: under the same notation in \S \ref{Analytic HMV}, 
a fixed splitting of (\ref{canonical extension}), 
and the identification $\ideal{o}_F/\ideal{p} \simeq \ideal{d}_F^{-1}[t_i]^{-1}/\ideal{p}\ideal{d}_F^{-1}[t_i]^{-1}$, by computing modulo the left action of $G(F)$ and the right action of $K_1(\ideal{n})$, 
\begin{align*}
\label{cal1}x_i \begin{pmatrix}y_{\infty}&a_i \\0&1\end{pmatrix}_{\infty}
\begin{pmatrix}\varpi_{\ideal{p}}&-u\\0&1\end{pmatrix}_0
\equiv& \begin{pmatrix}1&-a_i\\0&1\end{pmatrix}_0
\begin{pmatrix}D^{-1}t_i^{-1}&0\\0&1\end{pmatrix}
\begin{pmatrix}y_{\infty}&0\\0&1\end{pmatrix}_{\infty}
\begin{pmatrix}\varpi_{\ideal{p}}&-u\\0&1\end{pmatrix}_0\\
\nonumber=&\begin{pmatrix}1&-(a_i+u D^{-1}t_i^{-1})\\0&1\end{pmatrix}_0 
\begin{pmatrix}D^{-1}t_i^{-1}\varpi_{\ideal{p}}&0\\0&1\end{pmatrix}
\begin{pmatrix}y_{\infty}&0\\0&1\end{pmatrix}_{\infty}\\
\nonumber\equiv&\begin{pmatrix}1&-(a'')^{-1}(a_i+u D^{-1}t_i^{-1})\\0&1\end{pmatrix}_0
x_j 
\begin{pmatrix}t''y_{\infty}&0\\0&1\end{pmatrix}_{\infty},
\end{align*}
where $a''\in F^{\times}$ and $t''\in (F\otimes\mathbb{R})_{+}^{\times}$ such that $D^{-1}t_i^{-1}\varpi_{\ideal{p}}=a''D^{-1}t_j^{-1}u''t''$ for some $u'' \in \widehat{\ideal{o}}_F^{\times}$. 
Here we note that the $(1,2)$ entry of the first matrix in the last line is a lift of $a_i$. 
Furthermore the latter (\ref{second term}) follows from that a complete set of such representatives in (\ref{claim on distribution}) 
is given by $\left\{(u+a_i)/\varpi_{\ideal{p}}\mid u\in \ideal{o}_F/\ideal{p} \simeq \ideal{d}_F^{-1}[t_i]^{-1}/\ideal{p}\ideal{d}_F^{-1}[t_i]^{-1} \right\}$ 
and the map $\mathrm{ev}_{(u+a_i)_{i},\C}$ depends only on $\overline{(u+a_i)}_i=\overline{(a_i)}_i$. 
Since $\varepsilon(\ideal{p})N(\ideal{p})=-\alpha_{\ideal{p}}^2 +C(\ideal{p},\mathbf{f})\alpha_{\ideal{p}}$ by (\ref{unit root}), therefore we obtain the claim (\ref{claim on distribution}) as desired.
\end{proof}

\begin{prop}[interpolation property]\label{prop:interpolation}
Let $\mathbf{f}\in S_2(\ideal{n},\C)$ be a normalized Hecke eigenform with character $\varepsilon$ and $p$-ordinary. 
Let $\eta$ be a narrow ray class character of $F$, whose conductor is denoted by $\ideal{m}_{\eta}$, such that $\ideal{m}_{\eta}$ is prime to $\ideal{d}_F[t_i]$ for each $i$, and $\ideal{n}p|\ideal{m}_{\eta}$. 
Then we have 
\[
\sum_{\underline{b}\in \bigoplus_{1 \le i \le h_F^+}\left(\ideal{m}_{\eta}^{-1}\ideal{d}_F^{-1}[t_i]^{-1}/\ideal{d}_F^{-1}[t_i]^{-1}\right)^{\times}/\ideal{o}_{F,+}^{\times}}
\eta_i(\overline{b}_i)^{-1}
\mu_{\mathbf{f},\alpha}(\underline{b},\ideal{m}_{\eta})
=\alpha^{-v(\ideal{m}_{\eta})}\tau(\eta^{-1})
\frac{D(1,\mathbf{f},\eta)}{(-2\pi \sqrt{-1})^{n}},
\]
where $\underline{b}$ runs over a complete set of representatives of $\bigoplus_{i=1}^{h_F^+}\left(\ideal{m}_{\eta}^{-1}\ideal{d}_F^{-1}[t_i]^{-1}/\ideal{d}_F^{-1}[t_i]^{-1}\right)^{\times}/\ideal{o}_{F,+}^{\times}$.
\end{prop}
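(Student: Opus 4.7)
The plan is to substitute the definition \eqref{def of distribution} of $\mu_{\mathbf{f},\alpha}$ into the left-hand side, expand the product $\prod_{\ideal{q}|p}(1-\alpha_{\ideal{q}}^{-1}\varepsilon(\ideal{q})R(\ideal{q}))$ as a sum over subsets $S\subseteq\{\ideal{q}\mid p\}$, and show that only the $S=\emptyset$ term survives the character sum against $\eta^{-1}$. More precisely, the left-hand side becomes
\[
\alpha^{-v(\ideal{m}_{\eta})} \sum_{S\subseteq\{\ideal{q}|p\}} (-1)^{|S|}\Bigl(\prod_{\ideal{q}\in S}\alpha_{\ideal{q}}^{-1}\varepsilon(\ideal{q})\Bigr)\, I_S,\qquad I_S:=\sum_{\underline{b}}\eta_i(\bar{b}_i)^{-1}\Bigl(\prod_{\ideal{q}\in S}R(\ideal{q})\Bigr)\mathrm{ev}_{\underline{b},\C}([\omega_{\mathbf{f}}]_c).
\]

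For $S=\emptyset$, I would apply Proposition \ref{Mellin hol} with $A=\C$ and $a=1$ (noting that $\ideal{m}_\eta$ is prime to each $\ideal{d}_F[t_i]$, as required, and that $H_c^n(Y(\ideal{n}),\C)$ is torsion-free), obtaining
\[
I_\emptyset=\tau(\eta^{-1})\frac{D(1,\mathbf{f},\eta)}{(-2\pi\sqrt{-1})^n},
\]
which, after multiplication by $\alpha^{-v(\ideal{m}_\eta)}$, matches the right-hand side.

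For $S\neq\emptyset$, the key claim is that $I_S=0$. By the computation of the shift $\underline{b(\ideal{q})}$ performed before Proposition \ref{prop:distribution}, the operator $\prod_{\ideal{q}\in S}R(\ideal{q})$ sends $\mathrm{ev}_{\underline{b},\C}$ to an evaluation map indexed by the tuple obtained from $\underline{b}$ by multiplication by $\prod_{\ideal{q}\in S}\varpi_{\ideal{q}}^{-1}$ and an accompanying permutation of the narrow class representatives $[t_i]$. Since $\ideal{q}\mid p\mid\ideal{m}_{\eta}$ for every $\ideal{q}\in S$, the resulting evaluation depends on $\underline{b}$ only through its image in $\bigoplus_i\bigl((\ideal{m}_{\eta}\prod_{\ideal{q}\in S}\ideal{q}^{-1})^{-1}\ideal{d}_F^{-1}[t_i]^{-1}/\ideal{d}_F^{-1}[t_i]^{-1}\bigr)/\ideal{o}_{F,+}^{\times}$. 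Grouping the sum according to this coarser reduction and using that each non-empty fiber is a coset of the kernel $K$ of the natural surjection $(\ideal{o}_F/\ideal{m}_{\eta})^\times\twoheadrightarrow(\ideal{o}_F/\ideal{m}_{\eta}\prod_{\ideal{q}\in S}\ideal{q}^{-1})^\times$, the inner sum reduces to $\sum_{k\in K}\eta_i(k\bar b_i)^{-1}=\eta_i(\bar b_i)^{-1}\sum_{k\in K}\eta(k)^{-1}$. Because $\ideal{m}_\eta$ is the exact conductor of $\eta$ and $\prod_{\ideal{q}\in S}\ideal{q}$ is a non-trivial divisor of $\ideal{m}_{\eta}$, the restriction $\eta|_K$ is non-trivial, so this sum vanishes; hence $I_S=0$.

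The main obstacle will be the careful bookkeeping of how the shift operator $R(\ideal{q})$ acts at the level of the adelic decomposition: the multiplication by $\varpi_{\ideal{q}}^{-1}$ permutes the representatives $t_i$ of $\mathrm{Cl}_F^+$ and introduces auxiliary factors $a''\in F^\times$ and $t''\in (F\otimes\R)_+^\times$ (as in the computation in the proof of Proposition \ref{prop:distribution}), and one must verify that under these shifts the twisted characters $\eta_i$ match up correctly via the cocycle relation $\eta_i(\xi\bar b)=\eta(\xi)\eta_i(\bar b)$ so that the character-sum vanishing argument goes through uniformly across all summands $i$. Once this matching is in place, the factorization of the sum $I_S$ into an outer sum indexed by the coarser quotient and an inner character sum over a coset of $K$ becomes legitimate, completing the proof.
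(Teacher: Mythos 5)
Your proposal is correct and follows essentially the same route as the paper: expand $\mu_{\mathbf{f},\alpha}$ via \eqref{def of distribution}, identify the $S=\emptyset$ term with the right-hand side using Proposition \ref{Mellin hol}, and kill each cross term $I_S$ ($S\neq\emptyset$) by observing that $\prod_{\ideal{q}\in S}R(\ideal{q})\,\mathrm{ev}_{\underline{b},\C}$ depends only on $\underline{b}$ modulo a coarser lattice, so primitivity of $\eta$ (conductor exactly $\ideal{m}_\eta$, with $p\mid\ideal{m}_\eta$) forces the character sum over the fibers to vanish. The paper's own argument is terser -- it states the coarser dependence and invokes primitivity in one line -- but your expanded bookkeeping (the coset-of-kernel factorization of the inner sum, and the remark that one must track the class-representative permutation and the twist cocycle $\eta_i(\xi\bar b)=\eta(\xi)\eta_i(\bar b)$ across components) is exactly what that one line is compressing.
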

\begin{proof}
By the definition (\ref{def of distribution}) and Proposition \ref{Mellin hol}, 
it suffices to show that, for each $m$, 
\[
\sum_{\underline{b}}\eta_i(\overline{b}_i)^{-1}
R(\ideal{p}_1)\cdots R(\ideal{p}_m)
\mathrm{ev}_{\underline{b},\C}([\omega_{\mathbf{f}}]_c)=0.
\]
Note that, by the definition of $R(\ideal{p}_j)$, the value $R(\ideal{p}_1)\cdots R(\ideal{p}_m)\mathrm{ev}_{\underline{b},\C}([\omega_{\mathbf{f}}]_c)$ depends on $\underline{b}$ modulo $\bigoplus_{i=1}^{h_F^+}\ideal{p}_1^{-1} \cdots \ideal{p}_m^{-1}\ideal{d}_F^{-1}[t_i]^{-1}$. 
Therefore, our assertion follows from that $\eta$ is a primitive character. 
\end{proof}

%
\section{Integrality of $p$-adic $L$-functions}\label{section:integrality of p-adic L}
%
In this subsection, we prove the integrality of $p$-adic $L$-functions attached to Hilbert cusp forms divided by the canonical periods. 
In order to show it, we need the integrality of the relative cohomology class attached to a Hilbert cusp form. 

%
\subsection{Partial Eichler--Shimura--Harder isomorphism}\label{subsection:Eichler--Shimura--Harder}
%

In this subsection, we recall the Eichler--Shimura--Harder isomorphism (\ref{+,+ decomp}), where we use the assumption $h_F^+=1$ (\cite[\S 4.2]{Hira}).

Let $K$ be a finite extension of the field $\Phi_p$ defined in \S \ref{analytic HMV}, and $\integer{}$ the ring of integers of $K$. 
For $A=\integer{}$, $K$, or $\C$, let $H_c^n(Y(\ideal{n}),A)$ denote the compactly supported cohomology of $Y(\ideal{n})$ with coefficients in $A$, and let $H_{\pa}^n(Y(\ideal{n}),A)$ denote the parabolic cohomology of $Y(\ideal{n})$ with coefficients in $A$, that is, $H_{\pa}^n(Y(\ideal{n}),A)=\im\left(H_c^n(Y(\ideal{n}),A)\to H^n(Y(\ideal{n}),A)\right)$. 
Let $\widetilde{H}_{\pa}^n(Y(\ideal{n}),A)$ denote the torsion-free part of $H_{\pa}^n(Y(\ideal{n}),A)$, that is, $\widetilde{H}_{\pa}^n(Y(\ideal{n}),A)=H_{\pa}^n(Y(\ideal{n}),A)$ for $A=K$ or $\C$ and 
\begin{align*}
\widetilde{H}_{\pa}^n(Y(\ideal{n}),\integer{})
&=\im(H_{\pa}^n(Y(\ideal{n}),\integer{})\to H_{\pa}^n(Y(\ideal{n}),K)).
\end{align*}
As mentioned in \cite[$\S$7]{Hida88}, $H_{\pa}^n (Y(\ideal{n}),A)$ is a $W_G$-module and hence so is $\widetilde{H}_{\pa}^n(Y(\ideal{n}),A)$,
where $W_G$ is the Weyl group explained before Proposition \ref{anti Mellin}.
In the case where $n$ is even, if a character $\epsilon$ of $W_G$ satisfies $\sharp \{\iota\in J_F \mid \epsilon(-1_{\iota})=-1\} \neq n/2$, then we have
\begin{align}\label{+,+ decomp}
H_{\pa}^n (Y(\ideal{n}),\C)[\epsilon]
\simeq S_2(\ideal{n},\C)
\end{align}
as Hecke modules, where we use the assumption that $h_F^+=1$ (\cite[\S 4.2, (4.7)]{Hira}). 
Here $W_G$ is identified with $\{\pm1\}^{J_F}$ by the determinant map, and 
for a $W_G$-module $V$, $V[\epsilon]$ denotes the $\epsilon$-isotypic part $\{v\in V \mid w\cdot v=\epsilon(w)v\ \text{for all}\ w \in W_G \}$. 
Thus the Hecke algbra $\mathscr{H}_2(\ideal{n},\integer{})$ is isomorphic to the $\integer{}$\nobreakdash-subalgebra of 
$\mathrm{End}_{\integer{}}\left(\widetilde{H}_{\pa}^n(Y(\ideal{n}),\integer{})[\epsilon]\right)$.

%
%
\subsection{Canonical periods}\label{subsection:canonical period}
%
%

In this subsection, we recall the definition of the canonical periods (\ref{period}) (\cite[\S 6.1]{Hira}).

We keep the notation in \S \ref{subsection:Eichler--Shimura--Harder}.
Let $\textbf{f}\in S_2(\ideal{n},\integer{})$ be a normalized Hecke eigenform for all $T(\ideal{q})$ and $U(\ideal{q})$ with character $\varepsilon$. 
We put the character $\epsilon_{\mathbf{f}}=\textbf{1}$ or $\text{sgn}^{J_F}$ of $W_G$. 
Let $\mathfrak{p}_{\textbf{f}}$ denote the prime ideal of the Hecke algebra $\mathscr{H}_2(\ideal{n},\integer{})$ 
generated by $T(\mathfrak{q})-C(\ideal{q},\textbf{f})$ and $S(\mathfrak{q})-\varepsilon^{-1}(\mathfrak{q})$ for all non-zero prime ideals $\mathfrak{q}$ of $\mathfrak{o}_F$ prime to $\ideal{n}$, and $U(\ideal{q})-C(\ideal{q},\textbf{f})$ for all non\nobreakdash-zero prime ideals $\ideal{q}$ of $\ideal{o}_F$ dividing $\ideal{n}$. 
The isomorphism (\ref{+,+ decomp}) and the $q$-expansion principle over $\C$ imply that 
$\dim_{\C}\left( H_{\pa}^n(Y(\ideal{n}),\C)[\epsilon_{\mathbf{f}},\mathfrak{p}_{\textbf{f}}]\right)=1$ and $\rank_{\integer{}}\left( \widetilde{H}_{\pa}^n(Y(\ideal{n}),\integer{})[\epsilon_{\mathbf{f}},\mathfrak{p}_{\textbf{f}}]\right)=1$. We choose a generator $[\delta_{\textbf{f}}]^{\epsilon_{\mathbf{f}}}$ of $\widetilde{H}_{\pa}^n(Y(\ideal{n}),\integer{})[\epsilon_{\mathbf{f}},\mathfrak{p}_{\textbf{f}}]$. 
Let $[\omega_{\textbf{f}}]^{\epsilon_{\mathbf{f}}}$ denote the projection of $[\omega_{\textbf{f}}]$ to the $\epsilon_{\mathbf{f}}$\nobreakdash-isotypic part $H_{\pa}^n(Y(\ideal{n}),\C)[\epsilon_{\mathbf{f}},\mathfrak{p}_{\textbf{f}}]$. 
We define the canonical period $\Omega_{\textbf{f}}^{\epsilon_{\mathbf{f}}} \in \C^{\times}$ of $\mathbf{f}$ by
\begin{align}\label{period}
[\omega_{\mathbf{f}}]^{\epsilon_{\mathbf{f}}}=\Omega_{\mathbf{f}}^{\epsilon_{\mathbf{f}}}[\delta_{\mathbf{f}}]^{\epsilon_{\mathbf{f}}}. 
\end{align}

Let $C(\Gamma_{1}(\ideal{d}_F [t_1],\ideal{n}))$ denote the set of all cusps of $Y(\ideal{n})$, and let $C_{\infty}$ denote the subset of $C(\Gamma_{1}(\ideal{d}_F [t_1],\ideal{n}))$ consisting of cusps $\Gamma_{0}(\ideal{d}_F [t_1],\ideal{n})$\nobreakdash-equivalent to the cusp $\infty$. 
Let $D_{C_{\infty}}(\ideal{n})$ denote the union of $D_{s}$ for all $s\in C_{\infty}$, where $D_{s}$ is the boundary of $Y(\ideal{n})^{\text{BS}}$ at a cusp $s$ (\S \ref{subsection:modular symbol}). 
As explained in \cite[\S 5.1]{Hira}, the Hecke correspondence $U(\ideal{q})$ preserves the component $D_{C_{\infty}}(\ideal{n})$.
Let $\mathbb{H}_2(\ideal{n},\integer{})'$ be the commutative $\integer{}$\nobreakdash-subalgebra of $\End_{\integer{}}({H}^{n-1}(D_{C_{\infty}}(\ideal{n}),\integer{})) \oplus \End_{\integer{}}({H}^n (Y(\ideal{n})^{\text{BS}},D_{C_{\infty}}(\ideal{n});\integer{}))\oplus \End_{\integer{}}({H}^n (Y(\ideal{n}),\integer{}))\oplus \End_{\integer{}}({H}^n(D_{C_{\infty}}(\ideal{n}),\integer{}))$ generated by $U(\ideal{q})$ for all non-zero prime ideals $\ideal{q}$ of $\mathfrak{o}_F$ dividing $\ideal{n}$, and 
$\ideal{m}_{\mathbf{f}}'$ the maximal ideal of $\mathbb{H}_2(\ideal{n},\integer{})'$ generated by $\varpi$ and $U(\ideal{q})-C(\ideal{q},\mathbf{f})$ for all non-zero prime ideals $\ideal{q}$ of $\mathfrak{o}_F$ dividing $\ideal{n}$. 

As mentioned in \cite[\S 2.4]{Hira}, by the relative de Rham theory (\cite[Theorem 5.2]{Bo}), we can define the relative cohomology class $[\omega_{\textbf{f}}]_{\text{rel}}$ in $H^n(Y(\ideal{n})^{\text{BS}},D_{C_{\infty}}(\ideal{n});\C)$ whose image in $H^n(Y(\ideal{n}),\C)$ is $[\omega_{\mathbf{f}}]$. 
Let $[\delta_{\mathbf{f}}]_{\mathrm{rel}}^{\epsilon_{\mathbf{f}}}$ denote the class $[\omega_{\mathbf{f}}]_{\mathrm{rel}}^{\epsilon_{\mathbf{f}}}/\Omega_{\mathbf{f}}^{\epsilon_{\mathbf{f}}}$ in $H^n(Y(\ideal{n})^{\mathrm{BS}},D_{C_{\infty}}(\ideal{n});\C)$. 
We put 
\begin{align*}
\widetilde{H}^n(Y(\ideal{n})^{\text{BS}},D_{C_{\infty}}(\ideal{n});\integer{})
&=H^n(Y(\ideal{n})^{\text{BS}},D_{C_{\infty}}(\ideal{n});\integer{})/\text{($\integer{}$-torsion)},\\
\widetilde{H}^n(Y(\ideal{n}),\integer{})
&=H^n(Y(\ideal{n}),\integer{})/\text{($\integer{}$-torsion)}.
\end{align*}

\begin{prop}\label{prop:integral of relative class}
Let $\mathbf{f}\in S_2(\ideal{n},\integer{})$ be a normalized Hecke eigenform for all $T(\ideal{q})$ and $U(\ideal{q})$ with character $\varepsilon$. 
Assume that $H^n(D_{C_{\infty}}(\ideal{n}),\integer{})_{\ideal{m}_{\mathbf{f}}'}$ is torsion-free, and $C(\ideal{q},\mathbf{f})\not\equiv N(\ideal{q})\,(\bmod\,\varpi)$ for some prime ideal $\ideal{q}$ of $\ideal{o}_F$ dividing $\ideal{n}$. 
Then $[\delta_{\mathbf{f}}]_{\mathrm{rel}}^{\epsilon_{\mathbf{f}}}$ is integral, that is,
\[
[\delta_{\mathbf{f}}]_{\mathrm{rel}}^{\epsilon_{\mathbf{f}}}
\in \widetilde{H}^n(Y(\ideal{n})^{\mathrm{BS}},D_{C_{\infty}}(\ideal{n});\integer{}).
\]
\end{prop}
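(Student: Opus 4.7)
The proof will follow a Manin--Drinfeld type argument as hinted in the introduction. For $A=\integer{}$ or $A=\C$, the long exact cohomology sequence of the pair $(Y(\ideal{n})^{\mathrm{BS}}, D_{C_{\infty}}(\ideal{n}))$ reads
\begin{equation*}
\cdots \to H^{n-1}(D_{C_{\infty}}(\ideal{n}),A) \xrightarrow{\partial} H^n(Y(\ideal{n})^{\mathrm{BS}}, D_{C_{\infty}}(\ideal{n}); A) \xrightarrow{j^{\ast}} H^n(Y(\ideal{n})^{\mathrm{BS}},A) \xrightarrow{r} H^n(D_{C_{\infty}}(\ideal{n}),A) \to \cdots.
\end{equation*}
I would localize everywhere at $\ideal{m}_{\mathbf{f}}'$ and restrict to the $\epsilon_{\mathbf{f}}$-isotypic part; since $W_G$ and $\mathbb{H}_2(\ideal{n},\integer{})'$ act compatibly on every term and commute with the connecting maps, this preserves exactness, and the two sequences (for $A=\integer{}$ and for $A=\C$) fit into a scalar-extension compatible ladder.

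First I would build an integral relative lift of $[\delta_{\mathbf{f}}]^{\epsilon_{\mathbf{f}}}$. Because $[\delta_{\mathbf{f}}]^{\epsilon_{\mathbf{f}}}$ lies in the parabolic cohomology, it is the image of a compactly supported class, so its restriction to the entire Borel--Serre boundary $\partial Y(\ideal{n})^{\mathrm{BS}}$, and a fortiori its restriction $r([\delta_{\mathbf{f}}]^{\epsilon_{\mathbf{f}}})$ to the subspace $D_{C_{\infty}}(\ideal{n})$, vanishes. Exactness of the $A=\integer{}$ sequence after localization/isotypic projection then yields a class $[\widetilde{\delta}_{\mathbf{f}}]_{\mathrm{rel}}^{\epsilon_{\mathbf{f}}} \in H^n(Y(\ideal{n})^{\mathrm{BS}},D_{C_{\infty}}(\ideal{n});\integer{})_{\ideal{m}_{\mathbf{f}}'}$ with $j^{\ast}([\widetilde{\delta}_{\mathbf{f}}]_{\mathrm{rel}}^{\epsilon_{\mathbf{f}}})=[\delta_{\mathbf{f}}]^{\epsilon_{\mathbf{f}}}$. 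The torsion-freeness assumption on $H^n(D_{C_{\infty}}(\ideal{n}),\integer{})_{\ideal{m}_{\mathbf{f}}'}$ then allows me to pass to the torsion-free quotient $\widetilde{H}^n(Y(\ideal{n})^{\mathrm{BS}}, D_{C_{\infty}}(\ideal{n});\integer{})$ without losing the lifting property, so I have an honest integral candidate.

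The second, and main, step is to identify this integral candidate with the transcendentally defined class $[\delta_{\mathbf{f}}]_{\mathrm{rel}}^{\epsilon_{\mathbf{f}}}=[\omega_{\mathbf{f}}]_{\mathrm{rel}}^{\epsilon_{\mathbf{f}}}/\Omega_{\mathbf{f}}^{\epsilon_{\mathbf{f}}}$. Both classes project under $j^{\ast}$ to $[\delta_{\mathbf{f}}]^{\epsilon_{\mathbf{f}}}$ in $H^n(Y(\ideal{n})^{\mathrm{BS}},\C)$, so their difference lies in the image of $\partial$ with $\C$-coefficients. The crux of the proof is therefore the vanishing
\begin{equation*}
H^{n-1}(D_{C_{\infty}}(\ideal{n}),\C)_{\ideal{m}_{\mathbf{f}}'}[\epsilon_{\mathbf{f}}]=0,
\end{equation*}
which I would deduce from \cite[Proposition 5.3]{Hira}: in parallel weight $2$ with $h_F^+=1$, the action of $U(\ideal{q})$ for $\ideal{q}\mid\ideal{n}$ on the cohomology of the components $D_s$ for $s\in C_{\infty}$ is computed through the unipotent stabilizer of the cusp $\infty$ and turns out to be multiplication by $N(\ideal{q})$ (up to the normalization used for the Hecke operators), whereas $U(\ideal{q})-C(\ideal{q},\mathbf{f})$ lies in $\ideal{m}_{\mathbf{f}}'$. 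The hypothesis $C(\ideal{q},\mathbf{f})\not\equiv N(\ideal{q})\,(\bmod\,\varpi)$ for some $\ideal{q}\mid\ideal{n}$ then makes $U(\ideal{q})-C(\ideal{q},\mathbf{f})$ invertible on the relevant boundary cohomology, forcing the localization to vanish and hence $\partial=0$ on the $\epsilon_{\mathbf{f}}$-isotypic localization. This yields $[\delta_{\mathbf{f}}]_{\mathrm{rel}}^{\epsilon_{\mathbf{f}}} = [\widetilde{\delta}_{\mathbf{f}}]_{\mathrm{rel}}^{\epsilon_{\mathbf{f}}}$ in $\C$, establishing integrality modulo $\integer{}$-torsion. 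The main technical difficulty is precisely this eigenvalue computation on each boundary component $D_s$: one must trace the adelic double coset defining $U(\ideal{q})$ through the strong approximation decomposition of $Y(\ideal{n})$ and its Borel--Serre compactification, read off its effect on the group cohomology of the unipotent stabilizer, and verify that the resulting eigenvalue is indeed $N(\ideal{q})$ uniformly across the components of $D_{C_{\infty}}(\ideal{n})$.
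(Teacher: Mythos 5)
Your argument is essentially the paper's: the paper also runs the long exact sequence of the pair $(Y(\ideal{n})^{\mathrm{BS}},D_{C_{\infty}}(\ideal{n}))$ localized at $\ideal{m}_{\mathbf{f}}'$, invokes \cite[Proposition 5.3]{Hira} to kill $H^{n-1}(D_{C_{\infty}}(\ideal{n}),\integer{})_{\ideal{m}_{\mathbf{f}}'}$ modulo torsion, uses the torsion-freeness hypothesis on $H^n(D_{C_{\infty}}(\ideal{n}),\integer{})_{\ideal{m}_{\mathbf{f}}'}$, and deduces the integrality of $[\delta_{\mathbf{f}}]_{\mathrm{rel}}^{\epsilon_{\mathbf{f}}}$ from the resulting short exact sequence of torsion-free quotients. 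You have simply unpacked the implicit lifting/uniqueness mechanism and conjectured the internal content of \cite[Proposition 5.3]{Hira} (the $U(\ideal{q})$ eigenvalue computation on boundary cohomology), which the paper leaves as a citation.
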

\begin{proof}
We abbreviate $Y(\ideal{n})$ to $Y$ and $D_{C_{\infty}}(\ideal{n})$ to $D_{C_{\infty}}$. 
We have an exact sequence 
\[
H^{n-1}(D_{C_{\infty}},\integer{})_{\ideal{m}_{\textbf{f}}'}\to H^n(Y^{\mathrm{BS}},D_{C_{\infty}};\integer{})_{\ideal{m}_{\textbf{f}}'}\to H^n(Y,\integer{})_{\ideal{m}_{\textbf{f}}'}\to H^n(D_{C_{\infty}},\integer{})_{\ideal{m}_{\textbf{f}}'}.
\]
By \cite[Proposition 5.3]{Hira}, $H^{n-1}(D_{C_{\infty}},\integer{})_{\ideal{m}_{\textbf{f}}'}$ is torsion. 
By the assumption, $H^n(D_{C_{\infty}},\integer{})_{\ideal{m}_{\textbf{f}}'}$ is torsion-free. 
Therefore we obtain an exact sequence
\[
0\to \widetilde{H}^n(Y^{\mathrm{BS}},D_{C_{\infty}};\integer{})_{\ideal{m}_{\textbf{f}}'}\to \widetilde{H}^n(Y,\integer{})_{\ideal{m}_{\textbf{f}}'}\to H^n(D_{C_{\infty}},\integer{})_{\ideal{m}_{\textbf{f}}'}.
\]
Now the assertion follows from this exact sequence. 
\end{proof}

%
\subsection{Integrality of $p$-adic $L$-functions for Hilbert modular forms}\label{subsection:construction of p-adic L}
%

Let $F_{\infty}$ denote the cyclotomic $\Zp$-extension of $F$. 
We put $\Gamma=\Gal(F_{\infty}/F)$. 
Let $\gamma$ be a topological generator of $\Gamma$. 
\begin{thm}\label{thm:integral p-adic L}
Let $\mathbf{f}\in S_2(\ideal{n},\integer{})$ be a normalized Hecke eigenform for all $T(\ideal{q})$ and $U(\ideal{q})$ with character $\varepsilon$. 
Assume that $\mathbf{f}$ is $p$-ordinary. 
Let $\ideal{m}$ be a non-zero ideal of $\ideal{o}_F$ such that $(\ideal{m},p)=1$. 
We put $\mathbf{g}=(\mathbf{f}\otimes\mathbf{1}_{\ideal{n}})\otimes \mathbf{1}_{\ideal{m}}\in S_2(\ideal{n}',\integer{})$ $($cf. \cite[Proposition 4.4, 4.5]{Shi}$)$, where, for a non-zero ideal $\ideal{a}$ of $\ideal{o}_F$, $\mathbf{1}_{\ideal{a}}$ is the trivial character modulo $\ideal{a}$, and $(\ideal{n'},p)=1$.
Let $\chi$ be a narrow ray class character of $F$ whose conductor is $\ideal{n}'$ such that  
$\chi=\epsilon_{\mathbf{f}}$ on $W_G$.
Assume that $H^n(D_{C_{\infty}}(\ideal{n}'),\integer{})_{\ideal{m}_{\mathbf{g}}'}$ is torsion-free, where $\ideal{m}_{\mathbf{g}}'$ is the maximal ideal of $\mathbb{H}_2(\ideal{n}',\integer{})'$ defined before Proposition \ref{prop:integral of relative class}.
If $\chi$ is of type $S$ $($that is, $F_{\chi} \cap F_{\infty}=F$, where $F_{\chi}=\overline{\Q}^{\ker(\chi)}$$)$, then there exists a $p$-adic $L$-function $\mathscr{L}_p(\mathbf{f},\chi,T)\in \integer{}(\chi)[[T]]$ satisfying the following interpolation property:
for each finite order character $\rho$ of $\Gamma$ with conductor $p^{\nu_{\rho}}$, 
\begin{align}\label{p-adic L}
\mathscr{L}_p(\mathbf{f},\chi,\rho(\gamma)-1)
=\alpha^{-\nu_{\rho}}\tau(\chi^{-1}\rho^{-1})\frac{D(1,\mathbf{f},\chi\rho)}{(-2\pi \sqrt{-1})^n\Omega_{\mathbf{g}}^{\epsilon_{\mathbf{f}}}}\in \integer{}(\chi,\rho).
\end{align}
Here $\Omega_{\mathbf{g}}^{\epsilon_{\mathbf{f}}}\in \C^{\times}$ is the canonical period of $\mathbf{g}\in S_2(\ideal{n}',\integer{})$ defined by $(\ref{period})$
\end{thm}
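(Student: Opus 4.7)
The construction strategy is to build a measure on $\Gamma$ from the $\C$-valued distribution $\mu_{\mathbf{g}, \alpha}$ attached to $\mathbf{g}$ (Proposition \ref{prop:distribution}), normalized by the canonical period $\Omega_{\mathbf{g}}^{\epsilon_{\mathbf{f}}}$, and to verify the claimed interpolation using Proposition \ref{prop:interpolation}. Since $D(s, \mathbf{f}, \chi\rho) = D(s, \mathbf{g}, \chi\rho)$, we may work throughout with $\mathbf{g}$. For each finite-order character $\rho$ of $\Gamma$ of conductor $p^{\nu_\rho}$, the type $S$ hypothesis $F_{\chi} \cap F_\infty = F$ guarantees that the conductor of $\chi\rho$ equals $\ideal{n}' p^{\nu_\rho}$; hence $\eta = \chi\rho$ satisfies the hypotheses of Proposition \ref{prop:interpolation}, which yields
\[
\frac{1}{\Omega_{\mathbf{g}}^{\epsilon_{\mathbf{f}}}}\sum_{\underline{b}} (\chi\rho)_i(\overline{b}_i)^{-1}\, \mu_{\mathbf{g},\alpha}(\underline{b},\, \ideal{n}' p^{\nu_\rho}) = \alpha^{-\nu_\rho}\, \tau((\chi\rho)^{-1})\, \frac{D(1,\mathbf{g},\chi\rho)}{(-2\pi\sqrt{-1})^n\, \Omega_{\mathbf{g}}^{\epsilon_{\mathbf{f}}}}.
\]
The distribution property (Proposition \ref{prop:distribution}) ensures that the $\chi$-twisted partial sums on the left are compatible under the transition maps in the inverse system of ray class groups of conductor $\ideal{n}' p^{\nu}$; this compatibility defines a $\C$-valued measure on $\Gamma$, which under the Iwasawa isomorphism $\gamma \mapsto 1+T$ produces a power series $\mathscr{L}_p(\mathbf{f},\chi,T) \in \C[[T]]$ satisfying the interpolation formula (\ref{p-adic L}) at $T=\rho(\gamma)-1$.

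The principal difficulty is showing that in fact $\mathscr{L}_p(\mathbf{f}, \chi, T) \in \integer{}(\chi)[[T]]$, with individual values in $\integer{}(\chi, \rho)$. The evaluation maps $\mathrm{ev}_{\bar{b}_i, \C}$ appearing in $\mu_{\mathbf{g}, \alpha}$ come from compactly supported cohomology, and integrality of $[\omega_{\mathbf{g}}]_c/\Omega_{\mathbf{g}}^{\epsilon_{\mathbf{f}}}$ in $\widetilde{H}_c^n(Y(\ideal{n}'),\integer{})$ is not available. To circumvent this I would factor each evaluation through the relative cohomology pairing, using the Mellin transform formulas for the relative class $[\omega_{\mathbf{g}}]_{\mathrm{rel}}$ from \cite[Proposition 2.5, 2.6]{Hira}. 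The assumption that $\chi$ has conductor exactly $\ideal{n}'$, combined with the primitivity of $\chi\rho$ at every prime dividing $\ideal{n}' p^{\nu_\rho}$, ensures that the boundary contributions at cusps outside $C_{\infty}$ cancel in the $\chi$-twisted sum, so that the Mellin pairing factors through the class $[\delta_{\mathbf{g}}]_{\mathrm{rel}}^{\epsilon_{\mathbf{f}}} = [\omega_{\mathbf{g}}]_{\mathrm{rel}}^{\epsilon_{\mathbf{f}}}/\Omega_{\mathbf{g}}^{\epsilon_{\mathbf{f}}}$. By Proposition \ref{prop:integral of relative class}, whose torsion-freeness hypothesis on $H^n(D_{C_\infty}(\ideal{n}'), \integer{})_{\ideal{m}_{\mathbf{g}}'}$ is exactly the assumption of the theorem and whose non-congruence hypothesis $C(\ideal{q},\mathbf{g}) \not\equiv N(\ideal{q}) \pmod{\varpi}$ at some $\ideal{q} \mid \ideal{n}'$ is supplied by (Parity) applied to $\mathbf{g}$, the class $[\delta_{\mathbf{g}}]_{\mathrm{rel}}^{\epsilon_{\mathbf{f}}}$ is integral; hence each $\chi$-twisted partial sum lies in $\integer{}(\chi)$, and the resulting measure is $\integer{}(\chi)$-valued.

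The main obstacle is the careful reconciliation of the two cohomological formalisms: the construction of $\mu_{\mathbf{g},\alpha}$ lives naturally in compactly supported cohomology (where the evaluation maps are defined from cusps to $A$), while the integrality of the period normalization requires working in relative cohomology. The sign condition $\chi = \epsilon_{\mathbf{f}}$ on $W_G$ is what ties these together: it ensures that the Weyl-group averaging dictated by the holomorphic and anti-holomorphic Mellin formulas (Propositions \ref{Mellin hol} and \ref{Mellin anti-hol}) projects the cohomology class onto the $\epsilon_{\mathbf{f}}$-isotypic part, where $\Omega_{\mathbf{g}}^{\epsilon_{\mathbf{f}}}$ is defined and Proposition \ref{prop:integral of relative class} applies. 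With this matching carried out, the interpolation property and $\integer{}(\chi)$-integrality hold simultaneously, completing the construction of $\mathscr{L}_p(\mathbf{f}, \chi, T)$.
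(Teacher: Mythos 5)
Your proposal follows essentially the same route as the paper's proof: you correctly identify the two cohomological frameworks at play (compactly supported for the distribution, relative for integrality), the role of Proposition \ref{prop:integral of relative class}, the factoring of the evaluation maps through $H^n(Y(\ideal{n}')^{\mathrm{BS}},D_{C_\infty}(\ideal{n}');\C)$ via \cite[Proposition 2.5, 2.6]{Hira}, the use of the conductor hypothesis on $\chi$, and the distribution and interpolation properties of $\mu_{\mathbf{g},\alpha}$ projected to the $\epsilon_{\mathbf{f}}$-isotypic part.

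One correction: you claim the non-congruence hypothesis $C(\ideal{q},\mathbf{g})\not\equiv N(\ideal{q})\pmod{\varpi}$ of Proposition \ref{prop:integral of relative class} is ``supplied by (Parity) applied to $\mathbf{g}$.'' This is not the right justification, and indeed (Parity) is not even a hypothesis of the theorem you are proving. (Parity) concerns the behaviour of $\varphi$ and $\psi$ at the primes above $p$ and at the infinite places, and says nothing about the primes dividing $\ideal{n}'$. The correct and much simpler reason, as in the paper: $\mathbf{g}=(\mathbf{f}\otimes\mathbf{1}_{\ideal{n}})\otimes\mathbf{1}_{\ideal{m}}$ is by construction a twist by trivial characters modulo $\ideal{n}$ and $\ideal{m}$, which kills the $U(\ideal{q})$-eigenvalue at every prime $\ideal{q}$ dividing $\ideal{n}'$, so $C(\ideal{q},\mathbf{g})=0$ for all such $\ideal{q}$; since $(\ideal{n}',p)=1$, $N(\ideal{q})$ is a $p$-adic unit and the non-congruence is automatic. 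With this substitution your argument matches the paper's.
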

\begin{proof}
By Proposition \ref{prop:integral of relative class}, the class 
$[\delta_{\mathbf{g}}]_{\mathrm{rel}}^{\epsilon_{\mathbf{f}}}=[\omega_{\mathbf{g}}]_{\mathrm{rel}}^{\epsilon_{\mathbf{f}}}/\Omega_{\mathbf{g}}^{\epsilon_{\mathbf{f}}}$ is integral (because $C(\ideal{q},\mathbf{g})=0$ for every prime ideal $\ideal{q}$ of $\ideal{o}_F$ dividing $\ideal{n}$). 
Hence Proposition \ref{Mellin hol} and \ref{Mellin anti-hol} imply that the value in the right-hand side of (\ref{p-adic L}) belongs to $\integer{}(\chi,\rho)$ because $D(s,\mathbf{f},\chi\rho)=D(s,\mathbf{g},\chi\rho)$ and the fact \cite[Proposition 2.5, 2.6]{Hira} (that the evaluation maps factor through $H^n(Y(\ideal{n}')^{\mathrm{BS}},D_{C_{\infty}}(\ideal{n}');\C)$ as explained in \cite[\S 2.4]{Hira}), where we use the assumption that the conductor of $\chi$ is $\ideal{n}'$.

For a non-zero ideal $\ideal{m}$ of $\ideal{o}_F$ and $\underline{a}\in (\ideal{m}^{-1}\ideal{d}_F^{-1}[t_1]^{-1}/\ideal{d}_F^{-1}[t_1]^{-1})^{\times}/\ideal{o}_{F,+}^{\times}$, we define a map $\mu_{\mathbf{f},\alpha}^{\epsilon_{\mathbf{f}}}(-,\ideal{m})$ by 
\begin{align*}
\mu_{\mathbf{f},\alpha}^{\epsilon_{\mathbf{f}}}(\underline{a},\ideal{m})
&=\alpha^{-v(\ideal{m})}
\left(\prod_{\ideal{q}|p} \left(1-\alpha_{\ideal{q}}^{-1}\varepsilon(\ideal{q})R(\ideal{q})\right)\right)
\mathrm{ev}_{\underline{a},\C} ([\omega_{\mathbf{g}}]_{c}^{\epsilon_{\mathbf{f}}}).
\end{align*}
By Proposition \ref{prop:distribution}, $\mu_{\mathbf{f},\alpha}^{\epsilon_{\mathbf{f}}}$ satisfies the distribution property. 
By Proposition \ref{prop:interpolation}, 
the distribution $\mu_{\mathbf{f},\alpha}^{\epsilon_{\mathbf{f}}}/\Omega_{\mathbf{g}}^{\epsilon_{\mathbf{f}}}$ interpolates the value in the right-hand side of (\ref{p-adic L}). 
Now our assertion follows from the connection between $\integer{}(\chi)$-valued measures on $\Gal(F_{\chi}(\mu_{p^{\infty}})/F)$ and elements of $\integer{}(\chi)[[T]]$ (see, for example, \cite{Del--Ri}, \cite{Ri78}, \cite{Se}).
\end{proof}

%
\section{Equality between the Iwasawa $\lambda$-invariants}\label{section:Equality between the Iwasawa invariants}
%

%
\subsection{The Iwasawa $\lambda$-invariants for Hilbert modular forms}\label{subsection:application for alg and anal}
%

In this subsection, we define the analytic Iwasawa $\lambda$-invariant and state the main result of this paper. 

We keep the notation in \S \ref{subsection:construction of p-adic L}.
Let $\mathbf{f} \in S_2(\ideal{n}, \integer{})$ be a normalized Hecke eigenform with character $\varepsilon$. 
We assume that $\mathbf{f}$ is $p$-ordinary and 
$\mathbf{f}$ satisfies the conditions (RR), (Parity), and ($\mu=0$) in \S \ref{subsection:algebraic Iwasawa invariants}. 
Let $\ideal{n}'$ be the least common multiple of $\ideal{n}^2$ and $\ideal{m}_{\varepsilon} \ideal{m}_{\psi}^2$. 
We put $\mathbf{g}=(\mathbf{f}\otimes \mathbf{1}_{\ideal{n}})\otimes \mathbf{1}_{\ideal{m}_{\psi}}\in S_2(\ideal{n}',\integer{})$ (cf. \cite[Proposition 4.4, 4.5]{Shi}). 
Let $\mathcal{H}_2(\ideal{n}',\integer{})$ be the commutative $\integer{}$\nobreakdash-subalgebra of 
$\End_{\integer{}}(H_c^n(Y(\ideal{n}'),\integer{})) \oplus \End_{\integer{}}(H^n (Y(\ideal{n}'), \integer{})) \oplus \End_{\integer{}}(H^n (\partial(Y(\ideal{n}')^{\mathrm{BS}}),\integer{})) \oplus \End_{\integer{}}(H_c^{n+1}(Y(\ideal{n}'),\integer{}))$ generated by $T(\ideal{q})$, $S(\ideal{q})$ for all non\nobreakdash-zero prime ideals $\ideal{q}$ of $\ideal{o}_F$ prime to $\ideal{n}'$, and $U(\ideal{q})$ for all non-zero prime ideals $\ideal{q}$ of $\ideal{o}_F$ dividing $\ideal{n}'$, and 
$\ideal{m}$ the maximal ideal of $\mathcal{H}_2(\ideal{n}',\integer{})$ generated by $\varpi$ and $T(\ideal{q})-C(\ideal{q},\mathbf{g})$, $S(\ideal{q})-\varepsilon^{-1}(\ideal{q})$ for all non-zero prime ideals $\ideal{q}$ of $\ideal{o}_F$ prime to $\ideal{n}'$, and $U(\ideal{q})-C(\ideal{q},\mathbf{g})$ for all non-zero prime ideals $\ideal{q}$ of $\ideal{o}_F$ dividing $\ideal{n}'$. 

Let $\chi$ be an $\integer{}$\nobreakdash-valued totally even narrow ray class character of $F$ whose conductor is $\ideal{n}'$. 
We define the analytic Iwasawa $\lambda$-invariant $\lambda_{\mathbf{f}\otimes \chi}^{\mathrm{an}}$ by 
\begin{align}\label{lambda^an for f and chi}
&\lambda_{\mathbf{f}\otimes \chi}^{\mathrm{an}}
=\lambda(\mathscr{L}_p(\mathbf{f},\chi,T))=\deg(P_{\chi}^{\text{an}}(T)).
\end{align}
Here $\mathscr{L}_p(\mathbf{f},\chi,T) \in \integer{}[[T]]$ is the $p$-adic $L$-function constructed in Theorem \ref{thm:integral p-adic L} and 
$P_{\chi}^{\mathrm{an}}(T)$ is the distinguished polynomial corresponding to 
$\mathscr{L}_p(\mathbf{f},\chi,T)$ via the Weierstrass preparation theorem (see, for example, \cite[\S 7.1, Theorem 7.3]{Was}). 

\begin{thm}\label{thm:equality of Iwasawa invariants}
Let $p$ be a prime number such that $p \ge n+2$ and $p$ is prime to $\mathfrak{n}$ and $6\Delta_F$. 
Assume that $h_F^+=1$. 
Let $\chi$ be an $\integer{}$-valued totally even narrow ray class character of $F$ satisfying the conditions at the beginning of \S \ref{subsection:application for algebraic}.
We assume the following two conditions$:$
\begin{enumerate}[$(a)$] 
\item the local components $H^{n}(\partial \left(Y(\ideal{n}')^{\mathrm{BS}}\right),\integer{})_{\ideal{m}}$ and $H_c^{n+1}(Y(\ideal{n}'),\integer{})_{\ideal{m}}$ are torsion-free$;$

\item the local component $H^{n}(D_{C_{\infty}}(\ideal{n}'),\integer{})_{\ideal{m}_{\mathbf{g}}'}$ is torsion-free, where $\ideal{m}_{\mathbf{g}}'$ is the maximal ideal of $\mathbb{H}_2(\ideal{n}',\integer{})'$ defined before Proposition \ref{prop:integral of relative class}.
\end{enumerate}
Then we have 
\begin{align*}
\lambda_{\mathbf{f}\otimes \chi}^{\mathrm{alg}}
&=\lambda_{\mathbf{f}\otimes \chi}^{\mathrm{an}}(=\lambda_{\varphi\chi,\Sigma_0}+\lambda_{\psi\chi,\Sigma_0}). 
\end{align*}
\end{thm}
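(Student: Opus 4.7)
The plan is to leverage the equality $\lambda_{\mathbf{f}\otimes \chi}^{\mathrm{alg}} = \lambda_{\varphi\chi,\Sigma_0}+\lambda_{\psi\chi,\Sigma_0}$ already recorded in (\ref{alg.lambda}), and reduce the theorem to proving the analytic identity $\lambda_{\mathbf{f}\otimes \chi}^{\mathrm{an}} = \lambda_{\varphi\chi,\Sigma_0}+\lambda_{\psi\chi,\Sigma_0}$. Following Greenberg--Vatsal \cite[\S 3]{Gre--Vat}, the strategy is to compare $\mathscr{L}_p(\mathbf{f},\chi,T)$ with the product of two Deligne--Ribet $p$-adic $L$-functions $L_p(\varphi\chi,T)$ and $L_p(\psi\chi,T)$ attached to the one-dimensional characters $\varphi\chi$ and $\psi\chi$, and then to invoke Wiles' main conjecture for totally real fields \cite{Wil90} together with Proposition \ref{prop:surj} to identify the analytic $\lambda$'s of the two Deligne--Ribet factors with the algebraic $\lambda_{\varphi\chi,\Sigma_0}$ and $\lambda_{\psi\chi,\Sigma_0}$.

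To produce the congruence I would first use the condition (RR) and the parity hypothesis on $(\varphi,\psi)$ to form a Hilbert Eisenstein series $\mathbf{G}\in M_2(\ideal{n}',\integer{})$ induced by $\psi$ and $\varepsilon\psi^{-1}$, and then appeal to the main theorem of \cite{Hira} (a Hilbert-modular analogue of \cite[Theorem 2.10]{Vat}), which produces a Fourier-coefficient congruence $\mathbf{g}\equiv \mathbf{G}\pmod{\varpi}$. The torsion-freeness assumptions (a) and (b) enter crucially at this step: (a) is what lets the Eisenstein cohomology class at the boundary of $Y(\ideal{n}')^{\mathrm{BS}}$ be lifted to an interior class congruent to $[\omega_{\mathbf{g}}]_c$, while (b) has already been used in Proposition \ref{prop:integral of relative class} to put the relative class $[\delta_{\mathbf{g}}]_{\mathrm{rel}}^{\mathbf{1}}$ into the integral lattice entering the canonical period normalization $\Omega_{\mathbf{g}}^{\mathbf{1}}$. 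Once the Fourier congruence is in hand, the Mellin formulae (Propositions \ref{Mellin hol} and \ref{Mellin anti-hol}) applied to both $\mathbf{g}$ and $\mathbf{G}$ translate it into a congruence of special values $D(1,\mathbf{g},\chi\rho)/((-2\pi\sqrt{-1})^n\Omega_{\mathbf{g}}^{\mathbf{1}})$ for every finite-order character $\rho$ of $\Gamma$. Since the corresponding value for $\mathbf{G}$ factors as a product of Hecke $L$-values for $\varphi\chi\rho$ and $\psi\chi\rho$ (up to the appropriate Euler factors at $p$ encoded in the $\alpha^{-\nu_\rho}$ and in the removal of $\Sigma_0$-Euler factors), interpolating over $\rho$ via Theorem \ref{thm:integral p-adic L} and the connection between measures on $\Gal(F_\chi(\mu_{p^\infty})/F)$ and power series yields the measure-theoretic congruence
\[
\mathscr{L}_p(\mathbf{f},\chi,T)\equiv u\cdot L_p(\varphi\chi,T)\cdot L_p(\psi\chi,T)\pmod{\varpi},\qquad u\in\integer{}^{\times}.
\]

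Once this congruence (the content of Theorem \ref{congruence of p-adic L}) is established, the endgame is formal. The assumption $(\mu=0)$ on both $\varphi\chi$ and $\psi\chi$ implies, via the Iwasawa main conjecture of \cite{Wil90} together with Proposition \ref{prop:surj}, that neither $L_p(\varphi\chi,T)$ nor $L_p(\psi\chi,T)$ is divisible by $\varpi$; hence the product is also not divisible by $\varpi$, and the Weierstrass preparation theorem gives $\lambda(\mathscr{L}_p(\mathbf{f},\chi,T)) = \lambda(L_p(\varphi\chi,T)) + \lambda(L_p(\psi\chi,T))$. The main conjecture then identifies each of the two summands with $\lambda_{\varphi\chi,\Sigma_0}$ and $\lambda_{\psi\chi,\Sigma_0}$ respectively, which combined with (\ref{alg.lambda}) gives $\lambda_{\mathbf{f}\otimes\chi}^{\mathrm{an}} = \lambda_{\mathbf{f}\otimes\chi}^{\mathrm{alg}}$. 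The hard part of this program is clearly the first bullet: proving the mod-$\varpi$ congruence $\mathbf{g}\equiv \mathbf{G}$ inside an integral cohomological lattice. This is exactly where the Hilbert-modular Eisenstein-ideal argument of \cite{Hira} is required, and where the torsion-freeness hypotheses (a) and (b) are indispensable; everything downstream — the Mellin translation, additivity of $\lambda$ under congruences with $\mu=0$, and the invocation of Wiles' theorem — is essentially bookkeeping once this input is available.
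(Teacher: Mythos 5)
Your proposal follows the same route as the paper: reduce to $\lambda^{\mathrm{an}}_{\mathbf{f}\otimes\chi}=\lambda_{\varphi\chi,\Sigma_0}+\lambda_{\psi\chi,\Sigma_0}$ via (\ref{alg.lambda}), compare $\mathscr{L}_p(\mathbf{f},\chi,T)$ to a product of (non-primitive) Deligne--Ribet $p$-adic $L$-functions through the Eisenstein series $\mathbf{G}$, and finish with Wiles' main conjecture and the $\mu=0$ hypothesis to make $\lambda$ additive; this is exactly what the paper's Theorem \ref{thm:congruence of p-adic L} does. One small inaccuracy worth fixing: the Fourier-coefficient congruence $\mathbf{g}\equiv\mathbf{G}\pmod{\varpi}$ is not an \emph{output} of the main theorem of \cite{Hira} as you suggest --- it is an input, verified directly by comparing $\Tr(\rho_{\mathbf{f}}(\Frob_{\ideal{q}}))$ with $\psi(\ideal{q})+\varepsilon(\ideal{q})\psi^{-1}(\ideal{q})N(\ideal{q})$ using (RR) and (Parity), after which \cite[Theorem 6.1]{Hira} converts that coefficient congruence into a congruence of normalized special values; also the resulting unit factor is genuinely a power-series unit $U(T)\in\Lambda^\times$ (it encodes the $\rho$-dependent ratio of Gauss sums and $\alpha^{-\nu_\rho}$), not a constant $u\in\integer{}^\times$, though neither point affects the $\lambda$-invariant bookkeeping.
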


%
\subsection{Proof of main result (Theorem \ref{thm:equality of Iwasawa invariants})}\label{subsection:proof of main theorem}
%

For the proof of Theorem \ref{thm:equality of Iwasawa invariants}, by the equality (\ref{alg.lambda}), it suffices to show that 
\begin{align*}
\lambda_{\mathbf{f}\otimes \chi}^{\mathrm{an}}
&=\lambda_{\varphi\chi,\Sigma_0}+\lambda_{\psi\chi,\Sigma_0}.
\end{align*}
It follows from a congruence between our $p$-adic $L$-function for $\mathbf{g}$ and the product of two Deligne\nobreakdash--Ribet $p$-adic $L$-functions (Theorem \ref{thm:congruence of p-adic L}) as explained below.

In order to state Theorem \ref{thm:congruence of p-adic L}, we first define 
the $p$-adic $L$-functions $\mathscr{L}_p(A_{\varphi},\chi,T)$ and $\mathscr{L}_p(A_{\psi},\chi,T)$, and the non-primitive $p$-adic $L$-functions $\mathscr{L}_p^{\Sigma_0}(A_{\varphi},\chi,T)$ and $\mathscr{L}_p^{\Sigma_0}(A_{\psi},\chi,T)$ for the Galois representations $A_{\varphi\chi}$ and $A_{\psi\chi}$ appearing in \S \ref{subsection:algebraic Iwasawa invariants} and \S \ref{subsection:application for algebraic}. 
We put $\Lambda=\integer{}[[\Gamma]] \simeq \integer{}[[T]]\,;\,\gamma\mapsto 1+T$.

(i)\ The $p$-adic $L$-function $\mathscr{L}_p(A_{\varphi},\chi,T)\in \Lambda$ 
is defined by the interpolation property 
\[
\mathscr{L}_p(A_{\varphi},\chi,\rho(\gamma) -1)
=L_F(0,\chi\varepsilon \psi^{-1} \rho)
=L_F(1,\chi\varepsilon\chi_{\mathrm{cyc}} \psi^{-1} \rho)
\]
for every non-trivial finite order character $\rho$ of $\Gamma$. 
Here we remark that, by our assumption, $\chi\varepsilon\chi_{\mathrm{cyc}}\psi^{-1}$ is non-trivial character and hence $L_F(s,\chi\varepsilon\chi_{\mathrm{cyc}}\psi^{-1} \rho)$ is holomorphic for all $s\in \C$. 
Then, $\mathscr{L}_p(A_{\varphi},\chi,T)$ is related to the 
Deligne--Ribet $p$-adic $L$-function $L_p^{\mathrm{DR}}(s,\chi\varepsilon\omega \psi^{-1})$ by
\[
L_p^{\mathrm{DR}}(s,\chi\varepsilon\omega \psi^{-1})
=\mathscr{L}_p(A_{\varphi},\chi,\kappa(\gamma)^{-s}-1)
\]
for any $s \in \Zp$. 
Here, $\kappa(\gamma)$ is the element of  $1+p\Zp$ 
which induces the action of $\gamma$ on $\mu_{p^{\infty}}$ 
under the identification $\Gamma \simeq \Gal(F(\mu_{p^{\infty}})/F(\mu_p))$. 
The non-primitive $p$-adic $L$-function $\mathscr{L}_p^{\Sigma_0}(A_{\varphi},\chi,T)\in \Lambda$ is defined by the interpolation property 
\[
\mathscr{L}_p^{\Sigma_0}(A_{\varphi},\chi,\rho(\gamma) -1)
=L_F^{\{\ideal{n}'\}}(0,\chi\varepsilon \psi^{-1} \rho)
\]
for every non-trivial finite order character $\rho$ of $\Gamma$. 
Here $L_F^{\{\ideal{n}'\}}(s,\ast)$ denotes the complex $L$-functions formed from $L_F(s,\ast)$ by omitting the Euler factors for non-zero prime ideals dividing $\ideal{n}'$. 
Our assumption that $\mu(\Sel(F_{\infty},A_{\varphi\chi})^{\mathrm{PD}})=0$ and the Wiles theorem \cite[Theorem 1.3, Theorem 1.4]{Wil90} (see also \cite[Proposition 9]{Gre12}) assert that 
$\mathscr{L}_p^{\Sigma_0}(A_{\varphi},\chi,T)\notin\varpi \Lambda$ and 
the $\lambda$-invariant of $\mathscr{L}_p^{\Sigma_0}(A_{\varphi},\chi,T)$ is equal to $\lambda_{\chi\varepsilon\omega\psi^{-1},\Sigma_0}=\lambda_{\varphi\chi,\Sigma_0}$.

(ii)\ The $p$-adic $L$-function $\mathscr{L}_p(A_{\psi},\chi,T)\in \Lambda$ 
is defined by the interpolation property 
\[
\mathscr{L}_p(A_{\psi},\chi,\rho(\gamma) -1)
=\tau(\chi^{-1}\psi^{-1} \rho^{-1})
\frac{L_F(1,\chi\psi \rho)}{(-2\pi \sqrt{-1})^n}
=\frac{(-1)^n}{2^n \Delta_F^{1/2}}L_F(0,\chi^{-1}\psi^{-1} \rho^{-1})
\]
for every non-trivial finite order character $\rho$ of $\Gamma$. 
Here we remark that, by our assumption, $\chi^{-1}\omega\psi^{-1}$ is non-trivial character. 
Then, $\mathscr{L}_p(A_{\psi},\chi,T)$ is related to the Deligne--Ribet $p$-adic $L$-function $L_p^{\mathrm{DR}}(s,\chi^{-1} \omega \psi^{-1})$ by
\[
L_p^{\mathrm{DR}}(s,\chi^{-1} \omega \psi^{-1})
=\left(\frac{(-1)^n}{2^n \Delta_F^{1/2}}\right)^{-1}\mathscr{L}_p(A_{\psi},\chi,\kappa(\gamma)^{s} -1)
\]
for any $s \in \Zp$. 
The non-primitive $p$-adic $L$-function $\mathscr{L}_p^{\Sigma_0}(A_{\psi},\chi,T)\in \Lambda$ 
is defined by the interpolation property 
\[
\mathscr{L}_p^{\Sigma_0}(A_{\psi},\chi,\rho(\gamma) -1)
=\tau(\chi^{-1}\psi^{-1} \rho^{-1})
\frac{L_F^{\{\ideal{n}'\}}(1,\chi\psi \rho)}{(-2\pi \sqrt{-1})^n}
\]
for every non-trivial finite order character $\rho$ of $\Gamma$. 
Again by the Wiles theorem, the $\mu$-invariant of $\mathscr{L}_p(A_{\psi},\chi,T)$ is zero and its $\lambda$-invariant is equal to 
$\lambda_{\chi^{-1}\omega \psi^{-1}}
=\lambda_{\psi\chi}$.

Next, we define the $p$-adic $L$-function $\mathscr{L}_p(\mathbf{G},\chi,T)\in \Lambda$ for the Eisenstein series $\mathbf{G}\in M_2(\ideal{n}',\integer{})$ 
with character $\varepsilon$ characterized by 
\[
D(s,\mathbf{G})
=L_F^{\{\ideal{n}'\}}(s,\psi)L_F^{\{\ideal{n}'\}}(s-1,\varepsilon \psi^{-1}).
\]
Note that Theorem \ref{thm:lifting} assures the existence of such $\mathbf{G}$. 

(iii)
The $p$-adic $L$-function $\mathscr{L}_p(\mathbf{G},\chi,T)\in \Lambda$ is defined by the interpolation property
\begin{align*}
\mathscr{L}_p(\mathbf{G},\chi,\rho(\gamma) -1)
&=\tau(\chi^{-1} \psi^{-1}\rho^{-1})
\frac{D(1,\mathbf{G},\chi\rho)}{(-2\pi \sqrt{-1})^n}\\
&=L_F^{\{\ideal{n}'\}}(0,\chi\varepsilon \psi^{-1} \rho) 
\tau(\chi^{-1}\psi^{-1}\rho^{-1})
\frac{L_F^{\{\ideal{n}'\}}(1,\chi\psi \rho)}{(-2\pi \sqrt{-1})^n}
\end{align*}
for every non-trivial finite order character $\rho$ of $\Gamma$. 
Then clearly we have 
\[
\mathscr{L}_p(\mathbf{G},\chi,T)
=\mathscr{L}_p^{\Sigma_0}(A_{\varphi},\chi,T)
\mathscr{L}_p^{\Sigma_0}(A_{\psi},\chi,T). 
\]
Therefore, the $\mu$-invariant of $\mathscr{L}_p(\mathbf{G},\chi,T)$ is zero 
and the $\lambda$-invariant of $\mathscr{L}_p(\mathbf{G},\chi,T)$
is equal to $\lambda_{\varphi\chi,\Sigma_0}+\lambda_{\psi\chi,\Sigma_0}$.
Now Theorem \ref{thm:equality of Iwasawa invariants} follows from the following:
\begin{thm}\label{thm:congruence of p-adic L}
Under the same assumptions as Theorem \ref{thm:equality of Iwasawa invariants}, we have 
\begin{align}\label{congruence of p-adic L}
\mathscr{L}_p(\mathbf{f},\chi,T)\equiv U(T)\mathscr{L}_p(\mathbf{G},\chi,T) 
\,(\bmod\, \varpi \Lambda).
\end{align}
Here $U(T)$ is a unit in $\Lambda^{\times}$ characterized by 
\[
U(\rho(\gamma)-1)=u'\alpha^{-\nu_{\rho}}
\frac{\tau(\chi^{-1}\rho^{-1})}{\tau(\chi^{-1}\psi^{-1}\rho^{-1})}
\]
for every non-trivial finite order character $\rho$ of $\Gamma$ with conductor $p^{\nu_{\rho}}$, and some $p$-adic unit $u'\in \integer{}^{\times}$. 
In particular, we have
\begin{align}\label{equality of the analytic Iwasawa invariants}
\lambda_{\mathbf{f}\otimes \chi}^{\mathrm{an}}
&=\lambda_{\varphi\chi,\Sigma_0}+\lambda_{\psi\chi,\Sigma_0}.
\end{align}
\end{thm}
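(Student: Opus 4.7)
The plan is to deduce the congruence (\ref{congruence of p-adic L}) from an integral cohomological congruence between $\mathbf{g}$ and an Eisenstein series $\mathbf{G}$, and then to read off the equality (\ref{equality of the analytic Iwasawa invariants}) using the Iwasawa main conjecture for totally real fields.

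First I would invoke \cite[Theorem 6.1]{Hira} to promote the Hecke-eigenvalue congruence between $\mathbf{g}$ and $\mathbf{G}$ to an integral cohomological congruence. Under the condition (Parity), the $T(\ideal{q})$-eigenvalue $C(\ideal{q},\mathbf{g})$ reduces modulo $\varpi$ to $\psi(\ideal{q})+\varepsilon\psi^{-1}(\ideal{q})N(\ideal{q})$, which is the $T(\ideal{q})$-eigenvalue of $\mathbf{G}$. Hypothesis (a) is exactly what is required by the Eisenstein lifting of \cite[Theorem 6.1]{Hira}: it yields an integral relative cohomology class $[\delta_{\mathbf{G}}]_{\mathrm{rel}}^{\mathbf{1}} \in \widetilde{H}^n(Y(\ideal{n}')^{\mathrm{BS}},D_{C_\infty}(\ideal{n}');\integer{})$ congruent modulo $\varpi$ to $[\delta_{\mathbf{g}}]_{\mathrm{rel}}^{\mathbf{1}}$, which is integral by hypothesis (b) together with Proposition \ref{prop:integral of relative class}. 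Applying the evaluation maps $\mathrm{ev}_{\bar{b}_i,\integer{}}$ of \S \ref{subsection:modular symbol} componentwise to both classes, and noting that, by ordinarity and (Parity), the unit root $\alpha_{\ideal{p}}$ reduces modulo $\varpi$ to $\psi(\Frob_{\ideal{p}})$---so that the $p$-stabilization factor $1-\alpha_{\ideal{p}}^{-1}\varepsilon(\ideal{p})R(\ideal{p})$ in (\ref{def of distribution}) reduces to the product of the Euler factors appearing in the two Deligne--Ribet distributions---yields an $\integer{}$-integral congruence between the distribution $\mu_{\mathbf{f},\alpha}^{\mathbf{1}}/\Omega_{\mathbf{g}}^{\mathbf{1}}$ and the distribution attached to $\mathbf{G}$.

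Next I would compare the interpolation formulas in Theorem \ref{thm:integral p-adic L} and in (iii) using Propositions \ref{Mellin hol} and \ref{Mellin anti-hol}. The ratio of the values at $\rho(\gamma)-1$ becomes
\begin{align*}
u'\,\alpha^{-\nu_\rho}\,\frac{\tau(\chi^{-1}\rho^{-1})}{\tau(\chi^{-1}\psi^{-1}\rho^{-1})} \pmod{\varpi}
\end{align*}
for a $p$-adic unit $u' \in \integer{}^{\times}$ coming from the mod-$\varpi$ comparison of the two integral relative classes. The Gauss-sum ratio and the factor $\alpha^{-\nu_\rho}$ are $p$-adic units for every finite-order character $\rho$ of $\Gamma$, because $\ideal{m}_{\psi}$ is prime to $p$ and $\mathbf{f}$ is ordinary, so this ratio is interpolated by a single unit $U(T) \in \Lambda^{\times}$, giving (\ref{congruence of p-adic L}). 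Consequently the Weierstrass preparations modulo $\varpi$ of both sides agree. Combining this with the factorization $\mathscr{L}_p(\mathbf{G},\chi,T)=\mathscr{L}_p^{\Sigma_0}(A_\varphi,\chi,T)\mathscr{L}_p^{\Sigma_0}(A_\psi,\chi,T)$ from (iii), together with the Iwasawa main conjecture of Wiles \cite{Wil90} and the hypothesis $(\mu=0)$, the $\mu$-invariants of both $\mathscr{L}_p(\mathbf{G},\chi,T)$ and $\mathscr{L}_p(\mathbf{f},\chi,T)$ vanish, and their common $\lambda$-invariant equals $\lambda_{\varphi\chi,\Sigma_0}+\lambda_{\psi\chi,\Sigma_0}$, which is (\ref{equality of the analytic Iwasawa invariants}).

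The hard part will be the Eisenstein lifting in the first step: upgrading the Hecke-eigenvalue congruence modulo $\varpi$ to a cohomological congruence on the integral relative cohomology $\widetilde{H}^n(Y(\ideal{n}')^{\mathrm{BS}},D_{C_\infty}(\ideal{n}');\integer{})$ that is compatible with the evaluation maps of \S \ref{subsection:modular symbol} and with the chosen canonical period $\Omega_{\mathbf{g}}^{\mathbf{1}}$. This is where the torsion-freeness hypotheses (a) and (b) are essential, and it is considerably more delicate than the corresponding step over $\Q$ in \cite[Theorem 3.11]{Gre--Vat}, because the boundary $\partial Y(\ideal{n}')^{\mathrm{BS}}$ has positive-dimensional components at each cusp that require separate cohomological control.
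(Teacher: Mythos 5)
Your proposal follows essentially the same route as the paper: verify the Hecke--eigenvalue congruence $\mathbf{g}\equiv\mathbf{G}\ (\bmod\,\varpi)$ using (RR), (Parity), and the shape (\ref{rho:decom}) of $\rho_{\mathbf{f}}|_{G_{F_{\ideal{p}}}}$; invoke \cite[Theorem 6.1]{Hira} (which is exactly where hypotheses (a) and (b) enter) to turn this into a mod-$\varpi$ congruence of normalized special values $\tau(\chi^{-1}\rho^{-1})D(1,-,\chi\rho)/(-2\pi\sqrt{-1})^n\Omega_{\mathbf{g}}^{\mathbf{1}}$; absorb the Gauss-sum ratio, the $\alpha^{-\nu_\rho}$, and the period unit into $U(T)\in\Lambda^{\times}$ to get (\ref{congruence of p-adic L}); and then combine with the factorization $\mathscr{L}_p(\mathbf{G},\chi,T)=\mathscr{L}_p^{\Sigma_0}(A_{\varphi},\chi,T)\mathscr{L}_p^{\Sigma_0}(A_{\psi},\chi,T)$, Wiles' main conjecture \cite{Wil90}, and $(\mu=0)$ to read off (\ref{equality of the analytic Iwasawa invariants}). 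The one point where your write-up diverges in emphasis is the intermediate claim about the $p$-stabilization factor $\prod_{\ideal{p}|p}(1-\alpha_{\ideal{p}}^{-1}\varepsilon(\ideal{p})R(\ideal{p}))$ reducing to a product of Euler factors of the Deligne--Ribet distributions: the paper sidesteps this entirely by defining $\mathscr{L}_p(\mathbf{G},\chi,T)$ via interpolation of $\tau(\chi^{-1}\psi^{-1}\rho^{-1})D(1,\mathbf{G},\chi\rho)/(-2\pi\sqrt{-1})^n$ at \emph{ramified} $\rho$ only (so the Euler factor at $p$ is trivial on the Eisenstein side and the $\alpha^{-\nu_{\rho}}$ is a pure unit discrepancy packaged into $U(T)$), which is cleaner than matching $p$-stabilizations at the distribution level and avoids having to make sense of a $p$-stabilized distribution for the non-cuspidal $\mathbf{G}$.
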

\begin{proof}
By our assumption, in order to apply \cite[Theorem 6.1]{Hira} to the quadruplet $(\mathbf{g}, \mathbf{G}, \ideal{n}',\chi\rho)$ instead of $(\mathbf{f}, \mathbf{E}, \ideal{n}, \eta)$, 
it suffices to check the congruence $\mathbf{g}\equiv \mathbf{G}\,(\bmod\, \varpi)$, that is, 
\[
C(\ideal{q},\mathbf{g})\equiv C(\ideal{q},\mathbf{G})\,(\bmod\, \varpi)
\]
for every prime ideal $\ideal{q}$ of $\ideal{o}_F$. 
For every prime ideal $\ideal{q}$ of $\ideal{o}_F$ such that $\ideal{q}\nmid \ideal{n}'p$, we have 
\begin{align*}
C(\ideal{q},\mathbf{g})=C(\ideal{q},\mathbf{f})&= \Tr(\rho_{\mathbf{f}}(\Frob_{\ideal{q}}))\\
&\equiv\psi(\Frob_{\ideal{q}})+\varphi(\Frob_{\ideal{q}})\\
&\equiv\psi(\Frob_{\ideal{q}})+\det(\rho_{\mathbf{f}})\psi^{-1}(\Frob_{\ideal{q}})\\
&=\psi(\Frob_{\ideal{q}})+\varepsilon \chi_{\mathrm{cyc}} \psi^{-1}(\Frob_{\ideal{q}})\\
&=\psi(\ideal{q})+\varepsilon(\ideal{q})\psi^{-1}(\ideal{q})N(\ideal{q})\\
&=C(\ideal{q},\mathbf{G}) \ (\bmod\, {\varpi}).
\end{align*}
For every prime ideal $\ideal{p}$ of $\ideal{o}_F$ lying above $p$, by (\ref{rho:decom}), (RR), and (Parity), we obtain 
\begin{align*}
&C(\ideal{p},\mathbf{g})=C(\ideal{p},\mathbf{f})\equiv \psi(\Frob_{\ideal{p}})\equiv C(\ideal{p},\mathbf{G}) \, (\bmod\, {\varpi}).
\end{align*}
Moreover, by the definitions of $\mathbf{g}$ and $\mathbf{G}$, for every prime ideal $\ideal{q}$ of $\ideal{o}_F$ dividing $\ideal{n}'$, we have 
\[
C(\ideal{q},\mathbf{g})=0,\ \ C(\ideal{q},\mathbf{G})=0.
\]
Thus, by applying \cite[Theorem 6.1]{Hira}, we get the congruence 
\[
\tau(\chi^{-1}\rho^{-1})\frac{D(1,\mathbf{g},\chi\rho)}{(-2\pi \sqrt{-1})^n\Omega_{\mathbf{g}}^{\mathbf{1}}}
\equiv u' \tau(\chi^{-1}\rho^{-1})\frac{D(1,\mathbf{G},\chi\rho)}{(-2\pi \sqrt{-1})^n}
\, (\bmod \, \varpi) 
\]
for every non-trivial finite order character $\rho$ of $\Gamma$ with conductor $p^{\nu_{\rho}}$, 
where $u'$ is a unit in $\integer{}^{\times}$. 
Therefore we obtain 
\begin{align*}
\mathscr{L}_p(\mathbf{f},\chi,\rho(\gamma)-1)
&=\alpha^{-\nu_{\rho}}\tau(\chi^{-1}\rho^{-1})
\frac{D(1,\mathbf{g},\chi\rho)}{(-2\pi \sqrt{-1})^n\Omega_{\mathbf{g}}^{\mathbf{1}}}\\
&\equiv u'\alpha^{-\nu_{\rho}}\tau(\chi^{-1}\rho^{-1})\frac{D(1,\mathbf{G},\chi\rho)}{(-2\pi \sqrt{-1})^n}\\
&= u'\alpha^{-\nu_{\rho}}
\frac{\tau(\chi^{-1}\rho^{-1})}{\tau(\chi^{-1}\psi^{-1}\rho^{-1})}
\tau(\chi^{-1}\psi^{-1}\rho^{-1})\frac{D(1,\mathbf{G},\chi\rho)}{(-2\pi \sqrt{-1})^n}\\
&= U(\rho(\gamma)-1)\mathscr{L}_p(\mathbf{G},\chi,\rho(\gamma)-1) 
\, (\bmod \, \varpi)
\end{align*}
for every non-trivial finite order character $\rho$ of $\Gamma$ with conductor $p^{\nu_{\rho}}$.
This proves the congruence (\ref{congruence of p-adic L}). 
Now the equality (\ref{equality of the analytic Iwasawa invariants}) follows from (\ref{congruence of p-adic L}) and $\mu(\mathscr{L}_p(\mathbf{G},\chi,T))=0$.
\end{proof}

\section{Modularity of residually reducible representations}\label{section:Modularity}
%

%
\subsection{Modularity of residually reducible representations}\label{subsection:Example}
%
%
In this subsection, we prove a modularity theorem of residually reducible representations. 

In order to do it, 
we recall the definition and properties of Hilbert Eisenstein series of parallel weight $2$
(see, for more detail, \cite[\S 3]{Shi}, \cite[\S 2]{Da--Da--Po}):
\begin{thm}\label{thm:lifting}
Let $\psi_i$ be $\integer{}$-valued narrow ray class characters of $F$, whose conductor are denoted by $\ideal{m}_{\psi_i}$, with sign $r_i\in (\Z/2\Z)^{J_F}$ for $i=1,2$. 
We put $\ideal{n}=\ideal{m}_{\psi_1} \ideal{m}_{\psi_2}$ and $\varepsilon=\psi_1 \psi_2$. 
If $\varepsilon$ is totally even, then there exists 
$\mathbf{E}_2(\psi_1,\psi_2)=(E_2(\psi_1,\psi_2)_i)_{1 \le i \le h_F^+} \in M_2(\ideal{n},\integer{})$ with character $\varepsilon$, called a Hilbert Eisenstein series, such that 
\begin{align*}
D(s,\mathbf{E}_2(\psi_1,\psi_2))
&=L_F(s,\psi_1)L_F(s-1,\psi_2),\\
C(\ideal{a},\textbf{E}_2(\psi_1,\psi_2))
&=\sum_{\ideal{c}\mid\ideal{a}}\psi_1\left(\frac{\ideal{a}}{\ideal{c}}\right)\psi_2(\ideal{c})N(\ideal{c})\ \,
\text{for each non-zero ideal $\ideal{a}$ of $\ideal{o}_F$}.
\end{align*}
\end{thm}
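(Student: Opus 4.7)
The plan is to construct $\mathbf{E}_2(\psi_1,\psi_2)$ explicitly by its Fourier expansion at each cusp and then to verify the two asserted properties. By the isomorphism $(\ref{isomorphism between spaces of HMF})$, producing $\mathbf{E}_2(\psi_1,\psi_2)\in M_2(\ideal{n},\C)$ amounts to producing, for each $1\le i\le h_F^+$, a form $E_2(\psi_1,\psi_2)_i\in M_2(\Gamma_1(\ideal{d}_F[t_i],\ideal{n}),\C)$ with non-constant Fourier coefficients $a_\infty(\xi, E_2(\psi_1,\psi_2)_i) = \sum_{\ideal{c}\mid \xi[t_i]^{-1}}\psi_1(\xi[t_i]^{-1}\ideal{c}^{-1})\psi_2(\ideal{c})N(\ideal{c})$ for $0\ll\xi\in[t_i]$, together with suitable constant terms at the cusps. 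I would realize this as the value at $s=0$ of the holomorphic Eisenstein series with parameter $E_2(\psi_1,\psi_2;s)_i$ built in \cite[\S 3]{Shi} from the Hecke lattice sum over $B(\Q)\backslash \Gamma_1(\ideal{d}_F[t_i],\ideal{n})$, absolutely convergent for $\mathrm{Re}(s)\gg 0$ and meromorphically continued in $s$. The totally-even hypothesis on $\varepsilon=\psi_1\psi_2$ is necessary (and essentially sufficient) for a parallel weight $2t$ form with nebentypus $\varepsilon$ to exist: it is precisely the compatibility of $\varepsilon$ with the action of $K_\infty$ on the weight $2t$ automorphy factor.

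Once $\mathbf{E}_2(\psi_1,\psi_2)$ is constructed, the $L$-series identity is a formal Dirichlet-convolution manipulation: by $(\ref{L-function})$,
\begin{align*}
D(s,\mathbf{E}_2(\psi_1,\psi_2))
&=\sum_{\ideal{a}}\biggl(\sum_{\ideal{c}\mid\ideal{a}}\psi_1(\ideal{a}\ideal{c}^{-1})\psi_2(\ideal{c})N(\ideal{c})\biggr)N(\ideal{a})^{-s}\\
&=\biggl(\sum_{\ideal{b}}\psi_1(\ideal{b})N(\ideal{b})^{-s}\biggr)\biggl(\sum_{\ideal{c}}\psi_2(\ideal{c})N(\ideal{c})^{1-s}\biggr)\\
&=L_F(s,\psi_1)L_F(s-1,\psi_2),
\end{align*}
where the rearrangement is justified by absolute convergence for $\mathrm{Re}(s)$ large and then extended by analytic continuation. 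The coefficient formula for $C(\ideal{a},\mathbf{E}_2(\psi_1,\psi_2))$ is then read off from the Euler product $\prod_{\ideal{q}}(1-\psi_1(\ideal{q})N(\ideal{q})^{-s})^{-1}(1-\psi_2(\ideal{q})N(\ideal{q})^{1-s})^{-1}$ via the relation $(\ref{Hecke eigenvalue})$, and the nebentypus $\varepsilon=\psi_1\psi_2$ is visible from the Euler factor at unramified primes.

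The main obstacle is the integrality claim $\mathbf{E}_2(\psi_1,\psi_2)\in M_2(\ideal{n},\integer{})$ in the sense of $(\ref{integral MF})$. The non-constant Fourier coefficients $C(\ideal{a},\mathbf{E}_2(\psi_1,\psi_2))=\sum_{\ideal{c}\mid\ideal{a}}\psi_1(\ideal{a}\ideal{c}^{-1})\psi_2(\ideal{c})N(\ideal{c})$ lie manifestly in $\integer{}$ because both $\psi_i$ are $\integer{}$-valued and $N(\ideal{c})\in\Z$. The delicate point is the integrality of the constant terms $C_{\infty,i}(0,\mathbf{E}_2(\psi_1,\psi_2))$ of $(\ref{Hecke constant})$ at each of the cusps indexed by $[t_i]$. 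These are computed in \cite[Proposition 3.4]{Shi} in terms of values of the partial $L$-functions $L_F(-1,\psi_2)$ (or $L_F(0,\psi_1)$ when one of the characters is trivial, with the appropriate Euler factor adjustment), and their lying in $\integer{}$ is the content of the Deligne--Ribet theorem on integrality of special values of totally real abelian $L$-functions, applicable under the global hypotheses $p\ge n+2$ and $(p,6\Delta_F\ideal{n})=1$ in force throughout the paper. Once this integrality is established cusp by cusp, assembling the components via $(\ref{isomorphism between spaces of HMF})$ produces the desired element $\mathbf{E}_2(\psi_1,\psi_2)\in M_2(\ideal{n},\integer{})$ and completes the proof.
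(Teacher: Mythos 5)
Your proposal is correct and takes essentially the same approach as the paper, which gives no proof of Theorem \ref{thm:lifting} but simply recalls it from \cite[\S 3]{Shi} and \cite[\S 2]{Da--Da--Po}; your route via Shimura's parameter Eisenstein series at $s=0$, the formal Dirichlet convolution for the $L$-series, and Deligne--Ribet integrality of the constant terms is precisely the content of those sources. One small point worth tightening: Deligne--Ribet directly yields $p$-integrality only of the Euler-smoothed $L$-value, so to conclude that the constant term itself lies in $\integer{}$ one must also observe that the smoothing factor can be chosen a $p$-adic unit, which follows from the standing hypotheses $p\nmid 6$ (hence $p\ge 5$) and $(p,\ideal{n})=1$ rather than from $p\ge n+2$.
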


Hereafter we assume that $h_F^+=1$.
Let $\ideal{n}$ be a non-zero ideal of $\ideal{o}_F$ prime to $6p\Delta_F$ and $\ideal{d}_F[t_1]$. 
We fix narrow ray class characters $\varphi$ and $\psi$ of $F$ satisfying the conditions $(\mu=0)$, $\ideal{m}_{\varphi}\ideal{m}_{\psi}=\ideal{n}$, and 
\begin{align*}
&\text{(Eis)}&
&\text{ $\varphi$ and $\psi$ are $\cal{O}$-valued totally even (resp. totally odd) such that $\varphi$ is non-trivial.}&
\end{align*}
Then $\varphi$ and $\psi$ satisfy the condition \cite[\S 5, (Eis condition)]{Hira}. 

Let $\mathbf{E}$ denote the Hilbert Eisenstein series $\mathbf{E}_2(\varphi,\psi)\in M_2(\ideal{n},\integer{})$ attached to $\varphi$ and $\psi$ as Theorem \ref{thm:lifting}. 
We define the character $\epsilon_{{}_{\mathbf{E}}}$ of the Weyl group $W_G$ by $\epsilon_{{}_{\mathbf{E}}}=\sgn^{J_F}$ (resp. $\epsilon_{{}_{\mathbf{E}}}=\mathbf{1}$) if both $\varphi$ and $\psi$ are totally even (resp. totally odd). 
We put $\varepsilon=\varphi\psi$. 

For each cusp $s \in C(\Gamma_{1}(\ideal{d}_F [t_1],\ideal{n}))$, 
let $a_s(0,E_1)$ denote the constant term of $E_1$ at $s$ (cf. \S \ref{Diri}). 
Let $s_0 \in C(\Gamma_{1}(\ideal{d}_F [t_1],\ideal{n}))$ such that $v_p(a_{s_0}(0,E_{1}))\le v_p(a_{s}(0,E_{1}))$ for each $s\in C(\Gamma_{1}(\ideal{d}_F [t_1],\ideal{n}))$, where $v_p$ denotes the $p$-adic valuation. 
We put 
\[
C=a_{s_0}(0,E_{1}). 
\]

\begin{thm}\label{thm:congruence of modular forms}
Let $p$ be a prime number $>3$ such that $p$ is prime to $\ideal{n}$ and $\Delta_F$. 
We assume the above conditions and the following three conditions$:$ 
\begin{enumerate}[$(a)$] 
\item \label{thm:congruence of modular forms:assumption:torsion-free}
$H^{n}(\partial \left(Y(\ideal{n})^{\mathrm{BS}}\right),\integer{})_{\ideal{m}}$, $H_c^{n+1}(Y(\ideal{n}),\integer{})_{\ideal{m}}$, and $H^{n}(D_{C_{\infty}}(\ideal{n}),\integer{})_{\ideal{m}_{\mathbf{E}}'}$ are torsion-free, 
where $\ideal{m}$ $($resp. $\ideal{m}_{\mathbf{E}}'$$)$ is the Eisenstein maximal ideal of the Hecke algebra $\cal{H}_2(\ideal{n},\integer{})$ $($resp. $\mathbb{H}_2(\ideal{n},\integer{})'$$)$ defined before \cite[Theorem 5.5]{Hira}$;$ $($resp. \cite[Proposition 5.3]{Hira}$)$$;$

\item \label{thm:congruence of modular forms:assumption:Eis eigenvalue}
$C(\ideal{q},\mathbf{E})\not\equiv N(\ideal{q})\, (\bmod \,\varpi)$ for some prime ideal $\ideal{q}$ dividing $\ideal{n}$$;$

\item \label{thm:congruence of modular forms:assumption:ideal}
the ideal $(C)\neq 0, \integer{}$. 
\end{enumerate}
Then there exist a finite extension $K'$ of $K$ with the ring of integers $\integer{}\hookrightarrow \cal{O}'$ and a uniformizer $\varpi'$ such that $(\varpi')\cap \integer{}=(\varpi)$, and a Hecke eigenform $\mathbf{f}\in S_2(\ideal{n},\cal{O}')$ for all $T(\ideal{q})$ and $U(\ideal{q})$ with character $\varepsilon$ such that
$\mathbf{f}\equiv \mathbf{E}\, (\bmod \,\varpi')$, that is, for every non-zero ideal $\ideal{a}$ of $\ideal{o}_F$,
\[
C(\ideal{a},\mathbf{f})\equiv C(\ideal{a},\mathbf{E})\,(\bmod \,\varpi').
\]
\end{thm}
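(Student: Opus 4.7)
The plan is to exhibit $\mathbf{f}$ by identifying, inside the Eisenstein-localized cohomology, a non-trivial cuspidal Hecke eigensystem congruent to that of $\mathbf{E}$, and then lifting it to characteristic zero via the Deligne--Serre lemma.

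First, the Hilbert Eisenstein series $\mathbf{E}=\mathbf{E}_2(\varphi,\psi)\in M_2(\ideal{n},\integer{})$ determines the $\integer{}$-algebra homomorphism $\lambda_{\mathbf{E}}\colon \cal{H}_2(\ideal{n},\integer{})\to \integer{}$ characterized by $T(\ideal{q})\mapsto C(\ideal{q},\mathbf{E})$ and $S(\ideal{q})\mapsto\varepsilon^{-1}(\ideal{q})$ for $\ideal{q}\nmid\ideal{n}$, and $U(\ideal{q})\mapsto C(\ideal{q},\mathbf{E})=0$ for $\ideal{q}\mid\ideal{n}$; its reduction modulo $\varpi$ has kernel $\ideal{m}$. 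Under the torsion-freeness conditions in (a) and (b), the integral long exact sequences of the pairs $(Y(\ideal{n})^{\mathrm{BS}},\partial Y(\ideal{n})^{\mathrm{BS}})$ and $(Y(\ideal{n})^{\mathrm{BS}},D_{C_\infty}(\ideal{n}))$ induce, after localization at $\ideal{m}$ and $\ideal{m}_{\mathbf{E}}'$ respectively, short exact sequences of $\integer{}$-modules that cleanly separate the Eisenstein and cuspidal contributions inside $H^n_c(Y(\ideal{n}),\integer{})_{\ideal{m}}$, essentially in the spirit of the Manin--Drinfeld argument used in Proposition \ref{prop:integral of relative class}.

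Second, I would invoke the Eisenstein congruence theorem \cite[Theorem 5.5]{Hira} from the author's prior work, which, granted hypotheses (a), (b), and the lower bound $v_\varpi(C)\ge 1$ on the most singular Eisenstein constant term, produces a non-trivial mod-$\varpi$ congruence between the Eisenstein Hecke eigensystem $\lambda_{\mathbf{E}}$ and a cuspidal eigensystem supported in $S_2(\ideal{n},\kappa')$ for some finite extension $\kappa'$ of $\kappa$. Hypothesis (c), namely $(C)\neq 0,\integer{}$, is precisely the statement that $C$ is a nonzero non-unit of $\integer{}$ and so $v_\varpi(C)\ge 1$; hypothesis (b) is required to rule out Steinberg-type obstructions at ramified primes. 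The output is a nontrivial quotient of the cuspidal Hecke algebra $\mathscr{H}_2(\ideal{n},\integer{})_{\ideal{m}}$ through which the reduction of $\lambda_{\mathbf{E}}$ factors.

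Third, applying the Deligne--Serre lemma to this finite flat $\integer{}$-algebra acting on the lattice $S_2(\ideal{n},\integer{})_{\ideal{m}}$, one obtains a finite extension $K'/K$ with ring of integers $\cal{O}'\supset\integer{}$ and uniformizer $\varpi'$ above $\varpi$, together with an $\cal{O}'$-algebra homomorphism $\lambda_{\mathbf{f}}\colon \mathscr{H}_2(\ideal{n},\integer{})\otimes_{\integer{}}\cal{O}' \to \cal{O}'$ whose reduction modulo $\varpi'$ equals that of $\lambda_{\mathbf{E}}$. The associated normalized Hecke eigenform $\mathbf{f}\in S_2(\ideal{n},\cal{O}')$ then satisfies $C(\ideal{a},\mathbf{f})\equiv C(\ideal{a},\mathbf{E})\pmod{\varpi'}$ for every non-zero ideal $\ideal{a}$ of $\ideal{o}_F$, via the $q$-expansion principle applied to the congruence of Hecke eigenvalues and the constant terms. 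The main obstacle is the second step, namely converting the arithmetic statement $v_\varpi(C)\ge 1$ into an honest congruence in the cuspidal Hecke algebra: hypothesis (a) is what prevents Eisenstein classes in the boundary cohomology from absorbing the Eisenstein mass upon reduction mod $\varpi$, so that the nontrivial quotient of $\cal{H}_2(\ideal{n},\integer{})_{\ideal{m}}$ detected by $C$ is genuinely cuspidal, and hypothesis (b), through \cite[Proposition 5.3]{Hira}, plays the analogous role on $D_{C_\infty}(\ideal{n})$ at primes dividing $\ideal{n}$.
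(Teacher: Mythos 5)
Your proposal follows essentially the same route as the paper: (1) feed $v_\varpi(C)\ge 1$ (which is exactly hypothesis (c)) together with (a) and (b) into \cite[Theorem 5.5]{Hira} to produce a nontrivial congruence in parabolic cohomology, (2) apply the Deligne--Serre lemma to obtain a genuine Hecke eigenvector over a finite extension $\integer{}'$, and (3) pass through the Eichler--Shimura--Harder isomorphism (\ref{+,+ decomp}) to convert that eigenvector into a normalized cuspidal eigenform $\mathbf{f}$ with $\mathbf{f}\equiv\mathbf{E}\pmod{\varpi'}$. The only differences are presentational. First, the paper applies \cite[Lemma 6.11]{Del--Se} directly to the cohomology lattice $\widetilde{H}_{\pa}^n(Y(\ideal{n}),\integer{})[\epsilon_{\mathbf{E}}]$, where the output of \cite[Theorem 5.5]{Hira} naturally lives, and only afterward transports to $S_2(\ideal{n},\integer{}')$; you propose running Deligne--Serre on the cusp form lattice $S_2(\ideal{n},\integer{})_{\ideal{m}}$, which is Hecke-equivalent but requires that you already know this localization is nonzero, which is precisely what \cite[Theorem 5.5]{Hira} is supplying. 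Second, your first paragraph speaks of the torsion-freeness conditions ``cleanly separating'' the Eisenstein and cuspidal contributions; at the Eisenstein maximal ideal they are deliberately not separated (that is the congruence!), and the role of (a) and (b) is instead to control the boundary and relative cohomology so that the Eisenstein class pushes a nontrivial class into the parabolic quotient mod $\varpi$. Finally, the last sentence of your step three invokes congruence of constant terms, which is beside the point for a cusp form: the congruence $C(\ideal{a},\mathbf{f})\equiv C(\ideal{a},\mathbf{E})$ for all nonzero $\ideal{a}$ follows from the Hecke-eigenvalue congruence together with the normalization $C(\ideal{o}_F,\mathbf{f})=1$. None of these points affects the correctness of the argument.
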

\begin{proof}
We use the same notation as in the proof of \cite[Theorem 5.5]{Hira}. 
By the proof of \cite[Theorem 5.5, (5.8)]{Hira}, 
the assumption $(\ref{thm:congruence of modular forms:assumption:ideal})$ implies that there exists a non-zero class $e_0\in \widetilde{H}_{\pa}^n(Y(\ideal{n}),\integer{})[\epsilon_{{}_{\mathbf{E}}}]\backslash \varpi \widetilde{H}_{\pa}^n(Y(\ideal{n}),\integer{})[\epsilon_{{}_{\mathbf{E}}}]$ such that $e_0$ is cohomologous to $-[\omega_{\mathbf{E}}]$ modulo $\varpi$, and 
the Hecke eigenvalues of $e_0$ are the same as those of $-[\omega_{\mathbf{E}}]$ modulo $\varpi$ for all $t\in \mathscr{H}_2(\ideal{n},\integer{})$. 
Now the Deligne\nobreakdash--Serre lifting lemma (\cite[Lemma 6.11]{Del--Se}) in the case where 
$R=\integer{}$, $M=\widetilde{H}_{\pa}^n(Y(\ideal{n}),\integer{})[\epsilon_{{}_{\mathbf{E}}}]$, and $\mathbb{T}=\mathscr{H}_2(\ideal{n},\cal{O})$ 
says that there exist a finite extension $K'$ of $K$ with the ring of integers $\integer{}\hookrightarrow \cal{O}'$ and a uniformizer $\varpi'$ such that $(\varpi')\cap \integer{}=(\varpi)$, and a non-zero eigenvector 
$e\in \widetilde{H}_{\pa}^n(Y(\ideal{n}),\integer{})[\epsilon_{{}_{\mathbf{E}}}]\otimes \cal{O}'$ 
for all $t\in \mathbb{T}$ with eigenvalues $\lambda(V(\ideal{q}))$ such that 
\[
\lambda(V(\ideal{q}))\equiv C(\ideal{q},\mathbf{E})\, (\bmod\, \varpi')
\]
for all non-zero prime ideals $\ideal{q}$ of $\ideal{o}_F$ prime to $\ideal{n}$ (resp. dividing $\ideal{n}$) and $V(\ideal{q})=T(\ideal{q})$ (resp. $U(\ideal{q})$). 
By the isomorphism (\ref{+,+ decomp}), 
we obtain a Hecke eigenform $\mathbf{f}\in S_2(\ideal{n},\C)$ for all $T(\ideal{q})$ and $U(\ideal{q})$ such that $e=[\omega_{\mathbf{f}}]$. 
By using the relation between Hecke eigenvalues and Fourier coefficients, 
we may assume that $\mathbf{f}\in S_2(\ideal{n},\cal{O}')$ with character $\varepsilon$. 
Therefore we obtain the desired congruence 
$\mathbf{f}\equiv \mathbf{E}\,(\bmod\, \varpi')$.
\end{proof}

Let $\kappa'$ denote the residue field of $\integer{}'$. 
\begin{cor}\label{cor:modularity}
Under the same notation and assumptions as Theorem \ref{thm:congruence of modular forms}, the residual Galois representation 
$\overline{\rho}_{\mathbf{f}}=\rho_{\mathbf{f}}\,(\bmod\,\varpi'): G_{F}\to \GL_2(\kappa')$ associated to $\mathbf{f}$ is reducible and of the form 
\begin{align*}
\overline{\rho}_{\mathbf{f}}  \sim 
\begin{pmatrix} 
\varphi&\ast\\ 
0&\psi\chi_{\mathrm{cyc}}
\end{pmatrix} (\bmod\, \varpi').
\end{align*}
\end{cor}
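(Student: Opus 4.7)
The plan is to extract the structure of $\overline{\rho}_{\mathbf{f}}$ from the congruence $\mathbf{f} \equiv \mathbf{E} \pmod{\varpi'}$ provided by Theorem \ref{thm:congruence of modular forms} by comparing Frobenius traces and determinants modulo $\varpi'$ and then applying Chebotarev density together with the Brauer--Nesbitt theorem. The Hecke eigenform $\mathbf{f}\in S_2(\ideal{n},\cal{O}')$ carries an attached continuous Galois representation $\rho_{\mathbf{f}}\colon G_F\to \GL_2(\cal{O}')$ (as recalled in \S\ref{subsection:algebraic Iwasawa invariants}) satisfying, for every prime ideal $\ideal{q}\nmid\ideal{n}p$, the identities $\Tr(\rho_{\mathbf{f}}(\Frob_{\ideal{q}}))=C(\ideal{q},\mathbf{f})$ and $\det(\rho_{\mathbf{f}}(\Frob_{\ideal{q}}))=\varepsilon(\ideal{q})N(\ideal{q})$. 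On the Eisenstein side, Theorem \ref{thm:lifting} gives the explicit formula $C(\ideal{q},\mathbf{E})=\varphi(\ideal{q})+\psi(\ideal{q})N(\ideal{q})$ for every $\ideal{q}$ prime to $\ideal{n}$.

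Combining these with $\mathbf{f}\equiv\mathbf{E}\pmod{\varpi'}$ and the identity $N(\ideal{q})\equiv\chi_{\mathrm{cyc}}(\Frob_{\ideal{q}})\pmod{p}$ valid for $\ideal{q}\nmid p$, one obtains
\[
\Tr(\overline{\rho}_{\mathbf{f}}(\Frob_{\ideal{q}}))\equiv \varphi(\Frob_{\ideal{q}})+(\psi\chi_{\mathrm{cyc}})(\Frob_{\ideal{q}}),\qquad \det(\overline{\rho}_{\mathbf{f}}(\Frob_{\ideal{q}}))\equiv \varphi(\Frob_{\ideal{q}})\cdot(\psi\chi_{\mathrm{cyc}})(\Frob_{\ideal{q}})
\]
modulo $\varpi'$, for every $\ideal{q}\nmid\ideal{n}p$. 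By Chebotarev density, these Frobenius elements are dense in the quotient of $G_F$ cutting out the extension ramified only at primes of $\ideal{n}p\infty$, so the characteristic polynomial of $\overline{\rho}_{\mathbf{f}}(\sigma)$ matches that of $(\varphi\oplus\psi\chi_{\mathrm{cyc}})(\sigma)$ for every $\sigma\in G_F$. The Brauer--Nesbitt theorem then yields $\overline{\rho}_{\mathbf{f}}^{\mathrm{ss}}\simeq \varphi\oplus(\psi\chi_{\mathrm{cyc}})$.

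Next one checks that these two characters are distinct modulo $\varpi'$: the conductor of $\varphi$ divides $\ideal{n}$, which is prime to $p$, so $\varphi$ is unramified at every $\ideal{p}\mid p$; meanwhile $\chi_{\mathrm{cyc}}$ has non-trivial restriction to $I_{F_{\ideal{p}}}$ modulo $p$ (using $p>3$) and $\psi$ is unramified at $\ideal{p}$, so $\varphi|_{I_{F_{\ideal{p}}}} \not\equiv (\psi\chi_{\mathrm{cyc}})|_{I_{F_{\ideal{p}}}}$. Consequently $\overline{\rho}_{\mathbf{f}}^{\mathrm{ss}}$ decomposes into two inequivalent one-dimensional constituents, forcing $\overline{\rho}_{\mathbf{f}}$ itself to be reducible with a $G_F$-stable line. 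Choosing a basis adapted to this filtration puts $\overline{\rho}_{\mathbf{f}}$ into upper-triangular form with $\varphi$ and $\psi\chi_{\mathrm{cyc}}$ on the diagonal.

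The main expected obstacle is pinning down the \emph{ordering} of these two diagonal characters, i.e. verifying that $\varphi$ occupies the upper-left entry rather than the lower-right. In the semisimple case the ordering is immaterial; in the non-semisimple case the invariant line is uniquely determined, and one must identify its character. This can be handled by a local analysis at a prime $\ideal{p}\mid p$: the eigenform $\mathbf{f}$ is $p$-ordinary because $C(\ideal{p},\mathbf{f})\equiv\varphi(\ideal{p})+\psi(\ideal{p})N(\ideal{p})\equiv\varphi(\ideal{p})\not\equiv 0\pmod{\varpi'}$, so Wiles' local decomposition (\ref{rho:decom}) is available and identifies the unramified diagonal quotient, which in turn determines the global ordering after comparison with the semisimplification computed above.
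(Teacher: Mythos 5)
Your handling of the semisimplification is correct and runs in parallel with the paper's proof: you pass the congruence $\mathbf{f}\equiv\mathbf{E}\pmod{\varpi'}$ to trace and determinant congruences at Frobenius, spread them by Chebotarev, and invoke Brauer--Nesbitt. The paper instead introduces the auxiliary $4$-dimensional module $W=\overline{T}\oplus\Hom(\overline{T},\kappa'(\varepsilon\omega))$ and compares characteristic polynomials of Frobenius on $W$ --- a device in the style of Ribet --- but since for a $2$-dimensional module the trace and determinant already recover the full characteristic polynomial, your direct comparison of $\overline{T}$ with $\kappa'(\varphi)\oplus\kappa'(\psi\omega)$ is equivalent. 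Your argument that the two constituents are distinct (via their restrictions to inertia at $\ideal{p}\mid p$) is also fine; the paper instead uses that $\rho_{\mathbf{f}}$ is totally odd, so that $\alpha_1=\alpha_2$ would force an even determinant on complex conjugation. Reducibility then follows immediately from the computed semisimplification, since an irreducible representation equals its own semisimplification.

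Your final step, pinning down which diagonal entry is the subrepresentation, is where there is a genuine problem. Wiles's local structure $\rho_{\mathbf{f}}|_{G_{F_{\ideal{p}}}}\sim\begin{pmatrix}\rho_1&\ast\\0&\rho_2\end{pmatrix}$, $\rho_2$ unramified, supplies \emph{some} $G_{F_{\ideal{p}}}$-stable line with unramified quotient; it does not identify that line with the restriction of the global invariant line $T_1\subset\overline{T}$. If $\overline{T}|_{G_{F_{\ideal{p}}}}$ is split --- which nothing in your argument excludes --- there are two $G_{F_{\ideal{p}}}$-stable lines, and Wiles's choice need not agree with $T_1$, so the global ordering remains undetermined. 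Moreover, even granting local non-splitness, your argument would place the \emph{ramified} character $\psi\chi_{\mathrm{cyc}}$ on the global subrepresentation (since the unramified constituent must be Wiles's quotient), which is the opposite of the top-left entry displayed in the corollary. This, together with the fact that the paper's proof ends once $\alpha_1\neq\alpha_2$ is established and that the downstream application in Example~\ref{Example cong} invokes only $\overline{\rho}_{\mathbf{f}_0}^{\mathrm{ss}}$, shows that the displayed matrix records a composition series rather than a fixed filtration; the ordering step is both unnecessary for the corollary and not correctly filled in.
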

\begin{proof}
By Theorem \ref{thm:congruence of modular forms}, 
the Hecke eigenform $\mathbf{f}\in S_2(\ideal{n},\cal{O}')$ satisfies the congruences 
\begin{align}\label{f equiv E}
C(\ideal{a},\mathbf{f})&\equiv \sum_{\ideal{c}\mid\ideal{a}}\varphi\left(\frac{\ideal{a}}{\ideal{c}}\right)\psi(\ideal{c})\chi_{\mathrm{cyc}}(\ideal{c})\, (\bmod \,\varpi')
\end{align}
for every non-zero ideal $\mathfrak{a}$ of $\ideal{o}_F$. 
Let $\overline{T}$ denote a $2$-dimensional $\kappa'$-vector space 
with the action of $G_{F}$ via $\overline{\rho}_{\mathbf{f}}$. 
For the proof, it suffices to show that all of the constituents of $\overline{T}$ are isomorphic to $\kappa'(\varphi)$ or $\kappa'(\psi\omega)$. 
We write $\rho$ for $\overline{\rho}_{\mathbf{f}}$ to simplify the notation. 
Let 
\begin{align*}
W&=\overline{T}\oplus \overline{T}{}^{\ast},
\end{align*}
where $\overline{T}{}^{\ast}=\Hom(\overline{T},\kappa'(\varepsilon\omega))$.
We fix a prime ideal $\ideal{q}$ of $\ideal{o}_F$ prime to $\ideal{n}p$. 
We consider the characteristic polynomial of $\Frob_{\ideal{q}}$ acting on $W$. We note that $\rho$ satisfies the relation 
$\rho(\Frob_{\ideal{q}})^2 -C(\ideal{q},\mathbf{f})\rho(\Frob_{\ideal{q}})+\varepsilon(\ideal{q})\chi_{\mathrm{cyc}}(\ideal{q})=0$. 
Let $\alpha(\ideal{q})$ and $\beta(\ideal{q})$ denote the solutions of $X^2-C(\ideal{q},\mathbf{f})X+\varepsilon(\ideal{q})\chi_{\mathrm{cyc}}(\ideal{q})=0$. 
Let $G$ denote a finite quotient of $G_F$ through which the action on $W$ factors. 
Let $N_{\alpha(\ideal{q})}$ and $N_{\beta(\ideal{q})}$ denote the generalized eigenspaces of $\rho(\Frob_{\ideal{q}})$ with respect to $\alpha(\ideal{q})$ and $\beta(\ideal{q})$, respectively.
Then we have $\overline{T}=N_{\alpha(\ideal{q})}\oplus N_{\beta(\ideal{q})}$. 
Since the operation $\Hom(\ast,\kappa'(\varepsilon\omega))$ interchanges the eigenvalues of the action of $\Frob_{\ideal{q}}$, 
the characteristic polynomial of $\Frob_{\ideal{q}}$ acting on $W$ is $(X-\alpha(\ideal{q}))^2(X-\beta(\ideal{q}))^2$. 
On the other hand, the congruence (\ref{f equiv E}) implies that the characteristic polynomial of $\Frob_{\ideal{q}}$ acting on $\kappa'(\varphi)^{\oplus 2}\oplus \kappa'(\psi\omega)^{\oplus 2}$, which is regarded as a $G$\nobreakdash-module, is also $(X-\alpha(\ideal{q}))^2(X-\beta(\ideal{q}))^2$. 
By the Chebotarev density theorem，any element of $G$ is the image of some $\Frob_{\ideal{q}}$ such that $\ideal{q}$ is prime to $\ideal{n}p$. 
Therefore, by the Brauer\nobreakdash--Nesbitt theorem, we obtain
\[
W^{\mathrm{ss}}\simeq \kappa'(\varphi)^{\oplus 2}\oplus \kappa'(\psi\omega)^{\oplus 2}.
\]
Here $W^{\mathrm{ss}}$ denotes the semisimplification of $W$. 
Hence there exists a filtration 
\begin{align*}
0=T_0 \subsetneq T_1 \subsetneq T_2=\overline{T}
\end{align*}
of $\overline{T}$ such that, for each $i$ with $1\le i\le 2$, 
\[
T_i/T_{i-1}\simeq \kappa'(\alpha_i), 
\]
where $\alpha_i=\varphi$ or $\psi\omega$. 
Since $\rho$ is totally odd, we have $\alpha_1 \neq \alpha_2$.
This proves the corollary. 
\end{proof}

%
\subsection{Real quadratic field case}\label{Example}
%
%

In this subsection, we give examples satisfying all the assumption of the Main Theorem \ref{thm:main theorem} by using Corollary \ref{cor:modularity}. 
In order to do it, 
we prove (\ref{thm:congruence of modular forms:assumption:torsion-free}) of Theorem \ref{thm:congruence of modular forms} in certain case (Proposition \ref{prop:ab torsion-free} and \ref{prop:bou torsion-free})
and give a Hilbert Eisenstein series satisfying (\ref{thm:congruence of modular forms:assumption:Eis eigenvalue}) and (\ref{thm:congruence of modular forms:assumption:ideal}) of Theorem \ref{thm:congruence of modular forms} (Example \ref{Example cong}). 

In this subsection, we assume that $F$ is a real quadratic field with $h_F^+=1$. The following proposition is obtained by \cite[Proposition 5.8]{Hira}:
\begin{prop}\label{prop:ab torsion-free}
Assume that $\ideal{n}$ is prime to $6\Delta_F$. 
If $p$ is prime to $6\ideal{n}$ and $\sharp(\mathfrak{o}_{F,+}^{\times}/\mathfrak{o}_{F,\ideal{n}}^{\times 2})$, 
then $H_c^3(Y(\ideal{n}),\integer{})$ is torsion-free.
\end{prop}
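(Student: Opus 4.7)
The plan is to apply \cite[Proposition 5.8]{Hira} directly; that earlier result is a general torsion-freeness criterion for the compactly supported cohomology $H_c^{n+1}(Y(\ideal{n}), \integer{})$ in the real quadratic case, and the content of the present proposition is that its hypotheses are fulfilled under our assumptions. The first step is to check this: the assumption that $\ideal{n}$ is prime to $6\Delta_F$ and $p$ is prime to $6\ideal{n}$ rules out the small torsion in the congruence subgroup $\overline{\Gamma_1(\ideal{d}_F[t_1], \ideal{n})}$, while the coprimality of $p$ with $\sharp(\mathfrak{o}_{F,+}^{\times}/\mathfrak{o}_{F,\ideal{n}}^{\times 2})$ takes care of the unit-group contribution at the cusps.

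To see how the argument unfolds internally, I would analyze the long exact sequence of the Borel--Serre pair $(Y(\ideal{n})^{\mathrm{BS}}, \partial Y(\ideal{n})^{\mathrm{BS}})$:
\begin{align*}
H^2(\partial Y(\ideal{n})^{\mathrm{BS}}, \integer{}) \to H_c^3(Y(\ideal{n}), \integer{}) \to H^3(Y(\ideal{n})^{\mathrm{BS}}, \integer{}) \to H^3(\partial Y(\ideal{n})^{\mathrm{BS}}, \integer{}).
\end{align*}
Since $Y(\ideal{n})^{\mathrm{BS}}$ is a compact real $4$-manifold with nonempty boundary, it is homotopy equivalent to a $3$-complex, and the torsion of $H^3(Y(\ideal{n})^{\mathrm{BS}}, \integer{})$ is controlled via Poincar\'e--Lefschetz duality by $H_1(Y(\ideal{n}), \integer{})$. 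The boundary decomposes as a disjoint union of cuspidal components $D_s$, each of which is a $3$-dimensional nilmanifold (a $2$-torus bundle over $S^1$) further quotiented by the finite group $\overline{\Gamma_s}/\Gamma_s^{\mathrm{unip}}$, a quotient of $\mathfrak{o}_{F,+}^{\times}/\mathfrak{o}_{F,\ideal{n}}^{\times 2}$.

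The Hochschild--Serre spectral sequence for this finite quotient, combined with the hypothesis that $p$ is prime to $\sharp(\mathfrak{o}_{F,+}^{\times}/\mathfrak{o}_{F,\ideal{n}}^{\times 2})$, reduces the $p$-part of $H^{\ast}(D_s, \integer{})$ to that of the underlying nilmanifold, which is torsion-free. The main obstacle is the careful bookkeeping at the cusps: one must verify that all torsion in $H^3(Y(\ideal{n})^{\mathrm{BS}}, \integer{})$ is reached by the connecting map from the boundary, and that no hidden $p$-contribution survives from the action of $\overline{\Gamma_s}$ on the nilmanifold fiber. This cusp-by-cusp analysis, encapsulated in \cite[Proposition 5.8]{Hira}, is exactly what the hypothesis on the unit-square quotient is designed to carry out.
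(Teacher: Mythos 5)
Your proposal takes exactly the same route as the paper: the paper's ``proof'' consists solely of the observation that the proposition \emph{is} \cite[Proposition 5.8]{Hira}, stated under the standing assumption that $F$ is real quadratic with $h_F^+=1$, and your opening sentence correctly identifies this. The remainder of your write-up (Borel--Serre long exact sequence, duality, nilmanifold boundary components, Hochschild--Serre for the finite unit-quotient) is a plausible reconstruction of what must be happening inside the cited result, but it is not part of the present paper's argument, and note that the clean form of Lefschetz duality you want is $H_c^3(Y(\ideal{n}),\integer{})\simeq H_1(Y(\ideal{n})^{\mathrm{BS}},\integer{})$ rather than a statement relating the torsion of $H^3(Y(\ideal{n})^{\mathrm{BS}},\integer{})$ to $H_1(Y(\ideal{n}),\integer{})$.
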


Let $u$ denote the fundamental unit of $F$. 
We put $u_+=u$  (resp. $u^2$) if $N(u)=1$ (resp. $N(u)=-1$).
Let $\iota(\ideal{n})$ denote the index $[\overline{\Gamma_{1}(\ideal{d}_F[t_1],\ideal{o}_F)}:\overline{\Gamma_{1}(\ideal{d}_F[t_1],\ideal{n})}]$ of the subgroup $\overline{\Gamma_{1}(\ideal{d}_F[t_1],\ideal{n})}$ in $\overline{\Gamma_{1}(\ideal{d}_F[t_i],\ideal{o}_F)}$, where the bar ${}^{-}$ means image in $\GL_2(F)/(\GL_2(F)\cap F^{\times})$. 

\begin{prop}\label{prop:bou torsion-free}
Assume that $\ideal{n}$ is prime to $6\Delta_F$. 
If $p$ is prime to $\iota(\ideal{n})$ and $u_+^{\iota(\ideal{n})}-1$, then $H^{2}(\partial \left(Y(\ideal{n})^{\mathrm{BS}}\right),\integer{})$ is torsion-free.
\end{prop}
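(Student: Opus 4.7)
The plan is to reduce the torsion-freeness of $H^2(\partial(Y(\ideal{n})^{\mathrm{BS}}),\integer{})$ to an explicit computation of the cohomology of the cusp stabilizers. The Borel--Serre boundary decomposes as a finite disjoint union $\partial(Y(\ideal{n})^{\mathrm{BS}})=\coprod_{s}D_s$ indexed by the cusps of $Y(\ideal{n})$, so it suffices to show that $H^2(D_s,\integer{})$ has no $p$-torsion for each $s$. Writing $s=\alpha(\infty)$ with $\alpha\in\SL_2(F)$, the component $D_s=\overline{\Gamma_s}\backslash\alpha(X\times\{\infty\})$ is a free quotient of the contractible space $X\times\{\infty\}\simeq\R^3$ by the stabilizer $\overline{\Gamma_s}$, hence an Eilenberg--MacLane space $K(\overline{\Gamma_s},1)$; accordingly $H^\ast(D_s,\integer{})\simeq H^\ast(\overline{\Gamma_s},\integer{})$.

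The cusp stabilizer sits in a split extension $1\to\overline{\Gamma_s^u}\to\overline{\Gamma_s}\to\overline{\Gamma_s^d}\to 1$ whose unipotent part $\overline{\Gamma_s^u}\simeq\Lambda_s$ is a rank-$2$ lattice in $F\otimes\R$ (a conjugate of $\ideal{d}_F^{-1}[t_1]^{-1}$) and whose Levi part $\overline{\Gamma_s^d}$ consists of the induced diagonal unit actions. Because $\ideal{n}$ is prime to $6$, the only torsion unit $-1$ of $\mathfrak{o}_F^\times$ is excluded from $\Gamma_1(\ideal{d}_F[t_1],\ideal{n})$, so $\overline{\Gamma_s^d}$ is torsion-free; combined with $h_F^+=1$ and the coprimality of $p$ and $\iota(\ideal{n})$, it is infinite cyclic, generated by the image of $u_+^{m_s}$ for some positive integer $m_s\mid\iota(\ideal{n})$, acting on $\Lambda_s$ by multiplication by $u_+^{m_s}$. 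Since $u_+$ is totally positive with $N_{F/\Q}(u_+)=1$, the corresponding matrix $M_s\in\GL_2(\integer{})$ has $\det M_s=1$.

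Applying the Hochschild--Serre spectral sequence
\[
E_2^{p,q}=H^p(\overline{\Gamma_s^d},H^q(\Lambda_s,\integer{}))\Longrightarrow H^{p+q}(\overline{\Gamma_s},\integer{}),
\]
which collapses at $E_2$ because $\overline{\Gamma_s^d}\simeq\Z$ has cohomological dimension $1$, one obtains in total degree $2$ the short exact sequence
\[
0\to\mathrm{coker}\bigl(1-M_s^\ast:\Lambda_s^\vee\to\Lambda_s^\vee\bigr)\to H^2(\overline{\Gamma_s},\integer{})\to\integer{}\to 0,
\]
which splits since $\integer{}$ is free. The right-hand $\integer{}$ is $H^0(\overline{\Gamma_s^d},H^2(\Lambda_s,\integer{}))$, with the action trivial because $\det M_s=1$; the finite left-hand group has order $|\det(1-M_s)|=|N_{F/\Q}(1-u_+^{m_s})|$, which is non-zero because $u_+$ has infinite order. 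Since $m_s\mid\iota(\ideal{n})$, the factorization $1-u_+^{\iota(\ideal{n})}=(1-u_+^{m_s})\sum_{j=0}^{\iota(\ideal{n})/m_s-1}u_+^{jm_s}$ in $\ideal{o}_F$ yields $N_{F/\Q}(1-u_+^{m_s})\mid N_{F/\Q}(1-u_+^{\iota(\ideal{n})})$, so the hypothesis that $p$ is prime to $u_+^{\iota(\ideal{n})}-1$ (i.e.\ to its norm to $\Q$) forces the cokernel to be $p$-torsion-free, completing the proof.

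The main obstacle is the structural step of exhibiting $\overline{\Gamma_s^d}$ as infinite cyclic generated by a power of $u_+$ of exponent dividing $\iota(\ideal{n})$ uniformly across \emph{every} cusp $s$, not merely $s=\infty$; this requires tracking how conjugation by $\alpha\in\SL_2(F)$ interacts with the narrow-positive unit subgroup $\mathfrak{o}_{F,\ideal{n},+}^\times$, and it is precisely here that the coprimality of $p$ with $\iota(\ideal{n})$ enters, ruling out a priori $p$-torsion contributions when comparing the stabilizer subgroup of units at an arbitrary cusp with the narrow positive unit group of $F$.
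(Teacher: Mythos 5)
Your argument is correct, and it reaches the conclusion by a genuinely different route from the paper's. The paper does not compute $H^2(\overline{\Gamma_s},\integer{})$ directly: it passes to the equivalent statement that $H^1(\overline{N},K/\integer{})$ is divisible for $N=\alpha^{-1}\Gamma\alpha\cap B_{\infty}$, quotes Ghate's divisibility of $H^1(\overline{G},K/\integer{})$ for the full-level stabilizer $G=\alpha^{-1}\Gamma_1(\ideal{o}_F,\ideal{o}_F)\alpha\cap B_{\infty}$, and then proves $\Hom(\overline{G}^{\mathrm{ab}},K/\integer{})=\Hom(\overline{N}^{\mathrm{ab}},K/\integer{})$ by showing the cokernel of $\overline{N}^{\mathrm{ab}}\to\overline{G}^{\mathrm{ab}}$ has order prime to $p$ and that the kernel is killed by $\Hom(-,K/\integer{})$ via the coinvariants $(\overline{N}\cap U_{\infty})_{(\ideal{o}_{F,+}^{\times})^{\iota(\ideal{n})}}\simeq\ideal{o}_F/(u_+^{\iota(\ideal{n})}-1)$. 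Your Hochschild--Serre computation is self-contained and arguably cleaner: it locates the only possible torsion in $H^2(\overline{\Gamma_s},\integer{})$ in $E_2^{1,1}=\mathrm{coker}(1-M_s^{\ast})$, whose order $|N_{F/\Q}(1-u_+^{m_s})|$ divides $N_{F/\Q}(1-u_+^{\iota(\ideal{n})})$ and is hence a $p$-adic unit, so this cokernel vanishes over $\integer{}$ and $H^2(D_s,\integer{})\simeq\integer{}$; what it costs you is having to justify the explicit lattice-by-$\mathbb{Z}$ structure of $\overline{\Gamma_s}$ at every cusp, whereas the paper outsources the hard part at full level to Ghate. The two hypotheses enter identically in both proofs ($p\nmid\iota(\ideal{n})$ controls the index of the torus part of the stabilizer inside $\ideal{o}_{F,+}^{\times}$, and $p\nmid u_+^{\iota(\ideal{n})}-1$ kills the unipotent coinvariants), and the delicate step you flag --- that $m_s$ is prime to $p$ uniformly over all cusps --- is asserted with comparable brevity in the paper as the containment $(\ideal{o}_{F,+}^{\times})^{\iota(\ideal{n})}\hookrightarrow\overline{N}_{T_{\infty}}$, so your proof is not at a disadvantage there.
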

\begin{proof}
We write $\Gamma$ for $\Gamma_{1}(\ideal{d}_F[t_1],\ideal{n})$ to simplify the notation. 
We may assume $\Gamma=\Gamma_1(\mathfrak{o}_F,\ideal{n})$ by taking conjugation. 
For the proof, it suffices to show that 
$H^2(\overline{\Gamma_s},\cal{O})=H^2(\overline{\alpha^{-1}\Gamma \alpha \cap B_{\infty}},\cal{O})$ is torsion-free for each cusp $s\in C(\Gamma)$, 
where $\alpha\in \SL_2(\mathfrak{o}_F)$ such that $\alpha(\infty)=s$, and $B_{\infty}$ denotes the standard Borel subgroup of upper triangular matrices. 
We simply write 
\[
G=\alpha^{-1}\Gamma_1(\ideal{o}_F,\ideal{o}_F) \alpha \cap B_{\infty},\ \,
N=\alpha^{-1}\Gamma \alpha \cap B_{\infty}.
\]
We note that $\alpha^{-1}\Gamma_1(\ideal{o}_F,\ideal{o}_F) \alpha=\GL_2(\ideal{o}_F)$. 
As mentioned in \cite[p.260]{Gha}, for the proof, it suffices to show that $H^1(\overline{N},K/\cal{O})$ is divisible. 
By (the proof of) \cite[\S 3.4.2, Proposition 4]{Gha}, $H^1(\overline{G},K/\integer{})$ is divisible. 
Now we reduce it to showing that $H^1(\overline{G},K/\integer{})=H^1(\overline{N},K/\integer{})$, that is, 
\[
\Hom(\overline{G}^{\mathrm{ab}},K/\integer{})
=\Hom(\overline{N}^{\mathrm{ab}},K/\integer{}), 
\]
where $\overline{N}^{\mathrm{ab}}$ and $\overline{G}^{\mathrm{ab}}$ denote the maximal abelian quotients of $\overline{N}$ and $\overline{G}$, respectively. 
Let $\psi:\overline{N}^{\mathrm{ab}} \to \overline{G}^{\mathrm{ab}}$ denote the canonical morphism induced by $\overline{N} \hookrightarrow \overline{G}$. 
We have a diagram 
\[\xymatrix{
&0 \ar[r] & \overline{N} \ar[d] \ar[r] & \overline{G} \ar[d] \\
0 \ar[r] &\ker(\psi) \ar[r] & \ \overline{N}^{\mathrm{ab}} \ar[r]^{\psi} &\ \overline{G}^{\mathrm{ab}} \ar[r] & \mathrm{cok}(\psi) \ar[r] & 0.
}\]
Then the assumption on $\iota(\ideal{n})$ implies that $\mathrm{cok}(\psi)$ has finite order prime to $p$. 
Hence $\psi$ induces an exact sequence 
\[
0\to \Hom(\overline{G}^{\mathrm{ab}},K/\integer{}) 
\to \Hom(\overline{N}^{\mathrm{ab}},K/\integer{})
\to \Hom(\ker(\psi),K/\integer{})
\to 0.
\]
Thus it is enough to show that 
\begin{align}\label{ker(psi)=0}
\Hom(\ker(\psi),K/\integer{})=0. 
\end{align}

We simply write 
\[
\overline{G}_{T_{\infty}}=\overline{G}/(\overline{G}\cap U_{\infty}),\ \, \overline{N}_{T_{\infty}}=\overline{N}/(\overline{N}\cap U_{\infty}),
\]
where $U_{\infty}$ (resp. $T_{\infty}$) denotes the unipotent radical (resp. the standard torus) of $B_{\infty}$. 
We have the exact sequence 
\begin{align}\label{exact seq N bar}
0\to \overline{N} \cap U_{\infty} \to \overline{N} \to \overline{N}_{T_{\infty}}\to 0.
\end{align}
We note that $\overline{N}_{T_{\infty}}$ acts on $\overline{N} \cap U_{\infty}$ via the exact sequence (\ref{exact seq N bar}). 
Let $\varphi_N : \overline{N}^{\mathrm{ab}}\twoheadrightarrow \overline{N}_{T_{\infty}}$ denote the morphism induced by $\overline{N} \twoheadrightarrow \overline{N}_{T_{\infty}}$, and let $N'$ denote the image of the morphism $(\overline{N} \cap U_{\infty})_{\overline{N}_{T_{\infty}}}\to \overline{N}^{\mathrm{ab}}$ induced by $\overline{N}\cap U_{\infty} \to \overline{N}\twoheadrightarrow \overline{N}^{\mathrm{ab}}$. 
We note that $\ker(\varphi_N)\subset N'$. 
Since the morphism $\overline{N}_{T_{\infty}} \to \overline{G}_{T_{\infty}}$ is injective, we have $\ker(\psi) \subset \ker(\varphi_N)$ and hence $\ker(\psi)\subset N'$.
Thus we obtain 
\begin{align}\label{ex seq ker and N'}
\Hom(\ker(\psi),K/\integer{}) 
\twoheadleftarrow \Hom(N' ,K/\integer{})
\hookrightarrow \Hom((\overline{N} \cap U_{\infty})_{\overline{N}_{T_{\infty}}},K/\integer{}).
\end{align}
Now, for the proof of (\ref{ker(psi)=0}), it suffices to show that the last term
\[
\Hom((\overline{N} \cap U_{\infty})_{\overline{N}_{T_{\infty}}},K/\integer{})=0.\]
Since $\overline{G}_{T_{\infty}} \simeq \mathfrak{o}_{F,+}^{\times}$ (by \cite[\S 3.4.2]{Gha}), we have $(\ideal{o}_{F,+}^{\times})^{\iota(\ideal{n})} 
\hookrightarrow \overline{N}_{T_{\infty}}
\hookrightarrow \ideal{o}_{F,+}^{\times}$. 
It induces $(\overline{N}\cap U_{\infty})_{(\ideal{o}_{F,+}^{\times})^{\iota(\ideal{n})}}\twoheadrightarrow 
(\overline{N} \cap U_{\infty})_{\overline{N}_{T_{\infty}}}$, which implies 
\begin{align*}
\Hom((\overline{N} \cap U_{\infty})_{\overline{N}_{T_{\infty}}},K/\integer{}) 
\hookrightarrow  \Hom((\overline{N}\cap U_{\infty})_{(\ideal{o}_{F,+}^{\times})^{\iota(\ideal{n})}},K/\integer{}).
\end{align*}
Since $\overline{G} \cap U_{\infty} \simeq \ideal{o}_F$ (by \cite[\S 3.4.2]{Gha}), we have 
$\iota(\ideal{n})\ideal{o}_F 
\hookrightarrow \overline{N} \cap U_{\infty}
\hookrightarrow \ideal{o}_F$.
Hence the assumption on $\iota(\ideal{n})$ and the snake lemma for 
\[\xymatrix{
&0 \ar[r] & \overline{N}\cap U_{\infty} \ar[d]^{\times(u_+^{\iota(\ideal{n})}-1)} \ar[r] & \ideal{o}_F \ar[d]^{\times(u_+^{\iota(\ideal{n})}-1)} \ar[r]& \ideal{o}_F/(\overline{N}\cap U_{\infty})\ar[r] \ar[d]^{\times(u_+^{\iota(\ideal{n})}-1)}&0 \\
&0 \ar[r] & \overline{N}\cap U_{\infty} \ar[r] & \ideal{o}_F \ar[r] & \ideal{o}_F/(\overline{N}\cap U_{\infty}) \ar[r] & 0
}\]
implies that 
\begin{align*}
\Hom((\overline{N}\cap U_{\infty})_{(\ideal{o}_{F,+}^{\times})^{\iota(\ideal{n})}},K/\integer{}) 
\simeq \Hom(\ideal{o}_F/(u_+^{\iota(\ideal{n})}-1),K/\integer{}).
\end{align*}
Now, by combining them, 
the assumption on $u_+^{\iota(\ideal{n})}-1$ implies the claim (\ref{ker(psi)=0}) as desired. 
\end{proof}

\begin{ex}\label{Example cong}
We give examples satisfying all the assumptions of the Main Theorem \ref{thm:main theorem} in the case where $F=\Q(\sqrt{2})$. 
We have $\mathfrak{o}_F=\Z[\sqrt{2}]$, $h_F^+=1$, $\Delta_F=8$, $u=1+\sqrt{2}$, and $u_+=3+2\sqrt{2}$. 
We put $G=\Gal(K/\Q)$ and $H=\Gal(K/F)$, where $K=\Q(\sqrt{2},\sqrt{20149})$. 

Let $\varepsilon :H \to \{\pm 1\}$ be the non-trivial character 
whose conductor is a prime ideal $(20149)$ of $\ideal{o}_F$.
Let $\varepsilon_1 :\Gal(\Q(\sqrt{20149})/\Q) \to \{\pm 1\}$ be the non-trivial character whose conductor is a prime ideal $(20149)$ of $\Z$,
and $\varepsilon_2 :\Gal(\Q(\sqrt{2\cdot 20149})/\Q) \to \{\pm 1\}$ the non-trivial character whose conductor is an ideal $(8\cdot 20149)$ of $\Z$.  
Then we have $\mathrm{Ind}^G_H \varepsilon=\varepsilon_1 \oplus \varepsilon_2$ as $G$-modules and hence $L_{\Q}(s,\mathrm{Ind}^G_H \varepsilon)=L_{\Q}(s,\varepsilon_1)\cdot L_{\Q}(s,\varepsilon_2)$.
Since $L_F(s,\varepsilon)=L_{\Q}(s,\mathrm{Ind}^G_H \varepsilon)$, thus we obtain
\begin{align*}
L_F(-1,\varepsilon)
&=2^2 \cdot 5\cdot 281 \cdot 4951 \cdot 13417. 
\end{align*}

Let $p$ be a prime number $\in \{281,4951,13417\}$. 
Let $\textbf{E}$ denote the Eisenstein series $\in M_2((20149)^2,\integer{})$ characterized by 
\[
D(s,\mathbf{E})
=L_F^{\{(20149)\}}(s,\varepsilon)L_F^{\{(20149)\}}(s-1,\mathbf{1}).
\]
In order to apply Theorem \ref{thm:congruence of modular forms} (and Corollary \ref{cor:modularity}) to the pair $(p,\mathbf{E})$, it suffices to check the assumptions of Proposition \ref{prop:ab torsion-free} and \ref{prop:bou torsion-free}. 
The former follows from that $p$ is prime to $\sharp (\ideal{o}_F/(20149)^2)^{\times}$, and the latter follows from that $p$ is prime to $\iota^1((20149)^2)$ and $u_+^{\iota^1((20149)^2)}-1$.
Here, for a non-zero ideal $\ideal{a}$ of $\ideal{o}_F$, $\iota^1(\ideal{a})$ denotes the index $[\overline{\Gamma_{1}^1(\ideal{o}_F,\ideal{o}_F)}:\overline{\Gamma_{1}^1(\ideal{o}_F,\ideal{a})}]$, which is explicitly given by 
\[
\iota^1(\ideal{a})=\frac{1}{2}\sharp (\ideal{o}_F/\ideal{a})^{\times}N(\ideal{a})\prod_{\ideal{q}|\ideal{a}}\left(1+\frac{1}{N(\ideal{q})}\right)\ \ \text{if}\ \ideal{a}\neq (2)
\]
(cf. \cite[Theorem 4.2.5]{Mi}). 
Now, by applying Corollary \ref{cor:modularity}, there exists $\mathbf{f}_0 \in S_2((20149)^2,\integer{}')$ such that $\overline{\rho}_{\mathbf{f}_0}^{\mathrm{ss}} \simeq \varepsilon\oplus \mathbf{1}_{(20149)}\chi_{\mathrm{cyc}} (\bmod\, \varpi')$, where $\mathbf{1}_{(20149)}$ denotes the trivial character modulo $(20149)$. 
Let $\theta$ be the non-trivial character of $\Gal(F(\sqrt{-19})/F)$ whose conductor is a prime ideal $(19)$ of $\ideal{o}_F$. 
We put $\ideal{n}=(20149)^2 (19)^2$ and $\mathbf{f}=\mathbf{f}_0\otimes \theta \in S_2(\ideal{n},\integer{}')$. 
Thus, by the same way as above, $\mathbf{f}$ and $\ideal{n}$ satisfy all the assumptions of the Main Theorem \ref{thm:main theorem}.
\end{ex}


\end{document}